\def\input@path{{/home/andrew/Documents/}}
\numberwithin{equation}{section}
\numberwithin{figure}{section}
\theoremstyle{plain}
\newtheorem{thm}{\protect\theoremname}
\theoremstyle{plain}
\newtheorem{assumption}[thm]{\protect\assumptionname}
\theoremstyle{remark}
\newtheorem*{rem*}{\protect\remarkname}
\theoremstyle{definition}
\newtheorem{defn}[thm]{\protect\definitionname}
\theoremstyle{plain}
\newtheorem{fact}[thm]{\protect\factname}
\theoremstyle{remark}
\newtheorem{rem}[thm]{\protect\remarkname}
\theoremstyle{plain}
\newtheorem{prop}[thm]{\protect\propositionname}
\theoremstyle{definition}
\newtheorem{example}[thm]{\protect\examplename}
\theoremstyle{plain}
\newtheorem{lem}[thm]{\protect\lemmaname}
\theoremstyle{plain}
\newtheorem{cor}[thm]{\protect\corollaryname}
\theoremstyle{plain}
\newtheorem*{thm*}{\protect\theoremname}
\providecommand{\assumptionname}{Assumption}
\providecommand{\corollaryname}{Corollary}
\providecommand{\definitionname}{Definition}
\providecommand{\examplename}{Example}
\providecommand{\factname}{Fact}
\providecommand{\lemmaname}{Lemma}
\providecommand{\propositionname}{Proposition}
\providecommand{\remarkname}{Remark}
\providecommand{\theoremname}{Theorem}
\begin{document}
\title{Gradient flow structure for some nonlocal diffusion equations}
\author{Andrew Warren}
\address{IHÉS and the University of British Columbia}
\email{warren@ihes.fr; awarren@math.ubc.ca}
\begin{abstract}
We study ``nonlocal diffusion equations'' of the form 
\[
\partial_{t}\frac{d\rho_{t}}{d\pi}(x)+\int_{X}\left(\frac{d\rho_{t}}{d\pi}(x)-\frac{d\rho_{t}}{d\pi}(y)\right)\eta(x,y)d\pi(y)=0\qquad(\dagger)
\]
where $X$ is either $\mathbb{R}^{d}$ or $\mathbb{T}^{d}$, $\pi$
is a probability distribution on $X$, and $\eta(x,y)$ is a ``transition
kernel'' which may be singular as $x\rightarrow y$. For a suitable
notion of weak solutions which we discuss below, we show that solutions
to these nonlocal diffusion equations can be interpreted as gradient
flows of the relative entropy with respect to a certain nonlocal Wasserstein-type
metric defined in terms of $\eta$ and $\pi$. These ``nonlocal Wasserstein
metrics'' endow the space of probability measures on $X$ with a
formal Riemannian structure, thereby providing for us a nonlocal analogue
of the \emph{Otto calculus} originally developed  in the context
of the 2-Wasserstein metric. The class of equations $(\dagger)$ includes
a family of ``nonlocal Fokker-Planck equations'', which are thus
identified as nonlocal Wasserstein gradient flows of the relative
entropy, analogously with the usual Fokker-Planck equation and the
$W_{2}$ metric.

The gradient flow structure we provide allows us to deduce: existence
and uniqueness of solutions to ($\dagger$) in a suitable class of
weak solutions; stability of solutions in the sense of evolutionary
$\Gamma$-convergence, with respect to perturbations of initial condition,
reference measure $\pi$, and transition kernel $\eta$; sufficient
conditions for exponential convergence to equilibrium, in terms of
a nonlocal analogue of the log-Sobolev inequality; as well as the
consistency of a finite-volume-type spatial discretization scheme
in the $\mathbb{T}^{d}$ case.
\end{abstract}

\maketitle
\tableofcontents{}

Nonlocal equations of the form 
\[
\partial_{t}u_{t}(x)+\int_{X}\left(u_{t}(x)-u_{t}(y)\right)\eta(x,y)d\pi(y)=0
\]
 arise in physical models exhibiting anomalous diffusion, and can
be formally understood as the Kolmogorov equation for a pure-jump
process with exponentially distributed waiting times, and jump transition
kernel $\eta(x,y)$ and stationary measure $\pi$. In this article,
by contrast, we take a variational and mass-transport-theoretic perspective:
we give conditions under which this equation can be viewed as the
gradient flow of the relative entropy $\mathcal{H}(\cdot\mid\pi)$,
with respect to a suitably chosen ``nonlocal Wasserstein'' metric.
Aside from being of conceptual interest, this gradient flow structure
turns out to furnish us with variational proofs of a range of structural
features of the evolution equation, including existence, uniqueness,
stability with respect to choice of $\eta$ and $\pi$, and consistency
of a finite volume approximation scheme.

The paper is structured as follows. In Section \ref{sec:Introduction}
immediately below, we first give a formal definition of the nonlocal
Wasserstein metric and formally compute the gradient flow of the relative
entropy, after which we summarize related work and situate our own
contributions within the existing literature. Section \ref{sec:Preliminaries}
gives the technical setup for the family of nonlocal Wasserstein metrics
we employ, and introduces the measure-valued notion of weak solutions
we use for equation ($\dagger$). In Section \ref{sec:Compactness}
we provide a compactness result (Proposition \ref{prop:compactness-nce})
which is the key ingredient for our subsequent existence and stability
results. Section \ref{sec:Identifying} gives conditions under which
weak solutions to equation ($\dagger$) can be identified with metric
gradient flows (in the sense of satisfying an entropy dissipation
inequality) in the given nonlocal Wasserstein metric (Proposition
\ref{prop:weak-soln-vs-gradient-flow}), and establish uniqueness
(Proposition \ref{prop:uniqueness}) and stability with respect to
$\eta$ and $\pi$ (Proposition \ref{prop:stability}) of said gradient
flows. Existence of gradient flows (Theorem \ref{thm:existence})
is established in Section \ref{sec:Existence-and-Discrete-to-Continuum},
by way of the consistency of a finite volume scheme (Proposition \ref{prop:eta-properties-sufficient}).
Lastly, in Section \ref{sec:contractivity} we briefly discuss conditions
for exponential contraction of solutions towards the stationary distribution
$\pi$ \emph{à la }logarithmic Sobolev inequalities; while in Section
\ref{sec:nonlocal-Dirichlet} we connect the contents of the present
article to another interpretation of equation ($\dagger$), namely
as the semigroup generated by a certain purely nonlocal Dirichlet
form. 

\section{Introduction\protect\label{sec:Introduction}}

\subsection{Formal identification of the gradient flow structure}

As motivation, we give a formal argument (along the lines of the Otto
calculus introduced in \cite{otto2000generalization,otto2001geometry})
for the claimed gradient flow structure for $(\dagger)$; a nearly
identical computation for the case where $\pi$ is the Lebesgue measure
can be found in \cite{erbar2014gradient}, but we include this argument
below in case it aids the reader in navigating the present article.
The nonlocal Wasserstein metric we consider may be formally written
as follows. Denoting $\hat{\psi}(x,y):=\frac{\psi(x)-\psi(y)}{\log\psi(x)-\log\psi(y)}$,
we define
\[
\mathcal{W}^{2}(\bar{\rho}_{0},\bar{\rho}_{1}):=\inf_{(\rho_{t},v_{t})_{t\in[0,1]}\in\mathcal{CE}(\bar{\rho}_{0},\bar{\rho}_{1})}\int_{0}^{1}\frac{1}{2}\int_{X}\int_{X}(v_{t}(x,y))^{2}\widehat{\frac{d\rho}{d\pi}}(x,y)\eta(x,y)d\pi(x)d\pi(y)
\]
where for each $t$, $v_{t}:X\times X\rightarrow\mathbb{R}$ and
\[
\mathcal{CE}(\bar{\rho}_{0},\bar{\rho}_{1}):=\left\{ (\rho_{t},v_{t})_{t\in[0,1]}:\partial_{t}\frac{d\rho_{t}}{d\pi}(x)+\int_{X}v_{t}(x,y)\widehat{\frac{d\rho}{d\pi}}(x,y)\eta(x,y)d\pi(y)=0;\bar{\rho}_{0}=\rho_{0},\bar{\rho}_{1}=\rho_{1}\right\} 
\]
denotes the set of solutions to the ``nonlocal continuity equation''
with initial and terminal conditions $\bar{\rho}_{0}$ and $\bar{\rho}_{1}$.
By analogy with the Benamou-Brenier formula for the $W_{2}$ distance
\cite{benamou2000computational}, we think of ``vector fields''
$v(x,y)$ as tangent vectors for the density $\frac{d\rho}{d\pi}$;
equipping these tangent vectors with the inner product 
\[
\langle v,w\rangle_{\rho}:=\frac{1}{2}\int_{X}\int_{X}(v(x,y)w(x,y)\widehat{\frac{d\rho}{d\pi}}(x,y)\eta(x,y)d\pi(x)d\pi(y)
\]
which we think of as a formal Riemannian metric on $\mathcal{P}(X)$,
with $\mathcal{CE}(\bar{\rho}_{0},\bar{\rho}_{1})$ telling us which
paths in $\mathcal{P}(X)$ are candidates for being ``absolutely
continuous''. The $\mathcal{W}$ distance is then identified as the
formal geodesic distance corresponding to $\langle v,w\rangle_{\rho}$:
\[
\mathcal{W}^{2}(\bar{\rho}_{0},\bar{\rho}_{1})=\inf_{(\rho_{t},v_{t})_{t\in[0,1]}\in\mathcal{CE}(\bar{\rho}_{0},\bar{\rho}_{1})}\int_{0}^{1}\langle v_{t},v_{t}\rangle_{\rho_{t}}dt.
\]

Then, to identify the gradient flow of the relative entropy $\mathcal{H}(\rho\mid\pi)=\int\frac{d\rho}{d\pi}\log\frac{d\rho}{d\pi}d\pi$
with respect to the ``Riemannian metric'' $\mathcal{W}$, we perform
the following formal computation. Given any absolutely continuous
$\rho_{t}:[0,1]\rightarrow\mathcal{P}(X)$, 
\begin{align*}
\frac{d}{dt}\mathcal{H}(\rho_{t}\mid\pi) & =\int_{X}\left(1+\log\frac{d\rho_{t}}{d\pi}(x)\right)\partial_{t}\frac{d\rho_{t}}{d\pi}(x)d\pi(x)\\
 & =-\int_{X}\left(1+\log\frac{d\rho_{t}}{d\pi}(x)\right)\left(\int_{X}v_{t}(x,y)\widehat{\frac{d\rho}{d\pi}}(x,y)\eta(x,y)d\pi(y)\right)d\pi(x).
\end{align*}
Assume that $v_{t}(x,y)=-v_{t}(y,x)$. Then we have, since $\eta(x,y)=\eta(y,x)$,
and $\widehat{\frac{d\rho}{d\pi}}(x,y)=\widehat{\frac{d\rho}{d\pi}}(y,x)$,
that
\begin{multline*}
-\int_{X}\int_{X}\left(1+\log\frac{d\rho_{t}}{d\pi}(x)\right)v_{t}(x,y)\widehat{\frac{d\rho}{d\pi}}(x,y)\eta(x,y)d\pi(y)d\pi(x)\\
\begin{aligned}\text{(change of variables)} & =-\int_{X}\int_{X}\left(1+\log\frac{d\rho_{t}}{d\pi}(y)\right)v_{t}(y,x)\widehat{\frac{d\rho}{d\pi}}(x,y)\eta(x,y)d\pi(x)d\pi(y)\\
 & =\int_{X}\int_{X}\left(1+\log\frac{d\rho_{t}}{d\pi}(y)\right)v_{t}(x,y)\widehat{\frac{d\rho}{d\pi}}(x,y)\eta(x,y)d\pi(x)d\pi(y)
\end{aligned}
\end{multline*}
and so 
\begin{align*}
\frac{d}{dt}\mathcal{H}(\rho_{t}\mid\pi) & =\frac{1}{2}\int_{X}\int_{X}\left(\log\frac{d\rho_{t}}{d\pi}(y)-\log\frac{d\rho_{t}}{d\pi}(x)\right)v_{t}(x,y)\widehat{\frac{d\rho}{d\pi}}(x,y)\eta(x,y)d\pi(x)d\pi(y)\\
 & =\left\langle \log\frac{d\rho_{t}}{d\pi}(y)-\log\frac{d\rho_{t}}{d\pi}(x),v_{t}(x,y)\right\rangle _{\rho_{t}}.
\end{align*}
On a Riemannian manifold, we have that $\frac{d}{dt}F(x_{t})=\langle\text{Grad}F(x_{t}),\dot{x}_{t}\rangle_{x_{t}}$,
and so this computation suggests that the ``gradient vector'' $\text{Grad}\mathcal{H}(\rho_{t}\mid\pi)$
with respect to the Riemannian structure $\langle v,w\rangle_{\rho}$
is none other than $\log\frac{d\rho_{t}}{d\pi}(y)-\log\frac{d\rho_{t}}{d\pi}(x)$.
At the same time, the gradient flow $\rho_{t}$ of $\mathcal{H}(\cdot\mid\pi)$
should have the property that the tangent vector equals the vector
$-\text{Grad}\mathcal{H}(\rho_{t}\mid\pi)$. In our case, the tangent
vector $v_{t}$ solves the ``continuity equation'' together with
$\rho_{t}$, and so the gradient flow of $\mathcal{H}(\cdot\mid\pi)$
should satisfy 
\[
\partial_{t}\frac{d\rho_{t}}{d\pi}(x)+\int_{X}\left(-\text{Grad}\mathcal{H}(\rho_{t}\mid\pi)(x,y)\right)\widehat{\frac{d\rho}{d\pi}}(x,y)\eta(x,y)d\pi(y)=0.
\]
Together with the preceding computation of $\text{Grad}\mathcal{H}(\rho_{t}\mid\pi)$,
and the fact that $\widehat{\frac{d\rho}{d\pi}}(x,y)=\frac{\frac{d\rho}{d\pi}(x)-\frac{d\rho}{d\pi}(y)}{\log\frac{d\rho}{d\pi}(x)-\log\frac{d\rho}{d\pi}(y)}$,
this yields
\[
\partial_{t}\frac{d\rho_{t}}{d\pi}(x)+\int\left(\frac{d\rho}{d\pi}(x)-\frac{d\rho}{d\pi}(y)\right)\eta(x,y)d\pi(y)=0
\]
as desired. In other words, equation $(\dagger)$ is indeed the gradient
flow of the relative entropy $\mathcal{H}(\cdot\mid\pi)$ with respect
to $\mathcal{W}$. Moreover, we formally compute that the gradient
flow of $\mathcal{H}(\cdot\mid\pi)$ has the dissipation rate 
\begin{align*}
\frac{d}{dt}\mathcal{H}(\rho_{t}\mid\pi) & =-\left\langle \text{Grad}\mathcal{H}(\rho_{t}\mid\pi),\text{Grad}\mathcal{H}(\rho_{t}\mid\pi)\right\rangle _{\rho_{t}}\\
 & =-\frac{1}{2}\int_{x}\int_{X}\left(\log\frac{d\rho_{t}}{d\pi}(x)-\log\frac{d\rho_{t}}{d\pi}(y)\right)^{2}\widehat{\frac{d\rho}{d\pi}}(x,y)\eta(x,y)d\pi(x)d\pi(y)
\end{align*}
which suggests that the ``entropy-entropy production inequality''
\[
\forall\rho\in\mathcal{P}(X),\;\mathcal{H}(\rho\mid\pi)\leq\frac{1}{2C}\cdot\int_{x}\int_{X}\left(\log\frac{d\rho}{d\pi}(x)-\log\frac{d\rho}{d\pi}(y)\right)^{2}\widehat{\frac{d\rho}{d\pi}}(x,y)\eta(x,y)d\pi(x)d\pi(y)
\]
should ensure exponential convergence of the gradient flow to equilibrium,
similarly to the log-Sobolev inequality for the Fokker-Planck equation. 

The preceding computation is merely formal for two reasons. Firstly,
similar to the situation with the Otto calculus for $W_{2}$, our
inner product $\langle v,w\rangle_{\rho}$ does not literally equip
the space of probability measures with a manifold structure. Accordingly,
we must define the metric $\mathcal{W}$ more carefully, and we will
compute the $\mathcal{W}$ gradient flow of $\mathcal{H}(\cdot\mid\pi)$
in the sense of \emph{curves of maximal slope}, which are a synthetic
notion of gradient flows suitable for general metrics, not just geodesic
metrics on Riemannian manifolds. Secondly, the integral $\int v(x,y)\eta(x,y)d\pi(y)$
may not be defined in a literal sense in the case where $\eta(x,y)$
becomes singular as $x\rightarrow y$, so we must employ an appropriate
notion of weak solutions for the equation $(\dagger)$. To this end,
we will show that curves of maximal slope for $\mathcal{H}(\cdot\mid\pi)$
with respect to $\mathcal{W}$ can be identified with certain solutions
to the nonlocal continuity equation, in a variant of the weak sense
already considered in \cite{erbar2014gradient}.

\subsection{Motivation and related work}

\subsubsection*{The fractional heat equation as a gradient flow of the entropy}

The present article has one particularly key prior work, namely that
of Erbar \cite{erbar2014gradient}. To briefly convey the motivation
for \cite{erbar2014gradient}, it has been known since the introduction
of the JKO scheme \cite{jordan1998variational} and the Otto calculus
for Wasserstein gradient flows \cite{otto2000generalization,otto2001geometry}
that the heat equation can be viewed as the 2-Wasserstein gradient
flow for the entropy $\mathcal{H}(\cdot\mid\text{Leb})$, while the
Fokker-Planck equation with potential $V$ can be viewed as the 2-Wasserstein
gradient flow of the \emph{relative entropy} $\mathcal{H}(\cdot\mid\pi)$
where $\pi$ is the probability measure with density proportional
to $e^{-V}$. But what about for fractional diffusion? While some
nonlocal equations can be viewed as $W_{2}$ gradient flows (in particular
certain diffusion equations with nonlocal interaction \cite{carrillo2003kinetic,carrillo2006contractions}),
the fractional heat equation does not have any apparent similar $W_{2}$
gradient flow interpretation. What Erbar showed in \cite{erbar2014gradient}
was that one could extend the \emph{ersatz} of the Wasserstein distance
on discrete Markov chains introduced by \cite{maas2011gradient,mielke2011gradient,chow2012fokker}
to the case where the state space is continuous and the Markov chain
is now a measurable transition kernel; and in the case where the state
space is chosen to be $\mathbb{R}^{d}$ and the transition kernel
is chosen to have density $c_{d,s}|x-y|^{-d-s}$ with $s\in(0,2)$,
one formally obtains a \emph{nonlocal Wasserstein metric} (NLW) on
the space of probability measures on $\mathbb{R}^{d}$, for which
there is an associated (formal) ``nonlocal Otto calculus'' according
to which the fractional heat equation is the gradient flow of the
entropy $\mathcal{H}(\cdot\mid\text{Leb})$. 

Erbar was then able to show rigorously that the fractional heat equation
is an NLW gradient flow of $\mathcal{H}(\cdot\mid\text{Leb})$ in
the synthetic sense of gradient flows on metric spaces elaborated
in \cite{ambrosio2008gradient}; specifically \cite{erbar2014gradient}
shows that the synthetic ``evolutional variational inequality''
(EVI) is satisfied, by employing the ``Eulerian'' strategy developed
in \cite{otto2005eulerian,daneri2008eulerian}. However the proof
ultimately relies on a number of ``extrinsic'' facts about the fractional
heat equation, and in particular requires heat kernel estimates for
the fractional Laplacian such as those given in \cite{chen2003heat}.
Additionally certain estimates which play a crucial role in \cite{erbar2014gradient}
seem to require the jump kernel to be translation invariant. As a
consequence, the analysis provided in \cite{erbar2014gradient} is
ultimately specific to the fractional heat equation. 

Nonetheless Erbar observes that the class of nonlocal Wasserstein
metrics described in \cite{erbar2014gradient} does allow for a number
of other nonlocal equations to be cast as NLW gradient flows, with
gradient flows of the relative entropy (viz. the topic of the present
article) singled out in particular as an obvious possibility. However
in the decade since \cite{erbar2014gradient} was published, no work
in this direction has appeared with the exception of \cite{peletier2020jump}
(which we discuss momentarily). Indeed, below we shall see that a
rigorous analysis of the relative entropy case appears to be technically
subtle, and seems to require an analytic approach substantially different
than the one given in \cite{erbar2014gradient}. 

Let us rephrase this open problem/motivation coming from \cite{erbar2014gradient}
as follows. From the transportation metric gradient flow point of
view, \emph{what is the right notion of ``nonlocal Fokker-Planck
equation''}? Indeed, since the heat equation is the $W_{2}$ gradient
flow of $\mathcal{H}(\cdot\mid\text{Leb})$, the fractional heat equation
is an NLW gradient flow of $\mathcal{H}(\cdot|\text{Leb})$, and the
usual Fokker-Planck equation is the $W_{2}$ gradient flow of $\mathcal{H}(\cdot\mid\pi)$,
then a ``nonlocal Fokker-Planck equation'' should be some equation
of the form 
\begin{equation}
\partial_{t}\rho_{t}=-(-\Delta)^{s/2}\rho_{t}+"\overline{\text{div}}_{\pi}^{s/2}"\rho_{t}\label{eq:intro-nonlocal-fokker-planck}
\end{equation}
where $"\overline{\text{div}}_{\pi}^{s/2}"$ ought to be some sort
of nonlocal divergence operator depending on the fractional exponent
$s$ and reference measure $\pi$, which is also an NLW gradient flow
of the relative entropy $\mathcal{H}(\cdot\mid\pi)$. We emphasize
that this is \emph{not} the same as the usual ``fractional Fokker-Planck
equation'' 
\[
\partial_{t}\rho_{t}=-(-\Delta)^{s/2}\rho_{t}+\text{div}(\rho_{t}\nabla V)
\]
where one keeps the local divergence term from the usual Fokker-Planck
equation and replaces the usual local diffusion term with a fractional
diffusion term. (Indeed, this latter equation can instead be appropriately
understood variationally in terms of a nonlocal-local splitting scheme
\cite{bowles2015weak}.) Our class of nonlocal diffusion equations
($\dagger$) (and its associated gradient flow structure) will be
shown to be flexible enough to include equations of the form (\ref{eq:intro-nonlocal-fokker-planck})
for an appropriate choice of NLW metric and $"\overline{\text{div}}_{\pi}^{s/2}"$.
Moreover we obtain estimates on the rate of convergence to the equilibrium
measure $\pi$ for solutions to (\ref{eq:intro-nonlocal-fokker-planck})
when $\pi$ satisfies a certain ``nonlocal log-Sobolev inequality'';
in the case where the underlying space is the flat torus $\mathbb{T}^{d}$,
we note that crude sufficient conditions for such an inequality to
hold have already been observed in the nonlocal Dirichlet forms literature
\cite{wang2014simple}, and the same sufficient conditions apply in
our case. In particular, this means that our $"\overline{\text{div}}_{\pi}^{s/2}"$
is chosen appropriately in the sense that $\pi$ actually is the equilibrium
measure for (\ref{eq:intro-nonlocal-fokker-planck}), as one would
expect by analogy with the usual Fokker-Planck equation (and which
is not true in general for the fractional Fokker-Planck equation \cite{gentil2008levy}).

\subsubsection*{Other evolution equations which are nonlocal Wasserstein gradient
flows}

Nevertheless, a number of nonlocal equations other than the fractional
heat equation have been shown to be nonlocal Wasserstein-type gradient
flows in the years since \cite{erbar2014gradient}, indeed the present
article draws on technical strategies from several such works. Erbar
showed in \cite{erbar2016boltzmann} that the spatially homogeneous
Boltzmann equation is the gradient flow of a certain NLW metric on
the space of distributions of velocities, where the metric incorporates
the collision kernel from the Boltzmann equation. This approach has
subsequently been extended to the linear Boltzmann equation \cite{basile2020gradient}
as well as the homogeneous Landau equation \cite{carrillo2022landau},
and indeed one can recover the Landau equation as the evolutionary
$\Gamma$-limit of the homogeneous Boltzmann equation in the ``grazing
collision limit'' with respect to these gradient flow structures
\cite{carrillo2022boltzmann}. We also mention \cite{esposito2021novel},
which employs a similar strategy to show that the one-dimensional
aggregation equation can be viewed as an NLW gradient flow, as well
as the article \cite{an2021gradient} which provides a formal argument
along similar lines for the isotropic Landau equation. In a different
direction, it has recently been shown that the strategy from \cite{erbar2014gradient}
can be lifted to random measures to study nonlocal evolution equations
on the space of configurations \cite{dello2024wasserstein,erbar2023optimal}.

At the discrete level, the identification of the heat flow of a Markov
chain with the Wasserstein-type gradient flow of the relative entropy
with respect to the equilibrium measure from \cite{maas2011gradient}
has been extended to the study of ``porous medium equations on Markov
chains'' in \cite{erbarmaas2014gradient}. One might guess that a
combination of the techniques from \cite{erbar2014gradient} and \cite{erbarmaas2014gradient}
could show that the \emph{fractional porous medium equation} is an
NLW gradient flow, and indeed this result is claimed in the literature
in \cite{ferreira2019minimizing} in the case where the domain is
$\mathbb{T}^{d}$. At the technical level, \cite{ferreira2019minimizing}
show that the basic setup of weak solutions to the nonlocal continuity
equation and definition of the NLW metric make sense on $\mathbb{T}^{d}$
as well (in \cite{erbar2014gradient} the domain is always $\mathbb{R}^{d}$).
This is convenient for our own work, as we work simultaneously in
$\mathbb{R}^{d}$ and $\mathbb{T}^{d}$, indeed our existence theorem
on $\mathbb{R}^{d}$ is established by bootstrapping from existence
on $\mathbb{T}^{d}$. The authors of \cite{ferreira2019minimizing}
also observe that the fractional porous medium equation can be formally
identified as an NLW gradient flow of a suitable Renyi entropy, and
prove that a curve of maximal slope for this Renyi entropy exists.
However we caution the reader that there is a technical omission in
\cite{ferreira2019minimizing}, namely it is not actually proved that
this curve of maximal slope is related at all to their notion of weak
solutions for the fractional porous medium equation. (We fully expect
that such an identification can be proved, just that it would be technically
laborious, as our own proof of the identification between these two
solution concepts in our setting attests.)

There have also been works which extend the setup of \cite{erbar2014gradient}
in such a way that one still has an ``action'' or ``kinetic energy''
defined on solutions to the nonlocal continuity equation, but the
least-action variational problem no longer induces a metric, but rather
a ``nonlocal Wasserstein cost'' which still somehow retains enough
structure to support a reasonable theory of gradient flows. In \cite{esposito2019nonlocal},
the authors provide a gradient flow structure for the \emph{nonlocal-nonlocal
interaction equation} by modifying the formal Riemannian structure
from \cite{erbar2014gradient} into an \emph{asymmetric} norm on the
tangent space. This endows the space of probability measures with
the formal structure of an infinite-dimensional Finsler manifold and
rigorously puts a quasimetric on an appropriate subspace of probability
measures. Accordingly the authors show that the machinery of gradient
flows on metric spaces from \cite{ambrosio2008gradient} can be extended
to their quasimetric setting, and prove that certain weak solutions
to the nonlocal-nonlocal interaction equation can be identified as
curves of maximal slope for a nonlocal interaction energy with respect
to their NLW quasimetric. We note that \cite{esposito2019nonlocal}
prove existence not by minimizing movements, as in \cite{ambrosio2008gradient},
but rather by using the gradient flow structure itself to establish
discrete-to-continuum evolutionary $\Gamma$-convergence result, then
noting existence at the discrete level follows from classical ODE
theory. This is the kernel of the existence proof that we must employ
below, since our gradient flow structure turns out to be slightly
too weak, in general, to employ the existence theory from \cite{ambrosio2008gradient}.
Additionally we mention the articles \cite{esposito2023graph,heinze2023nonlocal,esposito2023graphmulti}
which employ or extend the NLW quasimetric gradient flow structure
from \cite{esposito2019nonlocal}.

Likewise in \cite{peletier2020jump}, the authors provide a gradient
flow structure for a remarkably general class of laws of jump processes
on standard Borel spaces, including ones where the natural gradient
flow structure does not arise from a metric (or quasimetric) but rather
is realized as a \emph{generalized gradient system} in the sense of
\cite{mielke2016evolutionary}; this abstraction is motivated by applications
to reaction-diffusion systems and large deviations theory, see for
instance \cite{mielke2011gradient,liero2013gradient,mielke2014relation,liero2017microscopic}.
Their results include existence, uniqueness, stability with respect
to initial conditions, and identification between weak solutions to
our $(\dagger)$ and curves of maximal slope of the relative entropy,
in the case where $\sup_{x}\int_{X}\eta(x,y)d\pi(y)<\infty$. Unfortunately
this excludes the case we are particularly interested in, namely where
$\eta(x,y)$ is non-integrable in the neighborhood of $x=y$ but satisfies
$\sup_{x}\int(|x-y|^{2}\wedge1)\eta(x,y)d\pi(y)<\infty$ (the fractional
transition kernel $c_{d,s}|x-y|^{-d-s}$ is one such, so the setup
of \cite{peletier2020jump} even excludes the original results from
\cite{erbar2014gradient}). For these non-integrable transition kernels
we must proceed by a substantively different technical route than
\cite{peletier2020jump}. Nevertheless, even in the case where $\sup_{x}\int_{X}\eta(x,y)d\pi(y)<\infty$
certain of our results are new, in particular stability with respect
to $\eta$ and $\pi$ and our discrete-to-continuum results. On a
separate point, we note (as was observed in \cite{slepcev2023nonlocal}
in the Euclidean case) that assuming $\sup_{x}\int_{X}\eta(x,y)d\pi(y)<\infty$
puts a rather severe restriction on the topology of the resulting
NLW metric then puts on the space of probability measures: for \cite{peletier2020jump}
this topology is always at least as strong as the topology of setwise
convergence, so that particle approximations are not directly compatible
with the metric structure, for example. 

\subsubsection*{Connections with nonlocal Dirichlet forms}

Lastly we comment that there is an obvious ``parallel setting''
to our own, in the theory of nonlocal Dirichlet forms. Much like how
the heat equation can be simultaneously viewed as the $W_{2}$ gradient
flow of the relative entropy and also the $L^{2}$ gradient flow of
the Cheeger energy (in fact this identity holds on rather general
metric measure spaces \cite{ambrosio2014calculus}), equation ($\dagger$)
can also be formally viewed as being induced by semigroup generated
the nonlocal Dirichlet form 
\[
\mathcal{E}_{\eta,\pi}(f,g)=\frac{1}{2}\iint_{X\times X}(f(x)-f(y))(g(x)-g(y))\eta(x,y)d\pi(x)d\pi(y).
\]
however we note that our notion of weak solutions for ($\dagger$)
is not precisely the same as the one preferred in the nonlocal Dirichlet
forms literature. In Section \ref{sec:nonlocal-Dirichlet} we briefly
consider this direction, and show that for sufficiently nice choices
of initial condition and jump kernel $\eta$, the two weak solution
concepts for ($\dagger$) give rise to the same solutions (Proposition
\ref{prop:dirichlet-caloric-equivalent}). This suggests the possibility
that we can jointly exploit tools from optimal transport and Dirichlet
form theory in the study of equations like ($\dagger$), a direction
we shall consider in future work: indeed this strategy has been employed
in the local setting in \cite{ambrosio2014calculus} to study Wasserstein
gradient flows on low-regularity underlying spaces. In the meantime,
we simply note that our stability results have certain parallels in
the nonlocal Dirichlet forms literature, in particular \cite{barlow2009non}
gives a notion of convergence for a sequence of nonlocal Dirichlet
forms that ensures the convergence of the induced semigroups; and
that \cite{chen2013discrete} provides an analogous finite-volume
discrete-to-continuum result to our own for nonlocal Dirichlet forms,
albeit modulo a number of technical differences.

\section{Preliminaries\protect\label{sec:Preliminaries}}

In what follows, we take $X$ to be either $\mathbb{R}^{d}$ or the
flat torus $\mathbb{T}^{d}$. %
\begin{comment}
We focus on these two spaces largely for concreteness --- in particular,
the vast majority of our arguments will also generalize to more general
Euclidean periodic domains.
\end{comment}
{} We let $\mathcal{P}(X)$ denote the space of probability measures
on $X$, equipped with the narrow topology. 

\subsection{Graph structure, weight kernel, interpolation}

We equip $X$ with a graph structure, as follows.
\begin{assumption}[Jump kernel and reference measure]
\label{assu:eta=000020properties} Given a family of reference measures
$\{\pi_{i}\}_{i\in I}$ belonging to $\mathcal{M}_{loc}^{+}(X)$,
we consider a family of functions $\eta_{i}:\{(x,y)\in X\times X\backslash\{x=y\}\}\rightarrow[0,\infty)$,
again where $i\in I$, satisfying the following properties: 
\begin{enumerate}
\item For all $i\in I$, $\eta_{i}$ is symmetric: $\eta_{i}(x,y)=\eta_{i}(y,x)$
for all $x\neq y$ in $\text{supp}(\pi_{i})$;
\item For every bounded and continuous function $f:X\rightarrow\mathbb{R}$
and $i\in I$, the function 
\[
\text{supp}(\pi_{i})\ni x\mapsto\int f(y)(1\wedge|x-y|^{2})\eta_{i}(x,y)d\pi_{i}(y)
\]
is bounded and continuous, and 
\[
\sup_{i\in I}\sup_{x\in\text{supp}(\pi_{i})}\int f(y)(1\wedge|x-y|^{2})\eta_{i}(x,y)d\pi_{i}(y)<\infty.
\]
\item The following uniform integrability condition holds: 
\[
\lim_{R\rightarrow\infty}\sup_{i\in I}\sup_{x\in\text{supp}(\pi_{i})}\iint_{A_{R}(x)}(1\wedge|x-y|^{2})\eta_{i}(x,y)d\pi_{i}(y)=0
\]
where $A_{R}(x)=\{y\in X:|x-y|<1/R\text{ or }|x-y|>R\}$.

\hspace{-4.5em}Additionally, for some results we require that 
\item \textbf{$\eta_{i}(x,y)$ }is a continuous function on $G$;
\item $\eta_{i}(x,y)>0$ for all $x\neq y$ in $X$; and,
\item For sufficiently small $\delta_{0}>0$ there exists a constant $S_{i}\geq1$
(not necessarily uniform in $i$) such that for all $|z|<\delta_{0}$
and all $i\in I$, 
\[
\frac{\eta_{i}(x-z,y-z)}{\eta_{i}(x,y)}\leq S_{i},
\]
and moreover $S_{i}\rightarrow1$ as $\delta_{0}\rightarrow0$. 
\end{enumerate}
\end{assumption}

\begin{rem*}
In many of our results below, we fix a \emph{single} reference measure
$\pi\in\mathcal{P}(X)$ and kernel $\eta$. However, we will need
to consider a countable family of reference measures together with
kernels, when investigating the stability of gradient flows, in Proposition
\ref{prop:stability}.

In the case where $X=\mathbb{R}^{d}$, our Assumption \ref{assu:eta=000020properties}
contains Assumption 1.1 from \cite{erbar2014gradient}; the relevance
for us is that we make use of several results from \cite{erbar2014gradient}
which require this assumption. (Each of these results has been adapted
to the setting where $X=\mathbb{T}^{d}$ in \cite{ferreira2019minimizing}.)
In particular, if we write $J(x,dy):=\eta(x,y)d\pi(y)$ and considers
only a single reference measure $\pi$, one observes that our Assumption
\ref{assu:eta=000020properties} (1-3) is the same as \cite[Assu. 1.1]{erbar2014gradient}
together with the requirement that $J(x,\cdot)\ll\pi$ for $\pi$-almost
all $x\in\mathbb{R}^{d}$.
\end{rem*}
For certain results below (principally: our proof of existence, which
will go by way of our stability theory and a reduction to the discrete
setting), we would like to be able to perform the following construction:
given a kernel $\eta$ and reference measure $\pi$ which satisfy
Assumption \ref{assu:eta=000020properties}, can we produce a sequence
of \emph{discrete} measures $\pi_{n}$ converging to $\pi$, and a
sequence of kernels $\eta_{n}$ converging to $\eta$, such that the
family $\{\eta_{n},;\pi_{n}\}_{n\in\mathbb{N}}\cup\{\eta,\pi\}$ all
satisfies Assumption \ref{assu:eta=000020properties}? We give an
example of such a construction in the case where $X=\mathbb{T}^{d}$
in Proposition \ref{prop:eta-properties-sufficient} below.
\begin{defn}[Underlying graph]
  We denote $G:=\{(x,y)\in X\times X\backslash\{x=y\}\}$. The intended
interpretation is that $G$ is the set of edges we have placed on
$X$, with $\eta(x,y)$ being the edge weight between $x$ and $y$
(but note that $\eta(x,y)=0$ is allowed). We equip $G$ with the
topology it inherits as a subspace of $X\times X$; in particular,
compact sets $K\subset G$ are separated from the diagonal $\{x=y\}$.
\end{defn}

\begin{defn}[Locally finite signed measures on $G$]
 We write $\mathcal{M}_{loc}(G)$ to denote the space of locally
finite signed measures on $G$; in other words, $\mathbf{v}\in\mathcal{M}_{loc}(G)$
iff for every compact $K\subset G$, $\Vert\mathbf{v}\llcorner K\Vert_{TV}<\infty$.
We equip $\mathcal{M}_{loc}(G)$ with the \emph{local weak{*} topology};
a sequence $\mathbf{v}_{n}\in\mathcal{M}_{loc}(G)$ converges locally
weak{*}ly to $\mathbf{v}\in\mathcal{M}_{loc}(G)$ iff, for every $\varphi$
in the space $C_{c}(G)$ of continuous functions with compact support
on $G$, we have $\iint_{G}\varphi d\mathbf{v}_{n}\rightarrow\iint_{G}\varphi d\mathbf{v}$.
We abuse notation and write $\rightharpoonup^{*}$ for local weak{*}
convergence. The space $\mathcal{M}_{loc}(G)$ with the local weak{*}
topology can be viewed as a projective limit of a sequence of spaces
of signed measures along a suitable exhaustion of $G$, see \cite{ambrosio2000functions}
for details.
\end{defn}

The nonlocal Wasserstein distance we define below requires the choice
of an \emph{interpolation function} $\theta$ having the following
form:
\begin{assumption}[Interpolation function]
\label{assu:theta=000020properties}$\theta:[0,\infty)\times[0,\infty)\rightarrow[0,\infty)$
satisfies the following properties:
\begin{enumerate}
\item Regularity: $\theta$ is continuous on $[0,\infty)\times[0,\infty)$
and $C^{1}$ on $(0,\infty)\times(0,\infty)$;
\item Symmetry: $\theta(s,t)=\theta(t,s)$ for $s,t\geq0$; 
\item Positivity, normalization: $\theta(s,t)>0$ for $s,t>0$ and $\theta(1,1)=1$;
\item Monotonicity: $\theta(r,t)\leq\theta(s,t)$ for all $0\leq r\leq s$
and $t\geq0$;
\item Positive 1-homogeneity: $\theta(\lambda s,\lambda t)=\lambda\theta(s,t)$
for $\lambda>0$ and $s\geq t\geq0$;
\item Concavity: the function $\theta:[0,\infty)\times[0,\infty)\rightarrow[0,\infty)$
is concave;
\item Connectedness: $C_{\theta}:=\int_{0}^{1}\frac{dr}{\theta(1-r,1+r)}\in[0,\infty).$
\end{enumerate}
\end{assumption}

However, for our main results concerning equation $(\dagger)$, we
shall always take $\theta$ to be the \emph{logarithmic mean} $\theta(r,s)=\frac{r-s}{\log r-\log s}$. 
\begin{fact}
\label{fact:arithmetic=000020mean=000020upper=000020bound}Any $\theta$
satisfying Assumption \ref{assu:theta=000020properties} also satisfies
$\theta(r,s)\leq\frac{r+s}{2}$.%
\begin{comment}
cite the fact that $\theta\leq(r+s)/2$
\end{comment}
\end{fact}

\begin{defn}[Nonlocal gradient and divergence]
 For any function $\phi:X\rightarrow\mathbb{R}$ we define its \emph{nonlocal
gradient} $\overline{\nabla}\phi:G\rightarrow\mathbb{R}$ by 
\[
\overline{\nabla}\phi(x,y)=\phi(y)-\phi(x)\text{ for all }(x,y)\in G.
\]
For any $\mathbf{v}\in\mathcal{M}_{loc}(G)$, its \emph{nonlocal divergence}
$\overline{\nabla}\cdot\mathbf{v}$ is formally defined as follows:
\[
\int_{X}\phi d\overline{\nabla}\cdot\mathbf{v}:=-\frac{1}{2}\iint_{G}\overline{\nabla}\phi(x,y)d\mathbf{v}(x,y).
\]
\end{defn}

\begin{rem}
Another reasonable choice for a ``nonlocal-divergence'' is the $\eta$-weighted
adjoint of $\overline{\nabla}$, i.e., 
\[
\int_{X}\phi d\overline{\nabla}\cdot\mathbf{j}=-\frac{1}{2}\iint_{G}\overline{\nabla}\phi(x,y)\eta(x,y)d\mathbf{j}(x,y).
\]
Indeed this operator is used in \cite{esposito2019nonlocal,slepcev2023nonlocal}.
We mention this alternate choice here only for the purpose of disambiguation.
Additionally, we warn the reader that other conventions for the definition
of nonlocal gradient and divergence operators are present in the literature,
in particular our definition is not the same as the one presented
in \cite{du2012analysis}.

It is the nonlocal divergence operator $\overline{\nabla}\cdot$ which
replaces the usual divergence operator in the Definition \ref{def:nce}.
However, in the sequel, we will simply write out the integral operator
$\frac{1}{2}\iint_{G}\overline{\nabla}\phi(x,y)d\mathbf{v}(x,y)$
explicitly; the definition here is just presented for easier comparison
with articles such as \cite{erbar2014gradient,esposito2019nonlocal,peletier2020jump}.
A technical reason for this is as follows: it may happen that $\mathbf{v}\in\mathcal{M}_{loc}(G)$
and $\iint_{G}\bar{\nabla}\varphi d\mathbf{v}<\infty$, but there
is no measure $\mathbf{m}\in\mathcal{M}_{loc}(X)$ such that $\int_{X}\phi d\mathbf{m}=-\frac{1}{2}\iint_{G}\overline{\nabla}\phi(x,y)d\mathbf{v}(x,y)$,
i.e. the object ``$\bar{\nabla}\cdot\mathbf{v}$'' does not actually
exist. This problem is avoided in \cite{peletier2020jump} by requiring
the jump kernel to be integrable in the sense that $\sup_{x}\int_{X}\eta(x,y)d\pi(y)<\infty$,
which is a stronger assumption than our Assumption \ref{assu:eta=000020properties}.
\end{rem}

\subsection{Action and nonlocal Fisher information}
\begin{defn}[Action]
\label{def:action}Let $\eta$ satisfy Assumption \ref{assu:eta=000020properties}
and $\theta$ satisfy Assumption \ref{assu:theta=000020properties}.
Let $\mu\in\mathcal{P}(X)$ and $\mathbf{v}\in\mathcal{M}_{loc}(G)$.
Let $\mathcal{\pi}\in\mathcal{M}_{loc}^{+}(X)$ be any \emph{reference
measure}. Define the \emph{action }of the pair $(\mu,\mathbf{v})$
by
\[
\mathcal{A}_{\theta,\eta}(\mu,\mathbf{v};\pi):=\iint_{G}\frac{\left(\frac{d\mathbf{v}}{d\lambda}(x,y)\right)^{2}}{2\theta\left(\frac{d\mu^{1}}{d\lambda}(x,y),\frac{d\mu^{2}}{d\lambda}(x,y)\right)}d\lambda(x,y)
\]
where $d\mu^{1}=\eta(x,y)d\mu(x)d\pi(y)$ and $d\mu^{2}=\eta(x,y)d\pi(x)d\mu(y)$,
and $\lambda$ is any measure in $\mathcal{M}_{loc}(G)$ dominating
$|\mathbf{v}|$, $\mu^{1}$, and $\mu^{2}$. Here, the fraction in
the integrand is understood with the convention that $\frac{0}{0}=0$.

If $\theta$ and $\eta$ and $\pi$ are obvious from context, we simply
write $\mathcal{A}(\mu,\mathbf{v})$. We also introduce the notation
\[
A(v,r,s)=\begin{cases}
\frac{v^{2}}{2\theta(r,s)} & \theta(r,s)>0\\
0 & v,\theta(r,s)=0\\
\infty & \text{else}
\end{cases}
\]
so that $\mathcal{A}(\mu,\mathbf{v})=\iint_{G}A\left(\frac{d\mathbf{v}}{d\lambda},\frac{d\mu^{1}}{d\lambda},\frac{d\mu^{2}}{d\lambda}\right)d\lambda$.
We note that $A$ is jointly convex and 1-homogeneous.
\end{defn}

\begin{rem*}
Note that $\mathcal{A}_{\theta,\eta,\pi}(\mu,\mathbf{v})$ does not
depend on the choice of $\lambda$ satisfying this domination condition
$|\mathbf{v}|,\mu\otimes\pi,\pi\otimes\mu\ll\lambda$, since $\theta$
is 1-homogeneous.
\end{rem*}
\begin{rem}
\label{rem:reference-measure} In particular, if we consider a ``weighted
measured graph'' $G=(V,E,w,\pi_{n})$ with finitely many vertices
located in $\mathbb{T}^{d}$ or $\mathbb{R}^{d}$, where $w(x_{i},x_{j})=\eta(x_{i},x_{j})$
for any $(x_{i},x_{j})\in E$, and $\pi_{n}$ is a measure supported
on $V$, then selecting $\pi_{n}$ as the reference measure in Definition
\ref{def:action} causes $\mathcal{A}_{\eta,\theta,m_{n}}$ to coincide
with the action associated to the \emph{ersatz Wasserstein distance
on finite Markov chains }from \cite{maas2011gradient}, if we also
restrict to the case where $\mu\ll\pi_{n}$ and $\mathbf{v}\ll\sum_{i,j}w(x_{i},x_{j})\pi_{n}(x_{i})\pi_{n}(x_{j})$.
\end{rem}

The action $\mathcal{A}$ has favorable convexity and lower semicontinuity
properties, as has already been observed in \cite{erbar2014gradient}.
The following proposition is a slight extension of existing convexity
and lower semicontinuity results for $\mathcal{A}$ already proved
in e.g. \cite{erbar2014gradient} or \cite{slepcev2023nonlocal},
in that the kernel $\eta$ is also allowed to vary. We also reiterate
that, as observed in \cite{slepcev2023nonlocal}, for fixed $\eta$,
$\mathcal{A}(\mu,\mathbf{v};\pi)$ is jointly \emph{topologically}
(not simply sequentially) l.s.c. in $(\mu,\mathbf{v};\pi)$, which
in particular establishes that $\mathcal{A}$ is jointly Borel measurable.
\begin{prop}
\label{prop:action=000020convex=000020lsc}The action $\mathcal{A}_{\theta,\eta}(\mu,\mathbf{v};\pi)$
is jointly convex in $(\mu,\mathbf{v},\pi)$, and is jointly sequentially
lower semicontinuous in $\eta$ and $(\mu,\mathbf{v},\pi)$ with respect
to:
\begin{itemize}
\item uniform convergence on compact subsets of $G$ for continuous kernels
$\eta$,
\item the narrow topology on $\mathcal{P}(X)$ for $\mu$,
\item the local weak{*} topology on $\mathcal{M}_{loc}^{+}(X)$ for $\pi$,
and
\item the local weak{*} topology on $\mathcal{M}_{loc}(G)$ for $\mathbf{v}$. 
\end{itemize}
Moreover, $\mathcal{A}_{\theta,\eta}(\mu,\mathbf{v};\pi)$ is jointly
topologically l.s.c. in $(\mu,\mathbf{v},\pi)$ for fixed $\eta$.
\end{prop}

\begin{proof}
Let $(\eta_{n}(x,y))$ be a sequence of kernels which are continuous
functions on $G$, and assume that $\eta_{n}(x,y)$ converges to some
kernel $\eta(x,y)$ uniformly on compact subsets $K$ of $G$. For
a given measure $\mathbf{m}\in\mathcal{M}_{loc}(G)$, we define the
measure $\eta_{n}\cdot\mathbf{m}$ by the following formula: for all
$\varphi\in C_{c}(G)$, 
\[
\iint_{G}\varphi(x,y)d(\eta_{n}\cdot\mathbf{m})(x,y)=\iint_{G}\varphi(x,y)\eta_{n}(x,y)d\mathbf{m}(x,y).
\]
Now, since $\eta_{n}(x,y)$ converges uniformly on compact sets to
$\eta(x,y)$, it follows for any compactly supported $\varphi$ that
$\varphi\eta_{n}$ converges uniformly to $\varphi\eta$. Hence, for
all $\varphi\in C_{c}(G)$ it holds that 
\begin{align*}
\iint_{G}\varphi(x,y)d(\eta_{n}\cdot\mathbf{m})(x,y) & =\iint_{G}\varphi(x,y)\eta_{n}(x,y)d\mathbf{m}(x,y)\\
 & \rightarrow\iint_{G}\varphi(x,y)\eta(x,y)d\mathbf{m}(x,y)\\
 & =\iint_{G}\varphi(x,y)d(\eta\cdot\mathbf{m})(x,y).
\end{align*}
Therefore, by the characterizing property of weak{*} convergence in
$\mathcal{M}_{loc}(G)$, it follows that $\eta_{n}\cdot\mathbf{m}\rightharpoonup^{*}\eta\cdot\mathbf{m}$.
More generally, suppose we have a sequence $\mathbf{m}_{n}\rightharpoonup^{*}\mathbf{m}$.
Fix a compact $K\subset G$. Recall that the duality pairing operation
between $C_{c}(K)$ and $\mathcal{M}(K)$ is jointly sequentially
continuous, so we observe that: given any $\varphi\in C_{c}(K)$,
$\varphi\eta_{n}$ and $\varphi\eta$ also belong to $C_{c}(K)$,
and $\varphi\eta_{n}$ converges to $\varphi\eta$ in $C_{c}(K)$,
 and thus
\[
\iint_{G}\varphi(x,y)\eta_{n}(x,y)d\mathbf{m}_{n}(x,y)\rightarrow\iint_{G}\varphi(x,y)\eta(x,y)d\mathbf{m}(x,y).
\]
Since $\varphi$ and $K$ were arbitrary, this implies that $\eta_{n}\cdot\mathbf{m}_{n}\rightharpoonup^{*}\eta\cdot\mathbf{m}$.

In particular, consider the case where $\mathbf{m}=\mu\otimes\pi\llcorner G$
and $\mathbf{m}_{n}=\mu_{n}\otimes\pi_{n}\llcorner G$ with $\mu_{n}$
converging narrowly to $\mu$ and $\pi_{n}$ converging locally weak{*}ly
to $\pi$. It follows that $\eta_{n}\cdot(\mu_{n}\otimes\pi_{n}\llcorner G)\rightharpoonup^{*}\eta\cdot(\mu\otimes\pi\llcorner G)$
in $\mathcal{M}_{loc}(G)$. Obviously the same applies if we replace
$\mu\otimes\pi$ with $\pi\otimes\mu$. 

Now recall that the \emph{action }of the pair $(\mu,\mathbf{v})$
is given by
\[
\iint_{G}A\left(\frac{d\mathbf{v}}{d\lambda},\frac{d\mu^{1}}{d\lambda},\frac{d\mu^{2}}{d\lambda}\right)d\lambda
\]
where $d\mu^{1}=\eta(x,y)d\mu(x)d\pi(y)$ and $d\mu^{2}=\eta(x,y)d\pi(x)d\mu(y)$,
and $\lambda$ is any measure in $\mathcal{M}_{loc}(G)$ dominating
$|\mathbf{v}|$, $\mu^{1}$, and $\mu^{2}$; in fact we can take $\lambda=|\mathbf{v}|+\mu^{1}+\mu^{2}$.
More generally, we can consider the integrand 
\[
\iint_{G}A\left(\frac{d\sigma^{1}}{d|\sigma|},\frac{d\sigma^{2}}{d|\sigma|},\frac{d\sigma^{3}}{d|\sigma|}\right)d|\sigma|
\]
where $\sigma\in\mathcal{M}_{loc}(G;\mathbb{R}^{3})$. Then, since
$A$ is jointly convex in its arguments, and positively 1-homogeneous,
we can apply Reshetnyak's theorem (in the form presented in \cite[Theorem A.1]{slepcev2023nonlocal})
to deduce that $\iint_{G}A\left(\frac{d\sigma^{1}}{d|\sigma|},\frac{d\sigma^{2}}{d|\sigma|},\frac{d\sigma^{3}}{d|\sigma|}\right)d|\sigma|$
is convex in $\sigma$, and sequentially lower-semicontinuous in $\sigma$
w.r.t. local weak{*} convergence in $\mathcal{M}_{loc}(G;\mathbb{R}^{3})$.
In our particular situation, it then suffices to consider the case
where 
\[
\sigma_{n}=\left(\mathbf{v}_{n},\eta_{n}\cdot(\mu_{n}\otimes\pi_{n}\llcorner G),\eta_{n}\cdot(\pi_{n}\otimes\mu_{n}\llcorner G)\right)
\]
which we have established converges locally weak{*}ly to $\sigma=\left(\mathbf{v},\eta\cdot(\mu\otimes\pi\llcorner G),\eta\cdot(\pi\otimes\mu\llcorner G)\right)$.
Indeed, for these choices of $\sigma_{n}$ and $\sigma$, we have
\[
\iint_{G}A\left(\frac{d\sigma^{1}}{d|\sigma|},\frac{d\sigma^{2}}{d|\sigma|},\frac{d\sigma^{3}}{d|\sigma|}\right)d|\sigma|=\iint_{G}A\left(\frac{d\mathbf{v}}{d\lambda},\frac{d\mu^{1}}{d\lambda},\frac{d\mu^{2}}{d\lambda}\right)d\lambda
\]
and similarly for $\sigma_{n}$. Lastly, we see that $\mathcal{A}$
is jointly topologically l.s.c. in $(\mu,\mathbf{v},\pi)$ by arguing
as in \cite[Corollary A.2]{slepcev2023nonlocal}.
\end{proof}
A very similar strategy can be used to study the convexity and semicontinuity
properties of the \emph{nonlocal relative Fisher information}, which
will turn out to coincide with the metric slope of the relative entropy
with respect to $\mathcal{W}_{\eta,\pi}$.
\begin{defn}
Let $\mu,\pi\in\mathcal{P}(X)$ and $\eta\in C_{\geq0}(G)$. The \emph{nonlocal
Fisher information} of $\mu$ with respect to $\pi$ is denoted $\mathcal{I}_{\eta}(\mu\mid\pi)$
and is denoted as follows: 
\[
\mathcal{I}_{\eta}(\mu\mid\pi):=\begin{cases}
\frac{1}{2}\iint_{G}\bar{\nabla}\frac{d\mu}{d\pi}(x,y)\bar{\nabla}\log\frac{d\mu}{d\pi}(x,y)\eta(x,y)d\pi(x)d\pi(y) & \mu\ll\pi\\
+\infty & \text{else}
\end{cases}
\]
where for the logarithm in the integrand above, we use the convention
that for $r>0$, $r(\log r-\log0)=(-r)(\log0-\log r)=\infty$, and
$\log0-\log0=0$. Note that with this convention $\mathcal{I}_{\eta}(\mu\mid\pi)\geq0$
for all $\mu,\pi\in\mathcal{P}(X)$. 
\end{defn}

\begin{prop}
\label{prop:fisher-convex-lsc}The nonlocal relative Fisher information
$\mathcal{I}_{\eta}(\mu\mid\pi)$ is jointly convex in $(\mu,\pi)$,
and jointly sequentially lower semicontinuous in the following sense:
if $(\mu_{n})_{n\in\mathbb{N}}$ converges narrowly to $\mu$ in in
$\mathcal{P}(X)$, and and $(\pi_{n})_{n\in\mathbb{N}}$ converging
locally weak{*}ly to $\pi$ in $\mathcal{M}_{loc}^{+}(X)$, with $\mu_{n}\ll\pi_{n}$
and $\mu\ll\pi$, and $(\eta_{n})_{n\in\mathbb{N}}$ is a sequence
of continuous kernels, converging uniformly on compact subsets of
$G$, then 
\[
\liminf_{n\rightarrow\infty}\mathcal{I}_{\eta_{n}}(\mu_{n}\mid\pi_{n})\geq\mathcal{I}_{\eta}(\mu\mid\pi).
\]
Furthermore, if $\eta_{n}$ and $\eta$ are strictly positive on $G$,
then we can drop the assumption that $\mu_{n}\ll\pi_{n}$ and $\mu\ll\pi$.
\end{prop}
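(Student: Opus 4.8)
The plan is to reduce the statement to the same Reshetnyak-type argument already used for Proposition \ref{prop:action convex lsc}. The first step is to record a disintegrated representation of $\mathcal{I}_{\eta}$ exactly parallel to the one for the action $\mathcal{A}$: writing $\mu^{1}:=\eta\cdot(\mu\otimes\pi)\llcorner G$ and $\mu^{2}:=\eta\cdot(\pi\otimes\mu)\llcorner G$, and letting $\lambda\in\mathcal{M}_{loc}(G)$ be any measure dominating $|\mu^{1}|$ and $|\mu^{2}|$ (for instance $\lambda=\mu^{1}+\mu^{2}$),
\[
\mathcal{I}_{\eta}(\mu\mid\pi)=\frac{1}{2}\iint_{G}\Phi\!\left(\frac{d\mu^{1}}{d\lambda}(x,y),\frac{d\mu^{2}}{d\lambda}(x,y)\right)d\lambda(x,y),\qquad \Phi(r,s):=(\log r-\log s)^{2}\,\theta(r,s),
\]
where $\Phi$ is extended by lower semicontinuity to $[0,\infty)^{2}$ (so $\Phi(r,0)=\Phi(0,r)=+\infty$ for $r>0$ and $\Phi(0,0)=0$); by the $1$-homogeneity of $\theta$ this does not depend on the choice of $\lambda$ and agrees with the definition of $\mathcal{I}_{\eta}$, exactly as in the remark following Definition \ref{def:action}. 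The key structural facts are that $\Phi$, like the function $A$ of Definition \ref{def:action}, is positively $1$-homogeneous (immediate, since $\log$ absorbs the scaling and $\theta$ is $1$-homogeneous) and jointly convex and lower semicontinuous. Because $\Phi$ is $1$-homogeneous, these last two properties reduce to convexity and (lower-semicontinuous) boundary behaviour of the one-dimensional profile $\phi(t):=\Phi(t,1-t)$ on $[0,1]$; for the logarithmic mean one has $\Phi(r,s)=(r-s)(\log r-\log s)$ and $\phi(t)=(2t-1)\log\frac{t}{1-t}$, for which a direct computation gives $\phi''(t)=\frac{1}{t^{2}(1-t)^{2}}>0$ and $\phi(0^{+})=\phi(1^{-})=+\infty$. (For a general interpolation function $\theta$ obeying Assumption \ref{assu:theta properties} one argues as in the discrete setting, cf. \cite{maas2011gradient,erbarmaas2014gradient}.)

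With this representation in hand, the lower-semicontinuity claim is proved by repeating the argument of Proposition \ref{prop:action convex lsc} almost verbatim. Suppose $\mu_{n}\to\mu$ narrowly in $\mathcal{P}(X)$, $\pi_{n}\rightharpoonup^{*}\pi$ locally weak$*$ly in $\mathcal{M}_{loc}^{+}(X)$, and $\eta_{n}\to\eta$ uniformly on compact subsets of $G$, with all the $\eta_{n}$ and $\eta$ continuous and having the same underlying graph $G$. The proof of Proposition \ref{prop:action convex lsc} already establishes that under these hypotheses $\mu_{n}^{1}:=\eta_{n}\cdot(\mu_{n}\otimes\pi_{n}\llcorner G)\rightharpoonup^{*}\mu^{1}$ and $\mu_{n}^{2}:=\eta_{n}\cdot(\pi_{n}\otimes\mu_{n}\llcorner G)\rightharpoonup^{*}\mu^{2}$ in $\mathcal{M}_{loc}(G)$. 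Hence $\sigma_{n}:=(\mu_{n}^{1},\mu_{n}^{2})\rightharpoonup^{*}\sigma:=(\mu^{1},\mu^{2})$ in $\mathcal{M}_{loc}(G;\mathbb{R}^{2})$, and since $\Phi$ is convex, positively $1$-homogeneous and lower semicontinuous, Reshetnyak's theorem in the form of \cite[Theorem A.1]{slepcev2022nonlocal} guarantees that $\sigma\mapsto\iint_{G}\Phi\!\big(\frac{d\sigma^{1}}{d|\sigma|},\frac{d\sigma^{2}}{d|\sigma|}\big)d|\sigma|$ is sequentially lower semicontinuous with respect to local weak$*$ convergence in $\mathcal{M}_{loc}(G;\mathbb{R}^{2})$. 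Evaluating this functional at $\sigma_{n}$ and at $\sigma$ and invoking the representation above gives exactly $\liminf_{n\to\infty}\mathcal{I}_{\eta_{n}}(\mu_{n}\mid\pi_{n})\geq\mathcal{I}_{\eta}(\mu\mid\pi)$.

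The convexity statement is obtained along the same lines: Reshetnyak's theorem also yields that the $\sigma$-functional above is convex, and for fixed $\pi$ and $\eta$ the map $\mu\mapsto(\mu^{1},\mu^{2})=\sigma$ is linear, so $\mathcal{I}_{\eta}(\cdot\mid\pi)$ is convex; the remaining dependence on $\pi$ is treated exactly as in Proposition \ref{prop:action convex lsc}.

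The only real work lies in the first paragraph. Once one has verified that $\Phi(r,s)=(\log r-\log s)^{2}\theta(r,s)$ is jointly convex, positively $1$-homogeneous and lower semicontinuous on $[0,\infty)^{2}$ with the stated boundary values, and that the associated integral is independent of the dominating measure $\lambda$ (and agrees with $\mathcal{I}_{\eta}$, in particular returning $+\infty$ off $\{\mu\ll\pi\}$), the convexity and lower-semicontinuity assertions follow mechanically from Reshetnyak's theorem together with the measure-convergence facts already established for the action. The one point of vigilance is that, since $\pi$ — and with it the ``natural'' dominating measure $\eta\cdot(\pi\otimes\pi)$ — varies along the sequence, the lower-semicontinuity argument must be run directly in terms of the pair $\sigma_{n}=(\mu_{n}^{1},\mu_{n}^{2})$ and its total variation $|\sigma_{n}|$ rather than fixing a reference measure in advance, which is precisely the device already used in the proof of Proposition \ref{prop:action convex lsc}.
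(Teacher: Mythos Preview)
Your proposal is correct and follows essentially the same route as the paper: represent $\mathcal{I}_{\eta}(\mu\mid\pi)$ as a Reshetnyak-type functional of the pair $\sigma=(\eta\cdot(\mu\otimes\pi)\llcorner G,\,\eta\cdot(\pi\otimes\mu)\llcorner G)$ with integrand $(r-s)(\log r-\log s)$, then invoke the same local weak$^*$ convergence $\sigma_n\rightharpoonup^{*}\sigma$ already established in Proposition~\ref{prop:action convex lsc}. The only place the paper is more explicit is the verification that this representation indeed returns $+\infty$ when $\mu\not\ll\pi$ (by exhibiting a product set $A\times B\subset G$ of positive $|\sigma|$-measure on which one density vanishes and the other does not); you assert this but do not spell it out, and it is worth a sentence since the definition of $\mathcal{I}_{\eta}$ sets it to $+\infty$ by fiat in that case.
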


\begin{proof}
We claim that 
\[
\mathcal{I}_{\eta}(\mu\mid\pi)=F\left(\left(\eta\cdot(\mu\otimes\pi\llcorner G),\eta\cdot(\pi\otimes\mu\llcorner G)\right)\right)
\]
where for any vector-valued measure $\lambda=(\lambda_{1},\lambda_{2})$
in $\mathcal{M}_{loc}(G;\mathbb{R}^{2})$, 
\[
F(\lambda):=\iint_{G}f\left((x,y),\frac{d\lambda}{d|\lambda|}\right)d|\lambda|(x,y)
\]
and 
\[
f\left((x,y),\frac{d\lambda}{d|\lambda|}\right):=\frac{1}{2}\left(\frac{d\lambda_{1}}{d|\lambda|}(x,y)-\frac{d\lambda_{2}}{d|\lambda|}(x,y)\right)\left(\log\frac{d\lambda_{1}}{d|\lambda|}(x,y)-\log\frac{d\lambda_{2}}{d|\lambda|}(x,y)\right).
\]
(Here as above, we use the convention that for $r>0$, $r(\log r-\log0)=(-r)(\log0-\log r)=\infty$,
and $\log0-\log0=0$.) This representation of $\mathcal{I}_{\eta}(\mu\mid\pi)$
as $F(\lambda)$ will allow us to invoke the Reshetnyak theorem (in
the form presented in \cite[Theorem A.1]{slepcev2023nonlocal}) to
deduce the desired convexity and lower semicontinuity result ---
together with the fact that $\eta_{n}\cdot(\mu_{n}\otimes\pi_{n}\llcorner G)\rightharpoonup^{*}\eta\cdot(\mu\otimes\pi\llcorner G)$
in $\mathcal{M}_{loc}(G)$ (and similarly for $\pi\otimes\mu$), which
we have already observed in the proof of Proposition \ref{prop:action=000020convex=000020lsc}
above. 

Indeed, write $\lambda=\left(\eta\cdot(\mu\otimes\pi\llcorner G),\eta\cdot(\pi\otimes\mu\llcorner G)\right)$.
Then, under the assumption that $\mu\ll\pi$, we compute that
\begin{multline*}
\frac{1}{2}\iint_{G}\bar{\nabla}\frac{d\mu}{d\pi}\bar{\nabla}\log\frac{d\mu}{d\pi}\eta d\pi\otimes\pi\\
\begin{aligned} & =\frac{1}{2}\iint_{G}\left(\left(\frac{d\mu\otimes\pi}{d\pi\otimes\pi}-\frac{d\pi\otimes\mu}{d\pi\otimes\pi}\right)\left(\log\frac{d\mu\otimes\pi}{d\pi\otimes\pi}-\log\frac{d\pi\otimes\mu}{d\pi\otimes\pi}\right)\right)\eta\frac{d\pi\otimes\pi}{d|\lambda|}d|\lambda|\\
 & =\frac{1}{2}\iint_{G}\left(\left(\frac{d\mu\otimes\pi}{d|\lambda|}-\frac{d\pi\otimes\mu}{d|\lambda|}\right)\left(\log\frac{d\mu\otimes\pi}{d|\lambda|}-\log\frac{d\pi\otimes\mu}{d|\lambda|}\right)\right)\eta d|\lambda|.
\end{aligned}
\end{multline*}
Then, using the 1-homogeneity of $(r-s)(\log r-\log s)$, and the
fact that $\eta\frac{d\mu\otimes\pi}{d|\lambda|}=\frac{d\eta\cdot(\mu\otimes\pi)}{d|\lambda|}$,
we see that 
\begin{multline*}
\frac{1}{2}\iint_{G}\left(\left(\frac{d\mu\otimes\pi}{d|\lambda|}-\frac{d\pi\otimes\mu}{d|\lambda|}\right)\left(\log\frac{d\mu\otimes\pi}{d|\lambda|}-\log\frac{d\pi\otimes\mu}{d|\lambda|}\right)\right)\eta d|\lambda|\\
=\frac{1}{2}\iint_{G}\left(\left(\frac{d\eta\cdot(\mu\otimes\pi)}{d|\lambda|}-\frac{d\eta\cdot(\pi\otimes\mu)}{d|\lambda|}\right)\left(\log\frac{d\eta\cdot(\mu\otimes\pi)}{d|\lambda|}-\log\frac{d\eta\cdot(\pi\otimes\mu)}{d|\lambda|}\right)\right)d|\lambda|
\end{multline*}
which is what we wanted to show, at least for the case where $\mu\ll\pi$.

Now, what happens if $\mu\not\ll\pi$? Then there exists a set $A$
on which $\mu(A)>0$ but $\pi(A)=0$; hence picking any set $B$ such
that $\pi(B)>0$, we have $\mu\otimes\pi(A\times B)>0$, hence $(\mu\otimes\pi+\pi\otimes\mu)(A\times B)>0$;
however, $\pi\otimes\mu(A\times B)=0$. 

Next, let us further assume that $B\cap A=\emptyset$. (This assumption
is permissible because we can start with any $B$ satisfying $\pi(B)>0$,
then instead take $B\backslash A$, which by assumption has $\pi(B\backslash A)=\pi(B)$.)
Note in particular that this means $A\times B\cap\{x=y\}=\emptyset$.
Consequently, still keeping $\lambda=\left(\eta\cdot(\mu\otimes\pi\llcorner G),\eta\cdot(\pi\otimes\mu\llcorner G)\right)$,
we see that for $|\lambda|$-almost all $(x,y)\in A\times B\subset G$,
\[
\frac{d\eta\cdot(\mu\otimes\pi)}{d|\lambda|}(x,y)\neq0,\text{ and }\frac{d\eta\cdot(\pi\otimes\mu)}{d|\lambda|}(x,y)=0.
\]
 Therefore,  (still using the convention that for $r>0$, $r(\log r-\log0)=(-r)(\log0-\log r)=\infty$,
and $\log0-\log0=0$) on $A\times B$, it holds $|\lambda|$-almost
always that
\[
\left(\left(\frac{d\eta\cdot(\mu\otimes\pi)}{d|\lambda|}-\frac{d\eta\cdot(\pi\otimes\mu)}{d|\lambda|}\right)\left(\log\frac{d\eta\cdot(\mu\otimes\pi)}{d|\lambda|}-\log\frac{d\eta\cdot(\pi\otimes\mu)}{d|\lambda|}\right)\right)=\infty;
\]
and since $|\lambda|(A\times B)>0$, and $A\times B\cap\{x=y\}=\emptyset$,
we have that 
\[
\iint_{G}\left(\left(\frac{d\mu\otimes\pi}{d|\lambda|}-\frac{d\pi\otimes\mu}{d|\lambda|}\right)\left(\log\frac{d\mu\otimes\pi}{d|\lambda|}-\log\frac{d\pi\otimes\mu}{d|\lambda|}\right)\right)\eta d|\lambda|=\infty
\]
also. (Here we use that $\eta>0$ on $G$.)%
\begin{comment}
Hmm, do we maybe need that $G$ is the entire space here? Look carefully
at this later
\end{comment}
{} Since $\mathcal{I}(\mu\mid\pi)$ is defined to equal $\infty$ whenever
$\mu\not\ll\pi$, this establishes the claim for general $\mu$ and
$\pi$ in $\mathcal{P}(X)$ and $\mathcal{M}_{loc}^{+}(X)$ respectively.
\begin{comment}
Thus, since $(r-s)(\log r-\log s)$ is convex and 1-homogeneous, we
can apply the Reshetnyak theorem to deduce that $\mathcal{I}(\mu\mid\pi)$
is jointly convex and weak{*} l.s.c. in both $\mu$ and $\pi$.
\end{comment}
\end{proof}
\begin{example}
\label{exa:relative-fisher-is-finite}Given $\eta$ and $\pi$, one
might reasonably ask, for what sort of probability measures $\mu$
does it hold that $\mathcal{I}_{\eta}(\mu\mid\pi)<\infty$? If the
density $\frac{d\mu}{d\pi}$ is sufficiently regular, it turns out
an explicit estimate can be obtained. Indeed, suppose that $\left\Vert \frac{d\mu}{d\pi}\right\Vert _{C^{1}}<\infty$
and also $\left\Vert \log\frac{d\mu}{d\pi}\right\Vert _{C^{1}}<\infty$.
(These assumptions are satisfied, for instance, by probability measures
$\mu$ whose density $\frac{d\mu}{d\pi}$ is proportional to $e^{-b|x|}+c$
for some constants $b,c>0$; this condition makes sense in the case
where $\pi$ itself is a probability measure.) Then, we can use the
fact that $|\bar{\nabla}\varphi(x,y)|\leq\Vert\varphi\Vert_{C^{1}}(1\wedge|x-y|)$
to compute that 
\begin{align*}
\int_{X}\bar{\nabla}\frac{d\mu}{d\pi}(x,y)\bar{\nabla}\log\frac{d\mu}{d\pi}(x,y)\eta(x,y)d\pi(x) & \le\left\Vert \frac{d\mu}{d\pi}\right\Vert _{C^{1}}\int_{X}(1\wedge|x-y|)\bar{\nabla}\log\frac{d\mu}{d\pi}(x,y)\eta(x,y)d\pi(x)\\
 & \leq\left\Vert \frac{d\mu}{d\pi}\right\Vert _{C^{1}}\left\Vert \log\frac{d\mu}{d\pi}\right\Vert _{C^{1}}\int_{X}(1\wedge|x-y|)^{2}\eta(x,y)d\pi(x)
\end{align*}
and the last line is bounded, uniformly in $y$, by Assumption \ref{assu:eta=000020properties}(2).
This implies directly that $\mathcal{I}_{\eta}(\mu\mid\pi)<\infty$.
\end{example}

\subsection{Nonlocal continuity equation.}

Our nonlocal continuity equation is the same as \cite[Section 3]{erbar2014gradient}
when the underlying space $X$ is taken to be $\mathbb{R}^{d}$ (respectively,
\cite[Section 4]{ferreira2019minimizing} for $X=\mathbb{T}^{d}$).
Namely, we consider the \emph{weak formulation} equation
\begin{equation}
\forall\varphi\in C_{c}^{\infty}([0,T]\times X)\qquad\int_{0}^{T}\int_{X}\partial_{t}\varphi_{t}(x)d\mu_{t}(x)dt+\frac{1}{2}\int_{0}^{T}\iint_{G}\bar{\nabla}\varphi_{t}(x,y)d\mathbf{v}_{t}(x,y)dt=0.\label{eq:nce}
\end{equation}

\begin{defn}[Nonlocal continuity equation]
\label{def:nce}Let $T>0$. We say that $(\mu_{t},\mathbf{v}_{t})_{t\in[0,T]}$
\emph{solves the nonlocal continuity equation} provided that
\begin{enumerate}
\item $\mu_{(\cdot)}:[0,T]\rightarrow\mathcal{P}(X)$ is continuous when
$\mathcal{P}(X)$ is equipped with the narrow topology,
\item $\mathbf{v}_{(\cdot)}:[0,T]\rightarrow\mathcal{M}_{loc}(G)$ is Borel
when $\mathcal{M}_{loc}(G)$ is equipped with the local weak{*} topology,
\item $(\mu_{t},\mathbf{v}_{t})_{t\in[0,T]}$ solves \ref{eq:nce}.
\end{enumerate}
We write $(\mu_{t},\mathbf{v}_{t})\in\mathcal{CE}_{T}$ to indicate
that $(\mu_{t},\mathbf{v}_{t})_{t\in[0,T]}$ satisfies conditions
(1), (2), and (3) above. Furthermore, we write $\mathcal{CE}$ to
denote $\mathcal{CE}_{1}$, and write $(\mu_{t},\mathbf{v}_{t})\in\mathcal{CE}_{T}[\nu,\sigma]$
to indicate that $(\mu_{t},\mathbf{v}_{t})\in\ensuremath{\mathcal{CE}_{T}}$
and $\mu_{0}=\nu$ and $\mu_{1}=\sigma$. 
\end{defn}

It is sometimes useful to consider test functions $\varphi$ for equation
\ref{eq:nce} which have lower regularity. In particular, since $|\bar{\nabla}\varphi(x,y)|\leq\Vert\varphi\Vert_{C^{1}(X)}(1\wedge|x-y|)$,
it makes sense to consider test functions which are $C^{1}$ in space
and differentiable in time provided that $(\mu_{t},\mathbf{v}_{t})_{t\in[0,T]}$
satisfies $\int_{0}^{1}\sqrt{\mathcal{A}_{\eta}(\mu_{t},\mathbf{v}_{t};\pi)}dt$,
in view of Lemma \ref{lem:C-eta=000020upper=000020bound=000020of=000020flux}
below; see also discussion in \cite[Section 3]{erbar2014gradient}.

\subsection{Nonlocal Wasserstein metric}
\begin{defn}
\label{def:nlw}Let $\eta$ satisfy Assumption \ref{assu:eta=000020properties}
and $\theta$ satisfy Assumption \ref{assu:theta=000020properties},
and let $\pi\in\mathcal{M}_{loc}^{+}(X)$. The \emph{nonlocal Wasserstein
distance} $\mathcal{W}_{\eta,\theta,m}$ on $\mathcal{P}(X)$ is defined
by 
\[
\mathcal{W}_{\eta,\theta,\pi}^{2}(\nu,\sigma):=\inf\left\{ \int_{0}^{1}\mathcal{A}_{\eta,\theta}(\mu_{t},\mathbf{v}_{t};\pi)dt:(\mu_{t},\mathbf{v}_{t})\in\mathcal{CE}[\nu,\sigma]\right\} .
\]
We will write $\mathcal{W}_{\eta,\pi}$ to denote the case where $\theta$
is the logarithmic mean.
\end{defn}

\begin{fact}
$\mathcal{W}_{\eta,\theta,\pi}$ is a pseudometric on $\mathcal{P}(X)$.
On $\mathcal{P}(X)$, $\mathcal{W}_{\eta,\theta,\pi}^{2}$ is jointly
convex, and $\mathcal{W}_{\eta,\theta,\pi}$ is jointly narrowly lower
semicontinuous. 
\end{fact}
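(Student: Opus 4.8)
The plan is to verify the pseudometric axioms directly from Definition~\ref{def:nlw}, to read off joint convexity of $\mathcal{W}_{\eta,\theta,\pi}^{2}$ from the joint convexity of the action, and to obtain narrow lower semicontinuity by a compactness-plus-lower-semicontinuity argument for the nonlocal continuity equation; throughout, $\mathcal{W}_{\eta,\theta,\pi}$ is allowed to take the value $+\infty$. Nonnegativity is immediate since $\mathcal{A}_{\eta,\theta}\geq 0$. For symmetry, I would note that if $(\mu_{t},\mathbf{v}_{t})_{t\in[0,1]}\in\mathcal{CE}[\nu,\sigma]$ then the time-reversed pair $(\mu_{1-t},-\mathbf{v}_{1-t})_{t\in[0,1]}$ lies in $\mathcal{CE}[\sigma,\nu]$ with the same action (the action sees $\mathbf{v}$ only through $A$, which is even in its first argument), and $\mathcal{W}_{\eta,\theta,\pi}(\nu,\nu)=0$ follows by inserting the stationary pair $(\nu,0)$, which solves (\ref{eq:nce}) with zero action. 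For the triangle inequality I would use that $A$ is quadratic in its first argument: reindexing $(\mu_{t},\mathbf{v}_{t})_{t\in[0,1]}\in\mathcal{CE}[\nu,\kappa]$ as $(\mu_{s/\alpha},\alpha^{-1}\mathbf{v}_{s/\alpha})_{s\in[0,\alpha]}$ gives a solution of (\ref{eq:nce}) on $[0,\alpha]$ with action $\alpha^{-1}\int_{0}^{1}\mathcal{A}_{\eta,\theta}(\mu_{t},\mathbf{v}_{t};\pi)\,dt$, and concatenating a near-optimal such rescaled curve for $(\nu,\kappa)$ on $[0,\alpha]$ with one for $(\kappa,\sigma)$ on $[\alpha,1]$ (the glued pair solves (\ref{eq:nce}) by linearity of the equation, being narrowly continuous at $s=\alpha$, where both pieces equal $\kappa$) yields $\mathcal{W}_{\eta,\theta,\pi}^{2}(\nu,\sigma)\leq\alpha^{-1}\mathcal{W}_{\eta,\theta,\pi}^{2}(\nu,\kappa)+(1-\alpha)^{-1}\mathcal{W}_{\eta,\theta,\pi}^{2}(\kappa,\sigma)$ up to an arbitrarily small error; optimizing over $\alpha\in(0,1)$ gives the triangle inequality. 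This is the argument of \cite{erbar2014gradient,ferreira2019minimizing}.

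Joint convexity of $\mathcal{W}_{\eta,\theta,\pi}^{2}$ follows quickly from Proposition~\ref{prop:action convex lsc}. Given $(\nu_{0},\sigma_{0}),(\nu_{1},\sigma_{1})$ and $\tau\in[0,1]$, I would choose near-optimal $(\mu_{t}^{i},\mathbf{v}_{t}^{i})\in\mathcal{CE}[\nu_{i},\sigma_{i}]$; since (\ref{eq:nce}) is linear, the convex combination $((1-\tau)\mu_{t}^{0}+\tau\mu_{t}^{1},(1-\tau)\mathbf{v}_{t}^{0}+\tau\mathbf{v}_{t}^{1})$ lies in $\mathcal{CE}[(1-\tau)\nu_{0}+\tau\nu_{1},(1-\tau)\sigma_{0}+\tau\sigma_{1}]$, and applying the joint convexity of $\mathcal{A}_{\eta,\theta}(\cdot,\cdot;\pi)$ for each $t$, integrating in $t$, and passing to the infimum gives $\mathcal{W}_{\eta,\theta,\pi}^{2}((1-\tau)\nu_{0}+\tau\nu_{1},(1-\tau)\sigma_{0}+\tau\sigma_{1})\leq(1-\tau)\mathcal{W}_{\eta,\theta,\pi}^{2}(\nu_{0},\sigma_{0})+\tau\mathcal{W}_{\eta,\theta,\pi}^{2}(\nu_{1},\sigma_{1})$.

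For the narrow lower semicontinuity — the substantive part — suppose $\nu_{n}\to\nu$, $\sigma_{n}\to\sigma$ narrowly with $L:=\liminf_{n}\mathcal{W}_{\eta,\theta,\pi}^{2}(\nu_{n},\sigma_{n})<\infty$; after passing to a subsequence, assume the liminf is a limit, and take $(\mu_{t}^{n},\mathbf{v}_{t}^{n})\in\mathcal{CE}[\nu_{n},\sigma_{n}]$ with $\int_{0}^{1}\mathcal{A}_{\eta,\theta}(\mu_{t}^{n},\mathbf{v}_{t}^{n};\pi)\,dt\leq\mathcal{W}_{\eta,\theta,\pi}^{2}(\nu_{n},\sigma_{n})+\tfrac1n$. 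The flux bound $\iint_{G}(1\wedge|x-y|)\,d|\mathbf{v}_{t}^{n}|\leq C_{\eta}\sqrt{\mathcal{A}_{\eta,\theta}(\mu_{t}^{n},\mathbf{v}_{t}^{n};\pi)}$ — obtained from Cauchy--Schwarz, the estimate $\theta\leq(r+s)/2$ of Fact~\ref{fact:arithmetic mean upper bound}, and Assumption~\ref{assu:eta properties}(2), and recorded in Lemma~\ref{lem:C-eta upper bound of flux} — shows, upon testing (\ref{eq:nce}) against time-independent $C^{1}$ functions, that the curves $t\mapsto\mu_{t}^{n}$ are equi-$\tfrac12$-Hölder in a bounded-Lipschitz distance metrizing narrow convergence, and that $\int_{0}^{1}|\mathbf{v}_{t}^{n}|(K)\,dt$ is bounded uniformly in $n$ for each compact $K\subset G$. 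Together with tightness of $\{\nu_{n}\}$ and $\{\sigma_{n}\}$ (automatic on $\mathbb{T}^{d}$), this produces, along a further subsequence, $\mu_{\cdot}^{n}\to\mu_{\cdot}$ uniformly in the narrow topology and $\mathbf{v}_{t}^{n}\,dt\rightharpoonup^{*}\mathbf{v}_{t}\,dt$ locally weak$^{*}$ on $[0,1]\times G$, with $(\mu_{t},\mathbf{v}_{t})\in\mathcal{CE}[\nu,\sigma]$. Writing $\int_{0}^{1}\mathcal{A}_{\eta,\theta}(\mu_{t},\mathbf{v}_{t};\pi)\,dt$ as a $1$-homogeneous convex integral functional of the triple $(\mathbf{v}_{t}\,dt,\,\eta\cdot(\mu_{t}\otimes\pi\llcorner G)\,dt,\,\eta\cdot(\pi\otimes\mu_{t}\llcorner G)\,dt)$ on $[0,1]\times G$, the Reshetnyak-type argument from the proof of Proposition~\ref{prop:action convex lsc}, now on the space-time domain, gives $\int_{0}^{1}\mathcal{A}_{\eta,\theta}(\mu_{t},\mathbf{v}_{t};\pi)\,dt\leq\liminf_{n}\int_{0}^{1}\mathcal{A}_{\eta,\theta}(\mu_{t}^{n},\mathbf{v}_{t}^{n};\pi)\,dt=L$, hence $\mathcal{W}_{\eta,\theta,\pi}^{2}(\nu,\sigma)\leq L$.

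The step I expect to be the main obstacle is the compactness used in the last step, specifically the tightness of $\{\mu_{t}^{n}:n\in\mathbb{N},\,t\in[0,1]\}$ when $X=\mathbb{R}^{d}$: bounded-Lipschitz equicontinuity alone does not produce a convergent subsequence of the curves, and one must additionally exploit the flux bound to control the total amount of mass transported over long distances in unit time, exactly as in \cite{erbar2014gradient} (respectively \cite{ferreira2019minimizing} on the torus). The remaining ingredients — identifying the endpoints $\mu_{0}=\nu$, $\mu_{1}=\sigma$, passing to the limit in the weak formulation (\ref{eq:nce}), and disintegrating the limiting space-time flux into a Borel family $t\mapsto\mathbf{v}_{t}$ — are routine for this nonlocal continuity equation.
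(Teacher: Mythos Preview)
Your proposal is correct and follows exactly the approach the paper defers to: the paper omits the proof entirely, stating it ``is exactly as in \cite{erbar2014gradient} (resp.\ \cite{ferreira2019minimizing} when $X=\mathbb{T}^{d}$),'' and your sketch reproduces precisely that argument---reparametrization/concatenation for the triangle inequality, linearity of (\ref{eq:nce}) plus Proposition~\ref{prop:action convex lsc} for convexity, and the compactness-plus-Reshetnyak strategy (which the paper itself later packages as Proposition~\ref{prop:compactness-nce}) for lower semicontinuity. Your identification of the tightness issue on $\mathbb{R}^{d}$ as the only nontrivial point is accurate and is handled in \cite{erbar2014gradient} via Assumption~\ref{assu:eta properties}(3).
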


The proof of this fact is exactly as in \cite{erbar2014gradient}
(resp. \cite{ferreira2019minimizing} when $X=\mathbb{T}^{d}$) and
is therefore omitted.

\begin{comment}
\emph{Summary of key results from Erbar and EPSS}.

\emph{Facts concerning convolutions}.
\end{comment}

In the case where $\pi\in\mathcal{P}(X)$, we use the notation $\mathcal{P}_{\pi}(X):=\{\mu\in\mathcal{P}(X):\mathcal{W}(\mu,\pi)<\infty\}$. 
\begin{defn}
\label{def:ac-curves} We call a time-dependent probability measure
$\mu_{(.)}\in C([0,T];\mathcal{P}(X))$ an \emph{absolutely continuous
(AC) curve} for $\mathcal{W}_{\eta,\theta,\pi}$ if there exists a
Borel $\mathbf{v}_{t}:[0,T]\rightarrow\mathcal{M}_{loc}(G)$ such
that $(\mu_{t},\mathbf{v}_{t})_{t\in[0,T]}\in\mathcal{CE}_{T}$ in
the sense of Definition \ref{def:nce} such that $\int_{0}^{1}\sqrt{\mathcal{A}_{\eta,\theta}(\mu_{t},\mathbf{v}_{t};\pi)}dt<\infty$.
If moreover we can take $\int_{0}^{1}\mathcal{A}_{\eta,\theta}(\mu_{t},\mathbf{v}_{t};\pi)dt<\infty$
we say that $\mu_{t}$ is \emph{2-absolutely continuous }or a \emph{curve
of finite total action}. 
\end{defn}

The terminology in the definition above is justified by the following
result, which makes use of the notion of \emph{metric speed}: for
an absolutely continuous curve $x_{t}$ in a metric space $(X,d)$,
the metric speed $|\dot{x}_{t}|_{d}$ is defined by $\lim_{s\rightarrow t}\frac{d(x_{t},x_{s})}{|t-s|}$,
see \cite{ambrosio2008gradient}. 
\begin{fact}[{\cite[Prop. 4.9]{erbar2014gradient}}]
\label{fact:tangent-flux} A curve $\mu_{(.)}\in C([0,T];\mathcal{P}(X))$
is absolutely continuous in the sense of metric spaces w.r.t. the
metric $\mathcal{W}_{\eta,\theta,\pi}$ iff it is absolutely continuous
in the sense of Definition \ref{def:ac-curves}. Moreover, it holds
that 
\[
\vert\dot{\mu}_{t}|_{\mathcal{W}_{\eta,\theta,\pi}}\leq\sqrt{\mathcal{A}_{\eta,\theta}(\mu_{t},\mathbf{v}_{t};\pi)}\qquad t-a.e..
\]
Furthermore there exists a $t-$a.e. unique Borel $\mathbf{v}_{t}:[0,T]\rightarrow\mathcal{M}_{loc}(G)$
such that $(\mu_{t},\mathbf{v}_{t})_{t\in[0,T]}\in\mathcal{CE}_{T}$
and $\vert\dot{\mu}_{t}|_{\mathcal{W}_{\eta,\theta,\pi}}=\sqrt{\mathcal{A}_{\eta,\theta}(\mu_{t},\mathbf{v}_{t};\pi)}\qquad t-a.e.$.
We call this $\mathbf{v}_{t}$ the \emph{tangent flux} for the AC
curve $\mu_{t}$.
\end{fact}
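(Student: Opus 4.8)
The plan is to follow the by-now standard argument for establishing the Benamou--Brenier-type characterization of AC curves, as in \cite[Section 8]{ambrosio2008gradient} and \cite[Section 4]{erbar2014gradient}, adapting to the present setting where $\eta$ and $\pi$ are allowed to be slightly more general. Since this fact is attributed directly to \cite[Prop. 4.9]{erbar2014gradient} and the only new ingredient here is that $\pi$ need not be Lebesgue, the bulk of the argument transfers verbatim once the underlying estimates from the preliminaries are in place; in particular Lemma \ref{lem:C-eta upper bound of flux} (the bound controlling the total variation of the flux on a compact set in terms of $\sqrt{\mathcal{A}}$ and the $\eta,\pi$ data) is what makes the weak formulation \eqref{eq:nce} meaningful and drives the compactness.

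First I would prove the easy direction: if $\mu_{(\cdot)}$ is AC in the sense of Definition \ref{def:ac-curves}, with associated flux $\mathbf{v}_t$, then using \eqref{eq:nce} and Lemma \ref{lem:C-eta upper bound of flux} one estimates $|\int_X \varphi \, d\mu_t - \int_X \varphi \, d\mu_s|$ for test functions $\varphi$ by $\int_s^t C_\varphi \sqrt{\mathcal{A}_{\eta,\theta}(\mu_r,\mathbf{v}_r;\pi)}\,dr$, where $C_\varphi$ depends on $\|\varphi\|_{C^1}$ and the quantity in Assumption \ref{assu:eta properties}(2); a density/truncation argument over a countable family of test functions then upgrades this to an estimate for a metric that induces the narrow topology, yielding metric absolute continuity together with $|\dot\mu_t|_{\mathcal{W}} \le \sqrt{\mathcal{A}_{\eta,\theta}(\mu_t,\mathbf{v}_t;\pi)}$ for a.e.\ $t$. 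Here one invokes the definition of $\mathcal{W}$ as the infimal-action distance together with a reparametrization (constant-speed) argument to pass between "length" and "energy".

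For the converse and the optimality statement, suppose $\mu_{(\cdot)}$ is metrically AC for $\mathcal{W}_{\eta,\theta,\pi}$. I would discretize time, connect consecutive points $\mu_{t_i}, \mu_{t_{i+1}}$ by near-optimal curves in $\mathcal{CE}$ (whose existence follows from the definition of $\mathcal{W}$ as an infimum over $\mathcal{CE}$), concatenate and reparametrize to get a sequence of approximating pairs $(\mu^n_t, \mathbf{v}^n_t)$ with $\int_0^T \mathcal{A}(\mu^n_t,\mathbf{v}^n_t;\pi)\,dt$ controlled by $\int_0^T |\dot\mu_t|^2_{\mathcal{W}}\,dt + o(1)$. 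Using Lemma \ref{lem:C-eta upper bound of flux} to get a uniform local-total-variation bound on the $\mathbf{v}^n_t$, I extract (after passing to the fluxes as space-time measures and using the local weak* compactness of $\mathcal{M}_{loc}(G)$ plus a Dunford--Pettis-type / Reshetnyak argument for the time integral) a limit pair $(\mu_t, \mathbf{v}_t) \in \mathcal{CE}_T$ with $\mu^n_t \to \mu_t$; lower semicontinuity of the action from Proposition \ref{prop:action convex lsc} (applied to the space-time disintegration, with Jensen in time) gives $\int_0^T \mathcal{A}(\mu_t,\mathbf{v}_t;\pi)\,dt \le \int_0^T |\dot\mu_t|^2_{\mathcal{W}}\,dt$. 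Combined with the easy direction this forces $\sqrt{\mathcal{A}(\mu_t,\mathbf{v}_t;\pi)} = |\dot\mu_t|_{\mathcal{W}}$ for a.e.\ $t$. Uniqueness of such an optimal $\mathbf{v}_t$ follows from strict convexity of $v \mapsto A(v,r,s)$ in $v$: if two fluxes both achieved the minimal action and both solved \eqref{eq:nce} with $\mu_t$, their average would solve \eqref{eq:nce} with strictly smaller action unless they agree $t$-a.e.

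The main obstacle is the compactness/closedness step for the fluxes: unlike in a fixed finite-dimensional setting, the $\mathbf{v}^n_t$ live in $\mathcal{M}_{loc}(G)$ and $G$ is only locally compact (compact subsets are separated from the diagonal), so mass can escape toward $\{x=y\}$ or to spatial infinity. The uniform integrability condition Assumption \ref{assu:eta properties}(3), together with the bound $\sqrt{\mathcal{A}} \ge \frac{1}{\sqrt{2}}\|\mathbf{v} \mres K\|_{TV} / (\text{local data})$ from Lemma \ref{lem:C-eta upper bound of flux}, is exactly what rules this out and lets the weak formulation \eqref{eq:nce} pass to the limit against all $\varphi \in C_c^\infty([0,T]\times X)$; making this tightness argument precise — and checking that the time-integrated flux (a measure on $[0,T]\times G$) has a disintegration $\mathbf{v}_t\,dt$ with $\mathbf{v}_t$ Borel in $t$ — is the technical heart, but it is handled exactly as in \cite[Prop. 4.9]{erbar2014gradient} since Assumption \ref{assu:eta properties} was designed to contain the hypotheses used there.
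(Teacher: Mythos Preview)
Your proposal is correct and outlines exactly the standard Benamou--Brenier argument from \cite[Prop.~4.9]{erbar2014gradient}; note however that the paper does not give its own proof of this fact at all---it is stated as a citation to Erbar and left unproved, so there is nothing to compare against beyond the original reference you are already following.
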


We do not make such direct use of the metric $\mathcal{W}$ in the
sequel, except that the metric speed $\vert\dot{\mu}_{t}|_{\mathcal{W}_{\eta,\theta,\pi}}$
appears in the synthetic notions of gradient flow in a metric space
from \cite{ambrosio2008gradient}. Accordingly, we do sometimes appeal
to the existence of tangent fluxes for AC curves to relate this notion
back to solutions to the nonlocal continuity equation with finite
total action.

\subsection{Weak solutions for nonlocal diffusion equations}

Our theory of weak solutions for equation ($\dagger$) makes use of
the nonlocal continuity equation in an essential way.
\begin{defn}
\label{def:nde-weak-solution}We say that a narrowly continuous curve
$\rho_{t}:[0,T]\rightarrow\mathcal{P}(X)$ is a \emph{weak solution}
for equation ($\dagger$) provided that $(\rho_{t},\mathbf{v}_{t})_{t\in[0,T]}$
solve the nonlocal continuity equation in the sense of Definition
\ref{def:nce}, where for almost all $t\in[0,T]$, 
\[
d\mathbf{v}_{t}(x,y)=-\bar{\nabla}\frac{d\rho_{t}}{d\pi}(x,y)\eta(x,y)d\pi(x)d\pi(y).
\]
In other words, $\forall\varphi\in C_{c}^{\infty}((0,T)\times X)$,
\begin{equation}
\int_{0}^{T}\int_{X}\partial_{t}\varphi_{t}(x)d\rho_{t}(x)dt-\frac{1}{2}\int_{0}^{T}\iint_{G}\bar{\nabla}\varphi_{t}(x,y)\bar{\nabla}\frac{d\rho_{t}}{d\pi}(x,y)\eta(x,y)d\pi(x)d\pi(y)dt=0.\label{eq:nde-weak-form}
\end{equation}
Furthermore, we say that $\rho_{t}$ is an \emph{entropy weak solution
}if additionally, $\mathcal{H}(\rho_{t}\mid\pi)$ is non-increasing
in $t$.
\end{defn}

A large part of the analysis that follows is ultimately directed towards
understanding the properties of solutions to equation \ref{eq:nde-weak-form}
(viz. existence, uniqueness, stability w.r.t. initial condition, and
so on). However, we do so indirectly, by relating solutions equation
\ref{eq:nde-weak-form} to \emph{curves of maximal slope} for the
relative entropy $\mathcal{H}(\cdot\mid\pi)$ (see Definition \ref{def:curve-of-maximal-slope}
below). 
\begin{rem*}
One might also ask whether it is possible to study solutions to equation
\ref{eq:nde-weak-form} more directly. Indeed, we note that equation
\ref{eq:nde-weak-form} can also be viewed as a \emph{nonlocal conservation
law}, and so one might try and adapt general tools for (for instance)
existence theory of conservation laws to our nonlocal setting. Indeed,
preliminary results concerning nonlocal conservation laws --- meaning,
solutions to the nonlocal continuity equation where the flux $\mathbf{v}_{t}$
is some measurable function of $\rho_{t}$ --- have been established
in \cite{esposito2022class} for a stronger notion of ``solution
to the nonlocal continuity equation'' than our Definition \ref{def:nce}.
(In particular, the setting of \cite{esposito2022class} considers
curves $\rho_{t}$ in the space of positive Radon measures which are
absolutely continuous w.r.t. the \emph{total variation} metric.) Therefore,
we cannot simply apply the existing results in \cite{esposito2022class}
to get results concerning equation \ref{eq:nde-weak-form} ``for
free''.

The notion of entropy weak solution is, of course, loosely borrowed
from the conservation law literature. We will see below in Proposition
\ref{prop:weak-soln-vs-gradient-flow} that entropy weak solutions
to \ref{eq:nde-weak-form} correspond to curves of maximal slope of
$\mathcal{H}(\cdot\mid\pi)$ with respect to the nonlocal Wasserstein
metric $\mathcal{W}_{\eta,\pi}$. 
\end{rem*}

\section{Compactness theorem for AC curves\protect\label{sec:Compactness}}

In this section, we provide a refinement of Erbar's compactness result
from \cite[Prop. 3.4]{erbar2014gradient}, which allows for also varying
the jump kernel $\eta$ and reference measure $\pi$; a similar result,
allowing for variable $\pi$ but with fixed $\eta$ has already appeared
in \cite{esposito2019nonlocal}. The required modifications to the
proof of \cite[Prop. 3.4]{erbar2014gradient} are routine, but sufficiently
intricate that we provide a somewhat detailed proof in our setting.

We also requires two preparatory lemmas; the proof of the first of
which is nearly identical to that of \cite[Lemma 2.6]{erbar2014gradient}. 
\begin{lem}
\label{lem:C-eta=000020upper=000020bound=000020of=000020flux}Let
$\{\eta_{i},\pi_{i}\}_{i\in I}$ satisfy Assumption \ref{assu:eta=000020properties}
(1-3). Then, for all $\mu\in\mathcal{P}(X)$ and $\mathbf{v}\in\mathcal{M}_{loc}(G)$,
and $i\in I$, we have 
\[
\iint_{G}(1\wedge|x-y|)d|\mathbf{v}|(x,y)\leq C_{\eta}\sqrt{\mathcal{A}_{\eta_{i}}(\mu,\mathbf{v};\pi_{i})}
\]
where 
\[
C_{\eta}^{2}:=2\sup_{i\in I}\sup_{x\in\text{supp}(\pi_{i})}\int(1\wedge|x-y|^{2})\eta_{i}(x,y)d\pi_{i}(y)<\infty.
\]
Moreover, for every compact set $K\subset G$, we have 
\[
|\mathbf{v}|(K)\leq\frac{C_{\eta}}{\min\{|x-y|:(x,y)\in K\}}\sqrt{\mathcal{A}_{\eta_{i}}(\mu,\mathbf{v};\pi_{i})}.
\]
\end{lem}

\begin{proof}[Proof of Lemma \ref{lem:C-eta=000020upper=000020bound=000020of=000020flux}]
We compute as follows. Fix $i\in I$. Let $\lambda$ be any measure
in $\mathcal{M}_{loc}^{+}(G)$ which dominates $|\mathbf{v}|$, $\mu^{1}:=\eta_{i}\cdot((\mu\otimes\pi_{i})\llcorner G)$,
and $\mu^{2}:=\eta_{i}\cdot((\pi_{i}\otimes\mu)\llcorner G)$. Then
since $\left(\frac{d|\mathbf{v}|}{d\lambda}\right)^{2}=\left(\frac{d\mathbf{v}}{d\lambda}\right)^{2}$
and hence
\[
\left(\frac{d|\mathbf{v}|}{d\lambda}\right)^{2}=\theta\left(\frac{d\mu^{1}}{d\lambda},\frac{d\mu^{2}}{d\lambda}\right)\frac{\left(\frac{d\mathbf{v}}{d\lambda}\right)^{2}}{\theta\left(\frac{d\mu^{1}}{d\lambda},\frac{d\mu^{2}}{d\lambda}\right)}
\]
we have that 
\begin{multline*}
\iint_{G}(1\wedge|x-y|)d|\mathbf{v}|(x,y)\\
\begin{aligned} & =\iint_{G}(1\wedge|x-y|)\left(\frac{d|\mathbf{v}|}{d\lambda}\right)d\lambda(x,y)\\
 & =\iint_{G}(1\wedge|x-y|)\sqrt{2\theta\left(\frac{d\mu^{1}}{d\lambda},\frac{d\mu^{2}}{d\lambda}\right)}\sqrt{\frac{\left(\frac{d\mathbf{v}}{d\lambda}\right)^{2}}{2\theta\left(\frac{d\mu^{1}}{d\lambda},\frac{d\mu^{2}}{d\lambda}\right)}}d\lambda(x,y)\\
 & \leq\left(\iint_{G}(1\wedge|x-y|^{2})2\theta\left(\frac{d\mu^{1}}{d\lambda},\frac{d\mu^{2}}{d\lambda}\right)d\lambda(x,y)\right)^{1/2}\left(\iint_{G}\frac{\left(\frac{d\mathbf{v}}{d\lambda}\right)^{2}}{2\theta\left(\frac{d\mu^{1}}{d\lambda},\frac{d\mu^{2}}{d\lambda}\right)}d\lambda(x,y)\right)^{1/2}.
\end{aligned}
\end{multline*}
The integral in the second set of parentheses is none other than $\mathcal{A}_{\eta_{i}}(\mu,\mathbf{v})$.
Likewise, using the fact that $\theta(r,s)\leq(r+s)/2$, we have that
\[
\iint_{G}(1\wedge|x-y|^{2})2\theta\left(\frac{d\mu^{1}}{d\lambda},\frac{d\mu^{2}}{d\lambda}\right)d\lambda(x,y)\leq\iint_{G}(1\wedge|x-y|^{2})d\mu^{1}(x,y)+\iint_{G}(1\wedge|x-y|^{2})d\mu^{2}(x,y).
\]
Then, observe that $\mu^{1}=\eta_{i}\cdot((\mu\otimes\pi_{i})\llcorner G)$,
and so 
\begin{align*}
\iint_{G}(1\wedge|x-y|^{2})d\mu^{1}(x,y) & =\iint_{G}(1\wedge|x-y|^{2})\eta_{i}(x,y)d\mu(x)d\pi_{i}(y)\\
 & \leq\sup_{i\in I}\sup_{x\in\text{supp}(\pi_{i})}\int(1\wedge|x-y|^{2})\eta_{i}(x,y)d\pi_{i}(y).
\end{align*}
Applying the same reasoning to $\mu^{2}$, we see that 
\[
\iint_{G}(1\wedge|x-y|^{2})2\theta\left(\frac{d\mu^{1}}{d\lambda},\frac{d\mu^{2}}{d\lambda}\right)d\lambda(x,y)\leq C_{\eta}^{2}.
\]
Altogether this shows that 
\[
\iint_{G}(1\wedge|x-y|)d|\mathbf{v}|(x,y)\leq C_{\eta}\sqrt{\mathcal{A}_{\eta_{i}}(\mu,\mathbf{v};\pi_{i})}
\]
as desired.

Finally, to see that 
\[
|\mathbf{v}|(K)\leq\frac{C_{\eta}}{\min\{|x-y|:(x,y)\in K\}}\sqrt{\mathcal{A}_{\eta_{i}}(\mu,\mathbf{v};\pi_{i})},
\]
we simply observe that 
\[
|\mathbf{v}|(K)\leq\frac{1}{\min\{|x-y|:(x,y)\in K\}}\iint_{G}(1\wedge|x-y|)d|\mathbf{v}|(x,y).
\]
\end{proof}
\begin{lem}
\label{lem:total-flux-convergence}Let $\{\eta_{n},\pi_{n}\}_{n\in\mathbb{N}}\cup\{\eta,\pi\}$
satisfy Assumption \ref{assu:eta=000020properties} (1-3). Let $\sup_{n}\mathcal{A}_{\eta_{n}}(\mu_{n},\mathbf{v}_{n};\pi_{n})<\infty$,
and suppose that $\mu_{n}\rightharpoonup\mu$, $\mathbf{v}_{n}\rightharpoonup^{*}\mathbf{v}$
in $\mathcal{M}_{loc}(G)$, and $\pi_{n}\rightharpoonup^{*}\pi$,
and $\eta_{n}\rightarrow\eta$ uniformly on compact subsets of $G$.
Then, 
\[
\iint_{G}(1\wedge|x-y|)d|\mathbf{v}_{n}|(x,y)\rightarrow\iint_{G}(1\wedge|x-y|)d|\mathbf{v}|(x,y).
\]
Similarly, let $\xi_{n}\in C^{1}(\mathbb{R}^{d})$ and let $\xi_{n}\rightarrow\xi$
uniformly on compact sets, with bounded $C^{1}$ norm. Then, 
\[
\iint_{G}\bar{\nabla}\xi_{n}(x,y)d|\mathbf{v}_{n}|(x,y)\rightarrow\iint_{G}\bar{\nabla}\xi(x,y)d|\mathbf{v}|(x,y).
\]
\end{lem}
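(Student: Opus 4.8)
The plan is to establish both convergence statements by a combination of (i) a ``lower bound'' coming from local weak-{*} convergence together with lower semicontinuity of total variation, and (ii) a matching ``upper bound'' coming from the uniform integrability hypothesis Assumption \ref{assu:eta properties}(3), which is exactly what prevents mass from escaping near the diagonal or near infinity. Since the second statement follows from the first by writing $|\bar\nabla\xi_n(x,y)| \le \|\xi_n\|_{C^1}(1\wedge|x-y|)$ and running the same tightness-plus-lsc argument (using that $\bar\nabla\xi_n \to \bar\nabla\xi$ uniformly on compacts), I will concentrate on the first.

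First I would fix $R>0$ and split $G$ into the ``good'' region $G_R := \{(x,y)\in G: 1/R \le |x-y| \le R\}$ and its complement. On $G_R$, which is a compact subset of $G$ bounded away from the diagonal, the function $(x,y)\mapsto (1\wedge|x-y|)$ is continuous and compactly supported after multiplying by a cutoff, so local weak-{*} convergence $\mathbf{v}_n \rightharpoonup^* \mathbf{v}$ together with Lemma \ref{lem:C-eta upper bound of flux} (which gives $\sup_n |\mathbf{v}_n|(G_R) < \infty$) lets me pass to the limit: I will show $\iint_{G_R}(1\wedge|x-y|)\,d|\mathbf{v}_n| \to \iint_{G_R}(1\wedge|x-y|)\,d|\mathbf{v}|$ — here one must be slightly careful because total variation is only lower semicontinuous under weak-{*} convergence, so to get the full limit on $G_R$ I would instead test against $(1\wedge|x-y|)$ times a continuous cutoff and use that $\mathbf{v}_n$ restricted to a neighborhood has uniformly bounded mass, upgrading weak-{*} to convergence of the integral against this fixed continuous integrand. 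On the complementary region, the key estimate is the analogue of the bound in Lemma \ref{lem:C-eta upper bound of flux} localized to $A_R(x)$: by Cauchy--Schwarz as in that lemma's proof,
\[
\iint_{G\setminus G_R}(1\wedge|x-y|)\,d|\mathbf{v}_n| \le \Big(\sup_{x\in\mathrm{supp}\,\pi_n}\iint_{A_R(x)}(1\wedge|x-y|^2)\eta_n(x,y)\,d\pi_n(y)\Big)^{1/2}\sqrt{\mathcal{A}_{\eta_n}(\mu_n,\mathbf{v}_n;\pi_n)},
\]
and the first factor tends to $0$ uniformly in $n$ as $R\to\infty$ by Assumption \ref{assu:eta properties}(3), while the second is bounded by hypothesis. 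The same bound with $\mathbf{v}$ in place of $\mathbf{v}_n$ (and $\eta,\pi$ in place of $\eta_n,\pi_n$) controls the tail of the limiting measure, using lower semicontinuity of the action (Proposition \ref{prop:action convex lsc}) to bound $\mathcal{A}_\eta(\mu,\mathbf{v};\pi)$.

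Combining these, for every $\varepsilon>0$ I choose $R$ so large that the tail contributions are $<\varepsilon$ for all $n$ and for the limit, then let $n\to\infty$ using the $G_R$ convergence; this yields $\limsup_n |\iint_G (1\wedge|x-y|)\,d|\mathbf{v}_n| - \iint_G(1\wedge|x-y|)\,d|\mathbf{v}|| \le 2\varepsilon$, and $\varepsilon$ is arbitrary. For the second statement I repeat verbatim with $(1\wedge|x-y|)$ replaced by the integrand $\bar\nabla\xi_n$: on $G_R$ I use that $\bar\nabla\xi_n \to \bar\nabla\xi$ uniformly on the compact set $G_R$ and $\sup_n|\mathbf{v}_n|(G_R)<\infty$; off $G_R$ I use $|\bar\nabla\xi_n(x,y)| \le (\sup_n\|\xi_n\|_{C^1})(1\wedge|x-y|)$ and the tail estimate above.

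The main obstacle I anticipate is the passage to the limit on the good region $G_R$: local weak-{*} convergence only gives convergence when tested against $C_c(G)$ functions, and one must be careful that (a) the integrand $(1\wedge|x-y|)\mathbf{1}_{G_R}$ is not itself continuous (the indicator of $G_R$ has a boundary), and (b) total variation norms can drop in the limit. Both are handled by the standard device of sandwiching between two continuous cutoff functions supported slightly inside and slightly outside $G_R$ and using the uniform mass bound from Lemma \ref{lem:C-eta upper bound of flux} to show the discrepancy between these is uniformly small — but getting the bookkeeping right (and making sure the cutoffs are chosen consistently for $\mathbf{v}_n$ and $\mathbf{v}$) is the delicate part. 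A secondary technical point is verifying that $\mathcal{A}_\eta(\mu,\mathbf{v};\pi)<\infty$ for the limit, which is exactly where Proposition \ref{prop:action convex lsc} is invoked.
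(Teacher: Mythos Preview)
There is a genuine gap in your argument when $X=\mathbb{R}^{d}$: your ``good region'' $G_{R}=\{(x,y)\in G:1/R\le|x-y|\le R\}$ is \emph{not} compact, because it places no constraint on $|x|$ or $|y|$ individually (only on their difference). Consequently you cannot test against a $C_{c}(G)$ cutoff supported on $G_{R}$, and local weak-{*} convergence $\mathbf{v}_{n}\rightharpoonup^{*}\mathbf{v}$ does not directly give convergence of $\iint_{G_{R}}(1\wedge|x-y|)\,d|\mathbf{v}_{n}|$. Your tail estimate via Assumption~\ref{assu:eta properties}(3) is correct as stated, but it only controls escape near the diagonal and at large $|x-y|$; it does nothing about escape to spatial infinity with $|x-y|$ moderate.

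The paper's proof fixes exactly this by defining instead $K_{\delta}=\{(x,y)\in G:|x|\le 2\delta^{-1},\ |y|\le 2\delta^{-1},\ |x-y|\ge\delta\}$, which \emph{is} compact in $G$. The price is a second tail region $\{|x|>\delta^{-1}\}$ that must be controlled, and here the paper uses an ingredient entirely absent from your proposal: the narrow convergence $\mu_{n}\rightharpoonup\mu$, which via Prokhorov gives $\mu_{n}(\{|x|>\delta^{-1}\})<\varepsilon$ uniformly in $n$ for $\delta$ small. This tightness of the $\mu_{n}$ is what bounds $\iint_{|x|>\delta^{-1}}(1\wedge|x-y|^{2})\,d\mu_{n}^{1}$ by $C_{\eta}\varepsilon$ in the Cauchy--Schwarz step. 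Once you add the spatial truncation and invoke Prokhorov, the rest of your outline (smooth cutoff, tail estimate, $|\bar{\nabla}\xi_{n}|\le\Vert\xi_{n}\Vert_{C^{1}}(1\wedge|x-y|)$ for the second claim) matches the paper.
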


\begin{proof}
Let $\varepsilon>0$. By Prokhorov's theorem, for $\delta>0$ sufficiently
small, it holds that that for all $n\in\mathbb{N}$,
\[
\mu_{n}(\{x\in X:|x|>\delta^{-1}\})<\varepsilon
\]
and also $\mu(\{x\in X:|x|>\delta^{-1}\})<\varepsilon$. Let us also
take $\delta>0$ small enough that
\[
\sup_{x\in X}\int_{A_{\delta^{-1}}(x)}(1\wedge|x-y|^{2})\eta_{i}(x,y)d\pi_{i}(y)<\varepsilon
\]
where $A_{(\delta)^{-1}}(x)=\{y\in X:|x-y|<\delta\text{ or }|x-y|>\delta^{-1}\}$.
(Note that this does occur for sufficiently small $\delta$ under
Assumption \ref{assu:eta=000020properties}(3).) Define also
\[
K_{\delta}:=\{(x,y)\in G:|x|\leq2\delta^{-1},|y|\leq2\delta^{-1},\text{ and }|x-y|\geq\delta\}.
\]
Observe that $K_{\delta}$ is a compact subset of $G$. At the same
time, for fixed $x\in X$ satisfying $|x|\leq\delta^{-1}$ we have
that 
\[
\{y\in X:(x,y)\in K_{\delta}^{C}\}\subset A_{\delta^{-1}}(x).
\]
Now let $\chi_{K_{\delta}}$ belong to $C_{c}^{\infty}(G)$, be bounded
between $0$ and $1$, and satisfy $\chi_{K_{\delta}}=1$ on $K_{\delta}$.
Since $\mathbf{v}_{n}\rightharpoonup^{*}\mathbf{v}$ in $\mathcal{M}_{loc}(G)$,
we have that 
\[
\iint_{G}(1\wedge|x-y|)\chi_{K_{\delta}}(x,y)d|\mathbf{v}_{n}|(x,y)\stackrel{n\rightarrow\infty}{\longrightarrow}\iint_{G}(1\wedge|x-y|)\chi_{K_{\delta}}(x,y)d|\mathbf{v}|(x,y).
\]
It then remains to estimate
\[
\iint_{G}(1\wedge|x-y|)(1-\chi_{K_{\delta}}(x,y))d|\mathbf{v}_{n}|(x,y)\leq\iint_{K_{\delta}^{C}}(1\wedge|x-y|)d|\mathbf{v}_{n}|(x,y).
\]
Arguing as in the previous lemma, defining $\mu_{n}^{1}=\eta_{n}\cdot((\mu_{n}\otimes\pi_{n})\llcorner G)$
and $\mu_{n}^{2}=\eta_{n}\cdot((\pi_{n}\otimes\mu_{n})\llcorner G)$
as before, we have
\begin{multline*}
\iint_{K_{\delta}^{C}}(1\wedge|x-y|)d|\mathbf{v}_{n}|(x,y)\\
\begin{aligned} & \leq\left(\iint_{K_{\delta}^{C}}(1\wedge|x-y|^{2})2\theta\left(\frac{d\mu_{n}^{1}}{d\lambda},\frac{d\mu_{n}^{2}}{d\lambda}\right)d\lambda(x,y)\right)^{1/2}\left(\iint_{K_{\delta}^{C}}\frac{\left(\frac{d\mathbf{v}_{n}}{d\lambda}\right)^{2}}{2\theta\left(\frac{d\mu_{n}^{1}}{d\lambda},\frac{d\mu_{n}^{2}}{d\lambda}\right)}d\lambda(x,y)\right)^{1/2}\\
 & \leq\left(\iint_{K_{\delta}^{C}}(1\wedge|x-y|^{2})2\theta\left(\frac{d\mu_{n}^{1}}{d\lambda},\frac{d\mu_{n}^{2}}{d\lambda}\right)d\lambda(x,y)\right)^{1/2}\sqrt{\mathcal{A}_{\eta_{n}}(\mu_{n},\mathbf{v}_{n};\pi_{n})}.
\end{aligned}
\end{multline*}
Note also that we have $\mathcal{A}_{\eta}(\mu,\mathbf{v};\pi)\leq\sup_{n\in\mathbb{N}}\mathcal{A}_{\eta_{n}}(\mu_{n},\mathbf{v}_{n};\pi_{n})$
by the joint lower semicontinuity of the action, and so
\[
\iint_{K_{\delta}^{C}}(1\wedge|x-y|)d|\mathbf{v}_{n}|(x,y)\leq\left(\iint_{K_{\delta}^{C}}(1\wedge|x-y|^{2})2\theta\left(\frac{d\mu^{1}}{d\lambda},\frac{d\mu^{2}}{d\lambda}\right)d\lambda(x,y)\right)^{1/2}\sup_{n\in\mathbb{N}}\sqrt{\mathcal{A}_{\eta_{n}}(\mu_{n},\mathbf{v}_{n};\pi_{n})}.
\]
Next, still reasoning as in Lemma \ref{lem:C-eta=000020upper=000020bound=000020of=000020flux},
we have 
\begin{multline*}
\iint_{K_{\delta}^{C}}(1\wedge|x-y|^{2})2\theta\left(\frac{d\mu_{n}^{1}}{d\lambda},\frac{d\mu_{n}^{2}}{d\lambda}\right)d\lambda(x,y)\\
\leq\iint_{K_{\delta}^{C}}(1\wedge|x-y|^{2})d\mu_{n}^{1}(x,y)+\iint_{K_{\delta}^{C}}(1\wedge|x-y|^{2})d\mu_{n}^{2}(x,y).
\end{multline*}
Then, observe that $\mu_{n}^{1}=\eta_{n}\cdot((\mu_{n}\otimes\pi_{n})\llcorner G)$,
and so 
\begin{align*}
\iint_{K_{\delta}^{C}}(1\wedge|x-y|^{2})d\mu_{n}^{1}(x,y)= & \iint_{K_{\delta}^{C}}(1\wedge|x-y|^{2})\eta_{n}(x,y)d\pi_{n}(y)d\mu_{n}(x)\\
\leq & \int_{|x|\leq\delta^{-1}}\int_{A_{\delta^{-1}}(x)}(1\wedge|x-y|^{2})\eta_{n}(x,y)d\pi_{n}(y)d\mu_{n}(x)\\
 & +\int_{|x|>\delta^{-1}}\int_{X}(1\wedge|x-y|^{2})\eta_{n}(x,y)d\pi_{n}(y)d\mu_{n}(x).\\
\leq & \varepsilon+C_{\eta}\varepsilon.
\end{align*}
Here $C_{\eta}$ is the same constant from Lemma \ref{lem:C-eta=000020upper=000020bound=000020of=000020flux};
the estimate above only used Assumption \ref{assu:eta=000020properties}(1-3)
and the fact that $\mu_{n}(|x|>\delta^{-1})<\varepsilon$. Since we
also have that $\mu(|x|>\delta^{-1})<\varepsilon$, the same estimate
applies to $\mu$ in place of $\mu_{n}$.

Applying the same reasoning to $\mu_{n}^{2}$ and $\mu^{2}$, we see
that 
\[
\iint_{K_{\delta}^{C}}(1\wedge|x-y|^{2})2\theta\left(\frac{d\mu_{n}^{1}}{d\lambda},\frac{d\mu_{n}^{2}}{d\lambda}\right)d\lambda(x,y)\leq(2+2C_{\eta})\varepsilon
\]
for all $n$, and also for $\mu$ in the place of $\mu_{n}$. Therefore
\begin{multline*}
\limsup_{n\rightarrow\infty}\left|\iint_{G}(1\wedge|x-y|)d|\mathbf{v}_{n}|(x,y)-\iint_{G}(1\wedge|x-y|)d|\mathbf{v}|(x,y)\right|\\
\leq2\sqrt{(2+2C_{\eta})\varepsilon}\sup_{n}\sqrt{\mathcal{A}_{\eta_{n}}(\mu_{n},\mathbf{v}_{n};\pi_{n})}.
\end{multline*}
As $\varepsilon>0$ was arbitrary, the first claim holds.

Next we consider the $\iint_{G}\bar{\nabla}\xi(x,y)d|\mathbf{v}|(x,y)$
case. This reduces to the first case: indeed let $K_{\delta}$ and
$\chi_{X_{\delta}}$ be chosen as before. Since $\mathbf{v}_{n}\rightharpoonup^{*}\mathbf{v}$
in $\mathcal{M}_{loc}(G)$, and $\bar{\nabla}\xi_{n}(x,y)$ converges
uniformly on compact subsets of $G$, we have that 
\[
\iint_{G}\bar{\nabla}\xi_{n}(x,y)\chi_{K_{\delta}}(x,y)d|\mathbf{v}_{n}|(x,y)\stackrel{n\rightarrow\infty}{\longrightarrow}\iint_{G}\bar{\nabla}\xi(x,y)\chi_{K_{\delta}}(x,y)d|\mathbf{v}|(x,y).
\]
On the other hand, 
\[
\left|\iint_{G}\bar{\nabla}\xi_{n}(x,y)(1-\chi_{K_{\delta}}(x,y))d|\mathbf{v}_{n}|(x,y)\right|\leq\Vert\xi_{n}\Vert_{C^{1}}\iint_{K_{\delta}^{C}}(1\wedge|x-y|)d|\mathbf{v}_{n}|(x,y)
\]
and similarly 
\[
\left|\iint_{G}\bar{\nabla}\xi(x,y)(1-\chi_{K_{\delta}}(x,y))d|\mathbf{v}|(x,y)\right|\leq\Vert\xi\Vert_{C^{1}}\iint_{K_{\delta}^{C}}(1\wedge|x-y|)d|\mathbf{v}|(x,y).
\]
Since $\sup_{n}\Vert\xi_{n}\Vert_{C^{1}}\geq\Vert\xi\Vert_{C^{1}}$,
we can estimate exactly as before that 
\begin{multline*}
\limsup_{n\rightarrow\infty}\left|\iint_{G}\bar{\nabla}\xi_{n}(x,y)d|\mathbf{v}_{n}|(x,y)-\iint_{G}\bar{\nabla}\xi(x,y)d|\mathbf{v}|(x,y)\right|\\
\leq2\sqrt{(2+2C_{\eta})\varepsilon}\sup_{n}\Vert\xi_{n}\Vert_{C^{1}}\sqrt{\mathcal{A}_{\eta_{n}}(\mu_{n},\mathbf{v}_{n};\pi_{n})}.
\end{multline*}
\end{proof}
\begin{prop}[Compactness of finite-action solutions to nonlocal continuity equation]
\label{prop:compactness-nce}Let $(\pi_{n})_{\in\mathbb{N}}$ be
a sequence of reference measures with $\pi_{n}\rightharpoonup^{*}\pi$.
Suppose also that $\{\eta,\pi\}\cup\{\eta_{n},\pi_{n}\}_{n\in\mathbb{N}}$
satisfy Assumption \ref{assu:eta=000020properties} (1-3) and that
$\eta_{n}$ converges uniformly on compact sets $K\subset G$ to $\eta$. 

For each $n$, let $(\mu_{t,n},\mathbf{v}_{t,n})_{t\in[0,T]}$ be
a solution to the nonlocal continuity equation, such that 
\[
A:=\sup_{n}\int_{0}^{T}\mathcal{A}_{\eta_{n}}(\mu_{t,n},\mathbf{v}_{t,n};\pi_{n})dt<\infty.
\]
Suppose that $\mu_{0,n}\rightharpoonup\mu_{0}$. Then, there exists
a solution $(\mu_{t},\mathbf{v}_{t})_{t\in[0,1]}$ to the nonlocal
continuity equation, such that along some subsequence $n_{k}$, we
have that: $\mu_{t,n_{k}}$ converges narrowly in $\mathcal{P}(X)$
to $\mu_{t}$ for all $t\in[0,T]$, $\int_{0}^{T}\mathbf{v}_{t,n}dt$
converges locally weak{*}ly in $\mathcal{M}_{loc}(G\times[0,T])$
to $\int_{0}^{T}\mathbf{v}_{t}dt$, and moreover, 
\[
\int_{0}^{T}\mathcal{A}_{\eta}(\mu_{t},\mathbf{v}_{t};\pi)dt\leq\liminf_{n}\int_{0}^{T}\mathcal{A}_{\eta_{n}}(\mu_{t,n},\mathbf{v}_{t,n};\pi_{n})dt.
\]
\end{prop}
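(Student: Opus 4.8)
The plan is to run the usual compactness argument for continuity equations — as in \cite[Prop.~3.4]{erbar2014gradient} or \cite{esposito2019nonlocal} — upgraded so that $\eta_n$ and $\pi_n$ may vary, the extra input at each step being supplied by Lemmas \ref{lem:C-eta upper bound of flux} and \ref{lem:total-flux-convergence} together with Proposition \ref{prop:action convex lsc}. First I would establish a uniform-in-$n$ modulus of continuity in time: testing \eqref{eq:nce} against $\varphi\in C^1(X)$ with $\Vert\varphi\Vert_{C^1}\le1$ and using the Cauchy--Schwarz step from the proof of Lemma \ref{lem:C-eta upper bound of flux} gives $|\langle\varphi,\mu_{t,n}-\mu_{s,n}\rangle|\le\frac12\int_s^t\iint_G(1\wedge|x-y|)\,d|\mathbf v_{r,n}|\,dr\le\frac{C_\eta}{2}\sqrt{A}\,\sqrt{t-s}$, i.e.\ a common H\"older-$\tfrac12$ modulus for the bounded-Lipschitz distance. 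Next comes tightness of $\{\mu_{t,n}:t\in[0,T],\ n\in\mathbb N\}$: on $\mathbb T^d$ this is automatic, while on $\mathbb R^d$ I would fix a smooth radial cutoff $\psi_R$ vanishing on $B_R$, equal to $1$ off $B_{2R}$, with $\Vert\psi_R\Vert_{C^1}\le1+2/R$, test \eqref{eq:nce} against $\psi_R$, and split the flux integral according to whether $|x-y|>L$ or $|x-y|\le L$ (with $1\le L\le R$); on the first piece Cauchy--Schwarz together with Assumption \ref{assu:eta properties}(3) applied to the long jumps gives a bound that is $o_L(1)\sqrt{TA}$, and on the second the fact that $\bar\nabla\psi_R\neq0$ forces one of $x,y$ into the annulus $B_{2R}\setminus B_R$, combined with $|\bar\nabla\psi_R|^2\le\tfrac{4L^2}{R^2}(1\wedge|x-y|^2)$ and Assumption \ref{assu:eta properties}(2), gives a bound $O(L/R)\sqrt{TA}$; choosing $L$ then $R$ large, and using tightness of $\{\mu_{0,n}\}$, yields $\sup_{t,n}\mu_{t,n}(B_{2R}^c)<\varepsilon$.

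With these in hand, the refined Arzel\`a--Ascoli theorem \cite[Prop.~3.3.1]{ambrosio2008gradient} produces a subsequence $n_k$ and a narrowly continuous $\mu_{(\cdot)}$ with $\mu_{t,n_k}\rightharpoonup\mu_t$ for every $t\in[0,T]$ (in particular $\mu_0$ is the given one). For the fluxes, set $\bar{\mathbf v}_n:=\mathbf v_{t,n}\,dt\in\mathcal M_{loc}(G\times[0,T])$; the second estimate of Lemma \ref{lem:C-eta upper bound of flux} plus Cauchy--Schwarz in time gives, for each compact $K\subset G$, both $|\bar{\mathbf v}_n|(K\times[0,T])\le C_\eta\sqrt{TA}/\operatorname{dist}(K,\{x=y\})$ and $\int_0^T|\mathbf v_{t,n}|(K)^2\,dt\le C_\eta^2A/\operatorname{dist}(K,\{x=y\})^2$. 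The first bound yields, along a further subsequence, $\bar{\mathbf v}_{n_k}\rightharpoonup^*\bar{\mathbf v}$ in $\mathcal M_{loc}(G\times[0,T])$; the second shows the time-marginals of $|\bar{\mathbf v}_n|\llcorner(K\times[0,T])$ have $L^2([0,T])$-bounded, hence equi-integrable, densities, so by Dunford--Pettis the time-marginal of $\bar{\mathbf v}$ on each $K\times[0,T]$ is absolutely continuous with respect to Lebesgue measure (this is what rules out the limit flux concentrating on time slices). Exhausting $G$ by compacts and gluing, $\bar{\mathbf v}$ disintegrates as $\mathbf v_t\,dt$ for a $t$-a.e.\ uniquely determined Borel family $\mathbf v_{(\cdot)}:[0,T]\to\mathcal M_{loc}(G)$, and $\int_0^T\mathbf v_{t,n_k}\,dt\rightharpoonup^*\int_0^T\mathbf v_t\,dt$ as required.

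It remains to check that $(\mu_t,\mathbf v_t)\in\mathcal{CE}_T$ and the action liminf. For the former, pass to the limit in \eqref{eq:nce} with $\varphi\in C_c^\infty([0,T]\times X)$: the $\partial_t\varphi$ term converges by dominated convergence (boundedness of $\partial_t\varphi$ and pointwise narrow convergence of $\mu_{t,n_k}$), while the flux term converges after splitting $G\times[0,T]$ into a set $K_\delta\times[0,T]$ compact in $G\times[0,T]$ — where local weak* convergence of $\bar{\mathbf v}_{n_k}$ applies, $\bar\nabla\varphi$ being continuous there — and its complement, on which $|\bar\nabla\varphi_t|\le\Vert\varphi\Vert_{C^1}(1\wedge|x-y|)$ and the tail estimates of the tightness step (equivalently, a time-integrated version of Lemma \ref{lem:total-flux-convergence}) make the contribution uniformly small. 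For the action liminf I would apply the Reshetnyak-type lower semicontinuity theorem \cite[Theorem A.1]{slepcev2022nonlocal} to the jointly convex, positively $1$-homogeneous integrand $A$ on $\mathcal M_{loc}(G\times[0,T];\mathbb R^3)$, with $\sigma_n:=\bigl(\mathbf v_{t,n}\,dt,\ \eta_n\cdot((\mu_{t,n}\otimes\pi_n)\llcorner G)\,dt,\ \eta_n\cdot((\pi_n\otimes\mu_{t,n})\llcorner G)\,dt\bigr)$: exactly as in the proof of Proposition \ref{prop:action convex lsc}, the pointwise narrow convergence $\mu_{t,n_k}\rightharpoonup\mu_t$, the local weak* convergence $\pi_n\rightharpoonup^*\pi$, the uniform-on-compacts convergence $\eta_n\to\eta$, and dominated convergence in $t$ give $\sigma_{n_k}\rightharpoonup^*\bigl(\mathbf v_t\,dt,\ \eta\cdot((\mu_t\otimes\pi)\llcorner G)\,dt,\ \eta\cdot((\pi\otimes\mu_t)\llcorner G)\,dt\bigr)$, and since the $\mathbb R^3$-valued integral of $A$ over $G\times[0,T]$ equals $\int_0^T\mathcal A(\cdot)\,dt$ for both $\sigma_{n_k}$ and its limit, the inequality $\int_0^T\mathcal A_\eta(\mu_t,\mathbf v_t;\pi)\,dt\le\liminf_k\int_0^T\mathcal A_{\eta_{n_k}}(\mu_{t,n_k},\mathbf v_{t,n_k};\pi_{n_k})\,dt$ follows.

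I expect the main obstacle to be tightness on $\mathbb R^d$: unlike the compact case $\mathbb T^d$, it does not follow from the equicontinuity estimate alone — bounded-Lipschitz-bounded families of probability measures need not be tight — and one must carefully isolate and bound the long-jump part of the flux using the uniform integrability in Assumption \ref{assu:eta properties}(3). The disintegration of $\bar{\mathbf v}$ and the non-compactly-supported test-function issue in the passage to the limit in \eqref{eq:nce} are the other technically fussy points; the variability of $\eta_n$ and $\pi_n$, by contrast, is absorbed with essentially no extra work once Lemma \ref{lem:total-flux-convergence} and Proposition \ref{prop:action convex lsc} are available.
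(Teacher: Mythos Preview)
Your argument is correct, but it follows a genuinely different route from the paper's, and the difference is worth noting.

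The paper does \emph{not} use Arzel\`a--Ascoli, and in particular does \emph{not} prove tightness of $\{\mu_{t,n}\}$ at all. Instead, it extracts the flux limit first: after getting $\bar{\mathbf v}_{n_k}\rightharpoonup^*\bar{\mathbf v}$ and disintegrating $\bar{\mathbf v}=\mathbf v_t\,dt$ (via the direct bound $|\mathbf v|(K\times B)\le C_{K,\eta}\sqrt{A}\sqrt{\mathrm{Leb}(B)}$, rather than your Dunford--Pettis step), it \emph{defines} $\mu_t$ by the formula $\int_X\xi\,d\mu_t:=\int_X\xi\,d\mu_0+\tfrac12\int_0^t\iint_G\bar\nabla\xi\,d\mathbf v_s\,ds$ for $\xi\in C^1(X)$. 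Convergence of $\int\xi\,d\mu_{t,n_k}$ to this quantity is established by a time-integrated version of Lemma \ref{lem:total-flux-convergence}; nonnegativity of $\mu_t$ follows because $\int\xi\,d\mu_{t,n}\ge0$ for nonnegative $\xi$; and testing with $\xi\equiv1$ gives $\mu_t(X)=1$ for free, since $\bar\nabla 1=0$. Once the limit is known to be a probability measure, convergence against $C^1_b(X)$ upgrades to narrow convergence without any tightness input.

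Your approach---equicontinuity plus the explicit long-jump/short-jump tightness argument, then Arzel\`a--Ascoli---is the more conventional one and is also valid; the tightness estimate you sketch is essentially the content of \cite[Prop.~3.4]{erbar2014gradient} in the fixed-$\eta$ case and your adaptation to varying $\eta_n,\pi_n$ via Assumption~\ref{assu:eta properties}(3) is fine. The paper's route is more economical precisely because it sidesteps what you (correctly) anticipated as the main obstacle. The remaining steps---disintegration of the flux, passage to the limit in \eqref{eq:nce} via the $K_\delta$ cutoff, and the action liminf via Reshetnyak on $\mathcal M_{loc}(G\times[0,T];\mathbb R^3)$---are handled the same way in both arguments.
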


\begin{proof}
The proof is simply a careful extension of the argument from \cite[Prop. 3.4]{erbar2014gradient},
which in turn is descended from \cite[Lem. 4.5]{dolbeault2009new}.

For each $n$, define $\mathbf{v}^{n}\in\mathcal{M}_{loc}(G\times[0,T])$
by $d\mathbf{v}_{t}^{n}(x,y,t)=d\mathbf{v}^{n}(x,y)dt$. Then, from
Lemma \ref{lem:C-eta=000020upper=000020bound=000020of=000020flux},
we infer that 
\[
\sup_{n}\int_{0}^{T}\int_{G}(1\wedge|x-y|)d|\mathbf{v}_{t}^{n}|(x,y)dt\leq C_{\eta}\sqrt{A}<\infty
\]
and, for any compact $K\subset G$ and $B\subset[0,T]$, we have 
\[
\sup_{n}|\mathbf{v}^{n}|(K\times B)\leq\sup_{n}\int_{B}|\mathbf{v}_{t}^{n}|(K)dt\leq\frac{C_{\eta}\sqrt{A}\sqrt{\text{Leb}(B)}}{\min\{|x-y|:(x,y)\in K\}}<\infty.
\]
This implies that $\mathbf{v}^{n}$ has bounded total variation on
every compact set in $G\times[0,T]$, uniformly in $n$; this implies
that (up to a subsequence, which we do not relabel) there exists a
$\mathbf{v}\in\mathcal{M}_{loc}(G\times[0,T])$ such that $\mathbf{v}^{n}\stackrel{*}{\rightharpoonup}\mathbf{v}$,
such that 
\[
|\mathbf{v}|(K\times B)\leq\frac{C_{\eta}\sqrt{A}\sqrt{\text{Leb}(B)}}{\min\{|x-y|:(x,y)\in K\}}
\]
as well. Consequently, $\mathbf{v}$ can be disintegrated w.r.t. the
Lebesgue measure on $[0,T]$; that is, there exists a Borel family
$(\mathbf{v}_{t})_{t\in[0,T]}\in\mathcal{M}_{loc}(G)$ such that $d\mathbf{v}(x,y,t)=d\mathbf{v}_{t}(x,y)dt$.
Moreover, local weak convergence of $\mathbf{v}^{n}$ to $\mathbf{v}$
implies that 
\[
\int_{0}^{T}\int_{G}(1\wedge|x-y|)d|\mathbf{v}_{t}|(x,y)dt\leq\lim_{n\rightarrow\infty}\int_{0}^{T}\int_{G}(1\wedge|x-y|)d|\mathbf{v}_{t}^{n}|(x,y)dt\leq C_{\eta}\sqrt{A}.
\]
Indeed, let $K_{\delta}$ be the same set from the argument from Lemma
\ref{lem:total-flux-convergence} and $\chi_{K_{\delta}}$ the same
smooth cutoff function. By the same reasoning as in Lemma \ref{lem:total-flux-convergence},
we have that
\[
\int_{0}^{T}\iint_{G}(1\wedge|x-y|)\chi_{K_{\delta}}(x,y)d|\mathbf{v}_{t,n}|(x,y)dt\stackrel{n\rightarrow\infty}{\longrightarrow}\int_{0}^{T}\iint_{G}(1\wedge|x-y|)\chi_{K_{\delta}}(x,y)d|\mathbf{v}_{t}|(x,y)dt
\]
since $(1\wedge|x-y|)\chi_{K_{\delta}}(x,y)$ is also compactly supported
on $[0,T]\times G$. Likewise, 
\[
\int_{0}^{T}\iint_{G}(1\wedge|x-y|)(1-\chi_{K_{\delta}}(x,y))d|\mathbf{v}_{t,n}|(x,y)dt\leq\int_{0}^{T}\iint_{K_{\delta}^{C}}(1\wedge|x-y|)d|\mathbf{v}_{t,n}|(x,y)dt
\]
and 
\begin{align*}
\int_{0}^{T}\iint_{K_{\delta}^{C}}(1\wedge|x-y|)d|\mathbf{v}_{t,n}|(x,y) & \leq\left(\int_{0}^{T}\iint_{K_{\delta}^{C}}(1\wedge|x-y|^{2})2\theta\left(\frac{d\mu^{1}}{d\lambda},\frac{d\mu^{2}}{d\lambda}\right)d\lambda(x,y)\right)^{1/2}\sqrt{A}\\
 & \leq\sqrt{T(2+2C_{\eta})\varepsilon}\cdot\sqrt{A}
\end{align*}
(where $\varepsilon$ is the same as in the proof Lemma \ref{lem:total-flux-convergence})
of so that
\[
\limsup_{n\rightarrow\infty}\left|\int_{0}^{T}\iint_{G}(1\wedge|x-y|)d|\mathbf{v}_{t,n}|(x,y)dt-\int_{0}^{T}\iint_{G}(1\wedge|x-y|)d|\mathbf{v}_{t}|(x,y)dt\right|\leq\sqrt{T(2+2C_{\eta})\varepsilon}\cdot\sqrt{A}.
\]
Sending $\varepsilon\rightarrow0$ establishes the claim. In particular
$t\mapsto\mathbf{v}_{t}$ is admissible for the nonlocal continuity
equation. 

Next we argue that for any $\xi\in C^{1}(X)$ and $t_{0}\leq t_{1}$
between $0$ and $T$, it holds that 
\[
\int_{t_{0}}^{t_{1}}\iint_{G}\bar{\nabla}\xi d\mathbf{v}_{t,n}dt\overset{n\rightarrow\infty}{\longrightarrow}\int_{t_{0}}^{t_{1}}\iint_{G}\bar{\nabla}\xi d\mathbf{v}_{t}dt.
\]
Again we argue similarly to Lemma \ref{lem:total-flux-convergence},
except that we need a smooth cutoff for the time-indicator $1_{(t_{0},t_{1})}$
as well. Accordingly, let $\tilde{\chi}_{t_{0}+\epsilon,t_{1}-\epsilon}$
be a smooth cutoff function in time for the indicator function $1_{(t_{0}+\epsilon,t_{1}-\epsilon)}$
supported compactly inside $(t_{0},t_{1})$. It then holds that 
\[
\int_{t_{0}}^{t_{1}}\iint_{G}\tilde{\chi}_{t_{0}+\epsilon,t_{1}-\epsilon}\chi_{K_{\delta}}\bar{\nabla}\xi d\mathbf{v}_{t,n}dt\overset{n\rightarrow\infty}{\longrightarrow}\int_{t_{0}}^{t_{1}}\iint_{G}\tilde{\chi}_{t_{0}+\epsilon,t_{1}-\epsilon}\chi_{K_{\delta}}\bar{\nabla}\xi d\mathbf{v}_{t}dt.
\]
At the same time, 
\[
|(1-\tilde{\chi}_{t_{0}+\epsilon,t_{1}-\epsilon}\chi_{K_{\delta}})\bar{\nabla}\xi|\leq\Vert\xi\Vert_{C^{1}}(1\wedge|x-y|)(1_{\{t\in[0,t_{0}+\epsilon]\cup[t_{1}-\epsilon,T]\}}\vee1_{K_{\delta}^{C}}).
\]
Hence 
\begin{align*}
\left|\int_{t_{0}}^{t_{1}}\iint_{G}(1-\tilde{\chi}_{t_{0}+\epsilon,t_{1}-\epsilon}\chi_{K_{\delta}})\bar{\nabla}\xi d\mathbf{v}_{t,n}dt\right|\leq & \Vert\xi\Vert_{C^{1}}\int_{[0,t_{0}+\epsilon]\cup[t_{1}-\epsilon,T]}\iint_{G}(1\wedge|x-y|)d|\mathbf{v}_{t,n}|(x,y)dt\\
 & +\Vert\xi\Vert_{C^{1}}\int_{0}^{T}\int_{K_{\delta}^{C}}(1\wedge|x-y|)d|\mathbf{v}_{t,n}|(x,y)dt.
\end{align*}
The first term can be estimated as 
\begin{multline*}
\int_{[0,t_{0}+\epsilon]\cup[t_{1}-\epsilon,T]}\iint_{G}(1\wedge|x-y|)d|\mathbf{v}_{t,n}|(x,y)dt\\
\begin{aligned} & \leq\left(\int_{[0,t_{0}+\epsilon]\cup[t_{1}-\epsilon,T]}2\iint_{G}(1\wedge|x-y|^{2}\eta_{n}(x,y)d\pi_{n}(x)d\mu_{n}(y)\right)^{1/2}\sqrt{A}\\
 & \leq\left(4\epsilon\sup_{n\in\mathbb{N}}\sup_{y\in\text{supp}(\pi_{n})}\iint_{G}(1\wedge|x-y|^{2}\eta_{n}(x,y)d\pi_{n}(x)\right)^{1/2}\sqrt{A}\\
 & =\sqrt{2\epsilon C_{\eta}}\sqrt{A}.
\end{aligned}
\end{multline*}
And we have already seen the second term is bounded by $\sqrt{T(2+2C_{\eta})\varepsilon}\cdot\sqrt{A}$.
Altogether it holds that 
\[
\limsup_{n\rightarrow\infty}\left|\int_{t_{0}}^{t_{1}}\iint_{G}(1-\tilde{\chi}_{t_{0}+\epsilon,t_{1}-\epsilon}\chi_{K_{\delta}})\bar{\nabla}\xi d\mathbf{v}_{t,n}dt\right|\leq\sqrt{2\epsilon C_{\eta}}\sqrt{A}+\sqrt{T(2+2C_{\eta})\varepsilon}\cdot\sqrt{A}
\]
so sending both $\epsilon,\varepsilon\rightarrow0$ the claim is established. 

We can now deduce that for any $\xi\in C^{1}(X)$, 
\begin{align*}
\int_{X}\xi d\mu_{t,n} & =\int_{X}\xi d\mu_{0,n}+\frac{1}{2}\int_{0}^{t}\iint_{G}\bar{\nabla}\xi d\mathbf{v}_{t,n}dt\\
 & \rightarrow\int_{X}\xi d\mu_{0}+\frac{1}{2}\int_{0}^{t}\iint_{G}\bar{\nabla}\xi d\mathbf{v}_{t}dt.
\end{align*}
Define the distribution $\mu_{t}$ by $\int\xi d\mu_{t}:=\int_{X}\xi d\mu_{0}+\int_{0}^{t}\iint_{G}\bar{\nabla}\xi d\mathbf{v}_{t}dt$.
Since $\int_{X}\xi d\mu_{t,n}$ is nonnegative for every nonnegative
$\xi$, the same is true in the limit, and so $\int\xi d\mu_{t}$
is actually a nonnegative Radon measure. Moreover we have that $\mu_{t,n}\rightharpoonup\mu_{t}$
since $C^{1}(X)$ is dense inside $C_{b}(X)$. (Note that $\mu_{t}$
is automatically a probability measure.) Moreover it holds that $t\mapsto\int\xi\mu_{t}$
is continuous for every $\xi\in C^{1}(X)$, in fact is absolutely
continuous, since $t\mapsto\frac{1}{2}\iint_{G}\bar{\nabla}\xi d\mathbf{v}_{t}$
is an $L^{1}$ function and 
\[
\limsup_{s\rightarrow t}\frac{|\int_{X}\xi d\mu_{t}-\int_{X}\xi d\mu_{s}|}{|t-s|}\leq\frac{1}{2}\iint_{G}\bar{\nabla}\xi d\mathbf{v}_{t}.
\]
Therefore $t\mapsto\mu_{t}$ is narrowly continuous since $C^{1}(X)$
is dense in $C_{b}(X)$. 

We now argue that $(\mu_{t},\mathbf{v}_{t})_{t\in[0,T]}\in\mathcal{CE}_{T}$.
So far we have shown that the nonlocal continuity equation holds in
the weak sense when tested against functions of the form $\xi\in C^{1}(X)$,
in other words (using absolute continuity of $t\mapsto\int\xi d\mu_{t}$)
\[
\frac{d}{dt}\int_{X}\xi(x)d\mu_{t}(x)dt=\frac{1}{2}\iint_{G}\bar{\nabla}\xi d\mathbf{v}_{t}dt\text{ }t-a.e.
\]
and so we must upgrade this to test functions $\varphi_{t}\in C_{c}^{\infty}((0,T)\times X)$
which also depend on time. To see this holds, we compute as follows.
(See \cite[Prop. 4.2]{santambrogio2015optimal} for a similar argument
in the case of the usual continuity equation.) Take $\tilde{\varphi}(t)\in C_{c}^{\infty}((0,T))$.
Multiplying and integrating we have that 
\[
\int_{0}^{T}\tilde{\varphi}(t)\frac{d}{dt}\int_{X}\xi(x)d\mu_{t}(x)dt=\frac{1}{2}\int_{0}^{T}\tilde{\varphi}(t)\iint_{G}\bar{\nabla}\xi d\mathbf{v}_{t}dt.
\]
Integrating by parts, we find that
\begin{align*}
\int_{0}^{T}\tilde{\varphi}(t)\frac{d}{dt}\int_{X}\xi(x)d\mu_{t}(x)dt & =-\int_{0}^{T}\frac{d}{dt}\tilde{\varphi}(t)\cdot\int_{X}\xi(x)d\mu_{t}(x)dt\\
 & =-\int_{0}^{T}\frac{d}{dt}\left(\tilde{\varphi}(t)\xi(x)\right)d\mu_{t}(x)dt.
\end{align*}
At the same time, $\tilde{\varphi}(t)\bar{\nabla}\xi(x,y)=\bar{\nabla}\left(\tilde{\varphi}(t)\xi\right)(x,y)$
and so
\[
\int_{0}^{T}\tilde{\varphi}(t)\iint_{G}\bar{\nabla}\xi d\mathbf{v}_{t}dt=\int_{0}^{T}\iint_{G}\bar{\nabla}\left(\tilde{\varphi}(t)\xi\right)d\mathbf{v}_{t}dt.
\]
As linear combination of functions of the form $\tilde{\varphi}(t)\xi(x)$
are dense in $C_{c}^{\infty}((0,T)\times X)$, this implies that for
all $\varphi_{t}\in C_{c}^{\infty}(X\times(0,T))$,
\[
\int_{0}^{T}\int_{X}\partial_{t}\varphi_{t}(x)d\rho_{t}(x)dt+\frac{1}{2}\int_{0}^{T}\iint_{G}\bar{\nabla}\varphi_{t}d\mathbf{v}_{t}dt=0
\]
as desired.

Finally, we observe that $\int_{0}^{T}\mathcal{A}_{\eta}(\mu_{t},\mathbf{v}_{t};\pi)dt\leq\liminf_{n}\int_{0}^{T}\mathcal{A}_{\eta_{n}}(\mu_{t,n},\mathbf{v}_{t,n};\pi_{n})dt$
by the same reasoning as in Proposition \ref{prop:action=000020convex=000020lsc},
by viewing $\int_{0}^{T}\mathcal{A}_{\eta}(\mu_{t},\mathbf{v}_{t};\pi)dt$
as an integral functional on the space $\mathcal{M}_{loc}(G\times[0,T])$,
with arguments $d\mathbf{v}_{t}(x,y)dt$, $\eta(x,y)d\mu(x)d\pi(y)dt$,
and $\eta(x,y)d\pi(x)d\mu(y)dt$.

\end{proof}

\section{Identifying the gradient flow of the relative entropy}

\label{sec:Identifying}

\subsection{Curves of maximal slope}

We recall (from say \cite{ambrosio2008gradient}) that on any metric
space $(X,d)$, a \emph{strong upper gradient $G$} for a functional
$F:X\rightarrow(-\infty,\infty]$ is characterized by the property
\[
\forall(x_{t})_{t\in[0,T]}\in AC([0,T];X)\qquad\forall0\leq t_{0}\leq t_{1}\leq T\qquad|F(x_{t_{0}})-F(x_{t_{1}})|\leq\int_{t_{0}}^{t_{1}}G(x_{t})|\dot{x}_{t}|dt
\]
where $|\dot{x}_{t}|$ denotes the metric speed of $x_{t}$, namely
$\lim_{s\rightarrow t}\frac{d(x_{t},x_{s})}{|t-s|}$.

In fact, for our purposes, it will be convenient to sometimes make
use of a slightly weaker notion: we say that $G$ is a ``a.e. upper
gradient'' provided that: for almost all $\tau_{0},\tau_{1}\in[0,T]$,
\[
\left|F(x_{\tau_{0}})-F(x_{\tau_{1}})\right|\leq\int_{\tau_{0}}^{\tau_{1}}G(x_{t})|\dot{x}_{t}|dt.
\]

Note that by Young's inequality, if $G$ is a strong upper gradient
for $F$, then it holds that: for all $t_{0},t_{1}\in[0,T]$, 
\[
\left|F(x_{\tau_{0}})-F(x_{\tau_{1}})\right|\leq\frac{1}{2}\int_{t_{0}}^{t_{1}}G(x_{t})^{2}dt+\frac{1}{2}\int_{t_{0}}^{t_{1}}|\dot{x}_{t}|^{2}dt.
\]
(Likewise, for almost all $\tau_{0},\tau_{1}\in[0,T]$ provided that
$G$ is an a.e. upper gradient for $F$.) This motivates the following
synthetic definition of a gradient flow in $(X,d)$. 
\begin{defn}[Curve of maximal slope]
\label{def:curve-of-maximal-slope} Let $x_{t}\in AC([0,T];X)$,
and $F:X\rightarrow(-\infty,\infty]$ with strong upper gradient $G$.
We say that $(x_{t})_{t\in[0,T]}$ is a \emph{curve of maximal slope
}for $F$ provided that for all $0\leq t_{0}\leq t_{1}\leq T$, the
following \emph{entropy dissipation inequality }(EDI) holds: 
\[
F(x_{t_{0}})-F(x_{t_{1}})\geq\frac{1}{2}\int_{t_{0}}^{t_{1}}G(x_{t})^{2}dt+\frac{1}{2}\int_{t_{0}}^{t_{1}}|\dot{x}_{t}|^{2}dt.
\]
In the case where $G$ is merely an a.e. upper gradient, we say that
$(x_{t})_{t\in[0,T]}$ is a curve of maximal slope\emph{ }for $F$
provided that $t\mapsto F(x_{t})$ is non-increasing, and for almost
all $0\leq t_{0}\leq t_{1}\leq T$ 
\[
F(x_{t_{0}})-F(x_{t_{1}})\geq\frac{1}{2}\int_{t_{0}}^{t_{1}}G(x_{t})^{2}dt+\frac{1}{2}\int_{t_{0}}^{t_{1}}|\dot{x}_{t}|^{2}dt.
\]
\end{defn}

\begin{rem*}
In the case of the a.e. upper gradient, the condition that $t\mapsto F(x_{t})$
is non-increasing implies that for \emph{all} $t_{0}$ and \emph{almost
all} $t_{1}$,
\[
F(x_{t_{0}})-F(x_{t_{1}})\geq\frac{1}{2}\int_{t_{0}}^{t_{1}}G(x_{t})^{2}dt+\frac{1}{2}\int_{t_{0}}^{t_{1}}|\dot{x}_{t}|^{2}dt
\]
by the monotone convergence theorem, provided that $F$ is l.s.c.
on $X$. This further implies that the inequality obtains for all
$t_{1}$ as well: in view of Young's inequality, we have that $t_{1}\mapsto F(x_{t_{0}})-F(x_{t_{1}})$
is monotone, and equal almost everywhere to the monotone and continuous
function $t_{1}\mapsto\frac{1}{2}\int_{t_{0}}^{t_{1}}G(x_{t})^{2}dt+\frac{1}{2}\int_{t_{0}}^{t_{1}}|\dot{x}_{t}|^{2}dt$.

Our notion of a.e. upper gradient deviates from the definition of
``weak upper gradient'' given in \cite[Def. 1.3.2]{ambrosio2008gradient}
in several ways. Notably we assume that $F(x_{t})$ is non-increasing,
whereas \cite{ambrosio2008gradient} takes $F(x_{t})$ be almost everywhere
equal to a non-increasing function. Our definition of curve of maximal
slope with respect to an a.e. upper gradient turns out to be more
suitable for our purposes, in particular it gives the right relation
between curves of maximal slope and entropy weak solutions to equation
($\dagger$), as we see below in Proposition \ref{prop:weak-soln-vs-gradient-flow}.

\end{rem*}
We will relate curves of maximal slope with entropy weak solutions
to equation ($\dagger$) momentarily. First, however, we present some
results which can already be obtained from the definition of curve
of maximal slope and the variational properties of $\mathcal{H},\mathcal{I},$
and $\mathcal{W}$. In the case where $\sqrt{\mathcal{I}(\cdot\mid\pi)}$
is an a.e. upper gradient for $\mathcal{H}(\cdot\mid\pi)$, we can
exploit an argument of Gigli (originally from \cite[Thm. 15]{gigli2010heat})
to deduce the following. 
\begin{prop}[Uniqueness of curves of maximal slope]
\label{prop:uniqueness} Fix $\rho_{0}\in\mathcal{P}(X)$ such that
$\mathcal{H}(\rho_{0}\mid\pi)<\infty$. Let $\sqrt{\mathcal{I}(\cdot\mid\pi)}$
be an a.e. upper gradient for $\mathcal{H}(\cdot\mid\pi)$. (In particular
it suffices to suppose the same assumptions as in Proposition \ref{prop:one=000020sided=000020chain=000020rule}
below.) Then, there is at most one AC curve $(\rho_{t})_{t\in[0,T]}$
satisfying the following for almost all $\tau\in[0,\infty)$:
\[
0\geq\mathcal{H}(\rho_{\tau}\mid\pi)-\mathcal{H}(\rho_{0}\mid\pi)+\frac{1}{2}\int_{0}^{\tau}|\dot{\rho}_{t}|^{2}dt+\frac{1}{2}\int_{0}^{\tau}\mathcal{I}(\rho_{t}\mid\pi)dt.
\]
In particular there is at most one curve of maximal slope beginning
at $\rho_{0}$.
\end{prop}

\begin{proof}
Suppose that there are two distinct AC curves $(\rho_{t})_{t\in[0,T]},(\hat{\rho}_{t})_{t\in[0,T]}$
both beginning at $\rho_{0}$, which are curves of maximal slope.
For each curve, by \cite[Prop. 4.9]{erbar2014gradient} we can select
a tangent flux $\mathbf{v}_{t}$ (resp. $\hat{\mathbf{v}}_{t}$) such
that $t$-almost surely, 
\[
|\dot{\rho}_{t}|^{2}=\mathcal{A}(\rho_{t},\mathbf{v}_{t};\pi)\text{ and }|\dot{\hat{\rho}}_{t}|^{2}=\mathcal{A}(\rho_{t},\hat{\mathbf{v}}_{t};\pi).
\]
Now, consider the curve $\tilde{\rho}_{t}:=\frac{1}{2}\rho_{t}+\frac{1}{2}\hat{\rho}_{t}$
with tangent flux $\tilde{\mathbf{v}}_{t}:=\frac{1}{2}\mathbf{v}_{t}+\frac{1}{2}\hat{\mathbf{v}}_{t}$.
Observe that $\left(\tilde{\rho}_{t},\tilde{\mathbf{v}}_{t}\right)_{t\in[0,\tau]}\in\mathcal{CE}_{\tau}$
for all $T\geq\tau>0$, and since the action is jointly convex, and
the relative Fisher information is convex in the first argument, and
the relative entropy is \emph{strictly} convex in the first argument,
we find that for almost all $\tau>0$,
\begin{align*}
0 & >\mathcal{H}(\tilde{\rho}_{\tau}\mid\pi)-\mathcal{H}(\rho_{0}\mid\pi)+\frac{1}{2}\int_{0}^{\tau}\mathcal{A}\left(\tilde{\rho}_{t},\tilde{\mathbf{v}}_{t};\pi\right)dt+\frac{1}{2}\int_{0}^{\tau}\mathcal{I}(\tilde{\rho}_{t}\mid\pi)dt\\
 & >\mathcal{H}(\tilde{\rho}_{\tau}\mid\pi)-\mathcal{H}(\rho_{0}\mid\pi)+\frac{1}{2}\int_{0}^{\tau}|\dot{\tilde{\rho}}_{t}|^{2}dt+\frac{1}{2}\int_{0}^{\tau}\mathcal{I}(\tilde{\rho}_{t}\mid\pi)dt.
\end{align*}
On the other hand, since $\sqrt{\mathcal{I}(\cdot\mid\pi)}$ is an
a.e. upper gradient for $\mathcal{H}(\cdot\mid\pi)$, we see that
for \emph{any} AC curve $(\mu_{t})_{t\in[0,T]}$ with $\mathcal{H}(\mu_{0}\mid\pi)<\infty$,
it holds for almost all $\tau\in[0,\infty)$ that 
\[
0\leq\mathcal{H}(\mu_{\tau}\mid\pi)-\mathcal{H}(\mu_{0}\mid\pi)+\frac{1}{2}\int_{0}^{\tau}|\dot{\mu}_{t}|^{2}dt+\frac{1}{2}\int_{0}^{\tau}\mathcal{I}(\mu_{t}\mid\pi)dt
\]
since $\mathcal{H}(\mu_{0}\mid\pi)\leq\liminf_{t_{n}\rightarrow0}\mathcal{H}(\mu_{t}\mid\pi)$.
We therefore obtain a contradiction.

It remains to show the last claim of the proposition. The argument
above shows that the two curves of maximal slope $(\rho_{t})_{t\in[0,T]},(\hat{\rho}_{t})_{t\in[0,T]}$
must agree on almost every interval of the form $[0,\tau]$ with $0<\tau\leq T$.
By density, and the fact that both $(\rho_{t})$ and $(\hat{\rho}_{t})$
are narrowly continuous curves, it follows that $\rho_{t}=\hat{\rho}_{t}$
for all $t\in[0,T]$.
\end{proof}
The following proposition provides ``evolutionary $\Gamma$-convergence''
result (in the sense of \cite{sandier2004gamma,serfaty2011gamma})
as the kernel $\eta$ and reference measure $\pi$ are varied; the
only caveat is that the proposition does not presuppose that $\sqrt{\mathcal{I}(\cdot\mid\pi)}$
is a strong/a.e. upper gradient for $\mathcal{H}(\cdot\mid\pi)$.
Note that in our setting, this is an improvement over what is already
done in \cite[Theorem 5.10]{peletier2020jump}, since their stability
result considers fixed jump kernel and reference measure only.
\begin{prop}[Stability of curves satisfying the EDI]
\label{prop:stability} Fix $\rho_{0},\pi\in\mathcal{P}(X)$ with
$\mathcal{H}(\rho_{0}\mid\pi)<\infty$. Additionally, let $(\rho_{0}^{n})_{n\in\mathbb{N}}$
and $(\pi_{n})_{n\in\mathbb{N}}$ be sequences in $\mathcal{P}(X)$
such that $\rho_{0}^{n}\rightharpoonup\rho_{0}$, $\pi_{n}\rightharpoonup\pi$,
and $\mathcal{H}(\rho_{0}^{n}\mid\pi_{n})\rightarrow\mathcal{H}(\rho_{0}\mid\pi)$;
and let $\eta_{n}$ be a sequence of kernels converging uniformly
on compacts $K\subset G$ to $\eta$, such that $\{\eta_{n},\pi_{n}\}_{n\in\mathbb{N}}\cup\{\eta,\pi\}$
satisfy Assumption \ref{assu:eta=000020properties}. 

Fix $T>0$. Suppose that for each $n$, $(\rho_{t}^{n})_{t\in[0,T]}$
satisfies 
\[
0\geq\mathcal{H}(\rho_{T}^{n}\mid\pi_{n})-\mathcal{H}(\rho_{0}^{n}\mid\pi_{n})+\frac{1}{2}\int_{0}^{T}|\dot{\rho}_{t}^{n}|_{\mathcal{W}_{\eta_{n},\pi_{n}}}^{2}dt+\frac{1}{2}\int_{0}^{T}\mathcal{I}_{\eta_{n}}(\rho_{t}^{n}\mid\pi)dt.
\]
Assume also that $t\mapsto\mathcal{H}(\rho_{t}^{n}\mid\pi_{n})$ is
non-increasing for each $n\in\mathbb{N}$. Then, there exists a subsequence
along $n$ (not relabelled) such that as $n\rightarrow\infty$, $(\rho_{t}^{n})_{t\in[0,T]}$
converges narrowly, uniformly in $t$, to some $(\rho_{t})_{t\in[0,T]}$
beginning at $\rho_{0}$, and this $(\rho_{t})_{t\in[0,T]}$ satisfies
\[
0\geq\mathcal{H}(\rho_{T}\mid\pi)-\mathcal{H}(\rho_{0}\mid\pi)+\frac{1}{2}\int_{0}^{T}|\dot{\rho}_{t}|_{\mathcal{W}_{\eta,\pi}}^{2}dt+\frac{1}{2}\int_{0}^{T}\mathcal{I}_{\eta}(\rho_{t}\mid\pi)dt.
\]
In particular, if $\sqrt{\mathcal{I}_{\eta_{n}}(\cdot\mid\pi)}$ is
an a.e. upper gradient for $\mathcal{H}(\cdot\mid\pi)$ with respect
to $\mathcal{W}_{\eta_{n},\pi_{n}}$ for each $n$, and $\sqrt{\mathcal{I}_{\eta}(\cdot\mid\pi)}$
is an a.e. upper gradient for $\mathcal{H}(\cdot\mid\pi)$ with respect
to $\mathcal{W}_{\eta,\pi}$, this establishes evolutionary $\Gamma$-convergence
of curves of maximal slope as $n\rightarrow\infty$.
\end{prop}

\begin{proof}
Let $C=\sup_{n}\mathcal{H}(\rho_{0}^{n}\mid\pi_{n})$. For each $n$,
it holds in particular that
\[
\frac{1}{2}\int_{0}^{T}|\dot{\rho}_{t}^{n}|_{\mathcal{W}_{\eta_{n},\pi_{n}}}^{2}dt\leq\mathcal{H}(\rho_{0}^{n}\mid\pi^{n})\leq C.
\]
Thus, choosing $\mathbf{v}_{t}^{n}$ such that $t$-almost surely,
$|\dot{\rho}_{t}^{n}|_{\mathcal{W}_{\eta_{n},\pi_{n}}}^{2}=\mathcal{A}_{\eta_{n}}(\rho_{t}^{n},\mathbf{v}_{t}^{n};\pi_{n})$,
we see that
\[
\sup_{n}\int_{0}^{T}\mathcal{A}_{\eta_{n}}(\rho_{t}^{n},\mathbf{v}_{t}^{n};\pi^{n})dt<\infty,
\]
and so applying the compactness result of Proposition \ref{prop:compactness-nce},
we see that along some subsequence (not relabelled), $(\rho_{t}^{n},\mathbf{v}_{t}^{n})_{t\in[0,T]}$
converges to some $(\rho_{t},\mathbf{v}_{t})_{t\in[0,T]}$ which solves
the nonlocal continuity equation, and such that 
\[
\liminf_{n\rightarrow\infty}\int_{0}^{T}|\dot{\rho}_{t}^{n}|^{2}dt=\liminf_{n\rightarrow\infty}\int_{0}^{T}\mathcal{A}_{\eta_{n}}(\rho_{t}^{n},\mathbf{v}_{t}^{n};\pi_{n})dt\geq\int_{0}^{T}\mathcal{A}_{\eta}(\rho_{t},\mathbf{v}_{t};\pi)dt\geq\int_{0}^{T}|\dot{\rho}_{t}|_{\mathcal{W}_{\eta,\pi}}^{2}dt.
\]
At the same time, using the joint weak{*} lower semicontinuity of
the relative entropy and the assumption that $t\mapsto\mathcal{H}(\rho_{t}^{n}\mid\pi_{n})$
is non-increasing for each $n\in\mathbb{N}$, we see that for all
$t\in[0,T]$, 
\[
\mathcal{H}(\rho_{t}\mid\pi)\leq\liminf_{n\rightarrow\infty}\mathcal{H}(\rho_{t}^{n}\mid\pi^{n})\leq\sup_{n\rightarrow\infty}\mathcal{H}(\rho_{0}^{n}\mid\pi)=C
\]
and so in particular $\rho_{t}^{n}\ll\pi_{n}$ and $\rho_{t}\ll\pi$.
Therefore, by Fatou's lemma and the joint sequential lower semicontinuity
property of $\mathcal{I}$ from Proposition \ref{prop:fisher-convex-lsc},
we find that 
\[
\liminf_{n\rightarrow\infty}\int_{0}^{T}\mathcal{I}_{\eta_{n}}(\rho_{t}^{n}\mid\pi^{n})dt\geq\int_{0}^{T}\liminf_{n\rightarrow\infty}\mathcal{I}_{\eta_{n}}(\rho_{t}^{n}\mid\pi^{n})dt\geq\int_{0}^{T}\mathcal{I}_{\eta}(\rho_{t}\mid\pi)dt.
\]
Lastly, since we assumed that $\mathcal{H}(\rho_{0}^{n}\mid\pi^{n})\rightarrow\mathcal{H}(\rho_{0}\mid\pi)$,
we conclude that 
\[
0\geq\mathcal{H}(\rho_{T}\mid\pi)-\mathcal{H}(\rho_{0}\mid\pi)+\frac{1}{2}\int_{0}^{T}|\dot{\rho}_{t}|_{\mathcal{W}_{\eta,\pi}}^{2}dt+\frac{1}{2}\int_{0}^{T}\mathcal{I}_{\eta}(\rho_{t}\mid\pi)dt
\]
as desired.
\end{proof}
In the next lemma, we note that the assumption in the second sentence
of the preceding proposition is not vacuous. The proof is a routine
application of semi-discrete optimal transport and is omitted. 
\begin{lem}[Existence of recovery sequence for relative entropy]
\label{lem:existence-well-prepared-initial} Let $\rho,\pi\in\mathcal{P}_{2}(X)$
with $\pi\ll\text{Leb}^{d}$, and suppose that $\mathcal{H}(\rho\mid\pi)<\infty$.
Then, there exists a pair of sequences $(\rho^{n})_{n\in\mathbb{N}}$
and $(\pi^{n})_{n\in\mathbb{N}}$ in $\mathcal{P}(X)$ such that $\rho^{n}\stackrel{*}{\rightharpoonup}\rho$,
$\pi^{n}\stackrel{*}{\rightharpoonup}\pi$, and $\mathcal{H}(\rho^{n}\mid\pi^{n})\rightarrow\mathcal{H}(\rho\mid\pi)$.
\end{lem}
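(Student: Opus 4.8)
The plan is to take the recovery sequences to be the spatial mollifications of $\rho$ and $\pi$. Fix a standard convolution kernel $k$ supported on $B(0,1)$, write $k_{\delta}(x)=\delta^{-d}k(x/\delta)$, and set $\rho^{n}:=k_{1/n}*\rho$ and $\pi^{n}:=k_{1/n}*\pi$. The routine structural facts come first: convolution with a probability density preserves total mass and positivity, so $\rho^{n},\pi^{n}\in\mathcal{P}(X)$; since $k$ is compactly supported one has $\int|x|^{2}\,d(k_{\delta}*\mu)\le 2\int|x|^{2}\,d\mu+2\delta^{2}$, hence $\rho^{n},\pi^{n}\in\mathcal{P}_{2}(X)$ as well (this is automatic on $\mathbb{T}^{d}$); and both $\rho^{n}$ and $\pi^{n}$ have smooth Lebesgue densities.

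Next I would verify the narrow convergences $\rho^{n}\stackrel{*}{\rightharpoonup}\rho$ and $\pi^{n}\stackrel{*}{\rightharpoonup}\pi$. Let $\varphi\in C_{b}(X)$. By Fubini, $\int\varphi\,d(k_{\delta}*\mu)=\int(k_{\delta}*\varphi)\,d\mu$, where $(k_{\delta}*\varphi)(y)=\int k_{\delta}(|z|)\varphi(y-z)\,dz$. For each fixed $y$, continuity of $\varphi$ gives $(k_{\delta}*\varphi)(y)\to\varphi(y)$ as $\delta\to0$, while $\|k_{\delta}*\varphi\|_{\infty}\le\|\varphi\|_{\infty}$; since $\mu$ is a finite measure, dominated convergence yields $\int\varphi\,d(k_{\delta}*\mu)\to\int\varphi\,d\mu$. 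As $\varphi\in C_{b}(X)$ was arbitrary this is the claimed narrow convergence, applied with $\mu=\rho$ and $\mu=\pi$.

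Finally, the entropy convergence $\mathcal{H}(\rho^{n}\mid\pi^{n})\to\mathcal{H}(\rho\mid\pi)$ is precisely Lemma \ref{lem:convolution-regularity-entropy} with $\delta=1/n$, which gives both $\mathcal{H}(k_{\delta}*\rho\mid k_{\delta}*\pi)\le\mathcal{H}(\rho\mid\pi)$ and $\lim_{\delta\to0}\mathcal{H}(k_{\delta}*\rho\mid k_{\delta}*\pi)=\mathcal{H}(\rho\mid\pi)$. There is thus no substantive obstacle in the present statement: the only genuinely nontrivial point --- upper semicontinuity of $\delta\mapsto\mathcal{H}(k_{\delta}*\rho\mid k_{\delta}*\pi)$ at $\delta=0$, established via the joint convexity of $\mathcal{H}$ together with the translation-invariance $\frac{d(\rho\circ\tau_{z}^{-1})}{d(\pi\circ\tau_{z}^{-1})}=\frac{d\rho}{d\pi}$ --- is already carried out in Lemma \ref{lem:convolution-regularity-entropy}, and the present lemma merely repackages it. The extra hypotheses $\rho,\pi\in\mathcal{P}_{2}(X)$ and $\pi\ll\text{Leb}^{d}$ play no role in the three claimed convergences but are retained for compatibility with the setting in which the recovery sequence is later used.
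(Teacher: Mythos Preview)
Your proof is correct for the lemma as stated, but it takes a genuinely different route from the paper's. The paper does not mollify: instead it fixes a sequence of \emph{discretely supported} $\pi^{n}\rightharpoonup^{*}\pi$, lets $T_{n}$ be the semi-discrete quadratic optimal transport map from $\pi$ to $\pi^{n}$ (which exists precisely because $\pi\ll\text{Leb}^{d}$ and $\pi\in\mathcal{P}_{2}$), sets $\rho^{n}:=(T_{n})_{\#}\rho$, and invokes the data-processing inequality $\mathcal{H}((T_{n})_{\#}\rho\mid(T_{n})_{\#}\pi)\le\mathcal{H}(\rho\mid\pi)$ together with joint lower semicontinuity of $\mathcal{H}$. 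Your mollification argument is arguably cleaner and, as you note, makes the lemma a corollary of Lemma~\ref{lem:convolution-regularity-entropy}; it also shows that the hypotheses $\pi\ll\text{Leb}^{d}$ and $\rho,\pi\in\mathcal{P}_{2}$ are not actually needed for the bare statement. What the paper's construction buys, however, is that the resulting $\pi^{n}$ are atomic: this is exactly what is exploited in Step~1 of Theorem~\ref{thm:existence}, where the existence proof bootstraps from the finite Markov chain setting of \cite{maas2011gradient} and explicitly takes ``$\rho_{0}^{n}$ as in the proof of Lemma~\ref{lem:existence-well-prepared-initial}''. Your smooth $\rho^{n},\pi^{n}$ would not plug into that argument.
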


\subsection{Chain rule}

In this section we provide a ``chain rule'' for the relative entropy
along AC curves $\rho_{t}$; this will allow us to show that $\sqrt{\mathcal{I}(\cdot\mid\pi)}$
is an a.e./strong upper gradient, and that curves of maximal slope
can be identified with entropy weak solutions to $(\dagger)$.
\begin{lem}
\label{lem:Cauchy-Schwarz}Let $(\rho_{t},\mathbf{v}_{t})_{t\in[0,T]}\in\mathcal{CE}_{T}$.
 Suppose that $\rho_{t}\ll\pi$ for almost all $t\in[0,T]$. Then
\[
\left|\iint_{G}\bar{\nabla}\log\frac{d\rho_{t}}{d\pi}d\mathbf{v}_{t}\right|\leq\sqrt{\mathcal{I}(\rho_{t}\mid\pi)}\mathcal{A}(\rho_{t},\mathbf{v}_{t})^{1/2}\qquad t\text{-a.s.}.
\]
In the case where $\mathbf{v}_{t}$ at a given $t\in[0,T]$ is a tangent
flux we have for almost all $t$ that
\[
\left|\iint_{G}\bar{\nabla}\log\frac{d\rho_{t}}{d\pi}d\mathbf{v}_{t}\right|\leq\sqrt{\mathcal{I}(\rho_{t}\mid\pi)}|\rho_{t}^{\prime}|
\]
with equality iff $d\mathbf{v}_{t}(x,y)=\lambda\bar{\nabla}\frac{d\rho_{t}}{d\pi}(x,y)\eta(x,y)d\pi(x)d\pi(y)$
for some $\lambda\in\mathbb{R}$. 
\end{lem}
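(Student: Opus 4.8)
The plan is to push everything onto a single integral over $G$ against a common dominating measure, and then read off the estimate as the Cauchy--Schwarz inequality for the bilinear form underlying the action. Fix $t$ with $\rho_t\ll\pi$, write $u:=\frac{d\rho_t}{d\pi}$, $\mu^1:=\eta\cdot((\rho_t\otimes\pi)\llcorner G)$, $\mu^2:=\eta\cdot((\pi\otimes\rho_t)\llcorner G)$, and pick $\lambda\in\mathcal{M}_{loc}^{+}(G)$ dominating $|\mathbf{v}_t|,\mu^1,\mu^2$ (say $\lambda=|\mathbf{v}_t|+\mu^1+\mu^2$); set $w:=\frac{d\mathbf{v}_t}{d\lambda}$, $s:=\frac{d\mu^1}{d\lambda}$, $r:=\frac{d\mu^2}{d\lambda}$. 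Since $\eta$ is symmetric and $d\rho_t=u\,d\pi$, the densities $s$ and $r$ are $u(x)$ and $u(y)$ times a common nonnegative factor, so $\bar{\nabla}\log u(x,y)=\log r-\log s$ holds $\lambda$-a.e.\ on $\{r>0,\,s>0\}$; on the complement, using $\tfrac00=0$ and $r(\log r-\log 0)=\infty$, either the integrand of the left-hand side vanishes or the integrand defining $\mathcal{I}(\rho_t\mid\pi)$ is $+\infty$, and that portion of $G$ may be discarded. We may also assume $\mathcal{I}(\rho_t\mid\pi)$ and $\mathcal{A}(\rho_t,\mathbf{v}_t)$ are finite, else the bound is trivial. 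Throughout $\theta$ is the logarithmic mean.

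With this notation $\iint_G\bar{\nabla}\log u\,d\mathbf{v}_t=\iint_G(\log r-\log s)\,w\,d\lambda$ and $\mathcal{A}(\rho_t,\mathbf{v}_t)=\iint_G\frac{w^2}{2\theta(s,r)}\,d\lambda$ directly from Definition~\ref{def:action}. The crucial ingredient is the defining relation of the logarithmic mean, $\theta(r,s)\,(\log r-\log s)=r-s$, equivalently $(\log r-\log s)(r-s)=\frac{(r-s)^2}{\theta(r,s)}$; combined with the identity $\mathcal{I}(\rho_t\mid\pi)=\tfrac12\iint_G\bar{\nabla}u\,\bar{\nabla}\log u\,\eta\,d\pi\otimes d\pi$ established in the proof of Proposition~\ref{prop:fisher-convex-lsc} and the observation $\bar{\nabla}u\,\eta\,d\pi\otimes d\pi=\mu^2-\mu^1$, this yields $\mathcal{I}(\rho_t\mid\pi)=\tfrac12\iint_G(\log r-\log s)(r-s)\,d\lambda=\tfrac12\iint_G\frac{(r-s)^2}{\theta(s,r)}\,d\lambda$. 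In particular $\mathcal{I}(\rho_t\mid\pi)=\mathcal{A}(\rho_t,\mathbf{w})$, where $\mathbf{w}:=\bar{\nabla}\frac{d\rho_t}{d\pi}\,\eta\,d\pi\otimes d\pi=\mu^2-\mu^1$ is the ``entropy-gradient flux'', and $\iint_G\bar{\nabla}\log u\,d\mathbf{v}_t$ is (a fixed multiple of) the value at $(\mathbf{w},\mathbf{v}_t)$ of the symmetric positive semi-definite bilinear form $\langle\mathbf{a},\mathbf{b}\rangle:=\iint_G\frac{(d\mathbf{a}/d\lambda)(d\mathbf{b}/d\lambda)}{2\theta(s,r)}\,d\lambda$, which the explicit formula in Definition~\ref{def:action} exhibits as the polarization of $\mathbf{a}\mapsto\mathcal{A}(\rho_t,\mathbf{a})$ (convexity and $1$-homogeneity being recorded there and in Proposition~\ref{prop:action convex lsc}).

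The inequality is now Cauchy--Schwarz. Concretely: since $(\log r-\log s)(r-s)\ge 0$, one has the pointwise identity $|\log r-\log s|\,|w|=\sqrt{(\log r-\log s)(r-s)}\cdot\sqrt{w^2/\theta(s,r)}$, and integrating against $d\lambda$, applying the $L^2(\lambda)$ Cauchy--Schwarz inequality, and using the normalizations of the previous paragraph gives $\bigl|\iint_G\bar{\nabla}\log u\,d\mathbf{v}_t\bigr|\le\sqrt{\mathcal{I}(\rho_t\mid\pi)}\,\mathcal{A}(\rho_t,\mathbf{v}_t)^{1/2}$ for a.e.\ $t$; equivalently, apply Cauchy--Schwarz to $\langle\cdot,\cdot\rangle$ with the pair $(\mathbf{w},\mathbf{v}_t)$. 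When $\mathbf{v}_t$ is the tangent flux, Fact~\ref{fact:tangent-flux} replaces $\mathcal{A}(\rho_t,\mathbf{v}_t)^{1/2}$ by the metric speed $|\rho_t^{\prime}|$ for a.e.\ $t$. For the equality clause: equality in Cauchy--Schwarz forces $w$ to be a $\lambda$-a.e.\ scalar multiple of $r-s$ on $\{\theta(s,r)>0\}$, which --- using that $\eta>0$ on $G$, that the tangent flux is concentrated on $\{\theta(s,r)>0\}$ (off this set $\mathcal{A}$ is infinite unless $w=0$), and unwinding the density identifications --- is exactly $d\mathbf{v}_t(x,y)=\lambda\,\bar{\nabla}\frac{d\rho_t}{d\pi}(x,y)\,\eta(x,y)\,d\pi(x)\,d\pi(y)$ for some $\lambda\in\mathbb{R}$; conversely such a $\mathbf{v}_t$ saturates the bound by the computations above.

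I expect the only real difficulty to be the measure-theoretic bookkeeping rather than the inequality itself: one must justify the identification $\bar{\nabla}\log u=\log r-\log s$ and the rewriting of $\mathcal{I}(\rho_t\mid\pi)$ as an integral of $\lambda$-densities on \emph{all} of $G$ --- in particular on the set where $u$ vanishes or blows up and on $\{\theta(s,r)=0\}$ --- keeping track of the $\tfrac00=0$ and $r(\log r-\log 0)=\infty$ conventions so that the two sides of the claimed inequality remain consistent there; and, for the equality case, one must handle the degeneracy of $\langle\cdot,\cdot\rangle$ (it is only positive semi-definite, so ``equality $\Rightarrow$ proportional'' genuinely uses that $\mathbf{v}_t$ is a tangent flux in order to exclude spurious contributions from the kernel of the form).
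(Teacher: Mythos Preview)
Your proposal is correct and follows essentially the same route as the paper: rewrite $\iint_G\bar{\nabla}\log\frac{d\rho_t}{d\pi}\,d\mathbf{v}_t$ as an $L^2$ pairing against a suitable reference measure, apply Cauchy--Schwarz, and use the logarithmic-mean identity $\theta(r,s)(\log r-\log s)=r-s$ to identify one factor as $\mathcal{I}(\rho_t\mid\pi)$ and the other as $\mathcal{A}(\rho_t,\mathbf{v}_t)$. The only cosmetic difference is that the paper invokes \cite[Lem.~2.3]{erbar2014gradient} to write $d\mathbf{v}_t=v_t\,\hat{\rho}_t\,\eta\,d\pi\otimes d\pi$ and then applies Cauchy--Schwarz directly in $L^2(\hat{\rho}_t\eta\,\pi\otimes\pi)$, whereas you work with a general dominating measure $\lambda$ and the densities $w,r,s$; the equality discussion is identical.
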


\begin{proof}
Recall the notation $\hat{\rho}_{t}(x,y)=\theta\left(\frac{d\rho_{t}}{d\pi}(x),\frac{d\rho_{t}}{d\pi}(y)\right)$.
We see that  under the condition that $\mathbf{v}_{t}=v_{t}(x,y)\hat{\rho}_{t}(x,y)\eta(x,y)d\pi(x)d\pi(y)$
(which holds $t$-a.s. by \cite[Lemma 2.3]{erbar2014gradient}) we
have
\[
\iint_{G}\bar{\nabla}\log\frac{d\rho_{t}}{d\pi}d\mathbf{v}_{t}=\iint_{G}\bar{\nabla}\left(\log\frac{d\rho_{t}}{d\pi}\right)(x,y)v_{t}(x,y)\hat{\rho}_{t}(x,y)\eta(x,y)d\pi(x)d\pi(y).
\]
Now, applying Cauchy-Schwarz to $L^{2}(\hat{\rho}\eta(\pi\otimes\pi))$,
we find that 
\begin{align*}
\iint_{G}\left|\bar{\nabla}\log\frac{d\rho_{t}}{d\pi}(v_{t})\right|\hat{\rho}_{t}\eta\pi\otimes\pi & \leq\left[\iint_{G}\left(\bar{\nabla}\log\frac{d\rho_{t}}{d\pi}\right)^{2}\hat{\rho}_{t}\eta\pi\otimes\pi\right]^{1/2}\left[\iint_{G}v_{t}^{2}\hat{\rho}_{t}\eta\pi\otimes\pi\right]^{1/2}\\
 & =\left[\iint_{G}\left(\bar{\nabla}\log\frac{d\rho_{t}}{d\pi}\right)^{2}\hat{\rho}\eta\pi\otimes\pi\right]^{1/2}\mathcal{A}(\rho_{t},\mathbf{v}_{t})^{1/2}.
\end{align*}
In the case where $\theta$ is the logarithmic mean, we have $\hat{\rho}=\frac{\bar{\nabla}\rho}{\bar{\nabla}\log\rho}$,
hence $\iint\left(\bar{\nabla}\log\frac{d\rho_{t}}{d\pi}\right)^{2}\hat{\rho}_{t}\eta\pi\otimes\pi=\mathcal{I}(\rho_{t}\mid\pi)$.
Now, if we have that $|\rho_{t}^{\prime}|=\mathcal{A}(\rho_{t},\mathbf{v}_{t})^{1/2}$
for a specific $t\in[0,T]$, we deduce
\[
\left|\iint_{G}\bar{\nabla}\log\frac{d\rho_{t}}{d\pi}d\mathbf{v}_{t}\right|\leq\sqrt{\mathcal{I}(\rho_{t}\mid\pi)}|\rho_{t}^{\prime}|.
\]
Lastly, we have equality in the instance of the Cauchy-Schwarz inequality
used above, precisely if $v_{t}(x,y)=\lambda\bar{\nabla}\log\frac{d\rho_{t}}{d\pi}(x,y)$
for some $\lambda\in\mathbb{R}$. Hence 
\begin{align*}
d\mathbf{v}_{t}(x,y) & =\lambda\bar{\nabla}\log\frac{d\rho_{t}}{d\pi}(x,y)\hat{\rho}_{t}(x,y)\eta(x,y)d\pi(x)d\pi(y)\\
 & =\lambda\bar{\nabla}\frac{d\rho_{t}}{d\pi}(x,y)\eta(x,y)d\pi(x)d\pi(y).
\end{align*}
\end{proof}
The following lemma is an application of Young's inequality at the
level of integrands. By integrating, it implies that $\left|\iint_{G}\bar{\nabla}\log\rho_{t}d\mathbf{v}_{t}\right|\leq\mathcal{I}(\rho_{t}\mid\pi)+\mathcal{A}(\rho_{t},\mathbf{v}_{t})$,
which can also be deduced by applying Young's inequality to the previous
lemma's statement.
\begin{lem}
\label{lem:Young's=000020inequality=000020action=000020fisher=000020integrands}Let
$\rho\in\mathcal{P}(X)$, $\pi\in\mathcal{M}_{loc}^{+}(X)$, and $\mathbf{v}\in\mathcal{M}_{loc}(G)$,
and let $\lambda$ be any dominating measure for the integrand of
the action $\mathcal{A}_{\eta}(\rho,\mathbf{v};\pi)$. Then, at the
level of integrands, we have pointwise in $(x,y)\in G$ that
\begin{align*}
\left|\bar{\nabla}\log\frac{d\rho}{d\pi}(x,y)\frac{d\mathbf{v}}{d\lambda}(x,y)\right| & \leq\frac{1}{2}\left(\bar{\nabla}\log\frac{d\rho}{d\pi}(x,y)\right)^{2}\theta\left(\frac{d(\rho\otimes\pi)}{d\lambda}(x,y),\frac{d(\pi\otimes\rho)}{d\lambda}(x,y)\right)\eta(x,y)\\
 & \qquad+\frac{\left(\frac{d\mathbf{v}}{d\lambda}(x,y)\right)^{2}}{2\theta\left(\frac{d(\rho\otimes\pi)}{d\lambda}(x,y),\frac{d(\pi\otimes\rho)}{d\lambda}(x,y)\right)\eta(x,y)}.
\end{align*}
\end{lem}

\begin{prop}[A.e. chain rule for the relative entropy]
 \label{prop:one=000020sided=000020chain=000020rule}Let $\pi\ll\text{Leb}^{d}$.%
\begin{comment}
Furthermore, let $\text{KL}(\rho_{0}\mid\pi)<\infty$.
\end{comment}
{} Suppose that $\eta$ and $\pi$ satisfy Assumption \ref{assu:eta=000020properties}.
Let $\theta$ be the logarithmic mean. Lastly, suppose that there
exists a $\mu_{0}\in\mathcal{P}(X)$ with $C^{1}$ density such that
$\mathcal{H}(\mu_{0}\mid\text{Leb}),\mathcal{I}_{\eta}(\mu_{0}\mid\text{Leb})<\infty$. 

Then, for any $\mathcal{W}_{\eta,\pi}$ AC curve $(\rho_{t})_{t\in[0,T]}$
in $\mathcal{P}(X)$ with tangent flux $(\mathbf{v}_{t})_{t\in[0,T]}$,
satisfying 
\[
\int_{0}^{T}\mathcal{H}(\rho_{t}\mid\pi)dt<\infty,\;\int_{0}^{T}\mathcal{I}(\rho_{t}\mid\pi)dt<\infty,\text{ and }\int_{0}^{T}\mathcal{A}_{\eta}(\rho_{t},\mathbf{v}_{t};\pi)dt<\infty
\]
\begin{comment}
and therefore $\text{esssup}_{\tau\in[0,T]}\mathcal{H}(\rho_{\tau}\mid\pi)<\infty,$
\end{comment}
it holds for almost all $\tau_{0}\leq\tau_{1}\in[0,T]$ that 
\[
\mathcal{H}(\rho_{\tau_{0}}\mid\pi)-\mathcal{H}(\rho_{\tau_{1}}\mid\pi)=-\frac{1}{2}\int_{\tau_{0}}^{\tau_{1}}\iint_{G}\bar{\nabla}\log\frac{d\rho_{t}}{d\pi}d\mathbf{v}_{t}dt
\]
and so $\sqrt{\mathcal{I}(\cdot\mid\pi)}$ is an a.e. upper gradient
for $\mathcal{H}(\cdot\mid\pi)$. Furthermore, for almost all $\tau\in[0,T]$,
it holds that 
\[
\mathcal{H}(\rho_{0}\mid\pi)-\mathcal{H}(\rho_{\tau}\mid\pi)\leq-\frac{1}{2}\int_{0}^{\tau}\iint_{G}\bar{\nabla}\log\frac{d\rho_{t}}{d\pi}d\mathbf{v}_{t}dt.
\]
\end{prop}

\begin{rem*}
The assumption that there exists a probability measure $\mu_{0}$
(with smooth density) such that $\mathcal{I}_{\eta}(\mu_{0}\mid\text{Leb})<\infty$
automatically holds in the case where $X=\mathbb{T}^{d}$, since if
one takes $\mu_{0}$ to be a constant multiple of the Lebesgue measure
one has $\mathcal{I}(\mu_{0}\mid\text{Leb})=0$. In the case where
$X=\mathbb{R}^{d}$, the same reasoning does not apply. Nonetheless
for certain specific kernels $\eta$ the existence of such a $\mu_{0}$
can be verified via extrinsic arguments: in particular, the results
of \cite[Section 5]{erbar2014gradient} show that if $\eta(x,y)$
is the fractional heat kernel on $\mathbb{R}^{d}$ for some $\alpha\in(0,2)$,
then if $\varphi_{t}(x)$ is the fundamental solution to the fractional
heat equation, it holds that $\mathcal{H}(\varphi_{t}dx\mid\text{Leb}),\mathcal{I}_{\eta}(\varphi_{t}dx\mid\text{Leb})<\infty$
for all positive $t$; more generally the existence of such a $\mu_{0}$
is implied by the technical assumption \cite[Assu. 5.5, eq. (5.5)]{erbar2014gradient}.
Finally, by Lemma \ref{lem:convolution-regularity-of-Fisher}, if
there exists a $\mu_{0}\in\mathcal{P}(X)$ such that $\mathcal{I}_{\eta}(\mu_{0}\mid\text{Leb})<\infty$,
then $k_{\delta}*\mu_{0}$ has smooth density and satisfies $\mathcal{I}_{\eta}(k_{\delta}*\mu_{0}\mid\text{Leb})\leq S\mathcal{I}_{\eta}(\mu_{0}\mid\text{Leb})<\infty$,
and likewise $\mathcal{H}(\cdot\mid\text{Leb})$ is stable w.r.t.
convolution with smooth mollifiers.
\end{rem*}
We first give a formal argument. Indeed, were it the case that the
relative entropy was differentiable in the first argument, we would
simply differentiate under the integral sign to see that
\[
\mathcal{H}(\rho_{T}\mid\pi)-\mathcal{H}(\rho_{0}\mid\pi)=\int_{0}^{T}\frac{d}{dt}\mathcal{H}(\rho_{t}\mid\pi)dt
\]
whence (setting $f_{t}=\frac{d\rho_{t}}{d\pi}$) we differentiate
under the integral sign again to compute that
\[
\frac{d}{dt}\mathcal{H}(\rho_{t}\mid\pi)=\frac{d}{dt}\int_{X}f_{t}\log f_{t}d\pi=\int_{X}(1+\log f_{t})\partial_{t}f_{t}d\pi.
\]
\begin{comment}
We claim that as measures, $\partial_{t}f_{t}d\pi=\partial_{t}\rho_{t}$.
Indeed, for any test function $\phi_{t}(x)$, 
\[
\int_{[0,1]\times\mathbb{R}^{d}}\phi\partial_{t}f_{t}d\pi=-\int_{[0,1]\times\mathbb{R}^{d}}\partial_{t}\phi f_{t}d\pi=-\int_{[0,1]\times\mathbb{R}^{d}}\partial_{t}\phi d\rho_{t}=\int_{[0,1]\times\mathbb{R}^{d}}\phi\partial_{t}\rho_{t}.
\]
\end{comment}
Next, we plug in $\partial_{t}f_{t}d\pi=\partial_{t}\rho_{t}=-\bar{\nabla}\cdot\mathbf{v}_{t}$,
which formally establishes that 
\[
\int_{X}(1+\log f_{t})\partial_{t}f_{t}d\pi=-\int_{X}(1+\log f_{t})d(\bar{\nabla}\cdot\mathbf{v}_{t})=\frac{1}{2}\iint_{G}\bar{\nabla}\log\frac{d\rho_{t}}{d\pi}d\mathbf{v}_{t}
\]
and so 
\[
\mathcal{H}(\rho_{0}\mid\pi)-\mathcal{H}(\rho_{T}\mid\pi)=-\frac{1}{2}\int_{0}^{T}\iint_{G}\bar{\nabla}\log\frac{d\rho_{t}}{d\pi}d\mathbf{v}_{t}dt.
\]
  Hence the lemma is ``proved''. 

The actual proof we give below goes according to the same strategy,
albeit with a careful regularization procedure. We defer the proof
to Appendix \ref{sec:Deferred-proofs}.

The preceding ``chain rule'' type result is enough for most of our
results below, but slightly weaker than is typically established (for
instance in \cite{erbar2016boltzmann,carrillo2022landau,peletier2020jump});
in particular it is not enough (when combined with Lemma \ref{lem:Cauchy-Schwarz})
to show that $\sqrt{\mathcal{I}(\cdot\mid\pi)}$ is indeed a strong
upper gradient for $\mathcal{H}(\cdot\mid\pi)$. Examining the proof
of Proposition \ref{prop:one=000020sided=000020chain=000020rule}
(in Appendix \ref{sec:Deferred-proofs} below), we see that the essential
stumbling block is in establishing that for \emph{every }$t\in[0,1]$,
$\mathcal{H}^{n}(h_{\sigma}*k_{\delta}*\rho_{t}\mid k_{\delta}*\pi)\rightarrow\mathcal{H}^{n}(k_{\delta}*\rho_{t}\mid\pi)$
as $\sigma\rightarrow0$ (where this notation is defined during the
proof).

In the following Corollary, we indicate alternate assumptions which
allow us to upgrade the result from Proposition \ref{prop:one=000020sided=000020chain=000020rule}
to a ``true'' chain rule, thereby establishing that $\sqrt{\mathcal{I}(\cdot\mid\pi)}$
is a strong upper gradient for $\mathcal{H}(\cdot\mid\pi)$. 
\begin{cor}
\label{cor:upgraded-chain-rule} Let $\pi\ll\text{Leb}^{d}$. Furthermore,
let $\mathcal{H}(\rho_{s_{0}}\mid\pi)<\infty$ for some $s_{0}\in[0,T]$.
Suppose that $\eta$ and $\pi$ satisfy Assumption \ref{assu:eta=000020properties}.
Take any $\mathcal{W}_{\eta,\pi}$ AC curve $(\rho_{t})_{t\in[0,T]}$
in $\mathcal{P}(X)$ satisfying 
\[
\int_{0}^{T}\mathcal{I}(\rho_{t}\mid\pi)dt<\infty,
\]
and assume at least one of the following holds:
\begin{enumerate}
\item $\sup_{x}\int\eta(x,y)d\pi(y)<\infty$,
\item $\sup_{t\in[0,T]}M_{2}(\rho_{t})<\infty$. 
\end{enumerate}
Then, for all $t_{0}\leq t_{1}\in[0,T]$, 
\[
\mathcal{H}(\rho_{t_{0}}\mid\pi)-\mathcal{H}(\rho_{t_{1}}\mid\pi)=-\int_{t_{0}}^{t_{1}}\iint_{G}\bar{\nabla}\log\rho_{t}d\mathbf{v}_{t}dt.
\]
\end{cor}

\begin{rem*}
We note that when $X=\mathbb{T}^{d}$, it automatically holds that
$\sup_{t\in[0,T]}M_{2}(\rho_{t})<\infty$. However, when $X$ is unbounded,
we expect that additional assumptions on $\eta$ and $\pi$ are required
to ensure propagation of 2nd moments along AC curves. In particular,
we mention \cite[Lem. 2.16]{esposito2019nonlocal}, which (in a slightly
different setting than ours) proves proves that one can bound $\sup_{t\in[0,T]}M_{2}(\rho_{t})$
in terms of $M_{2}(\rho_{0})$ and $\int_{0}^{T}\mathcal{A}(\rho_{t},\mathbf{v}_{t})dt$,
but under the additional assumption that $\eta$ satisfies a 4th moment
tail bound, namely 
\[
\sup_{x}\int\left(|x-y|^{2}\vee|x-y|^{4}\right)\eta(x,y)d\pi(y)<\infty.
\]
\end{rem*}
\begin{proof}
(sketch) The case where $\sup_{x}\int\eta(x,y)d\pi(y)<\infty$ is
already established in \cite[Thm. 4.16]{peletier2020jump}: note that
when $\sup_{x}\int\eta(x,y)d\pi(y)<\infty$, then ``Assumption $(V\pi\kappa)$''
in that article is satisfied.

If $\sup_{t\in[0,T]}M_{2}(\rho_{t})<\infty$, then one can argue as
in either of \cite[Section 4]{carrillo2022landau} or \cite[Section 4]{erbar2016boltzmann}.
In particular, the lemma of Carlen \& Carvallo (see \cite{carrillo2022landau}
for a statement) shows that one can use the uniform 2nd moment bound
to prove that $\mathcal{H}(k_{\delta}*\rho_{t}\mid\pi)$ is absolutely
continuous in $t$; one then uses the generalized dominated convergence
theorem, exactly as in the proof given in \cite{carrillo2022landau},
to show that for every $t\in[0,T]$, $t\in[0,1]$, $\mathcal{H}^{n}(h_{\sigma}*k_{\delta}*\rho_{t}\mid\pi)\rightarrow\mathcal{H}^{n}(k_{\delta}*\rho_{t}\mid\pi)$
as $\sigma\rightarrow0$.
\end{proof}
Finally we relate curves of maximal slope to weak solutions for equation
$(\dagger)$.
\begin{prop}
\label{prop:weak-soln-vs-gradient-flow}Let $(\rho_{t})_{t\in[0,T]}$
be a curve of maximal slope for $\mathcal{H}(\cdot\mid\pi)$, with
$\sqrt{\mathcal{I}_{\eta}(\cdot\mid\pi)}$ as an a.e. upper gradient
for $\mathcal{H}(\cdot\mid\pi)$. Additionally, suppose the same assumptions
of Proposition \ref{prop:one=000020sided=000020chain=000020rule}
or Corollary \ref{cor:upgraded-chain-rule}. Then, 
\[
(\rho_{t},\mathbf{v}_{t})_{t\in[0,T]};\qquad d\mathbf{v}_{t}(x,y)=-\bar{\nabla}\rho_{t}(x,y)\eta(x,y)d\pi(x)d\pi(y)
\]
solves the nonlocal continuity equation, and so $(\rho_{t})_{t\in[0,T]}$
is an entropy weak solution to the nonlocal diffusion equation $(\dagger)$.

Furthermore, the following converse holds. We consider the following
cases: 
\begin{enumerate}
\item $\sqrt{\mathcal{I}(\cdot\mid\pi)}$ is a strong upper gradient for
$\mathcal{H}(\cdot\mid\pi)$;
\item $\sqrt{\mathcal{I}(\cdot\mid\pi)}$ is an a.e. upper gradient for
$\mathcal{H}(\cdot\mid\pi)$. 
\end{enumerate}
Then, in case (1), any weak solution to $(\dagger)$  is the unique
curve of maximal slope for $\mathcal{H}(\cdot\mid\pi)$. In particular,
there is at most one weak solution to $(\dagger)$ and it is also
an entropy weak solution. In case (2), we merely have that for almost
all $\tau_{0}\leq\tau_{1}\in[0,T]$, 
\[
\mathcal{H}(\rho_{\tau_{0}}\mid\pi)-\mathcal{H}(\rho_{\tau_{1}}\mid\pi)=\frac{1}{2}\int_{\tau_{0}}^{\tau_{1}}\mathcal{I}(\rho_{t}\mid\pi)dt+\frac{1}{2}\int_{\tau_{0}}^{\tau_{1}}|\rho_{t}^{\prime}|^{2}dt.
\]
In this case, $(\rho_{t})_{t\in[0,T]}$ is the unique weak solution
to $(\dagger)$, and if we assume that $(\rho_{t})_{t\in[0,T]}$
is an entropy weak solution, then $(\rho_{t})_{t\in[0,T]}$ is the
unique curve of maximal slope for $\mathcal{H}(\cdot\mid\pi)$. Alternatively,
if we do not assume that $(\rho_{t})_{t\in[0,T]}$ is an entropy weak
solution, but we know that there exists a curve of maximal slope beginning
at $\rho_{0}$, then we can conclude (as in case (1)) that $(\rho_{t})_{t\in[0,T]}$
is the unique curve of maximal slope for $\mathcal{H}(\cdot\mid\pi)$
and is therefore an entropy weak solution to $(\dagger)$ .
\end{prop}

\begin{rem*}
The converse to this proposition leaves open the possibility that:
when $\sqrt{\mathcal{I}(\cdot\mid\pi)}$ is merely an a.e. upper gradient
for $\mathcal{H}(\cdot\mid\pi)$, if no curve of maximal slope for
$\mathcal{H}(\cdot\mid\pi)$ beginning at $\rho_{0}$ exists, then
there can be a (possibly unique) weak solution to $(\dagger)$ which
is not an entropy weak solution. While we still have that the entropy
dissipation equality is satisfied for almost all pairs of timepoints
$(\tau_{0},\tau_{1})$, it may be that $t\mapsto\mathcal{H}(\rho_{t}\mid\pi)$
has a nullset of timepoints which are jump discontinuities. This situation
would in some sense be unphysical, and we are not aware of an actual
example of such a non-entropy weak solution to $(\dagger)$.
\end{rem*}
\begin{proof}
Let $(\mathbf{v}_{t})_{t\in[0,T]}$ be chosen so that $\mathcal{A}(\rho_{t},\mathbf{v}_{t})=|\rho_{t}^{\prime}|^{2}$
for almost all $t\in[0,T]$. From the fact that $(\rho_{t})_{t\in[0,T]}$
is a curve of maximal slope, we know that $\mathcal{H}(\rho_{t_{0}})-\mathcal{H}(\rho_{t_{1}})\geq0$
for all $t_{0}\leq t_{1}\in[0,T]$. Likewise, the other hypotheses
of either Proposition \ref{prop:one=000020sided=000020chain=000020rule}
or Corollary \ref{cor:upgraded-chain-rule} are all satisfied, so
we know that for almost all $\tau\in[0,T]$, 
\[
\mathcal{H}(\rho_{t_{0}})-\mathcal{H}(\rho_{t_{1}})\leq-\int_{t_{0}}^{t_{1}}\iint_{G}\bar{\nabla}\log\rho_{t}d\mathbf{v}_{t}dt\leq\int_{t_{0}}^{t_{1}}\sqrt{\mathcal{I}(\rho_{t}\mid\pi)}|\dot{\rho}_{t}|dt\leq\frac{1}{2}\int_{t_{0}}^{t_{1}}\mathcal{I}(\rho_{t}\mid\pi)dt+\frac{1}{2}\int_{t_{0}}^{t_{1}}|\dot{\rho}_{t}|^{2}dt.
\]
But since $(\rho_{t})_{t\in[0,T]}$ is a curve of maximal slope, that
is,
\[
\mathcal{H}(\rho_{t_{0}})-\mathcal{H}(\rho_{t_{1}})\geq\frac{1}{2}\int_{t_{0}}^{t_{1}}\mathcal{I}(\rho_{t}\mid\pi)dt+\frac{1}{2}\int_{t_{0}}^{t_{1}}|\dot{\rho}_{t}|^{2}dt,
\]
all of these inequalities must in fact be equalities. Hence, by the
equality condition for Young's inequality, for almost all $t\in[0,T]$
it holds that
\[
\sqrt{\mathcal{I}(\rho_{t}\mid\pi)}=|\dot{\rho}_{t}|.
\]
Together with the equality condition in Lemma \ref{lem:Cauchy-Schwarz},
this implies that for almost all $t\in[0,T]$, $d\mathbf{v}_{t}(x,y)=-\bar{\nabla}\rho_{t}\eta(x,y)d\pi(x)d\pi(y)$.

Conversely, suppose that $(\rho_{t})_{t\in[0,T]}$ is a weak solution
to $(\dagger)$%
\begin{comment}
, and that for almost all $t\in[0,T]$, $\mathcal{H}(\rho_{t}\mid\pi)\leq\mathcal{H}(\rho_{0}\mid\pi)<\infty$
\end{comment}
. In this case, for almost all $t\in[0,T]$, $d\mathbf{v}_{t}(x,y)=-\bar{\nabla}\rho_{t}\eta(x,y)d\pi(x)d\pi(y)$,
and we compute that $\mathcal{A}(\rho_{t},\mathbf{v}_{t})=\mathcal{I}(\rho_{t}\mid\pi)$. 

We first consider case (1), where $\mathcal{I}(\cdot\mid\pi)$ is
a strong upper gradient for $\mathcal{H}(\cdot\mid\pi)$.%
\begin{comment}
Moreover, we know that $\rho_{t}\ll\pi$ $t$-a.e., hence by \cite[Cor. 4.10 and Prop. 4.11]{erbar2014gradient}
it holds $t$-a.e. that $\mathcal{A}(\rho_{t},\mathbf{v}_{t})=|\rho_{t}^{\prime}|$.
{[}more precisely, need to check by hand that $\bar{\nabla}\log\rho_{t}$
belongs to $\overline{\{\bar{\nabla}\varphi:\varphi\in C_{c}^{\infty}(\mathbb{R}^{d})\}}^{L^{2}(\hat{\rho}\eta\pi\otimes\pi)}$.
Is this obvious??{]}
\end{comment}
{} Fix any $0\leq t_{0}\leq t_{1}\leq T$. Since 
\[
\int_{t_{0}}^{t_{1}}\mathcal{I}(\rho_{t}\mid\pi)dt=\int_{t_{0}}^{t_{1}}\mathcal{A}(\rho_{t},\mathbf{v}_{t})dt<\infty
\]
we can apply Corollary \ref{cor:upgraded-chain-rule} to deduce that
\begin{align*}
\mathcal{H}(\rho_{t_{0}}\mid\pi)-\mathcal{H}(\rho_{t_{1}}\mid\pi) & =-\int_{t_{0}}^{t_{1}}\bar{\nabla}\log\rho_{t}d\mathbf{v}_{t}dt.
\end{align*}
In turn, we compute that 
\[
-\int_{t_{0}}^{t_{1}}\bar{\nabla}\log\rho_{t}d\mathbf{v}_{t}dt=\int_{t_{0}}^{t_{1}}\mathcal{I}(\rho_{t}\mid\pi)dt=\frac{1}{2}\int_{t_{0}}^{t_{1}}\mathcal{I}(\rho_{t}\mid\pi)dt+\frac{1}{2}\int_{\tau_{0}}^{\tau_{1}}\mathcal{A}(\rho_{t},\mathbf{v}_{t})dt.
\]
 But since $\mathcal{A}(\rho_{t},\mathbf{v}_{t})\geq|\dot{\rho}_{t}|^{2}$
for almost all $t$ by \cite[Prop. 4.9]{erbar2014gradient}, we have
that 
\[
\mathcal{H}(\rho_{t_{0}}\mid\pi)-\mathcal{H}(\rho_{t_{1}}\mid\pi)\geq\frac{1}{2}\int_{t_{0}}^{t_{1}}\mathcal{I}(\rho_{t}\mid\pi)dt+\frac{1}{2}\int_{t_{0}}^{t_{1}}|\rho_{t}^{\prime}|^{2}dt
\]
so that $\rho_{t}$ is a curve of maximal slope, as desired. On the
other hand, by using the fact that (thanks to Lemma \ref{lem:Cauchy-Schwarz})
$\mathcal{I}(\cdot\mid\pi)$ is a strong upper gradient for $\mathcal{H}(\cdot\mid\pi)$,
and applying Young's inequality, we have that
\[
\mathcal{H}(\rho_{t_{0}}\mid\pi)-\mathcal{H}(\rho_{t_{1}}\mid\pi)\leq\frac{1}{2}\int_{t_{0}}^{t_{1}}\mathcal{I}(\rho_{t}\mid\pi)dt+\frac{1}{2}\int_{t_{0}}^{t_{1}}|\rho_{t}^{\prime}|^{2}dt,
\]
so in fact 
\[
\mathcal{H}(\rho_{t_{0}}\mid\pi)-\mathcal{H}(\rho_{t_{1}}\mid\pi)=\frac{1}{2}\int_{t_{0}}^{t_{1}}\mathcal{I}(\rho_{t}\mid\pi)dt+\frac{1}{2}\int_{t_{0}}^{t_{1}}|\rho_{t}^{\prime}|^{2}dt.
\]
 Uniqueness now follows from Proposition \ref{prop:uniqueness}.

Now, consider case (2), where it is only known that $\mathcal{I}(\cdot\mid\pi)$
is an a.e. upper gradient for $\mathcal{H}(\cdot\mid\pi)$. Hence,
for almost all $0\leq\tau_{0}\leq\tau_{1}\leq T$, it holds that 
\begin{align*}
\mathcal{H}(\rho_{\tau_{0}}\mid\pi)-\mathcal{H}(\rho_{\tau_{1}}\mid\pi) & =-\int_{\tau_{0}}^{\tau_{1}}\bar{\nabla}\log\rho_{t}d\mathbf{v}_{t}dt\\
 & \geq\frac{1}{2}\int_{\tau_{0}}^{\tau_{1}}\mathcal{I}(\rho_{t}\mid\pi)dt+\frac{1}{2}\int_{\tau_{0}}^{\tau_{1}}|\rho_{t}^{\prime}|^{2}dt.
\end{align*}
In particular, $(\rho_{t})_{t\in[0,T]}$ is a curve of maximal slope
when restricted to $[\tau_{0},\tau_{1}]$. Arguing as in case (1),
by applying Lemma \ref{lem:Cauchy-Schwarz} and Young's inequality,
we deduce that in fact, 
\[
\mathcal{H}(\rho_{\tau_{0}}\mid\pi)-\mathcal{H}(\rho_{\tau_{1}}\mid\pi)=\frac{1}{2}\int_{\tau_{0}}^{\tau_{1}}\mathcal{I}(\rho_{t}\mid\pi)dt+\frac{1}{2}\int_{\tau_{0}}^{\tau_{1}}|\rho_{t}^{\prime}|^{2}dt.
\]
In turn, by weak{*} lower semicontinuity of $\mathcal{H}(\cdot\mid\pi)$,
this implies that for almost all $\tau_{0}\in[0,T]$, 
\[
\mathcal{H}(\rho_{\tau_{0}}\mid\pi)-\mathcal{H}(\rho_{T}\mid\pi)\geq\frac{1}{2}\int_{\tau_{0}}^{T}\mathcal{I}(\rho_{t}\mid\pi)dt+\frac{1}{2}\int_{\tau_{0}}^{T}|\rho_{t}^{\prime}|^{2}dt.
\]
Now, suppose that there exists some AC curve $(\tilde{\rho}_{t})_{t\in[0,T]}$
with $\rho_{0}=\tilde{\rho}_{0}$, which is also a weak solution for
$(\dagger)$. Then by the same reasoning, it also holds that for almost
all $\tau_{0},\tau_{1}\in[0,T]$, 
\[
\mathcal{H}(\tilde{\rho}_{\tau_{0}}\mid\pi)-\mathcal{H}(\tilde{\rho}_{\tau_{1}}\mid\pi)=\frac{1}{2}\int_{\tau_{0}}^{\tau_{1}}\mathcal{I}(\tilde{\rho}_{t}\mid\pi)dt+\frac{1}{2}\int_{\tau_{0}}^{\tau_{1}}|\tilde{\rho}_{t}^{\prime}|^{2}dt.
\]
Hence by Proposition \ref{prop:uniqueness}, we deduce that for almost
all $\tau_{0}\leq\tau_{1}\in[0,T]$, $\rho_{t}=\tilde{\rho}_{t}$
when $t\in[\tau_{0},\tau_{1}]$. Quantifying over all such $\tau_{0},\tau_{1}$,
and using the fact that $(\rho_{t})_{t\in[0,T]}$ and $(\tilde{\rho}_{t})_{t\in[0,T]}$
are both narrowly continuous, we conclude that $\rho_{t}=\tilde{\rho}_{t}$
for all $t\in[0,T]$. If furthermore, $(\rho_{t})_{t\in[0,T]}$ is
an entropy weak solution and so $t\mapsto\mathcal{H}(\rho_{t}\mid\pi)$
is non-increasing, we can conclude that $(\rho_{t})_{t\in[0,T]}$
is a curve of maximal slope. In this case we can apply Proposition
\ref{prop:uniqueness} and conclude this curve of maximal slope is
unique.

Lastly, we argue that $(\rho_{t})_{t\in[0,T]}$ is a curve of maximal
slope on the whole of $[0,T]$, assuming that a curve of maximal slope
beginning at $\rho_{0}$ exists. That is, there exists some AC curve
$(\tilde{\rho}_{t})_{t\in[0,T]}$ with $\rho_{0}=\tilde{\rho}_{0}$,
which is a curve of maximal slope for $\mathcal{H}(\cdot\mid\pi)$;
note in this case this means that $t\mapsto\mathcal{H}(\rho_{t}\mid\pi)$
is non-increasing (which the computation from the previous paragraph
does not actually establish). Since $(\rho_{t})_{t\in[0,T]}$ is at
least a curve of maximal slope when restricted to almost every interval
of the form $[\tau_{0},\tau_{1}]$, we know from Proposition \ref{prop:uniqueness}
that $(\rho_{t})$ and $(\tilde{\rho}_{t})$ must agree on each such
interval. By density, and the fact that both $(\rho_{t})$ and $(\tilde{\rho}_{t})$
are narrowly continuous curves, it follows that $\rho_{t}=\tilde{\rho}_{t}$
for all $t\in[0,T]$.
\end{proof}

\section{Existence and Discrete-to-Continuum\protect\label{sec:Existence-and-Discrete-to-Continuum}}

Now we are in a position to prove the unique existence of $\mathcal{W}_{\eta,\pi}$
gradient flows of $\mathcal{H}(\cdot\mid\pi)$.  Our proof goes by
way of a ``finite volume'' discrete-to-continuum result in the case
where $X=\mathbb{T}^{d}$, Proposition \ref{prop:eta-properties-sufficient},
of independent interest.
\begin{thm}[Existence and uniqueness of curves of maximal slope]
\label{thm:existence} Let $\rho_{0},\pi\in\mathcal{P}(X)$ with
$\mathcal{H}(\rho_{0}\mid\pi)<\infty$ and $\pi\ll\text{Leb}^{d}$.
Suppose that Assumption \ref{assu:eta=000020properties} is satisfied.
Then, there exists a unique AC curve $(\rho_{t})_{t\in[0,\infty)}$
beginning at $\rho_{0}$, satisfying
\[
(\forall T>0)\qquad0\geq\mathcal{H}(\rho_{T}\mid\pi)-\mathcal{H}(\rho_{0}\mid\pi)+\frac{1}{2}\int_{0}^{T}|\dot{\rho}_{t}|^{2}dt+\frac{1}{2}\int_{0}^{T}\mathcal{I}_{\eta}(\rho_{t}\mid\pi)dt.
\]
In particular, if $\sqrt{\mathcal{I}_{\eta}(\cdot\mid\pi)}$ is an
a.e. upper gradient for $\mathcal{H}(\cdot\mid\pi)$ with respect
to $\mathcal{W}_{\eta,\pi}$, there exists a unique curve of maximal
slope for $\mathcal{H}(\cdot\mid\pi)$ with respect to $\mathcal{W}_{\eta,\pi}$.
\end{thm}

\begin{proof}
\emph{Step 1}: First we consider the case where $X=\mathbb{T}^{d}$,
$\eta$ is strictly positive, and $\frac{d\pi}{dx}$ is uniformly
bounded from below. Let $\pi_{n}$ be a sequence of discrete measures
formed according to the construction from Proposition \ref{prop:eta-properties-sufficient}
below, and let $\rho_{0}^{n}$ be as in the proof of Lemma \ref{lem:existence-well-prepared-initial}.
For each $n$, we consider a weighted graph on the support of $\pi_{n}$,
with reference probability measure $\pi_{n}$ and edge weights given
as in Proposition \ref{prop:eta-properties-sufficient} below. Note
that $\{\eta,\pi\}\cup\{\eta_{n},\pi_{n}\}_{n\in\mathbb{N}}$ satisfies
Assumption \ref{assu:eta=000020properties} uniformly, by Proposition
\ref{prop:eta-properties-sufficient}.

By the main theorem in \cite{maas2011gradient}, there exists an AC
curve $(\rho_{t}^{n})_{t\in[0,\infty)}$ beginning at $\rho_{0}^{n}$,
which for all $t>0$ is a $\mathcal{W}_{\eta_{n},\pi_{n}}$ gradient
flow of $\mathcal{H}(\cdot\mid\pi_{n})$ , in the usual sense of gradient
flows on finite-dimensional Riemannian manifolds. So \emph{a fortiori}
this gradient flow is also a curve of maximal slope for $\mathcal{H}(\cdot\mid\pi_{n})$
with respect to $\mathcal{W}_{\eta_{n},\pi_{n}}$. Now, for each $T>0$,
we see that Proposition \ref{prop:stability} allows us to extract
a subsequence (not relabelled) such that, as $n\rightarrow\infty$,
$(\rho_{t}^{n})_{t\in[0,T]}\rightharpoonup(\rho_{t})_{t\in[0,T]}$,
and $(\rho_{t})_{t\in[0,T]}$ is a curve of maximal slope for $\mathcal{H}(\cdot\mid\pi)$
with respect to $\mathcal{W}_{\eta,\pi}$. We can then conclude by
invoking uniqueness and quantifying over all $T>0$.

We then easily deduce existence even when $\frac{d\pi}{dx}$ is not
bounded from below. Indeed, the previous argument applies if we take
$(1-\varepsilon)\pi+\varepsilon\text{Leb}^{d}$ as the reference measure
instead; it then suffices to send $\varepsilon\rightarrow0$ and apply
Proposition \ref{prop:stability}, since it is readily seen that $\mathcal{H}(\rho_{0}\mid(1-\varepsilon)\pi+\varepsilon\text{Leb}^{d})\rightarrow\mathcal{H}(\rho_{0}\mid\pi)$
under the assumption that $\mathcal{H}(\rho_{0}\mid\pi)<\infty$.
By similar reasoning, we can also drop the assumption that $\eta$
is strictly positive, by considering a sequence of $\eta_{n}$'s which
\emph{are} strictly positive, and converge pointwise monotonically
downwards to $\eta$.

\emph{Step 2}: Assume that $\pi\ll\text{Leb}^{d}$ and is compactly
supported in $\mathbb{R}^{d}$. The construction from Step 1 also
applies to dilations $N\mathbb{T}^{d}$ of $\mathbb{T}^{d}$ where
$N\in\mathbb{N}$. Let $N$ be sufficiently large that the quotient
map $Q$ from $\mathbb{R}^{d}$ to $N\mathbb{T}^{d}$ is injective
when restricted to $\text{supp}(\pi)$; in this case there is no distinction
between viewing $\pi$ as a measure on $\mathbb{R}^{d}$ or on $N\mathbb{T}^{d}$.
(Note also that $\text{supp}(\rho_{0})\subseteq\text{supp}(\pi)$
by assumption.) Likewise we can modify $\eta$ away from the support
of $\pi$ to be periodic\footnote{More precisely, we reason as follows.  Observe that $\text{supp}(\pi)\times\text{supp}(\pi)\backslash\{x=y\}$
is closed inside $\mathbb{T}^{d}\times\mathbb{T}^{d}\backslash\{x=y\}$,
so we can first restrict $\eta$ to this set, then apply the Tietze
extension theorem  on the space $\mathbb{T}^{d}\times\mathbb{T}^{d}\backslash\{x=y\}$
to extend $\eta\upharpoonright\text{supp}(\pi)\times\text{supp}(\pi)\backslash\{x=y\}$
to a continuous function on all of $\mathbb{T}^{d}\times\mathbb{T}^{d}\backslash\{x=y\}$.
This is equivalent to modifying $\eta$ away from $\text{supp}(\pi)\times\text{supp}(\pi)\backslash\{x=y\}$
to be periodic  on $\mathbb{R}^{d}\times\mathbb{R}^{d}\backslash\{x=y\}$.
Note that this extension need not be symmetric off of $\text{supp}(\pi)\times\text{supp}(\pi)\backslash\{x=y\}$,
but this is irrelevant to Assumption \ref{assu:eta=000020properties}(1).} with respect to cubes of the form $[-N/2,N/2]^{d}$, so that the
quotient map from $\mathbb{R}^{d}$ to $N\mathbb{T}^{d}$ leaves $\eta$
unchanged on $\text{supp}(\pi)$ as well. (In particular modifying
$\eta$ away from the support of $\pi$ does not change whether Assumption
\ref{assu:eta=000020properties}(1-3) is satisfied.) 

Step 1 establishes the existence of a curve of maximal slope $\tilde{\rho}_{t}$
for $\mathcal{H}(\cdot\mid Q_{\#}\pi)$ on $N\mathbb{T}^{d}$ with
respect to $\mathcal{W}_{(Q\times Q)_{\#}\eta,Q_{\#}\pi}$ starting
from $Q_{\#}\rho_{0}$. Now, since $\mathcal{H}(\tilde{\rho}_{t}\mid Q_{\#}\pi)$
is non-increasing, we have also that the support of $\tilde{\rho}_{t}$
is contained within the support of $Q_{\#}\pi$. Now let $\tilde{\mathbf{v}}_{t}$
denote a tangent flux solving the nonlocal continuity equation for
$\tilde{\rho}_{t}$. By \cite[Lem. 3.5]{ferreira2019minimizing} (which
states the periodic version of \cite[Lem. 2.3]{erbar2014gradient}),
we also obtain that $\tilde{\mathbf{v}}_{t}$ is supported on $\text{supp}(Q_{\#}\pi)\times\text{supp}(Q_{\#}\pi)\backslash\{x=y\}$
for almost all $t\in[0,T]$. Accordingly, we can lift $(\tilde{\rho}_{t},\tilde{\mathbf{v}}_{t})_{t\in[0,T]}$
to a solution to the nonlocal continuity equation $(\rho_{t},\mathbf{v}_{t})_{t\in[0,T]}$
on $\mathbb{R}^{d}$ without changing the action, relative entropy,
or relative Fisher information. It follows that $(\rho_{t})_{t\in[0,T]}$
is a curve of maximal slope for $\mathcal{H}(\cdot\mid\pi)$ w.r.t.
$\mathcal{W}_{\eta,\pi}$ beginning at $\rho_{0}$.

\emph{Step 3}: Finally we perform a compact exhaustion. Now let $\pi\in\mathcal{P}(\mathbb{R}^{d})$
be arbitrary apart from assuming that $\pi\ll\text{Leb}^{d}$. Let
$\rho_{0}\in\mathcal{P}(\mathbb{R}^{d})$ such that $\mathcal{H}(\rho_{0}\mid\pi)<\infty$.
Let $(K_{n})_{n\in\mathbb{N}}$ denote a compact exhaustion of $\mathbb{R}^{d}$;
without loss of generality, assume that the restrictions $\pi\llcorner K_{n}$
and $\rho_{0}\llcorner K_{n}$ have nonzero mass for every $n\in\mathbb{N}$.
Let $\pi_{n}$ denote the normalization of $\pi\llcorner K_{n}$ and
similarly for $\rho_{0,n}$. Then $\rho_{0,n}\rightharpoonup\rho_{0}$,
$\pi_{n}\rightharpoonup\pi$, and $\mathcal{H}(\rho_{0,n}\mid\pi_{n})\rightarrow\mathcal{H}(\rho_{0}\mid\pi)$.
 Finally let $\eta_{n}$ be a kernel which agrees with $\eta$ on
$K_{n}\times K_{n}\backslash\{x=y\}$ and is periodic with respect
to cubes of the form $[-N/2,N/2]^{d}$ (chosen to be sufficiently
large that $K_{n}$ belongs to a single such cube) ; such an $\eta_{n}$
exists by the same extension argument as in Step 2. Observe that
Step 2 establishes the existence of a curve of maximal slope for $\mathcal{H}(\cdot\mid\pi_{n})$
w.r.t. $\mathcal{W}_{\eta_{n},\pi_{n}}$ beginning at $\rho_{0,n}$.

It is obvious that $\eta_{n}\rightarrow\eta$ uniformly on compacts
$K\subset G$. It therefore remains to check that $\{\eta_{n},\pi_{n}\}_{n\in\mathbb{N}}\cup\{\eta,\pi\}$
satisfy Assumption \ref{assu:eta=000020properties}, since we can
then apply Proposition \ref{prop:stability}. But this is routine,
since for all bounded continuous $f:\mathbb{R}^{d}\rightarrow\mathbb{R}$,
$x\in\mathbb{R}^{d}$ and $n\in\mathbb{N}$, 
\[
\int f(y)(1\wedge|x-y|^{2})\eta_{n}(x,y)d\pi_{n}(y)=\frac{1}{\pi(K_{n})}\int_{K_{n}}f(y)(1\wedge|x-y|^{2})\eta(x,y)d\pi(y)
\]
and 
\[
\iint_{A_{R}(x)}(1\wedge|x-y|^{2})\eta_{n}(x,y)d\pi_{n}(y)=\frac{1}{\pi(K_{n})}\iint_{A_{R}(x)\cap K_{n}}(1\wedge|x-y|^{2})\eta(x,y)d\pi(y).
\]
Since we have assumed that $\pi(K_{n})\geq\pi(K_{1})>0$, this shows
that Assumption \ref{assu:eta=000020properties} is satisfied uniformly,
as desired.
\end{proof}
\begin{prop}
\label{prop:eta-properties-sufficient}Suppose that $\pi\in\mathcal{P}(\mathbb{T}^{d})$.
Suppose that $\eta$ satisfies Assumption \ref{assu:eta=000020properties}
(1-3) with respect to $\pi$.  For each $n$, let $\{x_{j}^{n}\}_{j=1}^{n^{d}}$
denote an evenly spaced grid of points in $\mathbb{T}^{d}$, such
that for $m\leq n$, $\{x_{j}^{m}\}_{j=1}^{m^{d}}\subseteq\{x_{j}^{n}\}_{j=1}^{n^{d}}$.
Let $T_{n}$ denote a map which sends $x\in\mathbb{T}^{d}$ to the
nearest point $x_{j}^{n}$ amongst the set $\{x_{j}^{n}\}_{j=1}^{n^{d}}$
(in the case of ties, we take $T_{n}$ to break the tie in an arbitrary
measurable fashion). Let $\pi_{n}=\pi\circ T_{n}^{-1}$ be the pushforward
of $\pi$ onto $\{x_{j}^{n}\}_{j=1}^{n^{d}}$ via $T_{n}$. Let $\delta_{n}=\text{diam}T_{n}^{-1}(x_{j}^{n})$
for some $j$ (the choice of $j$ is irrelevant since the grid is
regular). Lastly, on points contained in $\{x_{j}^{n}\}_{j=1}^{n^{d}}$,
we define 
\[
\eta_{n}(x_{j}^{n},x_{k}^{n})=\frac{1}{\pi_{n}(x_{j}^{n})\cdot\pi_{n}(x_{k}^{n})}\iint_{T_{n}^{-1}(x_{j}^{n})\times T^{-1}(x_{k}^{n})}1_{\{|x-y|\geq\delta_{n}/2\}}\eta(x,y)d\pi(x)d\pi(y).
\]
Then, if the pair $\{\eta,\pi\}$ satisfies Assumption \ref{assu:eta=000020properties}
(2-3), it holds that the entire family $\{\eta_{n},\pi_{n}\}_{n\in\mathbb{N}}\cup\{\eta,\pi\}$
satisfies Assumption \ref{assu:eta=000020properties} (2-3). 

Moreover, if $\eta$ is continuous and strictly positive on $\mathbb{T}^{d}\times\mathbb{T}^{d}\backslash\{x=y\}$
 and $\frac{d\pi}{dx}$ is uniformly bounded from below on $\mathbb{T}^{d}$,
it holds that each $\eta_{n}$ can be extended to a continuous function
on all of $G$ in such a way that $\eta_{n}\rightarrow\eta$ uniformly
on compact subsets of $G$.
\end{prop}

\begin{proof}
Given $\eta(x,y)$, we define the following, \emph{different} $\eta_{n}(x,y)$
to be used at the discrete level. Namely: for $\bar{x}\neq\bar{y}$
in the support of the $n$th diadic lattice in $\mathbb{T}^{d}$,
define 
\[
\eta_{n}(\bar{x},\bar{y}):=\frac{1}{\pi_{n}(\bar{x})\pi_{n}(\bar{y})}\iint_{T_{n}^{-1}(\bar{x})\times T_{n}^{-1}(\bar{y})\backslash\{(\text{diam}T_{n}^{-1})/2\}}\eta(x,y)d\pi(x)d\pi(y).
\]
(Here $\pi_{n}:=(T_{n})_{\#}\pi$.) That is, $\eta_{n}$ is a ``finite
volume'' type approximation of $\eta$, where volume is defined w.r.t.
$\pi$, but with a cutoff for when $|x-y|$ is too small. (Note that
here, $T_{n}^{-1}(\bar{x})$ is the $n$th dyadic cube centered at
$\bar{x}$, which has diameter independent of $\bar{x}$.)

We wish to compare $\int(1\wedge|\bar{x}-\bar{y}|^{2})\eta_{n}(\bar{x},\bar{y})d\pi_{n}(\bar{y})$
with $\int(1\wedge|x-y|^{2})\eta(x,y)d\pi(y)$. To that end, we compute
as follows. Under the assumption that $|x-y|\geq|\bar{x}-\bar{y}|/2$
(which holds on the set $T_{n}^{-1}(\bar{x})\times T_{n}^{-1}(\bar{y})\backslash\{(\text{diam}T_{n}^{-1})/2\}$),
we have that $(1\wedge|\bar{x}-\bar{y}|^{2})\leq4(1\wedge|x-y|^{2})$.
Consequently, for fixed $\bar{x}$ we have that 
\begin{multline*}
\sum_{\bar{y}}(1\wedge|\bar{x}-\bar{y}|^{2})\eta_{n}(\bar{x},\bar{y})\pi_{n}(\bar{y})\\
\begin{aligned} & =\frac{1}{\pi_{n}(\bar{x})}\sum_{\bar{y}}(1\wedge|\bar{x}-\bar{y}|^{2})\eta_{n}(\bar{x},\bar{y})\pi_{n}(\bar{x})\pi_{n}(\bar{y})\\
 & =\frac{1}{\pi_{n}(\bar{x})}\sum_{\bar{y}}(1\wedge|\bar{x}-\bar{y}|^{2})\iint_{T_{n}^{-1}(\bar{x})\times T_{n}^{-1}(\bar{y})\backslash\{(\text{diam}T_{n}^{-1})/2\}}\eta(x,y)d\pi(x)d\pi(y)\\
 & \leq4\frac{1}{\pi_{n}(\bar{x})}\sum_{\bar{y}}\iint_{T_{n}^{-1}(\bar{x})\times T_{n}^{-1}(\bar{y})\backslash\{(\text{diam}T_{n}^{-1})/2\}}(1\wedge|x-y|^{2})\eta(x,y)d\pi(x)d\pi(y)\\
 & \leq4\frac{1}{\pi_{n}(\bar{x})}\int_{T_{n}^{-1}(\bar{x})}\left(\int_{\mathbb{T}^{d}}(1\wedge|x-y|^{2})\eta(x,y)d\pi(y)\right)d\pi(x)\\
 & \leq4\sup_{x\in\mathbb{T}^{d}}\int_{\mathbb{T}^{d}}(1\wedge|x-y|^{2})\eta(x,y)d\pi(y)
\end{aligned}
\end{multline*}
where in the last line we have used that $\pi_{n}(\bar{x})=\pi(T_{n}^{-1}(\bar{x}))$.
 This establishes that the family $\{\eta_{n},\pi_{n}\}_{n\in\mathbb{N}}\cup\{\eta,\pi\}$
satisfies Assumption \ref{assu:eta=000020properties} (2) provided
that $\{\eta,\pi\}$satisfies Assumption \ref{assu:eta=000020properties}
(2). (Note that the continuity of the map 
\[
\text{supp}(\pi_{n})\ni x\mapsto\int f(y)(1\wedge|x-y|^{2})\eta_{n}(x,y)d\pi_{n}(y)
\]
 holds trivially for us, since $\text{supp}(\pi_{n})$ is discrete.)

The argument to verify that Assumption \ref{assu:eta=000020properties}
(3) is preserved under discretization is similar. Let $A_{R}(x)=\{y\in X:|x-y|<1/R\text{ or }|x-y|>R\}$.
Let $\bar{x}\in\{x_{j}^{n}\}_{j=1}^{n^{d}}$. Observe that
\begin{multline*}
\sum_{\bar{y}\in\{x_{j}^{n}\}_{j=1}^{n^{d}}\cap A_{R}(\bar{x})\backslash\{\bar{x}\}}(1\wedge|\bar{x}-\bar{y}|^{2})\eta_{n}(\bar{x},\bar{y})\pi_{n}(\bar{y})\\
\begin{aligned} & =\frac{1}{\pi_{n}(\bar{x})}\sum_{\bar{y}\in\{x_{j}^{n}\}_{j=1}^{n^{d}}\cap A_{R}(\bar{x})\backslash\{\bar{x}\}}(1\wedge|\bar{x}-\bar{y}|^{2})\eta_{n}(\bar{x},\bar{y})\pi_{n}(\bar{x})\pi_{n}(\bar{y})\\
 & =\frac{1}{\pi_{n}(\bar{x})}\sum_{\bar{y}\in\{x_{j}^{n}\}_{j=1}^{n^{d}}\cap A_{R}(\bar{x})\backslash\{\bar{x}\}}(1\wedge|\bar{x}-\bar{y}|^{2})\iint_{T_{n}^{-1}(\bar{x})\times T_{n}^{-1}(\bar{y})\backslash\{(\text{diam}T_{n}^{-1})/2\}}\eta(x,y)d\pi(x)d\pi(y)\\
 & \leq4\frac{1}{\pi_{n}(\bar{x})}\sum_{\bar{y}\in\{x_{j}^{n}\}_{j=1}^{n^{d}}\cap A_{R}(\bar{x})\backslash\{\bar{x}\}}\iint_{T_{n}^{-1}(\bar{x})\times T_{n}^{-1}(\bar{y})\backslash\{(\text{diam}T_{n}^{-1})/2\}}(1\wedge|x-y|^{2})\eta(x,y)d\pi(x)d\pi(y)\\
 & \leq4\frac{1}{\pi_{n}(\bar{x})}\int_{T_{n}^{-1}(\bar{x})}\left(\sum_{\bar{y}\in\{x_{j}^{n}\}_{j=1}^{n^{d}}\cap A_{R}(\bar{x})\backslash\{\bar{x}\}}\int_{T_{n}^{-1}(\bar{y})}(1\wedge|x-y|^{2})\eta(x,y)d\pi(y)\right)d\pi(x).
\end{aligned}
\end{multline*}
Now, for any $x\in T_{n}^{-1}(\bar{x})$, we have $|x-\bar{x}|\leq\delta_{n}/2$.
Furthermore, if $\bar{y}\in\{x_{j}^{n}\}_{j=1}^{n^{d}}\cap A_{R}(\bar{x})\backslash\{\bar{x}\}$
(and hence: $|\bar{x}-\bar{y}|<1/R\text{ or }|\bar{x}-\bar{y}|>R$)
and $y\in T_{n}^{-1}(\bar{y})$, it follows that either $|x-y|<1/R+\delta_{n}$,
or $|x-y|>R-\delta_{n}$. 

We consider two cases. Observe that for all $\bar{y}\in\{x_{j}^{n}\}_{j=1}^{n^{d}}\cap A_{R}(\bar{x})\backslash\{\bar{x}\}$,
we have $|\bar{x}-\bar{y}|\geq\delta_{n}$. If $1/R<\delta_{n}$,
then this means that none of the $\bar{y}\in\{x_{j}^{n}\}_{j=1}^{n^{d}}\cap A_{R}(\bar{x})$
satisfy $|\bar{x}-\bar{y}|<1/R$, hence $\bar{y}\in\{x_{j}^{n}\}_{j=1}^{n^{d}}\cap A_{R}(\bar{x})\backslash\{\bar{x}\}$
implies that $|x-y|>R-\delta_{n}$. In this case, we have that for
all $x\in T_{n}^{-1}(\bar{x})$, 
\[
\sum_{\bar{y}\in\{x_{j}^{n}\}_{j=1}^{n^{d}}\cap A_{R}(\bar{x})\backslash\{\bar{x}\}}\int_{T_{n}^{-1}(\bar{y})}(1\wedge|x-y|^{2})\eta(x,y)d\pi(y)\leq\int_{|x-y|>R-\delta_{n}}(1\wedge|x-y|^{2})\eta(x,y)d\pi(y).
\]
Now, note that uniformly in $n$, $\delta_{n}\leq\text{diam}(\mathbb{T}^{d})$,
and so for $R>2\text{diam}(\mathbb{T}^{d})$, we have $\int_{|x-y|>R-\delta_{n}}(1\wedge|x-y|^{2})\eta(x,y)d\pi(y)=0$
since the domain of integration is empty. This shows that 
\[
\sum_{\bar{y}\in\{x_{j}^{n}\}_{j=1}^{n^{d}}\cap A_{R}(\bar{x})\backslash\{\bar{x}\}}\int_{T_{n}^{-1}(\bar{y})}(1\wedge|x-y|^{2})\eta(x,y)d\pi(y)=0
\]
trivially. 

Next, consider the case where instead $1/R\geq\delta_{n}$. In this
case, $1/R+\delta_{n}\leq1/(R/2)$. Likewise, if $R\geq2$, we have
that $R-\delta_{n}\geq R-1/R\geq R/2$. Together, these imply that
whenever $\bar{y}\in\{x_{j}^{n}\}_{j=1}^{n^{d}}\cap A_{R}(\bar{x})\backslash\{\bar{x}\}$
and $x\in T_{n}^{-1}(\bar{x})$ and $y\in T_{n}^{-1}(\bar{y})$, we
have that $y\in A_{R/2}(x)$. Consequently, for all $x\in T_{n}^{-1}(\bar{x})$
we have 
\[
\sum_{\bar{y}\in\{x_{j}^{n}\}_{j=1}^{n^{d}}\cap A_{R}(\bar{x})\backslash\{\bar{x}\}}\int_{T_{n}^{-1}(\bar{y})}(1\wedge|x-y|^{2})\eta(x,y)d\pi(y)\leq\int_{A_{R/2}(x)}(1\wedge|x-y|^{2})\eta(x,y)d\pi(y).
\]

Finally, combining the two cases, we deduce that: for $R>\max\{2,2\text{diam}(\mathbb{T}^{d})\}=2\text{diam}(\mathbb{T}^{d})$,
it holds uniformly in $n$ that: for all $\bar{x}\in\{x_{j}^{n}\}_{j=1}^{n^{d}}$,
and $x\in T_{n}^{-1}(\bar{x})$,
\begin{align*}
\sum_{\bar{y}\in\{x_{j}^{n}\}_{j=1}^{n^{d}}\cap A_{R}(\bar{x})\backslash\{\bar{x}\}}\int_{T_{n}^{-1}(\bar{y})}(1\wedge|x-y|^{2})\eta(x,y)d\pi(y) & \leq\max\left\{ 0,\int_{A_{R/2}(x)}(1\wedge|x-y|^{2})\eta(x,y)d\pi(y)\right\} \\
 & =\int_{A_{R/2}(x)}(1\wedge|x-y|^{2})\eta(x,y)d\pi(y).
\end{align*}
Consequently, 
\begin{multline*}
\sum_{\bar{y}\in\{x_{j}^{n}\}_{j=1}^{n^{d}}\cap A_{R}(\bar{x})\backslash\{\bar{x}\}}(1\wedge|\bar{x}-\bar{y}|^{2})\eta_{n}(\bar{x},\bar{y})\pi_{n}(\bar{y})\\
\begin{aligned} & \leq4\frac{1}{\pi_{n}(\bar{x})}\int_{T_{n}^{-1}(\bar{x})}\left(\int_{A_{R/2}(x)}(1\wedge|x-y|^{2})\eta(x,y)d\pi(y)\right)d\pi(x)\\
 & \leq4\frac{1}{\pi_{n}(\bar{x})}\int_{T_{n}^{-1}(\bar{x})}\left(\sup_{x\in\text{supp}(\pi)}\int_{A_{R/2}(x)}(1\wedge|x-y|^{2})\eta(x,y)d\pi(y)\right)d\pi(x)\\
 & =4\sup_{x\in\text{supp}(\pi)}\int_{A_{R/2}(x)}(1\wedge|x-y|^{2})\eta(x,y)d\pi(y).
\end{aligned}
\end{multline*}
Quantifying over all $\bar{x}$, this shows that 
\[
\sup_{n\in\mathbb{N}}\sup_{\bar{x}\in\text{supp}(\pi_{n})}\sum_{\bar{y}\in\{x_{j}^{n}\}_{j=1}^{n^{d}}\cap A_{R}(\bar{x})\backslash\{\bar{x}\}}(1\wedge|\bar{x}-\bar{y}|^{2})\eta_{n}(\bar{x},\bar{y})\pi_{n}(\bar{y})\leq4\sup_{x\in\text{supp}(\pi)}\int_{A_{R/2}(x)}(1\wedge|x-y|^{2})\eta(x,y)d\pi(y).
\]
Since by assumption $\lim_{R/2\rightarrow\infty}\sup_{x\in\text{supp}(\pi)}\int_{A_{R/2}(x)}(1\wedge|x-y|^{2})\eta(x,y)d\pi(y)=0$,
this establishes that 
\[
\lim_{R\rightarrow\infty}\sup_{n\in\mathbb{N}}\sup_{\bar{x}\in\text{supp}(\pi_{n})}\sum_{\bar{y}\in\{x_{j}^{n}\}_{j=1}^{n^{d}}\cap A_{R}(\bar{x})\backslash\{\bar{x}\}}(1\wedge|\bar{x}-\bar{y}|^{2})\eta_{n}(\bar{x},\bar{y})\pi_{n}(\bar{y})=0.
\]
Moreover, since $\int_{A_{R}(x)}(1\wedge|x-y|^{2})\eta(x,y)d\pi(y)\leq\int_{A_{R/2}(x)}(1\wedge|x-y|^{2})\eta(x,y)d\pi(y)$,
we actually have that $4\sup_{x\in\text{supp}(\pi)}\int_{A_{R/2}(x)}(1\wedge|x-y|^{2})\eta(x,y)d\pi(y)$
controls both 
\[
\sup_{x\in\text{supp}(\pi)}\int_{A_{R}(x)}(1\wedge|x-y|^{2})\eta(x,y)d\pi(y)
\]
and 
\[
\sup_{n\in\mathbb{N}}\sup_{\bar{x}\in\text{supp}(\pi_{n})}\sum_{\bar{y}\in\{x_{j}^{n}\}_{j=1}^{n^{d}}\cap A_{R}(\bar{x})\backslash\{\bar{x}\}}(1\wedge|\bar{x}-\bar{y}|^{2})\eta_{n}(\bar{x},\bar{y})\pi_{n}(\bar{y}).
\]
This establishes that the entire family $\{\eta_{n},\pi_{n}\}_{n\in\mathbb{N}}\cup\{\eta,\pi\}$
satisfies Assumption \ref{assu:eta=000020properties} (3), as desired.

It remains only to establish the second part of the statement of the
lemma. Namely, that if $\eta$ is continuous on $G$, then $\eta_{n}$
can be extended to a continuous function on all of $G$ in such a
way that $\eta_{n}$ converges to $\eta$ uniformly on compact subsets
of $G$. (Clearly, the preceding results still hold regardless of
how we extend $\eta_{n}$, since we are only altering $\eta_{n}$
off of the support of $\pi_{n}$.) Accordingly, let $K\subset\subset G$.
Let $D=\text{dist}(K,\{x=y\})$. Take $n$ to be sufficiently large
that $D\geq3\delta_{n}=3\text{Diam}(T_{n}^{-1}(x_{j}^{n}))$. In this
case, For each $(x_{j}^{n},x_{k}^{n})\in K$, it holds that 
\[
T_{n}^{-1}(x_{j}^{n})\times T_{n}^{-1}(x_{k}^{n})=T_{n}^{-1}(x_{j}^{n})\times T_{n}^{-1}(x_{k}^{n})\cap\{|x-y|\geq\delta_{n}/2\}
\]
and so we have 
\[
\eta_{n}(x_{j}^{n},x_{k}^{n})=\frac{1}{\pi_{n}(x_{j}^{n})\cdot\pi_{n}(x_{k}^{n})}\iint_{T_{n}^{-1}(x_{j}^{n})\times T^{-1}(x_{k}^{n})}\eta(x,y)d\pi(x)d\pi(y).
\]
Now, $\eta(x,y)$ is continuous, and so restricted to $K$, $\eta(x,y)$
is uniformly continuous. Let $\Phi^{K}(\varepsilon)$ denote a modulus
of uniform continuity for $\eta$ restricted to $K$, in the following
sense:
\[
\forall(x,y),(x^{\prime},y^{\prime})\in K,|x-x^{\prime}|+|y-y^{\prime}|<\Phi^{K}(\varepsilon)\implies|\eta(x,y)-\eta(x^{\prime},y^{\prime})|<\varepsilon.
\]
Accordingly, let $\varepsilon>0$ be arbitrary. It follows that, if
$\delta_{n}<\frac{1}{2}\Phi^{K}(\varepsilon)$, we have 
\[
\left|\eta(x_{j}^{n},x_{k}^{n})-\frac{1}{\pi_{n}(x_{j}^{n})\cdot\pi_{n}(x_{k}^{n})}\iint_{T_{n}^{-1}(x_{j}^{n})\times T^{-1}(x_{k}^{n})}\eta(x,y)d\pi(x)d\pi(y)\right|<\varepsilon.
\]
Moreover, given any $(x,y)\in K$, we have that the closest $(x_{j}^{n},x_{k}^{n})$
to $(x,y)$ satisfies $|x-x_{j}^{n}|+|y-x_{k}^{n}|\leq2\delta_{n}<\Phi^{K}(\varepsilon)$,
and so we also have that 
\[
\left|\eta(x,y)-\frac{1}{\pi_{n}(x_{j}^{n})\cdot\pi_{n}(x_{k}^{n})}\iint_{T_{n}^{-1}(x_{j}^{n})\times T^{-1}(x_{k}^{n})}\eta(x,y)d\pi(x)d\pi(y)\right|<2\varepsilon.
\]

It remains to extend $\eta_{n}(x_{j}^{n},x_{k}^{n})$ to all of $G$
in such a way that $\eta(x,y)$ and $\eta_{n}(x,y)$ are uniformly
close on $K$.  We do so explicitly using a \emph{singular kernel
interpolator}, as follows. (We learned of this construction from \cite{belkin2019does};
compare also the broadly similar construction in \cite[Section 5.2]{chen2013discrete}.)
Let $a>2$ and define
\[
K(z)=\begin{cases}
|z|^{-a}(1-|z|^{2}) & |z|\leq1\\
0 & |z|>1.
\end{cases}
\]
Then $K(z)$ is continuous except at $z=0$. Given a function $f$
defined at points $p_{1},\ldots,p_{k}$in $\mathbb{R}^{D}$ we set
\[
f(p)=\frac{\sum_{i=1}^{k}f(p_{i})K_{\tilde{\varepsilon}}(\Vert p-p_{i}\Vert)}{\sum_{i=1}^{k}K_{\tilde{\varepsilon}}(\Vert p-p_{i}\Vert)}.
\]
where $\tilde{\varepsilon}$ is a bandwidth parameter and $K_{\tilde{\varepsilon}}(\cdot):=\frac{1}{\tilde{\varepsilon}^{D}}K\left(\frac{\cdot}{\tilde{\varepsilon}}\right)$.
Note that $f(p)$ is bounded in between the minimum and maximum values
of $f(p_{i})$ for $\Vert p_{i}-p\Vert<\tilde{\varepsilon}$ and $f(p)$
is continuous and well-defined whenever there is at least one $p_{i}$
such that $\Vert p_{i}-p\Vert<\tilde{\varepsilon}$. (In particular,
$f(p)\rightarrow f(p_{i})$ as $p\rightarrow p_{i}$ since $K$ is
singular at 0, so this really is an interpolation.)

Applying this device to our particular case, we extend $\eta_{n}$
to all of $\mathbb{T}^{d}\times\mathbb{T}^{d}$ by defining 
\[
\eta_{n}(x,y)=\frac{\sum\eta_{n}(x_{j}^{n},x_{k}^{n})K_{\tilde{\varepsilon}}(|x-x_{j}^{n}|+|y-x_{k}^{n}|)}{\sum K_{\tilde{\varepsilon}}(|x-x_{j}^{n}|+|y-x_{k}^{n}|)}
\]
where the summation ranges over all $(x_{j}^{n},x_{k}^{n})$ such
that $|x-x_{j}^{n}|+|y-x_{k}^{n}|<\tilde{\varepsilon}$. This definition
extends $\eta_{n}(x,y)$ to all of $\mathbb{T}^{d}\times\mathbb{T}^{d}\backslash\{x=y\}$
provided that $\tilde{\varepsilon}$ is strictly bigger than the minimal
distance between distinct points $x_{j}^{n}$ and $x_{k}^{n}$, which
in turn is less than $2\delta_{n}$. Accordingly, we assume that $2\delta_{n}<\tilde{\varepsilon}$
below.

We have already observed that $\eta_{n}(x,y)$ is bounded in between
$\min\eta(x_{j}^{n},x_{k}^{n})$ and $\max\text{\ensuremath{\eta}(\ensuremath{x_{j}^{n}},\ensuremath{x_{k}^{n}})}$
where the min and max run over the $\tilde{\varepsilon}$ ball around
$(x,y)$. Write $(x_{j,\text{min}}^{n},x_{k,\text{min}}^{n})$ and
$(x_{j,\text{max}}^{n},x_{k,\text{max}}^{n})$ for these values respectively,
and note that $|x_{j,\text{min}}^{n}-x_{j,\text{max}}^{n}|+|x_{k,\text{min}}^{n}-x_{k,\text{max}}^{n}|<2\tilde{\varepsilon}$
by the triangle inequality. It follows that if $2\tilde{\varepsilon}<\Phi^{K}(\varepsilon)$
we have 
\[
|\eta(x_{j,\text{min}}^{n},x_{k,\text{min}}^{n})-\eta(x_{j,\text{max}}^{n},x_{k,\text{max}}^{n})|<\varepsilon.
\]
Combining this with the fact, seen above, that $\left|\eta(x_{j}^{n},x_{k}^{n})-\eta_{n}(x_{j}^{n},x_{k}^{n})\right|<\varepsilon$
whenever $\delta_{n}<\frac{1}{2}\Phi^{K}(\varepsilon)$, for all pairs
$(x_{j}^{n},x_{k}^{n})\in K$ we have that
\[
|\eta_{n}(x_{j,\text{min}}^{n},x_{k,\text{min}}^{n})-\eta_{n}(x_{j,\text{max}}^{n},x_{k,\text{max}}^{n})|<3\varepsilon.
\]
Now take $(x_{j}^{n},x_{k}^{n})$ to be the closest such point to
$(x,y)$. By construction we have $|x-x_{j}^{n}|+|y-x_{k}^{n}|<2\delta_{n}<\tilde{\varepsilon}$
and so, since $\eta_{n}(x,y)$ and $\eta_{n}(x_{j}^{n},x_{k})$ are
both contained in the interval $[\eta_{n}(x_{j,\text{min}}^{n},x_{k,\text{min}}^{n}),\eta_{n}(x_{j,\text{max}}^{n},x_{k,\text{max}}^{n})]$
we see that
\[
|\eta_{n}(x,y)-\eta_{n}(x_{j}^{n},x_{k}^{n})|<3\varepsilon.
\]
At the same time, we already saw that $|\eta(x,y)-\eta_{n}(x_{j}^{n},x_{k}^{n})|<2\varepsilon$
so altogether we have, for all $(x,y)\in K$, that
\[
|\eta(x,y)-\eta_{n}(x,y)|<5\varepsilon
\]
under the conditions that: $2\tilde{\varepsilon}<\Phi^{K}(\varepsilon)$
and $\delta_{n}<\frac{1}{2}\Phi^{K}(\varepsilon)$ and $2\delta_{n}<\tilde{\varepsilon}$.
In particular, we can first pick $K$ arbitrarily, then pick $\varepsilon>0$
arbitrarily, then pick $\tilde{\varepsilon}<\frac{1}{2}\Phi^{K}(\varepsilon)$,
then pick $n$ sufficiently large (hence $\delta_{n}$ sufficiently
small) that $\text{dist}(K,\{x=y\})\geq3\delta_{n}$ and $\tilde{\varepsilon}>2\delta_{n}$.
Then for all such $n$ we have $|\eta(x,y)-\eta_{n}(x,y)|<5\varepsilon$
on $K$, in other words $\eta_{n}$ converges uniformly on $K$. Since
$K$ was arbitrary the claim follows.
\end{proof}
More generally, we have the following sufficient condition for discrete-to-continuum
convergence, which is basically a special case of Proposition \ref{prop:stability}
together with Proposition \ref{prop:weak-soln-vs-gradient-flow}.
\begin{prop}
Let $\pi_{n}$ be a sequence of probability measures supported on
finitely many atoms in $X$, and let $\eta_{n}$ be defined on the
support of $\pi_{n}$. Suppose that $\pi_{n}\rightharpoonup\pi$ with
$\pi\ll\text{Leb}^{d}$. Suppose also that each $\eta_{n}$ can be
extended to a continuous function all of $G=X\times X\backslash\{x=y\}$
in such a way that $\eta_{n}$ converges to some continuous $\eta$
uniformly on compact subsets of $G$, and such that $\{\eta_{n},\pi_{n}\}_{n\in\mathbb{N}}\cup\{\eta,\pi\}$
satisfies Assumption \ref{assu:eta=000020properties} (1-3). 

Then, curves of maximal slope for $\mathcal{H}(\cdot\mid\pi_{n})$
with respect to $\mathcal{W}_{\eta_{n},\pi_{n}}$ converge to curves
of maximal slope for $\mathcal{H}(\cdot\mid\pi)$ with respect to
$\mathcal{W}_{\eta,\pi}$, in the same sense as in Proposition \ref{prop:stability};
and if $\sqrt{\mathcal{I}(\cdot\mid\pi)}$ is an a.e. upper gradient
for $\mathcal{H}(\cdot\mid\pi)$, and same assumptions as Proposition
\ref{prop:one=000020sided=000020chain=000020rule} hold, then the
resulting limit is also an entropy weak solution for $(\dagger)$.
\end{prop}

\section{Nonlocal log-Sobolev inequality and contractivity rate\protect\label{sec:contractivity}}

We recall from Section \ref{sec:Identifying} that if $\sqrt{\mathcal{I}_{\eta}(\cdot\mid\pi)}$
is an a.e. upper gradient for $\mathcal{H}(\cdot\mid\pi)$, the chain
rule holds, and $\rho_{t}$ is a curve of maximal slope for $\mathcal{H}(\cdot\mid\pi)$
with respect to $\mathcal{W}_{\eta,\pi}$, then (thanks to the equality
condition of Young's inequality) it actually holds that 
\[
|\dot{\rho}_{t}|_{\mathcal{W}_{\eta,\pi}}^{2}=\mathcal{I}_{\eta}(\rho_{t}\mid\pi)\text{ }t-a.s.
\]
and so we actually have the \emph{entropy production formula} 
\[
\mathcal{H}(\rho_{0}\mid\pi)-\mathcal{H}(\rho_{T}\mid\pi)=\int_{0}^{T}\mathcal{I}_{\eta}(\rho_{t}\mid\pi)dt.
\]
It therefore follows immediately from Gronwall's inequality that the
inequality
\[
\mathcal{H}(\rho\mid\pi)\leq\frac{1}{C_{LS}}\mathcal{I}_{\eta}(\rho\mid\pi)\qquad\forall\rho\in\mathcal{P}(X)
\]
corresponds to an entropy-entropy production inequality for $(\dagger)$,
and so ensures exponential convergence to equilibrium. We call this
the ``nonlocal log-Sobolev inequality'' because $\mathcal{I}_{\eta}(\rho\mid\pi)$
is a nonlocal analogue of the Fisher information.\footnote{In the Dirichlet forms literature, the preferred terminology is that
this is the ``modified log-Sobolev inequality'' (MLSI) instead.
This is because in general, for a Dirichlet form $\mathcal{E}(u,v)$
on the space $L^{2}(X,\pi)$, we say that $\mathcal{E}$ satisfies
the MLSI provided that $\mathcal{H}(\rho\mid\pi)\leq C^{-1}\mathcal{E}\left(\frac{d\rho}{d\pi},\log\frac{d\rho}{d\pi}\right)$
\cite{bobkov2006modified}. In our case this is an equivalent description
of the ``nonlocal log-Sobolev inequality'' above, provided that
$\mathcal{E}$ is chosen to be the nonlocal Dirichlet form $\mathcal{E}_{\eta,\pi}$
given in Section \ref{sec:nonlocal-Dirichlet} immediately below. } In this section,  in the case where $X=\mathbb{T}^{d}$, we give
an explicit sufficient criterion on $\eta$ which guarantees that
the nonlocal log-Sobolev inequality holds.  We do not attempt a serious
study of this inequality here, rather we simply point out that this
inequality is at least true in some explicit cases.

The following proposition reproduces a remarkably brief argument given
in \cite{wang2014simple}.
\begin{prop}
Let $X=\mathbb{T}^{d}$. Suppose that there exists some constant $C>0$
such that, for all $x,y\in\mathbb{T}^{d}$, $\eta(x,y)\geq C$. Then
$\mathcal{H}(\rho\mid\pi)\leq\frac{1}{C_{LS}}\mathcal{I}(\rho\mid\pi)$
with $C_{LS}=C$.
\end{prop}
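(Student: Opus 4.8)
The plan is to reduce the nonlocal log-Sobolev inequality to an elementary pointwise estimate together with Jensen's inequality, reproducing the short argument of \cite{wang2014simple}. First I would dispose of the trivial case: if $\rho\not\ll\pi$ then both $\mathcal{H}(\rho\mid\pi)$ and $\mathcal{I}_{\eta}(\rho\mid\pi)$ equal $+\infty$ and there is nothing to prove, so assume $\rho\ll\pi$ and set $f:=\frac{d\rho}{d\pi}$. Since $\eta(x,y)\geq C>0$ everywhere off the diagonal, the underlying graph is $G=\mathbb{T}^{d}\times\mathbb{T}^{d}\setminus\{x=y\}$; moreover the integrand $(f(x)-f(y))(\log f(x)-\log f(y))$ is nonnegative by monotonicity of $\log$ and vanishes on the diagonal. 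Bounding $\eta$ below by $C$ therefore gives
\[
\mathcal{I}_{\eta}(\rho\mid\pi)=\frac{1}{2}\iint_{G}\overline{\nabla}f\,\overline{\nabla}\log f\,\eta\,d\pi\otimes d\pi\;\geq\;\frac{C}{2}\iint_{\mathbb{T}^{d}\times\mathbb{T}^{d}}\bigl(f(x)-f(y)\bigr)\bigl(\log f(x)-\log f(y)\bigr)\,d\pi(x)\,d\pi(y).
\]

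Next I would expand the product and integrate term by term, exploiting the $x\leftrightarrow y$ symmetry together with the normalizations $\int_{\mathbb{T}^{d}}d\pi=1$ and $\int_{\mathbb{T}^{d}}f\,d\pi=\rho(\mathbb{T}^{d})=1$. The two ``diagonal'' terms $f(x)\log f(x)$ and $f(y)\log f(y)$ each integrate to $\int f\log f\,d\pi=\mathcal{H}(\rho\mid\pi)$, while the two ``cross'' terms $f(x)\log f(y)$ and $f(y)\log f(x)$ each integrate to $\bigl(\int f\,d\pi\bigr)\bigl(\int\log f\,d\pi\bigr)=\int\log f\,d\pi$. Hence the right-hand side above equals $C\bigl(\mathcal{H}(\rho\mid\pi)-\int\log f\,d\pi\bigr)$, and since Jensen's inequality applied to the concave function $\log$ and the probability measure $\pi$ gives $\int\log f\,d\pi\leq\log\int f\,d\pi=\log 1=0$, we conclude $\mathcal{I}_{\eta}(\rho\mid\pi)\geq C\,\mathcal{H}(\rho\mid\pi)$, which is exactly the asserted inequality with $C_{LS}=C$.

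The only point that needs care — and the one I expect to be the main (if minor) technical obstacle — is the justification of the term-by-term integration, because $\int\log f\,d\pi$ may a priori equal $-\infty$ (only $(\log f)_{+}\leq f$ is automatically integrable) and the convention $r(\log r-\log 0)=\infty$ must be tracked where $f$ vanishes. I would resolve this by rewriting the nonnegative integrand, on the set $\{f>0\}^{2}$, in the manifestly nonnegative form
\[
\bigl(f(x)-f(y)\bigr)\bigl(\log f(x)-\log f(y)\bigr)=f(y)\,g\!\left(\tfrac{f(x)}{f(y)}\right)+f(x)\,g\!\left(\tfrac{f(y)}{f(x)}\right),\qquad g(t):=t\log t-t+1\geq 0,
\]
so that Tonelli's theorem applies to each summand and the inner integration reproduces the identity above as an equality in $[0,+\infty]$; on the complementary set $\{f=0\}$ (if it has positive $\pi$-measure) the integrand equals $+\infty$ on a set of positive $\pi\otimes\pi$-measure, whence $\mathcal{I}_{\eta}(\rho\mid\pi)=+\infty$ and the inequality is again trivial. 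With this bookkeeping in place the estimate $\mathcal{I}_{\eta}(\rho\mid\pi)\geq C\bigl(\mathcal{H}(\rho\mid\pi)-\int\log f\,d\pi\bigr)\geq C\,\mathcal{H}(\rho\mid\pi)$ holds in all cases, completing the proof.
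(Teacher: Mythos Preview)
Your proof is correct and follows essentially the same argument as the paper: both combine the symmetrization identity $\frac{1}{2}\iint(f(x)-f(y))(\log f(x)-\log f(y))\,d\pi\,d\pi = \mathcal{H}(\rho\mid\pi) - \int\log f\,d\pi$ with Jensen's inequality $\int\log f\,d\pi\leq 0$ and the pointwise bound $\eta\geq C$. In fact you are more careful than the paper, which skips the integrability bookkeeping you address with the $g(t)=t\log t - t + 1$ decomposition and the separate treatment of the set $\{f=0\}$.
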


\begin{rem*}
In the case where $X$ is unbounded, if one assumes that $\eta(x,y)\geq C$,
then Assumption \ref{assu:eta=000020properties} (3) is not satisfied.
We used Assumption \ref{assu:eta=000020properties} (3) to prove compactness
for AC curves in Proposition \ref{prop:compactness-nce}, among other
results. In particular, the proof we state below works just as well
for $X=\mathbb{R}^{d}$; but in this case, other arguments in this
article break down. 
\end{rem*}
\begin{proof}
First, note that since $\mu$ is a probability measure, 
\[
\mathcal{H}(\mu\mid\pi)=\int\frac{d\mu}{d\pi}\log\frac{d\mu}{d\pi}d\pi=\int\frac{d\mu}{d\pi}\log\frac{d\mu}{d\pi}d\pi-\int\frac{d\mu}{d\pi}d\pi\log\left(\int\frac{d\mu}{d\pi}d\pi\right).
\]
By Jensen's inequalty, $\log\left(\int\frac{d\mu}{d\pi}d\pi\right)\geq\int\log\frac{du}{d\pi}d\pi$.
Hence 
\begin{align*}
\mathcal{H}(\mu\mid\pi) & \leq\int\frac{d\mu}{d\pi}\log\frac{d\mu}{d\pi}d\pi-\int\frac{d\mu}{d\pi}d\pi\left(\int\log\frac{d\mu}{d\pi}d\pi\right)\\
 & =\frac{1}{2}\iint\left(\frac{d\mu}{d\pi}(y)-\frac{d\mu}{d\pi}(x)\right)\left(\log\frac{d\mu}{d\pi}(y)-\log\frac{d\mu}{d\pi}(x)\right)d\pi(x)d\pi(y)\\
 & =\frac{1}{2}\iint\overline{\nabla}\frac{d\mu}{d\pi}(x,y)\overline{\nabla}\log\frac{d\mu}{d\pi}(x,y)d\pi(x)d\pi(y).
\end{align*}
Now simply observe that if $C\leq\eta(x,y)$, it follows immediately
that 
\begin{align*}
\frac{1}{2}\iint\overline{\nabla}\frac{d\mu}{d\pi}(x,y)\overline{\nabla}\log\frac{d\mu}{d\pi}(x,y)d\pi(x)d\pi(y) & \leq\frac{1}{2C}\iint\overline{\nabla}\frac{d\mu}{d\pi}(x,y)\overline{\nabla}\log\frac{d\mu}{d\pi}(x,y)\eta(x,y)d\pi(x)d\pi(y)\\
 & =\frac{1}{C}\mathcal{I}_{\eta}(\mu\mid\pi)
\end{align*}
and hence $\mathcal{H}(\mu\mid\pi)\leq\frac{1}{C}\mathcal{I}_{\eta}(\mu\mid\pi)$
as desired.
\end{proof}
Finally, in the following example, we show how the PDE \ref{eq:intro-nonlocal-fokker-planck}
can be made precise and cast as a gradient flow of $\mathcal{H}(\cdot\mid\pi)$,
with an explicit lower bound on the contractivity rate depending on
the geometry of $\pi$, as advertised in the introduction. 
\begin{example}
 Consider the case where $X=\mathbb{T}^{d}$, $\frac{d\pi}{dx}=\frac{1}{C_{V}}e^{-V}$,
and $\eta(x,y)=\frac{1}{c_{V}}\left(e^{V(x)}+e^{V(y)}\right)\kappa(|x-y|)$;
such a choice of kernel has previously been studied, in the context
of nonlocal Dirichlet form theory, in \cite{chen2017weighted,wang2014simple}.
Note that \ref{assu:eta=000020properties} is satisfied in the case
where $V$ is continuous on $\mathbb{T}^{d}$, in particular the shift
constant $S$ can be estimated by bounding $\frac{e^{V(x)}+e^{V(y)}}{e^{V(x+z)}+e^{V(y+z)}}$.
The relevant continuity equation is
\[
\partial_{t}\left(\frac{d\mu_{t}}{d\pi}\right)(x)-\int_{\mathbb{T}^{d}}\overline{\nabla}\psi_{t}(x,y)\theta\left(\frac{d\mu_{t}}{d\pi}(x),\frac{d\mu_{t}}{d\pi}(y)\right)\tilde{J}(x,dy)=0
\]
so with $\tilde{J}(x,dy)=\frac{1}{c_{V}}\left(e^{V(x)}+e^{V(y)}\right)\kappa(|x-y|)d\pi(y)$
and $\psi_{t}=\log\left(\frac{d\mu_{t}}{d\pi}\right)$ (this is the
tangent potential for $\mathcal{H}(\cdot\mid\pi)$ by same computation
as in \cite{maas2011gradient}) we get 
\[
\partial_{t}\left(\frac{d\mu_{t}}{d\pi}\right)(x)-\int_{\mathbb{T}^{d}}\overline{\nabla}\left(\frac{d\mu_{t}}{d\pi}\right)(x,y)\frac{1}{c_{V}}\left(e^{V(x)}+e^{V(y)}\right)\kappa(|x-y|)d\pi(y)=0
\]
and, using $\pi=c_{V}e^{-V}dx$ and the Radon-Nikodym theorem, this
simplifies to
\[
\partial_{t}\left(\frac{d\mu_{t}}{d\pi}\right)(x)-\int_{\mathbb{T}^{d}}\overline{\nabla}\left(\frac{d\mu_{t}}{d\pi}\right)(x,y)\left(e^{V(x)-V(y)}+1\right)\kappa(|x-y|)dy=0.
\]
Finally, we replace $\frac{d\mu_{t}}{d\pi}$ with $\frac{d\mu_{t}}{dx}$.
Putting $\rho_{t}=\frac{d\mu_{t}}{dx}$ and using $\frac{d\mu_{t}}{d\pi}=\frac{d\mu_{t}}{dx}\frac{dx}{d\pi}$
we have 
\[
\frac{e^{V(x)}}{c_{V}}\partial_{t}\rho_{t}(x)-\int_{\mathbb{T}^{d}}\left(\frac{e^{V(y)}}{c_{V}}\rho_{t}(y)-\frac{e^{V(x)}}{c_{V}}\rho_{t}(x)\right)\left(e^{V(x)-V(y)}+1\right)\kappa(|x-y|)dy=0.
\]
Thus, dividing through by $\frac{e^{V(x)}}{c_{V}}$, we have 
\[
\partial_{t}\rho_{t}(x)-\int_{\mathbb{T}^{d}}\left(e^{V(y)-V(x)}\rho_{t}(y)-\rho_{t}(x)\right)\left(e^{V(x)-V(y)}+1\right)\kappa(|x-y|)dy=0.
\]
Note that if $C_{LS}\leq\frac{1}{c_{V}}\left(e^{V(x)}+e^{V(y)}\right)\kappa(|x-y|)$,
then the nonlocal log-Sobolev inequality is satisfied with constant
$C_{LS}$. %
\begin{comment}
The term $e^{V(x)-V(y)}+1$ has the following interpretation: compared
to the usual fractional heat equation, we have modified the jump kernel
to preferentially send mass from regions where $V$ is large, to regions
where $V$ is small. Formally, for a particle method, one might replace
this with Metropolis-type update step, where given a particle at $x_{0}$,
you sample $y_{0}$ from $\eta(x,y)dy$, then do accept/reject at
some rate depending on $V(x)/V(y)$.
\end{comment}
{} This shows that the rate of convergence to equilibrium is lower bounded
in terms of the \emph{distance between global minima }of $e^{V(\cdot)}$
(hence global maxima of $\frac{d\pi}{dx}$), as measured by $\kappa$,
as well as the value of $\max_{x\in\mathbb{T}^{d}}\frac{d\text{\ensuremath{\pi}}}{dx}$.

In particular, we can decompose the PDE as follows: (here we abuse
notation and write $\pi(x)$ to denote the Lebesgue density of the
equilibrium measure)
\[
\partial_{t}\rho_{t}(x)=\underbrace{\int_{\mathbb{T}^{d}}\overline{\nabla}\rho_{t}(x,y)\kappa(|x-y|)dy}_{\text{flat nonlocal diffusion term}}+\underbrace{\int_{\mathbb{T}^{d}}\left(\rho_{t}(y)\frac{\pi(x)}{\pi(y)}-\rho_{t}(x)\frac{\pi(y)}{\pi(x)}\right)\kappa(|x-y|)dy}_{\text{weighted nonlocal divergence term}}.
\]
Particularizing further, if we take $\kappa$ to be the fractional
heat kernel  on $\mathbb{T}^{d}$ for some $s\in(0,2)$, the resulting
equation can be written as 
\[
\partial_{t}\rho(x)=-(-\Delta)^{s/2}\rho_{t}(x)+\int_{\mathbb{T}^{d}}\left(\rho_{t}(y)\frac{\pi(x)}{\pi(y)}-\rho_{t}(x)\frac{\pi(y)}{\pi(x)}\right)\kappa(|x-y|)dy.
\]
Lastly, we mention that our results from the previous sections show
that: for this equation, weak solutions exist and are unique, are
curves of maximal slope for the relative entropy $\mathcal{H}(\cdot\mid\pi)$,
and are stable (in the sense of evolutionary $\Gamma$-convergence)
with respect to uniform perturbations of $\left(\frac{d\pi}{dx}\right)^{-1}$.
\end{example}

\section{Relation to nonlocal Dirichlet forms\protect\label{sec:nonlocal-Dirichlet}}

In this section, we briefly explain how our nonlocal transport-theoretic
approach to equation ($\dagger$) relates to a more classical perspective
drawing on Dirichlet form theory. We do not explain all the required
notions regarding Dirichlet forms in full detail; see \cite{fukushima2011dirichlet}
for general background.

Let $\pi\in\mathcal{P}(X)$. We define the symmetric bilinear form
$\mathcal{E}_{\eta,\pi}(u,v)$ with domain contained in $L^{2}(X,\pi)$
as follows:
\begin{align*}
\mathcal{E}_{\eta,\pi}(u,v) & :=\frac{1}{2}\iint_{X\times X\backslash\{x=y\}}(u(x)-u(y))(v(x)-v(y))\eta(x,y)d\pi(x)d\pi(y)\\
\mathcal{D}[\mathcal{E}_{\eta,\pi}] & :=\left\{ u\in L^{2}(X,\pi):\mathcal{E}_{\eta,\pi}(u,u)<\infty\right\} .
\end{align*}
Likewise we can define the ``$\mathcal{E}_{\eta,\pi}^{1}$ norm''
on functions $u\in L^{2}(X,\pi)$ by 
\[
\sqrt{\mathcal{E}_{\eta,\pi}^{1}(u,u)}:=\left(\mathcal{E}_{\eta,\pi}(u,u)+\Vert u\Vert_{L^{2}(\pi)}^{2}\right)^{1/2}.
\]
Let $\mathcal{F}$ be any subset of $\mathcal{D}[\mathcal{E}_{\eta,\pi}]$
which is closed with respect to the $\mathcal{E}_{\eta,\pi}^{1}$
norm. Then we say that $(\mathcal{E}_{\eta,\pi},\mathcal{F})$ is
a \emph{Dirichlet space} and $\mathcal{E}_{\eta,\pi}$ is a \emph{Dirichlet
form} provided that $\mathcal{E}_{\eta,\pi}$ is also ``Markovian'',
a technical condition we do not reproduce here but which is given
as condition ($\mathcal{E}.4$) in \cite{fukushima2011dirichlet}. 

We note that for $u$ to belong to $\mathcal{D}[\mathcal{E}_{\eta,\pi}]$,
under Assumption \ref{assu:eta=000020properties}(2) it suffices that
$u\in C^{1}(X)\cap L^{2}(X;\pi)$, since 
\begin{align*}
\iint_{X\times X\backslash\{x=y\}}(u(x)-u(y))^{2}\eta(x,y)d\pi(x)d\pi(y) & \leq\Vert u\Vert_{C^{1}}^{2}\iint_{X\times X\backslash\{x=y\}}(1\wedge|x-y|)^{2}\eta(x,y)d\pi(x)d\pi(y)
\end{align*}
and 
\begin{align*}
\iint_{X\times X\backslash\{x=y\}}(1\wedge|x-y|)^{2}\eta(x,y)d\pi(x)d\pi(y) & \leq\sup_{y\in\text{supp}(\pi)}\left(\int(1\wedge|x-y|)^{2}\eta(x,y)d\pi(x)\right)<\infty.
\end{align*}
In particular $\mathcal{D}[\mathcal{E}_{\eta,\pi}]$ automatically
includes the smooth functions of compact support. In fact, since $\pi$
is a probability measure, we actually have $C^{1}(X)\subset L^{2}(X;\pi)$
in our case. Additionally, in \cite[Example 1.2.4]{fukushima2011dirichlet}
it is verified that $\mathcal{E}_{\eta,\pi}$ is ``Markovian'' and
so is a Dirichlet form on any $\mathcal{E}_{\eta,\pi}^{1}$-closed
subset of $\mathcal{D}[\mathcal{E}_{\eta,\pi}]$, under the conditions
that: for any $\varepsilon>0$, $x\mapsto\int_{X\backslash B_{\varepsilon}(x)}\eta(x,y)d\pi(y)$
is locally integrable (which is implied by our \ref{assu:eta=000020properties}(3)),
and that for any compact $K\subset X$, $\iint_{K\times K\backslash\{x=y\}}(|x-y|)^{2}\eta(x,y)d\pi(x)d\pi(y)<\infty$,
which is implied by our Assumption \ref{assu:eta=000020properties}(2+4).
Moreover it is shown that $\mathcal{D}[\mathcal{E}_{\eta,\pi}]$ itself
is closed with respect to the $\mathcal{E}_{\eta,\pi}^{1}$ norm in
this case, so we can take $\mathcal{F}=\mathcal{D}[\mathcal{E}_{\eta,\pi}]$
for instance.

The Dirichlet form $\mathcal{E}_{\eta,\pi}$ gives rise to the nonlocal
parabolic equation\emph{ 
\[
\frac{d}{dt}\langle u(t),\cdot\rangle_{L^{2}(\pi)}=-\mathcal{E}_{\eta,\pi}(u(t),\cdot)
\]
}which we interpret as an ODE on the subspace $\mathcal{F}$ of $L^{2}(\pi)$.
Weak solutions to this equation are said to be \emph{caloric} (see
e.g. \cite{barlow2012equivalence,grigor2024parabolic} for this terminology),
as per the following definition.
\begin{defn}
Given the Dirichlet space $(\mathcal{E}_{\eta,\pi},\mathcal{F})$,
we say that a function $u(t):[0,T]\rightarrow\mathcal{F}$ is \emph{caloric}
for $\mathcal{E}_{\eta,\pi}$ provided that\emph{ }for all $\varphi\in\mathcal{F}$,
$\frac{d}{dt}\langle u(t),\varphi\rangle_{L^{2}(\pi)}$ exists for
all $t\in[0,T]$, and\emph{
\[
\frac{d}{dt}\int_{X}u(t,x)\varphi(x)d\pi+\frac{1}{2}\iint_{X\times X\backslash\{x=y\}}(u(t,x)-u(t,y))(\varphi(x)-\varphi(y))\eta(x,y)d\pi(x)d\pi(y)=0.
\]
}
\end{defn}

\begin{rem*}
Caloric functions are less commonly seen in the literature than semigroups
generated by Dirichlet forms; caloric functions are indeed a weaker
notion, see discussion in \cite{barlow2012equivalence,grigor2008off}.
The idea is that the generator $L$ of the Dirichlet form $\mathcal{E}$
is given by duality by $\langle Lu,\varphi\rangle_{L^{2}(\pi)}=-\mathcal{E}(u,\varphi)$
and so the caloric functions are a notion of weak solutions for the
evolution equation $\frac{d}{dt}u(t)=Lu(t)$.
\end{rem*}
Notice that this definition, is almost, but not quite, the same as
our Definition \ref{def:nde-weak-solution} of weak solutions to ($\dagger$),
as the class of test functions and notion of time derivative differ.
This is analogous to how $2$-Wasserstein gradient flows of the entropy
are measure-valued weak solutions to the heat equation, which also
has a subtly different $L^{2}$ theory of weak solutions.

The following proposition addresses whether caloric functions for
$\mathcal{E}_{\eta,\pi}$ are also weak solutions to $(\dagger)$
in the sense of Definition \ref{def:nde-weak-solution}, or vice versa.
Combining this with \ref{prop:weak-soln-vs-gradient-flow} allows
us to then relate caloric functions for $\mathcal{E}_{\eta,\pi}$
and curves of maximal slope for $\mathcal{H}(\cdot\mid\pi)$ w.r.t.
$\mathcal{W}_{\eta,\pi}$. 

Aside from being of analytic interest, this proposition allows for
a \emph{probabilistic} interpretation of curves of maximal slope for
$\mathcal{H}(\cdot\mid\pi)$. In the introduction, we mentioned that
equation $(\dagger)$ comes with a \emph{formal} interpretation as
the time-marginals of a pure-jump process, however that interpretation
can only directly be made rigorous in the case when $\sup_{x}\int_{X}\eta(x,y)d\pi(y)<\infty$
(see e.g. \cite[Thm. 9.3.27]{ccinlar2011probability}). However, in
the Dirichlet forms literature, existence theorems for underlying
stochastic processes hold much more generally, see especially \cite[Thm. 7.2.1]{fukushima2011dirichlet}.
\begin{prop}
\label{prop:dirichlet-caloric-equivalent}Let $\mathcal{E}_{\eta,\pi}$
be defined as above, and let $\mathcal{F}\subset\mathcal{D}[\mathcal{E}_{\eta,\pi}]$
be $\mathcal{E}_{\eta,\pi}^{1}$-closed. Assume $C^{1}(X)\subset\mathcal{F}$,
and suppose Assumption \ref{assu:eta=000020properties}(1-4) holds. 

(1) Let $u_{0}\in\mathcal{F}$ satisfy $\int_{X}u_{0}d\pi=1$. Then,
given any nonnegative caloric function $u(t)$ beginning at $u_{0}$,
the time-dependent probability measure $\rho_{t}$ given by $u(t)=\frac{d\rho_{t}}{d\pi}$
is a weak solution to equation ($\dagger$) in the sense of Definition
\ref{def:nde-weak-solution}. 

(2) Suppose that $\sqrt{\mathcal{I}_{\eta}(\cdot\mid\pi)}$ is an
a.e. upper gradient for $\mathcal{H}(\cdot\mid\pi)$ w.r.t. $\mathcal{W}_{\eta,\pi}$,
and that $\rho_{0}\in\mathcal{P}(X)$ with $\mathcal{H}(\rho_{0}\mid\pi)<\infty$
and $\frac{d\rho_{0}}{d\pi}=u_{0}\in\mathcal{F}$. Suppose also the
assumptions of either Proposition \ref{prop:one=000020sided=000020chain=000020rule}
or Corollary \ref{cor:upgraded-chain-rule}. Then given both a nonnegative
caloric function $u(t)$ for $\mathcal{E}_{\eta,\pi}$ beginning at
$u_{0}$, and a curve of maximal slope $\rho_{t}$ for $\mathcal{H}(\cdot\mid\pi)$
w.r.t. $\mathcal{W}_{\eta,\pi}$ beginning at $\rho_{0}$, it holds
that $u(t)=\frac{d\rho_{t}}{d\pi}$ for all $t\in[0,T]$, and so the
two curves coincide.
\end{prop}

\begin{rem*}
Part (2) above can be thought of as being in the same spirit as the
identification of the $W_{2}$ gradient flow of the entropy, and the
$L^{2}$ gradient flow of the Cheeger energy: see for instance \cite[Theorem 8.5]{ambrosio2014calculus}
for an abstract version of this identification.
\end{rem*}
\begin{proof}
(1) From the fact that $u(t)$ is caloric, it also holds, in particular,
that for all $\psi\in C^{1}(X)\subset\mathcal{F}$, 
\[
\frac{d}{dt}\int_{X}u(t,x)\varphi(x)d\pi(x)+\frac{1}{2}\iint_{X\times X\backslash\{x=y\}}(u(t,x)-u(t,y))(\psi(x)-\psi(y))\eta(x,y)d\pi(x)d\pi(y)=0.
\]
We first observe that $\int_{X}u(t,x)d\pi(x)=1$ for all $t>0$ as
well, so that $u(t)$ is a probability density w.r.t. $\pi$. Since
$C^{1}(X)\subset\mathcal{F}$, we have in particular that the constant
function $1$ is in $\mathcal{F}$. Hence
\[
\frac{d}{dt}\int_{X}u(t,x)d\pi(x)+\frac{1}{2}\iint_{X\times X\backslash\{x=y\}}(u(t,x)-u(t,y))\underbrace{(1-1)}_{=0}\eta(x,y)d\pi(x)d\pi(y)=0
\]
and so $\frac{d}{dt}\int_{X}u(t,x)d\pi(x)=0$. Hence $u(t)$ is a
probability density w.r.t. $\pi$ for all time. Let us define the
probability measure $\rho_{t}$ by $u(t)=\frac{d\rho_{t}}{d\pi}$.

Next we argue that $\rho_{t}$ is narrowly continuous as a function
from $[0,T]$ to $\mathcal{P}(X)$. From the assumption that $u_{t}$
is caloric and $C^{1}(X)\subset\mathcal{F}$, we have that for every
$\psi\in C^{1}(X)$, the function $t\mapsto\int_{X}u(t,x)\psi(x)d\pi(x)$
is differentiable at every $t\in[0,T]$, so in particular is continuous.
Since $C^{1}(X)$ is dense in $C_{b}(X)$, by duality this means that
$t\mapsto u(t)d\pi$ is narrowly continuous.  

Lastly, we have in particular that for all $\psi\in C_{c}^{\infty}(X)$
and all $t\in[0,T]$, 
\[
\frac{d}{dt}\int_{X}\psi(x)d\rho_{t}(x)+\frac{1}{2}\iint_{X\times X\backslash\{x=y\}}\bar{\nabla}\frac{d\rho_{t}}{d\pi}(x,y)\bar{\nabla}\psi(x,y)\eta(x,y)d\pi(x)d\pi(y)=0.
\]
This implies that for all $\varphi_{t}\in C_{c}^{\infty}(X\times(0,T))$
\[
\int_{0}^{T}\int_{X}\partial_{t}\varphi_{t}(x)d\rho_{t}(x)dt-\frac{1}{2}\int_{0}^{T}\iint_{G}\bar{\nabla}\varphi_{t}(x,y)\bar{\nabla}\frac{d\rho_{t}}{d\pi}(x,y)\eta(x,y)d\pi(x)d\pi(y)dt=0
\]
by the same reasoning as in the proof of Proposition \ref{prop:compactness-nce}
(briefly one uses the fact that linear combinations of test functions
of the form $\varphi(t,x)=\tilde{\varphi}(t)\psi(x)$ are dense in
$C_{c}^{\infty}(X\times(0,T))$). Altogether this shows that $\rho_{t}$
is a weak solution to ($\dagger)$ in the sense of Definition \ref{def:nde-weak-solution}.

(2) now follows immediately by combining part (1) above with the converse
direction of Proposition \ref{prop:weak-soln-vs-gradient-flow}: specifically
we use the fact that, if there exists a curve of maximal slope $\rho_{t}$
for $\mathcal{H}(\cdot\mid\pi)$ w.r.t. $\mathcal{W}_{\eta,\pi}$
beginning at $\rho_{0}$, then $\rho_{t}$ is the unique weak solution
to ($\dagger$) (and is also an entropy weak solution).
\end{proof}
Let us make several comments regarding Proposition \ref{prop:dirichlet-caloric-equivalent}. 

Firstly, note that Proposition \ref{prop:dirichlet-caloric-equivalent}
also allows for an indirect argument that caloric functions for $\mathcal{E}_{\eta,\pi}$
which are nonnegative and have initial condition $\mu_{0}=\frac{d\rho_{0}}{d\pi}$
with $\int\rho_{0}=1$ are \emph{unique}, if we assume that $\mathcal{H}(\rho_{0}\mid\pi)<\infty$
and that $\sqrt{\mathcal{I}_{\eta}(\cdot\mid\pi)}$ is an a.e. upper
gradient for $\mathcal{H}(\cdot\mid\pi)$ w.r.t. $\mathcal{W}_{\eta,\pi}$.
This is because we can avail ourselves of the uniqueness argument
for weak solutions to $(\dagger)$ given in the converse direction
of Proposition \ref{prop:weak-soln-vs-gradient-flow}.

Secondly, it is sometimes possible to drop the assumption that $u(t)$
is nonnegative: namely, one can \emph{deduce} that $u(t)$ is nonnegative
for all $t$ provided $u(0)$ is nonnegative, if we have a \emph{parabolic
maximum principle} for caloric functions for a given Dirichlet form.
See \cite[Prop. 5.2]{grigor2009heat} for an example of this type
of result.

Lastly, one might wonder about the converse direction to part (1)
of Proposition \ref{prop:dirichlet-caloric-equivalent}: given a weak
solution $\rho_{t}$ to $(\dagger)$, is it the case that $u(t)=\frac{d\rho_{t}}{d\pi}$
is automatically caloric for $\mathcal{E}_{\eta,\pi}$? We mention
two obstacles. One is that for a function to be caloric, it is required
that $u(t)$ be weakly differentiable at all times $t$, whereas weak
solutions to ($\dagger$) are by default merely absolutely continuous
in time (but cf. discussion of this issue in \cite[Prop. 1.3]{sturm1995analysis}).
Another more serious issue is that the definition of weak solutions
used in Definition \ref{def:nde-weak-solution} has the class of test
functions as $C_{c}^{\infty}(X\times[0,T]))$, and so in particular
$C_{c}^{\infty}(X)$ is included; however we must then upgrade this
class of admissible test functions to all of $\mathcal{F}$ if $u(t)$
is to be caloric w.r.t. $(\mathcal{E}_{\eta,\pi},\mathcal{F})$. This
can be done if $C_{c}^{\infty}(X)$ happens to be dense in $\mathcal{F}$
w.r.t. the $\mathcal{E}_{\eta,\pi}^{1}$ norm. To our knowledge, the
question of the existence of an $\mathcal{E}^{1}$-closed $\mathcal{F}$
in which $C_{c}^{\infty}(X)$ is $\mathcal{E}^{1}$-dense for Dirichlet
forms of type $\mathcal{E}_{\eta,\pi}$ has not been totally resolved
in the literature; however see \cite{schilling2012structure} for
some results in this direction when $\mathcal{F}=\mathcal{D}[\mathcal{E}_{\eta,\pi}]$.

\section*{Acknowledgements}

AW gratefully acknowledges the support of Labex CARMIN while at IHÉS,
where the first half of this work was completed. At UBC, the latter
half of this work was supported by the Burroughs Wellcome Fund, and
by the Exploration Grant NFRFE-2019-00944 from the New Frontiers in
Research Fund (NFRF). Work-in-progress talks were given as part of
guest visits to the group of Jan Maas at Institute of Science and
Technology Austria, as well as the Stochastic Analysis and Application
Research Center (SAARC) at the Korea Advanced Institute of Science
and Technology (KAIST); we are glad to have enjoyed Jan Maas's and
SAARC's hospitality. AW also thanks Dejan Slep\v{c}ev for suggesting
this line of inquiry, and for countless helpful discussions well before
this project began. We also thank Young-Heon Kim for helpful advice
and Mathav Murugan for helpful discussions regarding Dirichlet forms.

\bibliographystyle{plain}
\bibliography{0_home_andrew_Documents_otrefs}

\appendix

\section{Convolution estimates\protect\label{sec:Convolution-estimates}}

We make use of a number of convolution estimates in the proof of Proposition
\ref{prop:one=000020sided=000020chain=000020rule};   this section
collects the required notions and estimates.

Given a convolution kernel $k$ and a (possibly signed, possibly vector-valued)
measure $\mu$ on $\mathbb{R}^{d}$ or $\mathbb{T}^{d}$, we denote
\[
(k*\mu)(x):=\int k(|x-y|)d\mu(y).
\]
We also abuse notation and write $k*\mu$ for the measure whose Lebesgue
density is $(k*\mu)(x)$. To be precise, $k*\mu$ should be understood
as the convolution of $\mu$ by the measure $k(x)dx$, but we choose
not to write ``$k(x)dx*\mu$'' or similar so as to reduce notational
burden. 

Separately, for measures $\mathbf{v}\in\mathcal{M}_{loc}(G)$, we
follow \cite{erbar2014gradient} define the convolution $k*\mathbf{v}\in\mathcal{M}_{loc}(G)$
of a convolution kernel $k$ (on $\mathbb{R}^{d}$ or $\mathbb{T}^{d}$)
with $\mathbf{v}$ as follows:
\[
k*\mathbf{v}=\int_{\mathbb{R}^{d}}k(|z|)d\mathbf{v}(x-z,y-z)dz
\]
in other words, for all $\varphi\in C_{c}^{\infty}(G)$, 
\[
\iint_{G}\varphi(x,y)d(k*\mathbf{v})(x,y)=\int_{\mathbb{R}^{d}}\iint_{G}k(|z|)\varphi(x+z,y+z)d\mathbf{v}(x,y)dz.
\]
Note that the convolution operation $k*\mathbf{v}$ is \emph{not}
the usual convolution on $G$, but rather is an instance of convolution
with respect to a group action on a space: in this case, $\mathbb{R}^{d}$
or $\mathbb{T}^{d}$ acts on $G$ by the translation $(x,y)\mapsto(x+z,y+z)$;
note that this is a mapping from $G$ to itself under Assumption \ref{assu:eta=000020properties}
(5). To be precise, $k*\mathbf{v}$ should be understood as the convolution
of $\mathbf{v}$ by the measure $k(x)dx$ with respect to the action
$(x,y)\mapsto(x+z,y+z)$; we warn the reader that $k*\mathbf{v}$
does \emph{not }automatically have density (with respect to $\text{Leb}^{d}\otimes\text{Leb}^{d}\llcorner G$)
since the notion of convolution employed is not the typical one.

The choice of specific notions of convolution in our setting is made
to ensure compatibility with the notion of nonlocal transport we study,
as the following result indicates.
\begin{prop}[Stability of action and metric under convolution]
\label{prop:convolution=000020action=000020stability} Let $k$ be
any convolution kernel. Let $(\mu_{t},\mathbf{v}_{t})_{t\in[0,1]}\in\mathcal{CE}$.
Then, $(k*\mu_{t},k*\mathbf{v}_{t})_{t\in[0,1]}\in\mathcal{CE}$ also.
\end{prop}

\begin{proof}
This is established in \cite[proof of Proposition 4.8]{erbar2014gradient}.
\end{proof}
Next we verify that the action $\mathcal{A}$ is stable with respect
to convolution. This is a refinement of existing results (in particular
in \cite{erbar2014gradient}) because our kernel $\eta$ is not necessarily
translation-invariant.
\begin{lem}[{Space regularization of mass-flux pairs; compare \cite[Prop. 2.8]{erbar2014gradient}}]
\label{lem:action-spatial-convolution} Suppose $\pi\ll\text{Leb}^{d}$,
and $\pi$ and $\eta$ satisfy Assumption \ref{assu:eta=000020properties}.
Then, for any convolution kernel $k$ supported on $B(0,1)$, for
all $\delta<\delta_{0}$, and all $(\mu,\mathbf{v})\in\mathcal{P}(X)\times\mathcal{M}_{loc}(G)$
with $\mu\ll\text{Leb}^{d}$ and $\mathbf{v}\ll\text{Leb}^{d}\otimes\text{Leb}^{d}$,
it holds  that
\[
\mathcal{A}(k_{\delta}*\mu,k_{\delta}*\mathbf{v};k_{\delta}*\pi)\leq S\mathcal{A}(\mu,\mathbf{v};\pi).
\]
\end{lem}

\begin{proof}
We assume without loss of generality that $\mathcal{A}(\mu,\mathbf{v};\pi)<\infty$.
Let $z\in X$. Write $\tau_{z}$ to denote the shift map $x\mapsto x+z$.
Now compute that 
\begin{align*}
\mathcal{A}(\mu\circ\tau_{z}^{-1},\mathbf{v}\circ(\tau_{z}\times\tau_{z})^{-1};\pi\circ\tau_{z}^{-1}) & =\iint\frac{\left(\frac{d\mathbf{v}}{dx\otimes dy}(x,y)\right)^{2}}{2\theta\left(\frac{d\mu}{dx}(x)\frac{d\pi}{dy}(y)\eta(x-z,y-z),\frac{d\pi}{dx}(x)\frac{d\mu}{dy}(y)\eta(y-z,x-z)\right)}dxdy.
\end{align*}
Suppose that $|z|<\delta_{0}$. Hence, $\frac{\eta(x,y)}{\eta(x-z,y-z)}\leq S$
under Assumption \ref{assu:eta=000020properties}.

Now, compute, from the monotonicity of $\theta$ in each variable,
then 1-homogeneity of $\theta$, that
\begin{multline*}
\theta\left(\frac{d\mu}{dx}(x)\frac{d\pi}{dy}(y)\eta(x-z,y-z),\frac{d\pi}{dx}(x)\frac{d\mu}{dy}(y)\eta(y-z,x-z)\right)\\
\begin{aligned} & =\theta\left(\frac{d\mu}{dx}(x)\frac{d\pi}{dy}(y)S^{-1}\eta(x,y),\frac{d\pi}{dx}(x)S^{-1}\frac{d\mu}{dy}(y)\eta(y,x)\right)\\
 & =S^{-1}\theta\left(\frac{d\mu}{dx}(x)\frac{d\pi}{dy}(y)\eta(x,y),\frac{d\pi}{dx}(x)\frac{d\mu}{dy}(y)\eta(y,x)\right).
\end{aligned}
\end{multline*}
Consequently, when $|z|<\delta_{0}$, we have that pointwise in $(x,y)$,
\[
\frac{\left(\frac{d\mathbf{v}}{dx\otimes dy}(x,y)\right)^{2}}{2\theta\left(\frac{d(\mu\otimes\pi)}{dx\otimes dy}(x,y)\eta(x-z,y-z),\frac{d(\pi\otimes\mu)}{dx\otimes dy}(x,y)\eta(y-z,x-z)\right)}\leq S\frac{\left(\frac{d\mathbf{v}}{dx\otimes dy}(x,y)\right)^{2}}{2\theta\left(\frac{d(\mu\otimes\pi)}{dx\otimes dy}(x,y)\eta(x,y),\frac{d(\pi\otimes\mu)}{dx\otimes dy}(x,y)\eta(y,x)\right)}
\]
and thus
\begin{multline*}
\int_{|z|<\delta}\frac{\left(\frac{d\mathbf{v}}{dx\otimes dy}(x,y)\right)^{2}}{2\theta\left(\frac{d(\mu\otimes\pi)}{dx\otimes dy}(x,y)\eta(x-z,y-z),\frac{d(\pi\otimes\mu)}{dx\otimes dy}(x,y)\eta(y-z,x-z)\right)}k_{\delta}(z)dz\\
\leq S\frac{\left(\frac{d\mathbf{v}}{dx\otimes dy}(x,y)\right)^{2}}{2\theta\left(\frac{d(\mu\otimes\pi)}{dx\otimes dy}(x,y)\eta(x,y),\frac{d(\pi\otimes\mu)}{dx\otimes dy}(x,y)\eta(y,x)\right)}.
\end{multline*}
Therefore, 
\[
\mathcal{A}(\mu\circ\tau_{z}^{-1},\mathbf{v}\circ(\tau_{z}\times\tau_{z})^{-1};\pi\circ\tau_{z}^{-1})\leq S\mathcal{A}(\mu,\mathbf{v};\pi).
\]

Moreover, since $\mathcal{A}$ is jointly convex, if $\{z_{i},\ldots,z_{N}\}$
has the property that $|z_{i}|<\delta_{0}$ for all $i=1,\ldots,N$,
then
\begin{align*}
\mathcal{A}\left(\frac{1}{N}\sum_{i=1}^{N}\mu\circ\tau_{z_{i}}^{-1},\frac{1}{N}\sum_{i=1}^{N}\mathbf{v}\circ(\tau_{z_{i}}\times\tau_{z_{i}})^{-1};\frac{1}{N}\sum_{i=1}^{N}\pi\circ\tau_{z_{i}}^{-1}\right) & \leq\frac{1}{N}\sum_{i=1}^{N}\mathcal{A}\left(\mu\circ\tau_{z_{i}}^{-1},\mathbf{v}\circ(\tau_{z_{i}}\times\tau_{z_{i}})^{-1};\pi\circ\tau_{z_{i}}^{-1}\right)\\
 & \leq S\mathcal{A}(\mu,\mathbf{v};\pi).
\end{align*}
Now, since $\mathcal{A}$ is jointly sequentially l.s.c. with respect
to narrow convergence in $\mathcal{P}(X)$ for $\mu$, local weak{*}
convergence in $\mathcal{M}_{loc}^{+}(X)$ for $\pi$, and local weak{*}
convergence in $\mathcal{M}_{loc}(G)$, it suffices to show that when
$Z_{i},\ldots,Z_{N}$ are i.i.d. random variables with distribution
$k_{\delta}(z)dz$, then with probability 1, 
\[
\frac{1}{N}\sum_{i=1}^{N}\mu\circ\tau_{Z_{i}}^{-1}\stackrel[N\rightarrow\infty]{*}{\rightharpoonup}k_{\delta}*\mu,\quad\frac{1}{N}\sum_{i=1}^{N}\pi\circ\tau_{Z_{i}}^{-1}\stackrel[N\rightarrow\infty]{*}{\rightharpoonup}k_{\delta}*\pi,\text{ and }\frac{1}{N}\sum_{i=1}^{N}\mathbf{v}\circ(\tau_{Z_{i}}\times\tau_{Z_{i}})^{-1}\stackrel[N\rightarrow\infty]{*}{\rightharpoonup}k_{\delta}*\mathbf{v},
\]
as in this case, with probability 1, 
\[
\mathcal{A}(k_{\delta}*\mu,k_{\delta}*\mathbf{v};k_{\delta}*\pi)\leq\liminf_{n\rightarrow\infty}\mathcal{A}\left(\frac{1}{N}\sum_{i=1}^{N}\mu\circ\tau_{Z_{i}}^{-1},\frac{1}{N}\sum_{i=1}^{N}\mathbf{v}\circ(\tau_{Z_{i}}\times\tau_{Z_{i}})^{-1}\right)\leq S\mathcal{A}(\mu,\mathbf{v};\pi).
\]
We now reason as in the proof of \cite[proof of Lemma A.3]{slepcev2023nonlocal}.\footnote{Here we are basically establishing Jensen's inequality via the strong
law of large numbers for random measures. The usual proofs of Jensen's
inequality do not apply here, since even though $\mathcal{A}$ is
jointly convex and l.s.c., its support has empty interior inside the
space of signed measures, hence versions of Jensen's inequality for
functionals on infinite-dimensional spaces (such as the one given
in \cite{perlman1974jensen}) cannot be directly employed.} Indeed, the fact that $\frac{1}{N}\sum_{i=1}^{N}\mu\circ\tau_{Z_{i}}^{-1}\rightharpoonup^{*}k_{\delta}*\mu$
(and likewise $\frac{1}{N}\sum_{i=1}^{N}\pi\circ\tau_{Z_{i}}^{-1}\rightharpoonup^{*}k_{\delta}*\pi$)
is already established verbatim in \cite[proof of Lemma A.3]{slepcev2023nonlocal}
under more general assumptions, so we omit the details; in any case,
the proof is very similar to (indeed, simpler than) the one we provide
now which shows that $\frac{1}{N}\sum_{i=1}^{N}\mathbf{v}\circ(\tau_{Z_{i}}\times\tau_{Z_{i}})^{-1}\rightharpoonup^{*}k_{\delta}*\mathbf{v}$. 

Let $\mathbf{v}=\mathbf{v}^{+}-\mathbf{v}^{-}$ denote the Jordan
decomposition of $\mathbf{v}$. Note that $\left(\mathbf{v}\circ(\tau_{Z_{i}}\times\tau_{Z_{i}})^{-1}\right)^{+}=\mathbf{v}^{+}\circ(\tau_{Z_{i}}\times\tau_{Z_{i}})^{-1}$
and similarly for $\mathbf{v}^{-}$.

It suffices to show that: for every compact $K\subset G$, and every
$\varphi\in C_{C}(K)$, with probability 1, 
\[
\frac{1}{N}\sum_{i=1}^{N}\iint_{K}\varphi d\left(\mathbf{v}^{+}\circ(\tau_{Z_{i}}\times\tau_{Z_{i}})^{-1}\right)\rightarrow\iint_{K}\varphi d\left(k_{\delta}*\mathbf{v}^{+}\right)
\]
and similarly for $\mathbf{v}^{-}$. More precisely, if this convergence
holds on arbitrary $K$, with arbitrary test function $\varphi\in C_{C}(K)$,
with probability 1, then \cite[Lemma 4.8 (i)]{kallenberg2017random}
indicates that for each $K$, with probability 1, 
\[
\frac{1}{N}\sum_{i=1}^{N}\mathbf{v}^{+}\circ(\tau_{Z_{i}}\times\tau_{Z_{i}})^{-1}\llcorner K\stackrel[N\rightarrow\infty]{*}{\rightharpoonup}k_{\delta}*\mathbf{v}^{+}\llcorner K
\]
and similarly for $\mathbf{v}^{-}$, so by definition of convergence
in the local weak{*} topology, this shows that $\frac{1}{N}\sum_{i=1}^{N}\mathbf{v}^{+}\circ(\tau_{Z_{i}}\times\tau_{Z_{i}})^{-1}\stackrel[N\rightarrow\infty]{*}{\rightharpoonup}k_{\delta}*\mathbf{v}^{+}$
and $\frac{1}{N}\sum_{i=1}^{N}\mathbf{v}^{-}\circ(\tau_{Z_{i}}\times\tau_{Z_{i}})^{-1}\stackrel[N\rightarrow\infty]{*}{\rightharpoonup}k_{\delta}*\mathbf{v}^{-}$.

We claim that for each $\varphi\in C_{C}(K)$, it holds that $\iint_{K}\varphi d\left(\mathbf{v}^{+}\circ(\tau_{Z}\times\tau_{Z})^{-1}\right)$
is an $L^{\infty}$ random variable when $Z$ is distributed according
to $k_{\delta}(z)dz$. (The same argument will work for $\mathbf{v}^{-}$.)
It suffices to show that $\left\Vert \mathbf{v}\circ(\tau_{z}\times\tau_{z})^{-1}\llcorner K\right\Vert _{TV}$
is uniformly bounded in $z$, since 
\begin{align*}
\left|\iint_{K}\varphi d\left(\mathbf{v}^{+}\circ(\tau_{v}\times\tau_{z})^{-1}\right)\right| & \leq\Vert\varphi\Vert_{L^{\infty}(K)}\left\Vert \mathbf{v}^{+}\circ(\tau_{z}\times\tau_{z})^{-1}\llcorner K\right\Vert _{TV}\\
 & \leq\Vert\varphi\Vert_{L^{\infty}(K)}\left\Vert \mathbf{v}\circ(\tau_{z}\times\tau_{z})^{-1}\llcorner K\right\Vert _{TV}.
\end{align*}
To this end, we know from Lemma \ref{lem:C-eta=000020upper=000020bound=000020of=000020flux}
that there is some uniform constant $C$ such that 
\[
|\mathbf{v}(K)|\leq\frac{1}{a_{K}}C\mathcal{A}(\mu,\mathbf{v});\qquad a_{K}:=\min\{|x-y|:(x,y)\in K\}.
\]
Now, note that $\mathbf{v}\circ(\mathcal{\tau}_{z}\times\tau_{z})^{-1}(K)=\mathbf{v}\left(K-\binom{z}{z}\right)$;
at the same time, $a_{K}=a_{K-\binom{z}{z}}$. Thus, uniformly in
$z$,
\[
|\mathbf{v}\circ(\mathcal{\tau}_{z}\times\tau_{z})^{-1}(K)|\leq\frac{1}{a_{K}}C\mathcal{A}(\mu,\mathbf{v}).
\]

Thus, since $\iint_{K}\varphi d\left(\mathbf{v}^{+}\circ(\tau_{Z}\times\tau_{Z})^{-1}\right)$
is an $L^{\infty}$ random variable, we can apply the strong law of
large numbers to deduce that 
\begin{align*}
\iint_{K}\varphi d\left(\frac{1}{N}\sum_{i=1}^{N}\mathbf{v}^{+}\circ(\tau_{Z_{i}}\times\tau_{Z_{i}})^{-1}\right) & =\frac{1}{N}\sum_{i=1}^{N}\iint_{K}\varphi d\left(\mathbf{v}^{+}\circ(\tau_{Z_{i}}\times\tau_{Z_{i}})^{-1}\right)\\
\text{(a.s.)} & \rightarrow\int_{\mathbb{R}^{d}}\iint_{K}\varphi d\left(\mathbf{v}^{+}\circ(\tau_{v}\times\tau_{z})^{-1}\right)k_{\delta}(z)dz\\
 & =\iint_{K}\varphi d\left(\int_{\mathbb{R}^{d}}\mathbf{v}^{+}\circ(\tau_{v}\times\tau_{z})^{-1}k_{\delta}(z)dz\right)\\
 & =\iint_{K}\varphi d\left(k_{\delta}*\mathbf{v}^{+}\right).
\end{align*}
Applying the same reasoning to $\mathbf{v}^{-}$, we deduce from \cite[Lemma 4.8 (i)]{kallenberg2017random}
that with probability 1, both 
\[
\frac{1}{N}\sum_{i=1}^{N}\mathbf{v}^{+}\circ(\tau_{Z_{i}}\times\tau_{Z_{i}})^{-1}\llcorner K\stackrel[N\rightarrow\infty]{*}{\rightharpoonup}k_{\delta}*\mathbf{v}^{+}\llcorner K\text{ and }\frac{1}{N}\sum_{i=1}^{N}\mathbf{v}^{-}\circ(\tau_{Z_{i}}\times\tau_{Z_{i}})^{-1}\llcorner K\stackrel[N\rightarrow\infty]{*}{\rightharpoonup}k_{\delta}*\mathbf{v}^{-}\llcorner K.
\]
Quantifying over a countable compact exhaustion of $G$, this means
that (in the local weak{*} topology) with probability 1,
\[
\frac{1}{N}\sum_{i=1}^{N}\mathbf{v}^{+}\circ(\tau_{Z_{i}}\times\tau_{Z_{i}})^{-1}\stackrel[N\rightarrow\infty]{*}{\rightharpoonup}k_{\delta}*\mathbf{v}^{+}\text{ and }\frac{1}{N}\sum_{i=1}^{N}\mathbf{v}^{-}\circ(\tau_{Z_{i}}\times\tau_{Z_{i}})^{-1}\stackrel[N\rightarrow\infty]{*}{\rightharpoonup}k_{\delta}*\mathbf{v}^{-}.
\]
 Since $k_{\delta}*(\mathbf{v}^{+}-\mathbf{v}^{-})=k_{\delta}*\mathbf{v}^{+}-k_{\delta}*\mathbf{v}^{-}$,
this establishes that with probability 1,
\[
\frac{1}{N}\sum_{i=1}^{N}\mathbf{v}^{+}\circ(\tau_{Z_{i}}\times\tau_{Z_{i}})^{-1}-\frac{1}{N}\sum_{i=1}^{N}\mathbf{v}^{-}\circ(\tau_{Z_{i}}\times\tau_{Z_{i}})^{-1}\stackrel[N\rightarrow\infty]{*}{\rightharpoonup}k_{\delta}*\mathbf{v}
\]
and hence with probability 1, $\frac{1}{N}\sum_{i=1}^{N}\mathbf{v}\circ(\tau_{Z_{i}}\times\tau_{Z_{i}})^{-1}\stackrel[N\rightarrow\infty]{*}{\rightharpoonup}k_{\delta}*\mathbf{v}$,
as desired. 
\end{proof}
\begin{cor}
\label{cor:action-space-convolution-convergence}Suppose that $(\mu_{t},\mathbf{v}_{t})\in\mathcal{CE}$
and $\int_{0}^{1}\mathcal{A}(\mu_{t},\mathbf{v}_{t};\pi)<\infty$.
Assume also that $\mathbf{v}_{t}$ is a tangent flux for $\mu_{t}$
in the sense of Fact \ref{fact:tangent-flux}. Under the same assumptions
as Lemma \ref{lem:action-spatial-convolution}, we have that as $\delta\rightarrow0$,
$\mathcal{A}(k_{\delta}*\mu_{t},k_{\delta}*\mathbf{v}_{t};k_{\delta}*\pi)\rightarrow\mathcal{A}(\mu_{t},\mathbf{v}_{t};\pi)$
for almost all $t$, $d(k_{\delta}*\mathbf{v}_{t})dt\rightharpoonup^{*}d\mathbf{v}_{t}dt$
in $\mathcal{M}_{loc}([0,1]\times G)$, and 
\[
\int_{0}^{1}\mathcal{A}(k_{\delta}*\mu_{t},k_{\delta}*\mathbf{v}_{t};k_{\delta}*\pi)dt\rightarrow\int_{0}^{1}\mathcal{A}(\mu_{t},\mathbf{v}_{t};\pi)dt.
\]
\end{cor}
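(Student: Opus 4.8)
The plan is to sandwich $\mathcal{A}(k_\delta*\mu_t,k_\delta*\mathbf{v}_t;k_\delta*\pi)$ between the one-sided estimate of Lemma~\ref{lem:action-spatial-convolution} and the lower semicontinuity of $\mathcal{A}$ from Proposition~\ref{prop:action convex lsc}, thereby obtaining pointwise-in-$t$ convergence of the action, and then to upgrade to the two time-integrated statements by dominated convergence. For the upper bound, fix $\delta<\delta_0$ and apply Lemma~\ref{lem:action-spatial-convolution} with the contraction parameter of Assumption~\ref{assu:eta properties}(6) taken equal to $\delta$, which gives $\mathcal{A}(k_\delta*\mu_t,k_\delta*\mathbf{v}_t;k_\delta*\pi)\le S_\delta\,\mathcal{A}(\mu_t,\mathbf{v}_t;\pi)$ with $S_\delta\ge 1$ and $S_\delta\to 1$ as $\delta\to 0$. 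Since $\int_0^1\mathcal{A}(\mu_t,\mathbf{v}_t;\pi)\,dt<\infty$ we have $\mathcal{A}(\mu_t,\mathbf{v}_t;\pi)<\infty$ for a.e.\ $t$, so for such $t$, $\limsup_{\delta\to 0}\mathcal{A}(k_\delta*\mu_t,k_\delta*\mathbf{v}_t;k_\delta*\pi)\le\mathcal{A}(\mu_t,\mathbf{v}_t;\pi)$.

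For the matching lower bound I would first record three weak convergences: $k_\delta*\mu_t\rightharpoonup\mu_t$ narrowly and $k_\delta*\pi\rightharpoonup^*\pi$ in $\mathcal{M}_{loc}^+(X)$ (the standard approximate-identity facts), and $k_\delta*\mathbf{v}_t\rightharpoonup^*\mathbf{v}_t$ in $\mathcal{M}_{loc}(G)$ for a.e.\ $t$. For the last, test against $\varphi\in C_c(G)$ supported in a compact $K\subset G$, and use the definition of the $G$-convolution together with Fubini to write $\iint_G\varphi\,d(k_\delta*\mathbf{v}_t)=\iint_G\big(\int_{\mathbb{R}^d}k_\delta(|z|)\varphi(x+z,y+z)\,dz\big)\,d\mathbf{v}_t(x,y)$; the inner $z$-integral converges to $\varphi(x,y)$ uniformly (as $\varphi$ is uniformly continuous), is supported for $\delta\le\delta_0$ in a fixed compact $K'\subset G$ separated from the diagonal, and on $K'$ one has $|\mathbf{v}_t|(K')\le C_\eta(\min\{|x-y|:(x,y)\in K'\})^{-1}\sqrt{\mathcal{A}(\mu_t,\mathbf{v}_t;\pi)}<\infty$ for a.e.\ $t$ by Lemma~\ref{lem:C-eta upper bound of flux}, so dominated convergence applies. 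With these convergences in hand, the fixed-$\eta$ topological lower semicontinuity of $\mathcal{A}$ from Proposition~\ref{prop:action convex lsc} yields $\liminf_{\delta\to 0}\mathcal{A}(k_\delta*\mu_t,k_\delta*\mathbf{v}_t;k_\delta*\pi)\ge\mathcal{A}(\mu_t,\mathbf{v}_t;\pi)$; combined with the first paragraph, $\mathcal{A}(k_\delta*\mu_t,k_\delta*\mathbf{v}_t;k_\delta*\pi)\to\mathcal{A}(\mu_t,\mathbf{v}_t;\pi)$ for a.e.\ $t$.

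For the weak* convergence $d(k_\delta*\mathbf{v}_t)\,dt\rightharpoonup^* d\mathbf{v}_t\,dt$ in $\mathcal{M}_{loc}([0,1]\times G)$ I would repeat the same dominated-convergence computation against $\psi\in C_c([0,1]\times G)$: rewriting $\int_0^1\iint_G\psi\,d(k_\delta*\mathbf{v}_t)\,dt=\int_0^1\iint_G\big(\int_{\mathbb{R}^d}k_\delta(|z|)\psi(t,x+z,y+z)\,dz\big)\,d\mathbf{v}_t(x,y)\,dt$, the inner $z$-integral tends to $\psi$ uniformly, stays supported (in the space variables) in a fixed compact $K'\subset G$ for small $\delta$, and on $K'$ the bound $|\mathbf{v}_t|(K')\lesssim\sqrt{\mathcal{A}(\mu_t,\mathbf{v}_t;\pi)}$ lies in $L^1(dt)$, supplying the dominating function. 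Finally, the integrated action convergence follows from dominated convergence in $t$, using the pointwise a.e.\ limit from the second paragraph and the uniform domination $\mathcal{A}(k_\delta*\mu_t,k_\delta*\mathbf{v}_t;k_\delta*\pi)\le S_\delta\,\mathcal{A}(\mu_t,\mathbf{v}_t;\pi)\le(\sup_{\delta\le\delta_0}S_\delta)\,\mathcal{A}(\mu_t,\mathbf{v}_t;\pi)\in L^1(dt)$, giving $\int_0^1\mathcal{A}(k_\delta*\mu_t,k_\delta*\mathbf{v}_t;k_\delta*\pi)\,dt\to\int_0^1\mathcal{A}(\mu_t,\mathbf{v}_t;\pi)\,dt$. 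The main obstacle is the $\mathcal{M}_{loc}(G)$-convergence of $k_\delta*\mathbf{v}_t$: since the convolution on $G$ is the nonstandard group-action one, one must check that translating the support of a test function by $|z|<\delta$ neither leaves $G$ nor approaches the diagonal—guaranteed by Assumption~\ref{assu:eta properties}(5) and the separation of compact subsets of $G$ from $\{x=y\}$—and the requisite uniform mass bound is exactly Lemma~\ref{lem:C-eta upper bound of flux}.
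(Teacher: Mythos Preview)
Your argument is correct and in fact more direct than the paper's. The paper does not verify $k_\delta*\mathbf{v}_t\rightharpoonup^*\mathbf{v}_t$ by hand; instead it invokes the compactness result Proposition~\ref{prop:compactness-nce} on the family $(k_\delta*\mu_t,k_\delta*\mathbf{v}_t)_{t\in[0,1]}$ (which has uniformly bounded total action) to extract, along any sequence $\delta_n\to 0$, a subsequential limit $(\mu_t,\tilde{\mathbf{v}}_t)$, and then uses the tangent flux hypothesis: since $\int_0^1\mathcal{A}(\mu_t,\tilde{\mathbf{v}}_t;\pi)\,dt\le\int_0^1\mathcal{A}(\mu_t,\mathbf{v}_t;\pi)\,dt$ (from lsc and the limsup bound) while $\mathbf{v}_t$ already minimises the action for a.e.\ $t$, one gets $\mathcal{A}(\mu_t,\tilde{\mathbf{v}}_t;\pi)=\mathcal{A}(\mu_t,\mathbf{v}_t;\pi)$ a.e., and the a.e.\ uniqueness of tangent fluxes (Fact~\ref{fact:tangent-flux}) forces $\tilde{\mathbf{v}}_t=\mathbf{v}_t$. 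Your approximate-identity computation on $C_c(G)$ test functions sidesteps this entirely, identifying the weak* limit as $\mathbf{v}_t$ without any minimality or uniqueness input; in particular your proof shows that the tangent flux assumption in the corollary is not actually needed. The paper's route has the advantage of reusing machinery already in place (and illustrates how the tangent flux characterisation can be used as an identification tool), while yours is self-contained and yields a slightly stronger statement.
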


\begin{proof}
Assume that $\delta<\delta_{0}$. On the one hand, we have that $\mathcal{A}(k_{\delta}*\mu_{t},k_{\delta}*\mathbf{v}_{t};k_{\delta}*\pi)\leq S\mathcal{A}(\mu_{t},\mathbf{v}_{t};\pi)$,
where $S\rightarrow1$ as $\delta_{0}\rightarrow0$. Hence 
\[
\limsup_{\delta\rightarrow0}\mathcal{A}(k_{\delta}*\mu_{t},k_{\delta}*\mathbf{v}_{t};k_{\delta}*\pi)\leq\limsup_{\delta_{0}\rightarrow0}S\mathcal{A}(\mu_{t},\mathbf{v}_{t};\pi)=\mathcal{A}(\mu_{t},\mathbf{v}_{t};\pi).
\]
At the same time, we have uniformly in $\delta<\delta_{0}$ that 
\[
\int_{0}^{1}\mathcal{A}(k_{\delta}*\mu_{t},k_{\delta}*\mathbf{v}_{t};k_{\delta}*\pi)dt\leq S\int_{0}^{1}\mathcal{A}(\mu_{t},\mathbf{v}_{t};\pi)dt<\infty
\]
We know that $k_{\delta}*\mu$ and $k_{\delta}*\pi$ converge narrowly
as $\delta\rightarrow0$ to $\mu$ and $\pi$. Hence, applying the
compactness result from Proposition \ref{prop:compactness-nce}, along
any $\delta_{n}\rightarrow0$ we can pass to some subsequence (which
we do not relabel) along which we can extract a limiting $(\mu_{t},\tilde{\mathbf{v}}_{t})_{t\in[0,1]}$
such that $k_{\delta}*\mathbf{v}_{t}$ converges to $\tilde{\mathbf{v}}_{t}$
in the sense that $d(k_{\delta}*\mathbf{v}_{t})dt\rightharpoonup^{*}d\tilde{\mathbf{v}}_{t}dt$
in $\mathcal{M}_{loc}([0,1]\times G)$, and by Fatou's lemma
\begin{align*}
\int_{0}^{1}\mathcal{A}(\mu_{t},\tilde{\mathbf{v}}_{t};\pi)dt & \leq\liminf_{\delta_{n}\rightarrow0}\int_{0}^{1}\mathcal{A}(k_{\delta_{n}}*\mu_{t},k_{\delta_{n}}*\mathbf{v}_{t};k_{\delta_{n}}*\pi)dt.\\
 & \leq\int_{0}^{1}\limsup_{\delta_{n}\rightarrow0}\mathcal{A}(k_{\delta_{n}}*\mu_{t},k_{\delta_{n}}*\mathbf{v}_{t};k_{\delta_{n}}*\pi)dt\\
 & =\int_{0}^{1}\mathcal{A}(\mu_{t},\mathbf{v}_{t};\pi)dt.
\end{align*}
By the assumption that $\mathbf{v}_{t}$ is tangent, we have that
for almost all $t\in[0,1]$, $\mathcal{A}(\mu_{t},\mathbf{v}_{t};\pi)\leq\mathcal{A}(\mu_{t},\tilde{\mathbf{v}}_{t};\pi)$.
Hence in fact, $\mathcal{A}(\mu_{t},\mathbf{v}_{t};\pi)=\mathcal{A}(\mu_{t},\tilde{\mathbf{v}}_{t};\pi)$
for almost all $t\in[0,1]$, and so by the a.e. uniqueness of tangent
fluxes we actually have $\mathbf{v}_{t}=\tilde{\mathbf{v}}_{t}$ for
almost all $t\in[0,1]$, so $d\tilde{\mathbf{v}}_{t}dt=d\mathbf{v}_{t}dt$.
Moreover, the subsequence of $\delta_{n}$, and $\delta_{n}$ itself
were arbitrary, hence 
\[
\lim_{\delta\rightarrow0}\int_{0}^{1}\mathcal{A}(k_{\delta}*\mu_{t},k_{\delta}*\mathbf{v}_{t};k_{\delta}*\pi)dt=\int_{0}^{1}\mathcal{A}(\mu_{t},\mathbf{v}_{t};\pi)dt.
\]

\end{proof}
We now turn to convolution estimates for the relative entropy and
nonlocal relative Fisher information.
\begin{lem}
\label{lem:convolution-regularity-entropy}Let $k$ be a standard
convolution kernel supported on $B(0,1)$, let $\rho,\pi\in\mathcal{P}(X)$.
Then $\mathcal{H}(k_{\delta}*\rho\mid k_{\delta}*\pi)\leq\mathcal{H}(\rho\mid\pi)$,
and $\lim_{\delta\rightarrow0}\mathcal{H}(k_{\delta}*\rho\mid k_{\delta}*\pi)=\mathcal{H}(\rho\mid\pi)$. 
\end{lem}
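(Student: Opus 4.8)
The plan is to derive both assertions from two standard facts about the relative entropy --- its monotonicity under the application of a common Markov kernel, and its joint narrow lower semicontinuity --- arguing in parallel with the treatment of the analogous convolution estimate for the action in Lemma~\ref{lem:action-spatial-convolution}.

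For the inequality $\mathcal{H}(k_{\delta}*\rho\mid k_{\delta}*\pi)\leq\mathcal{H}(\rho\mid\pi)$, the point is that $k_{\delta}*\rho$ and $k_{\delta}*\pi$ are obtained by pushing $\rho$ and $\pi$ forward through one and the same Markov kernel, namely the channel $x\mapsto x+Z$ with $Z$ of law $k_{\delta}(z)\,dz$; since $\mathcal{H}(\cdot\mid\cdot)$ is the $f$-divergence associated with $t\mapsto t\log t$, the bound is an instance of the data processing inequality. If one prefers to avoid quoting this, one argues directly as in Lemma~\ref{lem:action-spatial-convolution}: the relative entropy is invariant under pushing both arguments forward by a common bijection, so $\mathcal{H}(\rho\circ\tau_{z}^{-1}\mid\pi\circ\tau_{z}^{-1})=\mathcal{H}(\rho\mid\pi)$ for all $z$; hence, for $Z_{1},\dots,Z_{N}$ i.i.d.\ of law $k_{\delta}(z)\,dz$, joint convexity of $\mathcal{H}$ gives, almost surely,
\[
\mathcal{H}\left(\tfrac{1}{N}\sum_{i=1}^{N}\rho\circ\tau_{Z_{i}}^{-1}\;\middle|\;\tfrac{1}{N}\sum_{i=1}^{N}\pi\circ\tau_{Z_{i}}^{-1}\right)\leq\tfrac{1}{N}\sum_{i=1}^{N}\mathcal{H}(\rho\circ\tau_{Z_{i}}^{-1}\mid\pi\circ\tau_{Z_{i}}^{-1})=\mathcal{H}(\rho\mid\pi),
\]
and, since $\tfrac{1}{N}\sum_{i}\rho\circ\tau_{Z_{i}}^{-1}\rightharpoonup^{*}k_{\delta}*\rho$ and $\tfrac{1}{N}\sum_{i}\pi\circ\tau_{Z_{i}}^{-1}\rightharpoonup^{*}k_{\delta}*\pi$ almost surely --- this is established in \cite[proof of Lemma~A.3]{slepcev2022nonlocal} and recalled in the proof of Lemma~\ref{lem:action-spatial-convolution} --- joint narrow lower semicontinuity of $\mathcal{H}$ yields $\mathcal{H}(k_{\delta}*\rho\mid k_{\delta}*\pi)\leq\mathcal{H}(\rho\mid\pi)$. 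Note that, unlike the action estimate, no smallness restriction on $\delta$ is required.

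For the convergence, one observes that $k_{\delta}*\rho\rightharpoonup\rho$ and $k_{\delta}*\pi\rightharpoonup\pi$ narrowly in $\mathcal{P}(X)$ as $\delta\to0$ (test against $C_{b}(X)$ and apply dominated convergence), so joint narrow lower semicontinuity of $\mathcal{H}$ gives $\liminf_{\delta\to0}\mathcal{H}(k_{\delta}*\rho\mid k_{\delta}*\pi)\geq\mathcal{H}(\rho\mid\pi)$; together with the upper bound just obtained, valid for every $\delta$, this forces $\lim_{\delta\to0}\mathcal{H}(k_{\delta}*\rho\mid k_{\delta}*\pi)=\mathcal{H}(\rho\mid\pi)$, the case $\mathcal{H}(\rho\mid\pi)=\infty$ being covered by the same chain of inequalities. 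The only delicate point --- the \emph{main obstacle} --- is the Jensen-type passage in the previous paragraph, since the effective domain $\{\mu\ll\nu\}$ of $\mathcal{H}$ has empty interior in the space of pairs of measures, so infinite-dimensional Jensen inequalities do not apply off the shelf; this is exactly the difficulty already circumvented in Lemma~\ref{lem:action-spatial-convolution} by invoking the strong law of large numbers for random measures, and that argument transfers verbatim with $\mathcal{A}$ replaced by $\mathcal{H}$ (and quoting the data processing inequality instead bypasses it entirely).
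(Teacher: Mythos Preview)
Your proposal is correct and follows essentially the same route as the paper: the paper also obtains the upper bound by combining the translation invariance $\mathcal{H}(\rho\circ\tau_{z}^{-1}\mid\pi\circ\tau_{z}^{-1})=\mathcal{H}(\rho\mid\pi)$ with joint convexity and the law-of-large-numbers/lower-semicontinuity argument from Lemma~\ref{lem:action-spatial-convolution}, and then concludes via joint weak$^*$ lower semicontinuity of $\mathcal{H}$. Your additional remark that the data processing inequality yields the upper bound directly is a nice shortcut that the paper does not mention.
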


\begin{proof}
On the one hand, since $\mathcal{H}(\cdot\mid\cdot)$ is jointly weak{*}
l.s.c. we have 
\[
\mathcal{H}(\rho\mid\pi)\leq\liminf_{\delta\rightarrow0}\mathcal{H}(k_{\delta}*\rho\mid k_{\delta}*\pi).
\]
On the other hand, we can argue similarly to Lemma \ref{lem:action-spatial-convolution}
to get a corresponding upper bound. Indeed, when $Z_{i},\ldots,Z_{N}$
are i.i.d. random variables with distribution $k_{\delta}(z)dz$,
then with probability 1, 
\[
\frac{1}{N}\sum_{i=1}^{N}\rho\circ\tau_{Z_{i}}^{-1}\stackrel[N\rightarrow\infty]{*}{\rightharpoonup}k_{\delta}*\rho;\quad\frac{1}{N}\sum_{i=1}^{N}\pi\circ\tau_{Z_{i}}^{-1}\stackrel[N\rightarrow\infty]{*}{\rightharpoonup}k_{\delta}*\pi.
\]
Thus, from the joint convexity of $\mathcal{H}$ we have almost surely
that
\[
\mathcal{H}(k_{\delta}*\rho\mid k_{\delta}*\pi)\leq\liminf_{N\rightarrow\infty}\frac{1}{N}\sum_{i=1}^{N}\mathcal{H}(\rho\circ\tau_{Z_{i}}^{-1}\mid\pi\circ\tau_{Z_{i}}^{-1}).
\]
But notice that $\frac{d(\rho\circ\tau_{Z_{i}}^{-1})}{d(\pi\circ\tau_{Z_{i}}^{-1})}=\frac{d\rho}{d\pi}$,
and so $\mathcal{H}(\rho\circ\tau_{Z_{i}}^{-1}\mid\pi\circ\tau_{Z_{i}}^{-1})=\mathcal{H}(\rho\mid\pi)$.
It follows that 
\[
\mathcal{H}(k_{\delta}*\rho\mid k_{\delta}*\pi)\leq\mathcal{H}(\rho\mid\pi)
\]
so, as $\delta\rightarrow0$, we have$\mathcal{H}(k_{\delta}*\rho\mid k_{\delta}*\pi)\rightarrow\mathcal{H}(\rho\mid\pi).$
\end{proof}
\begin{lem}[regularity of Fisher information under convolution]
\label{lem:convolution-regularity-of-Fisher} Let $k$ be a standard
convolution kernel supported on $B(0,1)$, and let $\rho,\pi\in\mathcal{P}(X)$.
Assume that $(\eta,\pi)$ satisfy Assumption \ref{assu:eta=000020properties}.
Then, for $\delta<\delta_{0}$, it holds that $\mathcal{I}_{\eta}(k_{\delta}*\rho\mid k_{\delta}*\pi)\leq S\mathcal{I}_{\eta}(\rho\mid\pi)$,
and in particular $\lim_{\delta\rightarrow0}\mathcal{I}_{\eta}(k_{\delta}*\rho\mid k_{\delta}*\pi)=\mathcal{I}_{\eta}(\rho\mid\pi)$. 
\end{lem}

\begin{proof}
Compute that 
\begin{align*}
\mathcal{I}_{\eta}(\rho\circ\tau_{z}\mid\pi\circ\tau_{z}) & =\frac{1}{2}\iint_{G}\bar{\nabla}\log\frac{d\rho}{d\pi}(x,y)\bar{\nabla}\frac{d\rho}{d\pi}(x,y)\eta(x-z,y-z)d\pi(x)d\pi(y).
\end{align*}
\begin{comment}
Under the assumption that $\pi\ll Leb$
\end{comment}
For $|z|\leq\delta_{0}$,
\[
\frac{\eta(x,y)}{\eta(x-z,y-z)}\leq S
\]
under Assumption \ref{assu:eta=000020properties}, and so we find
that pointwise in $(x,y)$, 
\begin{align*}
\bar{\nabla}\log\frac{d\rho}{d\pi}(x,y)\bar{\nabla}\rho(x,y)\eta(x-z,y-z) & \leq S\cdot\bar{\nabla}\log\rho(x,y)\bar{\nabla}\rho(x,y)\eta(x,y)
\end{align*}
which shows that $\mathcal{I}_{\eta}(\rho\circ\tau_{z}\mid\pi\circ\tau_{z})\leq S\mathcal{I}_{\eta}(\rho\mid\pi)$.
Then, by Proposition \ref{prop:fisher-convex-lsc}, $\mathcal{I}$
is jointly convex in both its arguments, so we argue as in the proof
of Lemma \ref{lem:action-spatial-convolution} to see that 
\[
\mathcal{I}_{\eta}(k_{\delta}*\rho\mid k_{\delta}*\pi)\leq\int_{X}\mathcal{I}_{\eta}(\rho\circ\tau_{z}\mid\pi\circ\tau_{z})k_{\delta}(z)dz\leq S\mathcal{I}_{\eta}(\rho\mid\pi).
\]

Sending $\delta_{0}\rightarrow0$ this establishes that $\limsup_{\delta\rightarrow0}\mathcal{I}_{\eta}(k_{\delta}*\rho\mid k_{\delta}*\pi)=\mathcal{I}_{\eta}(\rho\mid\pi)$
since $S\rightarrow1$. At the same time, $\mathcal{I}_{\eta}$ is
jointly l.s.c. with respect to narrow convergence thanks to Proposition
\ref{prop:fisher-convex-lsc}, so $\liminf_{\delta\rightarrow0}\mathcal{I}_{\eta}(k_{\delta}*\rho\mid k_{\delta}*\pi)=\mathcal{I}_{\eta}(\rho\mid\pi)$
as well. Hence altogether $\lim_{\delta\rightarrow0}\mathcal{I}_{\eta}(k_{\delta}*\rho\mid k_{\delta}*\pi)=\mathcal{I}_{\eta}(\rho\mid\pi)$
as desired.
\end{proof}

\begin{lem}
\label{lem:flux-density-pointwise-convergence}Let $X$ be either
$\mathbb{T}^{d}$ or $\mathbb{R}^{d}$. Let $\mathbf{v}\in\mathcal{M}_{loc}(G)$.
Suppose that $\mathbf{v}$ has density $w(x,y)$ with respect to $\text{Leb}^{d}\otimes\text{Leb}^{d}\llcorner G$.
Then $k_{\delta}*\mathbf{v}$ has density $k_{\delta}*w$ with respect
to $\text{Leb}^{d}\otimes\text{Leb}^{d}\llcorner G$, and for almost
all $(x,y)$, 
\[
(k_{\delta}*w)(x,y)\rightarrow w(x,y)\text{ as }\delta\rightarrow0.
\]
\end{lem}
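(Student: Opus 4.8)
The plan is to prove the two assertions in turn. For the density formula, I would argue by a change of variables. Fix $\varphi\in C_c^\infty(G)$ with compact support $K\subset G$, and start from the defining identity
\[
\iint_G\varphi\,d(k_\delta*\mathbf{v})=\int_{\mathbb{R}^d}\iint_G k_\delta(|z|)\,\varphi(x+z,y+z)\,w(x,y)\,dx\,dy\,dz.
\]
After the change of variables $(x,y)\mapsto(x-z,y-z)$ — a unit-Jacobian map preserving $G$ by Assumption \ref{assu:eta properties}(5), so that $w(x-z,y-z)$ stays meaningful — the right-hand side becomes $\int_{\mathbb{R}^d}\iint_G k_\delta(|z|)\varphi(x,y)\,w(x-z,y-z)\,dx\,dy\,dz$. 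Since $\varphi$ is bounded with compact support, $k_\delta$ is supported in $B(0,\delta)$, and $w\in L^1_{loc}$, the triple integral converges absolutely (for $\delta$ small the relevant domain lies in a compact subset of $G\times\mathbb{R}^d$), so Fubini permits interchanging the $z$- and $(x,y)$-integrations, and the inner $z$-integral is then recognized as $(k_\delta*w)(x,y)$. As $\varphi$ is arbitrary, this identifies $k_\delta*w$ as the Lebesgue density of $k_\delta*\mathbf{v}$.

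For the pointwise convergence, the key observation is that $k_\delta*w$ is a mollification only in the $d$-dimensional \emph{diagonal} direction inside the $2d$-dimensional ambient space, so the classical ``$L^1_{loc}$ mollifications converge a.e.'' statement does not apply verbatim in $\mathbb{R}^{2d}$. I would change to coordinates adapted to the diagonal action: set $p:=x-y$ and $q:=x$, so that $(x,y)=(q,q-p)$ and the diagonal translation becomes $(p,q)\mapsto(p,q-z)$. With $\tilde w(p,q):=w(q,q-p)$, which is locally integrable on the (open, $\{p=0\}$-avoiding) image of $G$, one has $(k_\delta*w)(x,y)=(k_\delta*_q\tilde w)(p,q)$, a genuine mollification in $q$ with $p$ as a parameter. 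By Fubini, for a.e.\ $p$ the slice $\tilde w(p,\cdot)$ lies in $L^1_{loc}(\mathbb{R}^d_q)$; for each such $p$, $k_\delta*_q\tilde w(p,q)\to\tilde w(p,q)$ at every Lebesgue point in $q$, hence for a.e.\ $q$. To conclude that the exceptional set in $(p,q)$ is Lebesgue-null I would note that $(p,q)\mapsto\limsup_{\delta\to0}|(k_\delta*_q\tilde w)(p,q)-\tilde w(p,q)|$ is measurable — because $\sup_{\delta>0}|(k_\delta*_q\tilde w)(p,q)|$ is controlled by the Hardy--Littlewood maximal function of $\tilde w(p,\cdot)$ in $q$, so the $\limsup$ may be evaluated along a countable set of $\delta$'s — and then apply Tonelli's theorem to the indicator of the set where this $\limsup$ is positive, using that almost every $p$-slice of it is null. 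Undoing the coordinate change yields $(k_\delta*w)(x,y)\to w(x,y)$ for a.e.\ $(x,y)$; the case $X=\mathbb{T}^d$ is handled identically, passing to coordinate charts if desired.

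The main obstacle is exactly this ``partial mollification'' point in the second part: one must extract a.e.\ convergence in the full variable $(x,y)$ from a convolution acting only along the diagonal direction, which forces the Fubini-slicing argument together with the maximal-function/measurability bookkeeping needed to glue the a.e.-in-$q$ null sets over $p$. The density identity of the first part is routine by comparison.
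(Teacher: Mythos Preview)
Your proposal is correct and follows essentially the same route as the paper: the density identity via change of variables and Fubini is identical, and for the pointwise convergence both you and the paper pass to coordinates adapted to the diagonal action, slice via Fubini, and invoke the one-variable mollifier theorem. The only cosmetic differences are that the paper uses the orthogonal coordinates $x'=(y-x)/\sqrt{2}$, $z'=(x+y)/\sqrt{2}$ rather than your $(p,q)=(x-y,x)$, and that you are more explicit than the paper about the measurability bookkeeping needed to pass from ``for a.e.\ $p$, a.e.\ $q$'' to ``for a.e.\ $(p,q)$''.
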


We remind the reader that here, we are not considering the ``usual''
convolution, but rather convolution with respect to the action of
$X$ on $G$ given by $(x,y)\mapsto(x+z,y+z)$. Accordingly, the pointwise
a.e. convergence of the mollified density $k_{\delta}*w$ is not directly
given by standard convergence theorems for mollifiers, but rather
must be established carefully by hand. 
\begin{proof}
First we recall that $k_{\delta}*\mathbf{v}$ is defined as follows:
for any $\varphi\in C_{C}(G)$ we have 
\[
\iint_{G}\varphi(x,y)k_{\delta}*\mathbf{v}(x,y)=\int_{|z|<\delta}\iint_{G}\varphi(x+z,y+z)d\mathbf{v}(x,y)k_{\delta}(|z|)dz.
\]
Since $\mathbf{v}(x,y)$ has Lebesgue density $w(x,y)$ we have 
\begin{align*}
\iint_{G}\varphi(x,y)k_{\delta}*\mathbf{v}(x,y) & =\int_{|z|<\delta}\iint_{G}\varphi(x+z,y+z)w(x,y)dxdyk_{\delta}(|z|)dz\\
 & =\int_{|z|<\delta}\iint_{G}\varphi(x,y)w(x-z,y-z)dxdyk_{\delta}(|z|)dz\\
 & =\iint_{G}\varphi(x,y)\left(\int_{|z|<\delta}w(x-z,y-z)k_{\delta}(|z|)\right)dxdy\\
 & =\iint_{G}\varphi(x,y)k_{\delta}*w(x,y)dxdy.
\end{align*}
Since the test function $\varphi$ is arbitrary this shows that $k_{\delta}*\mathbf{v}=k_{\delta}*w(x,y)dxdy$,
which establishes the first claim.

Next, since $\mathbf{v}\in\mathcal{M}_{loc}$ we have that $w\in L_{loc}^{1}(G,\text{Leb}^{d}\otimes\text{Leb}^{d}\llcorner G)$.
In what follows, we use a more convenient coordinate system on $G$
which corresponds to the orbit decomposition of $G$ with respect
to the action $X\curvearrowright G$. Namely we work in the coordinate
system $(x^{\prime},z^{\prime})$ where 
\[
x^{\prime}=\frac{y-x}{\sqrt{2}};\qquad z^{\prime}=\frac{x+y}{\sqrt{2}}.
\]
This is an orthogonal coordinate system on $X\times X$, and in these
coordinates we can write $G=\{X\times X:x^{\prime}\neq0\}$. Moreover
the action of $X$ on $G$ given by $(x,y)\mapsto(x+z,y+z)$ leaves
$x^{\prime}$ fixed, and acts on the $z^{\prime}$ coordinate by $z^{\prime}\mapsto z^{\prime}+\sqrt{2}z$.
Furthermore this change of coordinates preserves the Lebesgue measure
$\text{Leb}^{d}\otimes\text{Leb}^{d}\llcorner G$.

We rewrite the density $w(x,y)dy$ as $w(x^{\prime},z^{\prime})dx^{\prime}dz^{\prime}$,
and note that 
\[
\int w(x-z,y-z)k_{\delta}(|z|)dz=\int w(x^{\prime},z^{\prime}-\sqrt{2}z)k_{\delta}(|z|)dz.
\]
Now, since $w\in L_{loc}^{1}(G,\text{Leb}^{d}\otimes\text{Leb}^{d}\llcorner G)$,
and our coordinate transformation is measure preserving, we have that
\[
\iint_{K}|w(x^{\prime},z^{\prime})|dx^{\prime}dz^{\prime}<\infty
\]
and so by the Fubini-Tonelli theorem we have that for almost all $x_{0}^{\prime}\in X$,
the function $z^{\prime}\mapsto w(x_{0}^{\prime},z^{\prime})$ is
in $L_{loc}^{1}(X)$. Hence by \cite[Thm. C.7(ii)]{evans2010partial}
we have that for almost all $z_{0}^{\prime}$, 
\begin{align*}
\int w(x_{0}^{\prime},z_{0}^{\prime}-\sqrt{2}z)k_{\delta}(|z|)dz & =\int w(x_{0}^{\prime},z_{0}^{\prime}-z^{\prime})k_{\delta}\left(\frac{|z^{\prime}|}{\sqrt{2}}\right)\frac{1}{\sqrt{2}^{d}}dz^{\prime}\\
 & =\int w(x_{0}^{\prime},z_{0}^{\prime}-z^{\prime})k_{\sqrt{2}\delta}\left(|z^{\prime}|\right)dz^{\prime}\\
 & \rightarrow w(x_{0}^{\prime},z_{0}^{\prime}).
\end{align*}
(Here we have performed the (pedantic) change of coordinates from
$z$ belonging to $X$ (which acts on $G$) to $z^{\prime}$ in our
new coordinate system on $G$, and in particular have used the fact
that since $z^{\prime}=\sqrt{2}z$ we have $dz^{\prime}=\sqrt{2}^{d}dz$
since the differentials encode the $d$-dimensional Lebesgue measure.
Also, recall that $k_{\delta}(|z|)=\frac{1}{\delta^{d}}k\left(\frac{|z|}{\delta}\right)$.)
Since the null sets in $(x,y)$ coordinates are the same as in the
$(x^{\prime},z^{\prime})$ coordinates, we have that $k_{\delta}*w(x,y)\rightarrow w(x,y)$
for $\text{Leb}^{d}\otimes\text{Leb}^{d}\llcorner G$-almost all $(x,y)$
as desired.
\end{proof}
Additionally, we \emph{also} consider mollifications of AC curves
$(\rho_{t},\mathbf{v}_{t})_{t\in[0,T]}$ in \emph{time}, along the
lines of the proof of \cite[Prop. 3.10]{esposito2019nonlocal}. Namely,
let us extend $(\rho_{t},\mathbf{v}_{t})$ by reflection to be defined
on the time interval $[-T,2T]$, and take $h(t)$ to be a standard
mollifier supported on $[-1,1]$. For $\sigma\in(0,T]$, we then define
the convolution of $(k_{\delta}*\rho_{t},k_{\delta}*\mathbf{v}_{t})_{t\in[0,1]}$
as follows:
\[
\forall A\in\mathcal{B}(X)\qquad h_{\sigma}*\rho_{t}(A):=\int_{-\sigma}^{\sigma}h_{\sigma}(t-s)\rho_{s}(A)ds;
\]
\[
\forall U\in\mathcal{B}(G)\qquad h_{\sigma}*\mathbf{v}_{t}(U):=\int_{-\sigma}^{\sigma}h_{\sigma}(t-s)\mathbf{v}_{s}(U)ds.
\]
(Here $h_{\sigma}(t):=\frac{1}{\sigma}h(t/\sigma)$.) Let us verify
that as $\sigma\rightarrow0$, we have the narrow (resp. local weak{*})
convergence of $h_{\sigma}*\rho_{t}$ to $\rho_{t}$ and $h_{\sigma}*\mathbf{v}_{t}$
to $\mathbf{v}_{t}$.
\begin{lem}
\label{lem:time-mollified-measures-convergence}Let $(\rho_{t},\mathbf{v}_{t})_{t\in[0,1]}$
solve the nonlocal continuity equation, and assume $\int_{0}^{1}\sqrt{\mathcal{A}_{\eta}(\rho_{t},\mathbf{v}_{t};\pi)}dt<\infty$.
Then, for all $t\in[0,1]$, $h_{\sigma}*\rho_{t}\rightharpoonup\rho_{t}$;
and, for almost all $t\in[0,1]$, $h_{\sigma}*\mathbf{v}_{t}\rightharpoonup^{*}\mathbf{v}_{t}$. 
\end{lem}

\begin{proof}
\begin{comment}
{[}todo but same as in EPSS --- except that $k_{\delta}*\mathbf{v}_{t}$
is not a tangent flux! Maybe need to interchange order of convolutions.
Hm{]}
\end{comment}
{} %
{} For $\rho_{t}$, we use the fact that $t\mapsto\rho_{t}$ is narrowly
continuous. Let $\varphi\in C_{b}(X)$ be arbitrary. Approximating
$\varphi$ with simple functions, and then appealing to the monotone
convergence theorem, we see that 
\[
\int_{X}\varphi(x)d\left(h_{\sigma}*\rho_{t}\right)(x)=\int_{-\sigma}^{\sigma}h_{\sigma}(t-s)\left(\int_{X}\varphi(x)d\rho_{s}(x)\right)ds.
\]
Now, $s\mapsto\int_{X}\varphi(x)d\rho_{s}(x)$ is itself bounded and
continuous, so by \cite[Thm. C.7(iii)]{evans2010partial} it holds
for all $t\in[0,1]$ that as $\sigma\rightarrow0$, 
\[
\int_{-\sigma}^{\sigma}h_{\sigma}(t-s)\left(\int_{X}\varphi(x)d\rho_{s}(x)\right)ds\rightarrow\left(\int_{X}\varphi(x)d\rho_{t}(x)\right).
\]
Since $\varphi$ was arbitrary, we conclude that for all $t\in[0,1]$
and $\varphi\in C_{b}(X)$, $\int_{X}\varphi(x)d\left(h_{\sigma}*\rho_{t}\right)(x)\rightarrow\left(\int_{X}\varphi(x)d\rho_{t}(x)\right)$
as $\sigma\rightarrow0$. In other words, $h_{\sigma}*\rho_{t}\rightharpoonup^{*}\rho_{t}$.

For $\mathbf{v}_{t}$ the situation is different because $t\mapsto\mathbf{v}_{t}$
is merely Borel. Nonetheless, from Lemma \ref{lem:C-eta=000020upper=000020bound=000020of=000020flux}
we have that for every compact $K$, 
\[
|\mathbf{v}_{t}|(K)\leq C_{K,\eta}\sqrt{\mathcal{A}(\rho_{t},\mathbf{v}_{t};\pi)}.
\]
In particular, $t\mapsto\mathbf{v}_{t}(K)$ is $L^{1}$ since 
\[
\int_{0}^{T}|\mathbf{v}_{t}|(K)dt\leq C_{K,\eta}\int_{0}^{T}\sqrt{\mathcal{A}(\rho_{t},\mathbf{v}_{t};\pi)}dt<\infty.
\]
Therefore, when $U\subset\subset G$ we have, by \cite[Thm. C.7(iv)]{evans2010partial},
that $h_{\sigma}*\mathbf{v}_{t}(U)\rightarrow\mathbf{v}_{t}(U)$ for
Lebesgue-almost every $t\in[0,T]$ (since $|\mathbf{v}_{t}|(U)\leq|\mathbf{v}_{t}|(K)$).
Quantifying over a countable family of $U$'s which generate the open
sets of $K\subset\subset G$, we see that: for Lebesgue-almost all
$t\in[0,1]$, it holds for every open $U\subset K$ that $h_{\sigma}*\mathbf{v}_{t}(U)\rightarrow\mathbf{v}_{t}(U)$.
This implies that, for Lebesgue-almost all $t\in[0,1]$, it holds
that $h_{\sigma}*\mathbf{v}_{t}$ converges locally weak{*}ly to $\mathbf{v}_{t}$
(since this latter type of convergence is equivalent to $\liminf_{\sigma\rightarrow0}|h_{\sigma}*\mathbf{v}_{t}|(U)\geq|\mathbf{v}_{t}|(U)$
for every compactly contained open $U\subset G$). 
\end{proof}
\begin{prop}
\label{prop:time-convolution-action}Let $(\rho_{t},\mathbf{v}_{t})_{t\in[0,1]}$
be a solution to the nonlocal continuity equation with finite total
action $\int_{0}^{1}\mathcal{A}(\rho_{t},\mathbf{v}_{t};\pi)dt$.
Let $\delta>0$. As $\sigma\rightarrow0$, it holds that 
\[
\mathcal{A}(h_{\sigma}*\rho_{t},h_{\sigma}*\mathbf{v}_{t};\pi)\rightarrow\mathcal{A}(\rho_{t},\mathbf{v}_{t};\pi)
\]
for almost all $t\in[0,1]$, and 
\[
\int_{0}^{T}\mathcal{A}(h_{\sigma}*\rho_{t},h_{\sigma}*\mathbf{v}_{t};\pi)dt\rightarrow\int_{0}^{T}\mathcal{A}(\rho_{t},\mathbf{v}_{t};\pi)dt.
\]
\end{prop}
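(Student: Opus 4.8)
The plan is to sandwich $\mathcal{A}(h_{\sigma}*\rho_{t},h_{\sigma}*\mathbf{v}_{t};\pi)$ between a Jensen-type upper bound coming from convexity of the action and a lower bound coming from its lower semicontinuity, and then reduce everything to the classical mollifier convergence theorem for the scalar function $a(s):=\mathcal{A}(\rho_{s},\mathbf{v}_{s};\pi)$, which lies in $L^{1}([0,1])$ by the finite-total-action hypothesis. Precisely, the two bounds I aim for are
\[
\mathcal{A}(h_{\sigma}*\rho_{t},h_{\sigma}*\mathbf{v}_{t};\pi)\le\int h_{\sigma}(t-s)\,\mathcal{A}(\rho_{s},\mathbf{v}_{s};\pi)\,ds\qquad\text{for every }t
\]
(the inequality being vacuous when the right-hand side is infinite), and
\[
\liminf_{\sigma\to0}\mathcal{A}(h_{\sigma}*\rho_{t},h_{\sigma}*\mathbf{v}_{t};\pi)\ge\mathcal{A}(\rho_{t},\mathbf{v}_{t};\pi)\qquad\text{for a.e.\ }t,
\]
where all convolutions use the reflection extension of $(\rho_{t},\mathbf{v}_{t})$, so that $h_{\sigma}*\rho_{t}$ is again a probability measure.

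For the upper bound I would argue exactly as in the proof of Lemma \ref{lem:action-spatial-convolution}: because $\mathcal{A}$ is not the restriction of a convex function with nonempty interior in its domain, the continuum average cannot be dominated by an infinite-dimensional Jensen inequality directly, so I discretize it via a strong law of large numbers for random measures. Fix $t$ with $\int h_{\sigma}(t-s)a(s)\,ds<\infty$ (true for a.e.\ $t$ since $a\in L^{1}$ and $h_{\sigma}$ is bounded), and let $S_{1},S_{2},\dots$ be i.i.d.\ with density $h_{\sigma}(t-\cdot)$; joint convexity of $\mathcal{A}$ in $(\mu,\mathbf{v})$ with $\pi$ held fixed (Proposition \ref{prop:action convex lsc}) gives $\mathcal{A}\bigl(\tfrac1N\sum_{i=1}^{N}\rho_{S_{i}},\tfrac1N\sum_{i=1}^{N}\mathbf{v}_{S_{i}};\pi\bigr)\le\tfrac1N\sum_{i=1}^{N}\mathcal{A}(\rho_{S_{i}},\mathbf{v}_{S_{i}};\pi)$ for all $N$. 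By the SLLN the right-hand side converges a.s.\ to $\int h_{\sigma}(t-s)a(s)\,ds$; while, testing against countably many $\varphi\in C_{b}(X)$ respectively $\varphi\in C_{c}(G)$ and invoking the SLLN together with \cite[Lemma 4.8 (i)]{kallenberg2017random} (for the flux terms the random variable $\iint_{K}\varphi\,d\mathbf{v}_{S}$ is bounded in absolute value by $\Vert\varphi\Vert_{\infty}\int h_{\sigma}(t-s)|\mathbf{v}_{s}|(K)\,ds<\infty$ thanks to Lemma \ref{lem:C-eta upper bound of flux}, so only the $L^{1}$ SLLN is needed), one gets that a.s.\ $\tfrac1N\sum_{i}\rho_{S_{i}}\rightharpoonup h_{\sigma}*\rho_{t}$ narrowly and $\tfrac1N\sum_{i}\mathbf{v}_{S_{i}}\rightharpoonup^{*}h_{\sigma}*\mathbf{v}_{t}$. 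Lower semicontinuity of $\mathcal{A}$ (Proposition \ref{prop:action convex lsc}) then gives the displayed upper bound.

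The lower bound follows immediately from Lemma \ref{lem:time-mollified-measures-convergence}, which gives $h_{\sigma}*\rho_{t}\rightharpoonup\rho_{t}$ narrowly for all $t$ and $h_{\sigma}*\mathbf{v}_{t}\rightharpoonup^{*}\mathbf{v}_{t}$ for a.e.\ $t$ as $\sigma\to0$ (with $\pi$ fixed), combined with the joint lower semicontinuity in Proposition \ref{prop:action convex lsc}. To conclude, extend $a$ by reflection past the endpoints of $[0,1]$ to an $L^{1}$ function on an enlarged interval — consistent with the reflection of the curve, since reflecting a solution of the nonlocal continuity equation only flips the sign of $\mathbf{v}$ and $\mathcal{A}$ is even in $\mathbf{v}$; then $\int h_{\sigma}(t-s)a(s)\,ds\to a(t)$ at every Lebesgue point of $a$, hence for a.e.\ $t$. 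Chaining the two bounds with this, for a.e.\ $t$ we get $a(t)\le\liminf_{\sigma}\mathcal{A}(h_{\sigma}*\rho_{t},h_{\sigma}*\mathbf{v}_{t};\pi)\le\limsup_{\sigma}\mathcal{A}(h_{\sigma}*\rho_{t},h_{\sigma}*\mathbf{v}_{t};\pi)\le\limsup_{\sigma}\int h_{\sigma}(t-s)a(s)\,ds=a(t)$, which is the pointwise assertion. For the integral assertion, Fatou's lemma and the lower bound give $\liminf_{\sigma}\int_{0}^{1}\mathcal{A}(h_{\sigma}*\rho_{t},h_{\sigma}*\mathbf{v}_{t};\pi)\,dt\ge\int_{0}^{1}a(t)\,dt$, and the upper bound together with $\int_{0}^{1}\int h_{\sigma}(t-s)a(s)\,ds\,dt\to\int_{0}^{1}a(t)\,dt$ (standard, the boundary corrections being negligible against $a\in L^{1}$) gives the matching $\limsup$ bound, so the integrals converge.

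The technically delicate point is the Jensen bound: it must be obtained through the random-measure SLLN rather than any off-the-shelf convexity inequality, and — in contrast with Lemma \ref{lem:action-spatial-convolution}, where spatially shifted fluxes have uniformly bounded mass on each compact set — here $|\mathbf{v}_{s}|(K)$ and $a(s)$ are only integrable, not bounded, in $s$, so one must run the SLLN in its $L^{1}$ form and carefully verify integrability of the relevant random variables for a.e.\ fixed $t$ and each fixed $\sigma>0$.
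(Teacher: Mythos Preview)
Your proposal is correct and follows essentially the same route as the paper: both obtain the Jensen-type upper bound $\mathcal{A}(h_{\sigma}*\rho_{t},h_{\sigma}*\mathbf{v}_{t};\pi)\le h_{\sigma}*\mathcal{A}(\rho_{t},\mathbf{v}_{t};\pi)$ via the random-measure SLLN argument (the paper bounds the flux random variable in $L^{2}$ via $|\mathbf{v}_{S}|(K)\le C_{K,\eta}\sqrt{\mathcal{A}(\rho_{S},\mathbf{v}_{S};\pi)}$, while you bound it in $L^{1}$ --- both suffice), then combine with the lower-semicontinuity lower bound from Lemma~\ref{lem:time-mollified-measures-convergence} and mollifier convergence of $a\in L^{1}$ to sandwich pointwise a.e., and finally use Fatou plus the $L^{1}$-mollifier limit for the integral statement. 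One wording slip: your sentence ``bounded in absolute value by $\Vert\varphi\Vert_{\infty}\int h_{\sigma}(t-s)|\mathbf{v}_{s}|(K)\,ds$'' should read that this integral bounds the \emph{expectation} (hence $L^{1}$-norm) of the random variable, not its pointwise value.
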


\begin{proof}
We first claim that 
\[
\mathcal{A}(h_{\sigma}*\rho_{t},h_{\sigma}*\mathbf{v}_{t};\pi)dt\leq\int_{-\sigma}^{\sigma}\mathcal{A}(\rho_{t-s},\mathbf{v}_{t-s};\pi)h_{\sigma}(t-s)ds=h_{\sigma}*\mathcal{A}(\rho_{t},\mathbf{v}_{t};\pi).
\]
We argue along similar lines to the proof of Lemma \ref{lem:action-spatial-convolution}.
Let $S_{1},S_{2}\ldots$ be random times which are i.i.d. and distributed
accoding to $h_{\sigma}(t-s)ds$. Then we claim that almost surely,
\[
\frac{1}{N}\sum_{i=1}^{N}\rho_{S_{i}}\rightharpoonup h_{\sigma}*\rho_{t}\text{ and }\frac{1}{N}\sum_{i=1}^{N}\mathbf{v}_{S_{i}}\rightharpoonup^{*}h_{\sigma}*\mathbf{v}_{t}.
\]
We only establish the latter since the former is easier to prove.
Let $\mathbf{v}_{t}=\mathbf{v}_{t}^{+}-\mathbf{v}_{t}^{-}$ denote
the Jordan decomposition of $\mathbf{v}$. It suffices to show that:
for every compact $K\subset G$, and every $\varphi\in C_{C}(K)$,
with probability 1, 
\[
\frac{1}{N}\sum_{i=1}^{N}\iint_{K}\varphi d\left(\mathbf{v}_{S_{i}}^{+}\right)\rightarrow\iint_{K}\varphi d\left(h_{\sigma}*\mathbf{v}_{t}^{+}\right)
\]
and similarly for $\mathbf{v}_{t}^{-}$. We show this convergence
holds by appealing to the strong law of large numbers. Indeed, 
\[
\left|\iint_{K}\varphi d\left(\mathbf{v}_{S_{i}}^{+}\right)\right|\leq\Vert\varphi\Vert_{L^{\infty}(K)}C_{K,\eta}\sqrt{\mathcal{A}(\rho_{S_{i}},\mathbf{v}_{S_{i}};\pi)}.
\]
Under the assumption that $\int_{0}^{1}\mathcal{A}(\rho_{t},\mathbf{v}_{t};\pi)dt<\infty$,
we have that $\int_{-\sigma}^{\sigma}\mathcal{A}(\rho_{s},\mathbf{v}_{s};\pi)h_{\sigma}(t-s)ds<\infty$
also. Hence $S\mapsto\iint_{K}\varphi d\left(\mathbf{v}_{S}^{+}\right)$
is an $L^{2}$ random variable, so in particular the strong law of
large numbers applies, and with probability 1, 
\[
\frac{1}{N}\sum_{i=1}^{N}\iint_{K}\varphi d\left(\mathbf{v}_{S_{i}}^{+}\right)\rightarrow\int_{-\sigma}^{\sigma}\iint_{K}\varphi d\left(\mathbf{v}_{s}^{+}\right)h_{\sigma}(t-s)ds=\iint_{K}\varphi d\left(h_{\sigma}*\mathbf{v}_{t}^{+}\right).
\]
Since $K$ was arbitrary, this shows that $\frac{1}{N}\sum_{i=1}^{N}\mathbf{v}_{S_{i}}^{+}\rightharpoonup^{*}h_{\sigma}*\mathbf{v}_{t}^{+}$
almost surely. The same reasoning applies to $\mathbf{v}_{t}^{-}$.
Finally, $h_{\sigma}*(\mathbf{v}_{t}^{+}-\mathbf{v}_{t}^{-})=h_{\sigma}*\mathbf{v}_{t}^{+}-h_{\sigma}*\mathbf{v}_{t}^{-}$
so altogether we have $\frac{1}{N}\sum_{i=1}^{N}\mathbf{v}_{S_{i}}\rightharpoonup^{*}h_{\sigma}*\mathbf{v}_{t}$.

As a consequence, we have, by the joint convexity and lower semicontinuity
of $\mathcal{A}$, that 
\begin{align*}
\mathcal{A}(h_{\sigma}*\rho_{t},h_{\sigma}*\mathbf{v}_{t};\pi)dt & \leq\liminf_{N\rightarrow\infty}\mathcal{A}\left(\frac{1}{N}\sum_{i=1}^{N}\rho_{S_{i}},\frac{1}{N}\sum_{i=1}^{N}\mathbf{v}_{S_{i}};\pi\right)\\
 & \leq\liminf_{N\rightarrow\infty}\frac{1}{N}\sum_{i=1}^{N}\mathcal{A}(\rho_{S_{i}},\mathbf{v}_{S_{i}};\pi).
\end{align*}
Now, since $t\mapsto\mathcal{A}(\rho_{t},\mathbf{v}_{t};\pi)$ is
measurable, and $\int_{-\sigma}^{\sigma}\mathcal{A}(\rho_{s},\mathbf{v}_{s};\pi)h_{\sigma}(t-s)ds<\infty$,
we have that $S\mapsto\mathcal{A}(\rho_{S},\mathbf{v}_{S};\pi)$ is
an $L^{1}$ random variable, hence $\frac{1}{N}\sum_{i=1}^{N}\mathcal{A}(\rho_{S_{i}},\mathbf{v}_{S_{i}};\pi)\rightarrow\int_{-\sigma}^{\sigma}\mathcal{A}(\rho_{s},\mathbf{v}_{s};\pi)h_{\sigma}(t-s)ds$
almost surely. This shows that $\mathcal{A}(h_{\sigma}*\rho_{t},h_{\sigma}*\mathbf{v}_{t};\pi)dt\leq\int_{-\sigma}^{\sigma}\mathcal{A}(\rho_{s},\mathbf{v}_{s};\pi)h_{\sigma}(t-s)ds$
as desired.

Now, observe that 
\begin{align*}
\int_{0}^{T}\mathcal{A}(h_{\sigma}*\rho_{t},h_{\sigma}*\mathbf{v}_{t};\pi)dt & \leq\int_{0}^{T}\int_{-\sigma}^{\sigma}\mathcal{A}(\rho_{s},\mathbf{v}_{s};\pi)h_{\sigma}(t-s)dsdt\\
 & \leq\int_{-T}^{2T}\mathcal{A}(\rho_{t},\mathbf{v}_{t};\pi)dt\\
 & =3\int_{0}^{T}\mathcal{A}(\rho_{t},\mathbf{v}_{t};\pi)dt.
\end{align*}
  And so, assuming that $\int_{0}^{T}\mathcal{A}(\rho_{t},\mathbf{v}_{t};\pi)dt<\infty$,
it follows that 
\[
t\mapsto\mathcal{A}(h_{\sigma}*\rho_{t},h_{\sigma}*\mathbf{v}_{t};\pi)\in L^{1}([0,T]).
\]
First, we show that for almost all $t$, $\mathcal{A}(h_{\sigma}*\rho_{t},h_{\sigma}*\mathbf{v}_{t};\pi)\rightarrow\mathcal{A}(\rho_{t},\mathbf{v}_{t};\pi)$
as $\sigma\rightarrow0$. On the one hand, from the $t$-a.s. weak{*}
(resp. local weak{*}) convergence of $h_{\sigma}*\rho_{t}$ to $\rho_{t}$
and $h_{\sigma}*\mathbf{v}_{t}$ to $\mathbf{v}_{t}$, we have that
\[
\liminf_{\sigma\rightarrow0}\mathcal{A}(h_{\sigma}*\rho_{t},h_{\sigma}*\mathbf{v}_{t};\pi)\geq\mathcal{A}(\rho_{t},\mathbf{v}_{t};\pi)\text{ }t-a.s..
\]
At the same time, from the fact that $t\mapsto\mathcal{A}(\rho_{t},\mathbf{v}_{t};\pi)$
is $L^{1}$, we have 
\begin{align*}
\mathcal{A}(h_{\sigma}*\rho_{t},h_{\sigma}*\mathbf{v}_{t};\pi) & \leq h_{\sigma}*\mathcal{A}(\rho_{t},\mathbf{v}_{t};\pi)\\
 & \rightarrow\mathcal{A}(\rho_{t},\mathbf{v}_{t};\pi)\text{ }t-a.s.
\end{align*}
so that $\mathcal{A}(h_{\sigma}*\rho_{t},h_{\sigma}*\mathbf{v}_{t};\pi)\rightarrow\mathcal{A}(\rho_{t},\mathbf{v}_{t};\pi)$
for almost all $t\in[0,1]$. Also, from Fatou's lemma, we have that
\begin{align*}
\liminf_{\sigma\rightarrow0}\int_{0}^{T}\mathcal{A}(h_{\sigma}*\rho_{t},h_{\sigma}*\mathbf{v}_{t};\pi)dt & \geq\int_{0}^{T}\liminf_{\sigma\rightarrow0}\mathcal{A}(h_{\sigma}*\rho_{t},h_{\sigma}*\mathbf{v}_{t};\pi)dt\\
 & \geq\int_{0}^{T}\mathcal{A}(\rho_{t},\mathbf{v}_{t};\pi)dt
\end{align*}
but also 
\begin{align*}
\limsup_{\sigma\rightarrow0}\int_{0}^{T}\mathcal{A}(h_{\sigma}*\rho_{t},h_{\sigma}*\mathbf{v}_{t};\pi)dt & \leq\limsup_{\sigma\rightarrow0}\int_{0}^{T}h_{\sigma}*\mathcal{A}(\rho_{t},\mathbf{v}_{t};\pi)dt\\
 & =\int_{0}^{T}\mathcal{A}(\rho_{t},\mathbf{v}_{t};\pi)dt
\end{align*}
where in the last line we have used \cite[Thm. C.7(iv)]{evans2010partial}.
Hence we actually have that 
\[
\int_{0}^{T}\mathcal{A}(h_{\sigma}*\rho_{t},h_{\sigma}*\mathbf{v}_{t};\pi)dt\rightarrow\int_{0}^{T}\mathcal{A}(\rho_{t},\mathbf{v}_{t};\pi)dt
\]
as $\sigma\rightarrow0$. 
\end{proof}
\begin{lem}
\label{lem:smoothed-density-regularity}Let $\mu_{t}\in\mathcal{P}(X)$
and suppose that $t\mapsto\mu_{t}$ is narrowly continuous. Let $k_{\delta}*$
and $h_{\sigma}*$ denote space- and time-mollification with respect
to standard mollifiers. Then, $k_{\delta}*\mu_{t}\ll\text{Leb}^{d}$,
and its density $k_{\delta}*\mu_{t}(x)$ and time-mollified density
$h_{\sigma}*k_{\delta}*\mu_{t}(x)$ have the following regularity
properties:
\begin{enumerate}
\item $k_{\delta}*\mu_{t}(x)\in C^{1}(X)$ for fixed $\delta,t$, and likewise
$h_{\sigma}*k_{\delta}*\mu_{t}(x)\in C^{1}(X)$, with a bound on $C^{1}$
norm that depends only on $\delta$;
\item $h_{\sigma}*k_{\delta}*\mu_{t}(x)\in C^{1}([0,T])$ for fixed $\delta,x$
with a bound on $C^{1}$ norm that depends only on $\delta$ and $\sigma$;
\item For each $t\in[0,1]$, as $\sigma\rightarrow0$, $h_{\sigma}*k_{\delta}*\mu_{t}(x)\rightarrow k_{\delta}*\mu_{t}(x)$
uniformly on compact subsets of $X$. 
\end{enumerate}
\end{lem}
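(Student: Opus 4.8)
The plan is to dispose of the three claims in turn; each is a routine mollification estimate, the only real work being to track how constants depend on $\delta$ and $\sigma$. First, $k_\delta*\mu_t\ll\text{Leb}^d$ by Fubini: for $\varphi\in C_c(X)$, $\int_X\varphi\,d(k_\delta*\mu_t)=\int_X\varphi(x)\bigl(\int_X k_\delta(x-y)\,d\mu_t(y)\bigr)dx$, so $k_\delta*\mu_t$ has density $(k_\delta*\mu_t)(x)=\int_X k_\delta(x-y)\,d\mu_t(y)$. For claim (1): since $k_\delta\in C_c^\infty$ and $\mu_t$ is a probability measure, differentiating under the integral gives $\nabla(k_\delta*\mu_t)(x)=\int_X\nabla k_\delta(x-y)\,d\mu_t(y)$, hence $\|k_\delta*\mu_t\|_{C^1(X)}\le\|k_\delta\|_{C^1}=:C(\delta)$. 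For the time-mollified density, write $(h_\sigma*k_\delta*\mu_t)(x)=\int h_\sigma(t-s)(k_\delta*\mu_s)(x)\,ds$ (with $\mu_{(\cdot)}$ extended by reflection as above); since $t\mapsto\mu_t$ is narrowly continuous and $k_\delta,\nabla k_\delta$ are bounded continuous, $s\mapsto(k_\delta*\mu_s)(x)$ and $s\mapsto\nabla(k_\delta*\mu_s)(x)$ are bounded and continuous, so one may differentiate under the $ds$-integral in $x$; with $\int h_\sigma(t-s)\,ds=1$ this gives $\|h_\sigma*k_\delta*\mu_t\|_{C^1(X)}\le\sup_s\|k_\delta*\mu_s\|_{C^1(X)}\le C(\delta)$.

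For claim (2), fix $\delta,x$ and use $(h_\sigma*k_\delta*\mu_t)(x)=\int h_\sigma(t-s)(k_\delta*\mu_s)(x)\,ds$ again. Since $h_\sigma\in C^\infty$ and $s\mapsto(k_\delta*\mu_s)(x)$ is bounded and continuous, we differentiate in $t$ under the integral: $\partial_t(h_\sigma*k_\delta*\mu_t)(x)=\int h_\sigma'(t-s)(k_\delta*\mu_s)(x)\,ds$. As $s\mapsto h_\sigma(t-s)$ is supported in an interval of length $2\sigma$ with $\|h_\sigma'\|_{L^\infty}=\sigma^{-2}\|h'\|_{L^\infty}$, and $\|k_\delta*\mu_s\|_{L^\infty}\le\|k_\delta\|_{L^\infty}$, we get $|\partial_t(h_\sigma*k_\delta*\mu_t)(x)|\le 2\sigma^{-1}\|h'\|_{L^\infty}\|k_\delta\|_{L^\infty}$, while the value is $\le\|k_\delta\|_{L^\infty}$; both bounds depend only on $\delta,\sigma$, which is the claimed $C^1([0,T])$ control.

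For claim (3), the key observation is that $(s,x)\mapsto(k_\delta*\mu_s)(x)$ is jointly continuous: if $(s_n,x_n)\to(s,x)$ then $\|k_\delta(x_n-\cdot)-k_\delta(x-\cdot)\|_{L^\infty}\to0$ by uniform continuity of $k_\delta$, and $\mu_{s_n}\rightharpoonup\mu_s$ narrowly, so decomposing $\int k_\delta(x_n-y)\,d\mu_{s_n}(y)-\int k_\delta(x-y)\,d\mu_s(y)$ into the change in the integrand (controlled in $L^\infty$, total mass $1$) plus a narrow-convergence term, both tend to $0$. Thus $(s,x)\mapsto(k_\delta*\mu_s)(x)$ is uniformly continuous on compact sets, so given compact $K\subset X$, $t\in[0,1]$, and $\varepsilon>0$, there is $\sigma_0>0$ with $\sup_{x\in K}|(k_\delta*\mu_s)(x)-(k_\delta*\mu_t)(x)|<\varepsilon$ whenever $|s-t|<\sigma_0$; then for $\sigma<\sigma_0$, since $h_\sigma(t-\cdot)\ge0$ is supported in $(t-\sigma,t+\sigma)$ and integrates to $1$, $\sup_{x\in K}|(h_\sigma*k_\delta*\mu_t)(x)-(k_\delta*\mu_t)(x)|\le\varepsilon$, which is the asserted uniform convergence. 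The one point needing genuine (if modest) care is this joint/uniform continuity of $(s,x)\mapsto(k_\delta*\mu_s)(x)$, since narrow convergence alone does not permit the test function to vary — one must use the uniform continuity of $k_\delta$ together with the fixed total mass of the $\mu_s$; the rest is bookkeeping of mollifier constants.
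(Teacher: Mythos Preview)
Your proof is correct; for parts (1) and (2) it follows essentially the same route as the paper (differentiate under the integral and bound by $\|k_\delta\|_{C^1}$ and $\|h_\sigma'\|_{L^\infty}\|k_\delta\|_{L^\infty}$ respectively). For part (3), however, your argument is genuinely more direct than the paper's. The paper first shows that the family $\{k_\delta*\mu_t\}_{t}$ (and the time-mollified family) is uniformly equicontinuous in $x$ with a modulus inherited from $k_\delta$, separately observes pointwise convergence in $x$ from continuity of $t\mapsto k_\delta*\mu_t(x)$, and then upgrades pointwise to uniform convergence on compacts via Arzel\`a--Ascoli together with a subsequence/uniqueness-of-limits argument. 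Your approach instead establishes joint continuity of $(s,x)\mapsto(k_\delta*\mu_s)(x)$ directly (splitting into an $L^\infty$-controlled change of integrand plus a narrow-convergence term), so that on the compact set $[t-\sigma_0,t+\sigma_0]\times K$ one has uniform continuity, from which the mollifier estimate is immediate. This avoids Arzel\`a--Ascoli and the subsequence extraction entirely, at the modest cost of having to justify varying the test function and measure simultaneously---which you do correctly by exploiting the uniform continuity of $k_\delta$ and the fixed total mass of the $\mu_s$.
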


\begin{proof}
(i) The density $k_{\delta}*\mu_{t}(x)$ is bounded uniformly in $t$
since 
\[
\sup_{t}\Vert k_{\delta}*\mu_{t}\Vert_{L^{\infty}(X)}\leq\int_{X}\Vert k_{\delta}(x-y)\Vert_{\infty}d\mu_{t}(y)\leq O(\delta^{-d}).
\]
Likewise, it holds that $k_{\delta}*\mu_{t}(x)$ is smooth by standard
properties of mollifiers, and $h_{\sigma}*k_{\delta}*\mu_{t}(x)$
is smooth also. To wit, the fact that $k_{\delta}(x-y)$ is bounded
means that we can apply the dominated convergence theorem to differentiate
under the integral sign:
\[
\frac{d}{dx}k_{\delta}*\mu_{t}(x)=\int_{X}\frac{d}{dx}k_{\delta}(x-y)d\mu_{t}(y).
\]
And so $\sup_{x}\frac{d}{dx}k_{\delta}*\mu_{t}(x)\leq\sup_{x}\frac{d}{dx}k_{\delta}(x)$.
Similarly 
\[
\left|\frac{d}{dx}k_{\delta}*\mu_{t}(x)-\frac{d}{dx^{\prime}}k_{\delta}*\mu_{t}(x^{\prime})\right|\leq\int_{X}\left|\frac{d}{dx}k_{\delta}(x-y)-\frac{d}{dx^{\prime}}k_{\delta}(x^{\prime}-y)\right|d\mu_{t}(y)
\]
and so the derivative of $k_{\delta}*\mu_{t}(x)$ automatically has
the same modulus of uniform continuity as that of $\frac{d}{dx}k_{\delta}(x)$.
This continuity is inherited by $h_{\sigma}*k_{\delta}*\mu_{t}$ since
again by the Leibniz integration rule,
\[
\frac{d}{dx}h_{\sigma}*k_{\delta}*\mu_{t}(x)=\int_{-\sigma}^{\sigma}\left(\int_{X}\frac{d}{dx}k_{\delta}(x-y)d\mu_{t}(y)\right)h_{\sigma}(|t-s|)ds
\]
and thus 
\[
\left|\frac{d}{dx}h_{\sigma}*k_{\delta}*\mu_{t}(x)-\frac{d}{dx^{\prime}}h_{\sigma}*k_{\delta}*\mu_{t}(x^{\prime})\right|\leq\int_{-\sigma}^{\sigma}\int_{X}\left|\frac{d}{dx}k_{\delta}(x-y)-\frac{d}{dx^{\prime}}k_{\delta}(x^{\prime}-y)\right|d\mu_{t}(y)h_{\sigma}(|t-s|)ds.
\]
Since we assume $k_{\delta}$ to be a standard mollifier which in
particular satisfies $\Vert\frac{d}{dx}k_{\delta}(x)\Vert_{\infty}\leq O(\delta^{-1})$,
this establishes claim (i). 

(ii) For fixed $\delta,x$ we have that $t\mapsto k_{\delta}*\mu_{t}(x)$
is uniformly bounded by $O(\delta^{-d})$, hence we can differentiate
under the integral sign:
\begin{align*}
\frac{d}{dt}h_{\sigma}*k_{\delta}*\mu_{t}(x) & =\frac{d}{dt}\int_{-\sigma}^{\sigma}k_{\delta}*\mu_{s}(x)h_{\sigma}(t-s)ds\\
 & =\int_{-\sigma}^{\sigma}k_{\delta}*\mu_{s}(x)\frac{d}{dt}h_{\sigma}(t-s)ds\\
 & \leq\sup_{t\in[0,T]}k_{\delta}*\mu_{s}(x)\cdot\int_{-\sigma}^{\sigma}\frac{d}{dt}h_{\sigma}(t-s)ds.
\end{align*}
Since we assume $h_{\sigma}$ to be a standard mollifier, we have
$\Vert\frac{d}{dt}h_{\sigma}(t)\Vert_{\infty}\leq O(\sigma^{-1})$,
and therefore $\Vert\frac{d}{dt}h_{\sigma}*k_{\delta}*\mu_{t}(x)\Vert_{L^{\infty}[0,T]}\leq O(\delta^{-d}\sigma^{-1})$
uniformly in $x$.

(iii) First compute that 
\[
|k_{\delta}*\mu_{t}(x)-k_{\delta}*\mu_{t}(x^{\prime})|\leq\int_{X}|k_{\delta}(y-x)-k_{\delta}(y-x^{\prime})|d\mu_{t}(y).
\]
Note that $k_{\delta}$ is compactly supported, hence uniformly continuous,
and also translation invariant. Letting $\Phi_{\delta}$ be a modulus
of uniform continuity for $k_{\delta}(|0-\cdot|)$, we see that $|k_{\delta}*\mu_{t}(x)-k_{\delta}*\mu_{t}(x^{\prime})|\leq\Phi_{\delta}(|x-x^{\prime}|)$.
Note that this modulus of uniform continuity is independent of $\mu_{t}$.
Hence the family $\{k_{\delta}*\mu_{t}(x)\}_{t\in[0,1]}$ is uniformly
equicontinuous with modulus $\Phi_{\delta}$.

Furthermore, we have that 
\begin{align*}
|h_{\sigma}*k_{\delta}*\mu_{t}(x)-h_{\sigma}*k_{\delta}*\mu_{t}(x^{\prime})| & \leq\int_{-\sigma}^{\sigma}|k_{\delta}*\mu_{t}(x)-k_{\delta}*\mu_{t}(x^{\prime})|h_{\sigma}(t)dt\\
 & \leq\Phi_{\delta}(|x-x^{\prime}|)
\end{align*}
as well, which means that the larger family $\{h_{\sigma}*k_{\delta}*\mu_{t}(x)\}_{t\in[0,1];\sigma\geq0}$
is uniformly equicontinuous with modulus $\Phi_{\sigma}$. 

Next, we observe that $t\mapsto k_{\delta}*\mu_{t}(x)$ is continuous,
since $t\mapsto\mu_{t}$ is narrowly continuous and $k_{\delta}*\mu_{t}(x)=\int k_{\delta}(|x-y|)d\mu_{t}(y)$
is a duality pairing between the bounded continuous function $k_{\delta}(|\cdot-y|)$
and the probability measure $\mu_{t}$. Therefore, we have that for
fixed $x$, we have $h_{\sigma}*k_{\delta}*\mu_{t}(x)$ converges
uniformly in $t$ to $k_{\delta}*\mu_{t}(x)$ by standard convergence
properties of mollifiers of continuous functions on compact sets,
see \cite[Thm. C.7(ii)]{evans2010partial}. 

In particular, for fixed $t$, we have that $h_{\sigma}*k_{\delta}*\mu_{t}(x)\rightarrow k_{\delta}*\mu_{t}(x)$
pointwise in $x$. Also we have that $k_{\delta}*\mu_{t}(x)$ is uniformly
bounded by $k(0)/\delta^{d}$ since $k$ is a standard mollifier,
and so the same holds for $h_{\sigma}*k_{\delta}*\mu_{t}(x)$. Let
$\sigma_{n}\rightarrow0$ be any subsequence. Restricting to any compact
$K\subset X$, we find, by the Arzela-Ascoli theorem, that along a
further subsequence of $\sigma_{n}$ (which we do not relabel), $h_{\sigma_{n}}*k_{\delta}*\mu_{t}(x)$
converges uniformly; and since we know the pointwise limit is $k_{\delta}*\mu_{t}(x)$,
we have that $h_{\sigma_{n}}*k_{\delta}*\mu_{t}(x)$ converges uniformly
on $K$. By choosing a countable compact exhaustion of $X$ and selecting
a diagonal subsequence, we produce a further subsequence of $\sigma_{n}$
(which we do not relabel) such that $h_{\sigma_{n}}*k_{\delta}*\mu_{t}(x)$
converges uniformly on compacts to $k_{\delta}*\mu_{t}(x)$. This
establishes that every subsequence $\sigma_{n}$ contains a further
subsequence along which $h_{\sigma_{n}}*k_{\delta}*\mu_{t}(x)$ converges
uniformly on compacts to $k_{\delta}*\mu_{t}(x)$, hence we finally
conclude that $h_{\sigma}*k_{\delta}*\mu_{t}(x)\rightarrow k_{\delta}*\mu_{t}(x)$
uniformly on compact subsets of $X$.
\end{proof}

\section{Deferred proof of the chain rule\protect\label{sec:Deferred-proofs}}

In the proof of Proposition \ref{prop:one=000020sided=000020chain=000020rule},
we shall make repeated use of the following variant of the dominated
convergence theorem, from \cite[Ex. 2.3.20]{folland1999real}.
\begin{thm*}[``Generalized Lebesgue dominated convergence theorem'']
 Let $f_{n},g_{n},f,g\in L^{1}(X,\mu)$. Suppose $f_{n}\rightarrow f$
and $g_{n}\rightarrow g$ pointwise $\mu$-almost everywhere. If $|f_{n}|\leq g_{n}$
pointwise $\mu$-almost everywhere for each $n$, and $\int g_{n}d\mu\rightarrow\int gd\mu$,
then $\int f_{n}d\mu\rightarrow\int fd\mu$ also.
\end{thm*}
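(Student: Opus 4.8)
The plan is to reduce everything to Fatou's lemma by exploiting the sign constraints implicit in the domination hypothesis. First I would observe that $|f_{n}|\leq g_{n}$ $\mu$-a.e. forces both $g_{n}+f_{n}\geq0$ and $g_{n}-f_{n}\geq0$ $\mu$-a.e., and that both sequences lie in $L^{1}(X,\mu)$ since $f_{n}$ and $g_{n}$ do. Passing to the pointwise a.e. limits gives $g_{n}+f_{n}\to g+f$ and $g_{n}-f_{n}\to g-f$ $\mu$-a.e.; moreover $|f|\leq g$ $\mu$-a.e. (take limits in $|f_{n}|\leq g_{n}$), so the limit functions $g\pm f$ are also nonnegative a.e. and in $L^{1}$.

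Next I would apply Fatou's lemma to the nonnegative sequence $(g_{n}+f_{n})$:
\[
\int_{X}(g+f)\,d\mu\leq\liminf_{n\to\infty}\int_{X}(g_{n}+f_{n})\,d\mu=\int_{X}g\,d\mu+\liminf_{n\to\infty}\int_{X}f_{n}\,d\mu,
\]
where the last equality uses linearity of the integral on $L^{1}$ together with the hypothesis $\int g_{n}\,d\mu\to\int g\,d\mu$ (a convergent numerical sequence, so it splits off the $\liminf$). Since $\int_{X}g\,d\mu$ is \emph{finite}, it may be cancelled from both sides, yielding $\int_{X}f\,d\mu\leq\liminf_{n}\int_{X}f_{n}\,d\mu$. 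Applying Fatou's lemma instead to $(g_{n}-f_{n})$ and performing the same manipulation gives
\[
\int_{X}(g-f)\,d\mu\leq\int_{X}g\,d\mu-\limsup_{n\to\infty}\int_{X}f_{n}\,d\mu,
\]
and after cancelling $\int_{X}g\,d\mu$ this reads $\limsup_{n}\int_{X}f_{n}\,d\mu\leq\int_{X}f\,d\mu$. Combining the two inequalities forces $\lim_{n}\int_{X}f_{n}\,d\mu=\int_{X}f\,d\mu$, as claimed.

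There is no substantial obstacle here; this is the textbook Fatou argument (indeed the ordinary dominated convergence theorem is the special case $g_{n}\equiv g$). The only points demanding a word of care are (i) the finiteness of $\int_{X}g\,d\mu$, which is used twice to license the cancellation of the $g$-term, and (ii) the bookkeeping of the ``$\mu$-almost everywhere'' qualifiers, which is dispatched by discarding a single $\mu$-null set outside of which all of the countably many a.e. statements ($f_{n}\to f$, $g_{n}\to g$, and $|f_{n}|\leq g_{n}$ for every $n$) hold simultaneously, so that Fatou's lemma may be invoked on genuinely nonnegative functions.
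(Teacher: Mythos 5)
Your proof is correct, and it is the standard argument: the paper does not prove this statement at all but simply cites it as an exercise in Folland (\cite[Ex.\ 2.3.20]{folland1999real}), whose intended solution is precisely your Fatou argument applied to the nonnegative sequences $g_{n}+f_{n}$ and $g_{n}-f_{n}$, using finiteness of $\int g\,d\mu$ to cancel and a single null set to handle the a.e.\ qualifiers. Nothing further is needed.
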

\begin{proof}[Proof of Proposition \ref{prop:one=000020sided=000020chain=000020rule}]
 We suppose that $\rho_{t}\ll\pi$ for almost all $t\in[0,T]$, since
otherwise the claimed inequality holds trivially. Also, note that
$\int_{0}^{T}\sqrt{\mathcal{I}(\rho_{t}\mid\pi)}|\rho_{t}^{\prime}|dt<\infty$,
since $\int_{0}^{T}|\rho_{t}^{\prime}|^{2}dt=\int_{0}^{T}\mathcal{A}(\rho_{t},\mathbf{v}_{t})dt<\infty$
, and together with $\int_{0}^{T}\mathcal{I}(\rho_{t}\mid\pi)dt<\infty$
we have $\int_{0}^{T}\sqrt{\mathcal{I}(\rho_{t}\mid\pi)}|\rho_{t}^{\prime}|dt<\infty$
by Young's inequality. %
\begin{comment}
Additionally, we note that the integral $\int_{0}^{T}\sqrt{\mathcal{I}(\rho_{t}\mid\pi)}|\rho_{t}^{\prime}|dt$
is invariant under time reparametrization, so without loss of generality
we can take $(\rho_{t})_{t\in[0,T]}$ to be constant speed (this relies
on the general existence of arc-length reparametrization of a.c. curves
in metric spaces, see \cite[Lem. 1.1.4 (b)]{ambrosio2008gradient}).
Consequently, we can also assume below that $\int_{0}^{T}\sqrt{\mathcal{I}(\rho_{t}\mid\pi)}dt<\infty$.
\end{comment}
{} 

\emph{Step 0: regularization}. First, we replace the functional $\mathcal{H}(\cdot\mid\pi)$
with $\mathcal{H}^{n}(\cdot\mid\pi)$, as follows. Recall that $\mathcal{H}(\cdot\mid\pi)$
can be rewritten as an integral functional with a nonnegative integrand,
as for all $\rho\in\mathcal{P}(X)$,
\[
\int\frac{d\rho}{d\pi}\log\frac{d\rho}{d\pi}d\pi=\int\left(\frac{d\rho}{d\pi}\log\frac{d\rho}{d\pi}+1-\frac{d\rho}{d\pi}\right)d\pi
\]
and the function $x\log x+1-x$ is nonnegative. Note that the derivative
of $x\log x+1-x$ is simply $\log x$. Now, define $\tilde{g}_{n}:[0,\infty)\rightarrow[-n,n]$
by 
\[
\tilde{g}_{n}(x):=\begin{cases}
-n & \log x\leq-n\\
\log x & -n<\log x<n\\
n & n\leq\log x
\end{cases}
\]
and let $g_{n}(u)$ be a smooth approximation of $\tilde{g}_{n}(u)$
--- more precisely, we take $g_{n}(u)$ to be $C^{1}$, non-decreasing,
bounded between $-n$ and $n$, and monotonically converging to $\log u$
as $n\rightarrow\infty$ (that is, monotone downwards for $0<u<1$
and monotone upwards for $u\geq1$). Then, we define $f_{n}:[0,\infty)\rightarrow\mathbb{R}$
by $f_{n}(x):=\int_{1}^{x}g_{n}(s)ds$ (with the usual convention
that $\int_{1}^{x}g_{n}(s)ds=-\int_{x}^{1}g_{n}(s)ds$), and then
define
\[
\mathcal{H}^{n}(\rho\mid\pi):=\begin{cases}
\int_{X}f_{n}\left(\frac{d\rho}{d\pi}\right)d\pi & \rho\ll\pi\\
\infty & \text{else}.
\end{cases}
\]
Note that the minimum for both $x\log x-x+1$ and $f_{n}(x)$ is attained
in both cases at $x=1$, with the value 0. We further note that $f_{n}$
is the integral of a non-decreasing function, hence convex, and moreover
$f_{n}(x)\leq x\log x-x+1$ by construction; thus, $\mathcal{H}^{n}(\rho\mid\pi)$
is a convex integral functional which converges monotonically upward
to $\mathcal{H}(\rho\mid\pi)$ as $n\rightarrow\infty$.

Having regularized the entropy functional, we next regularize the
solution to the nonlocal continuity equation. First we mollify $(\rho_{t},\mathbf{v}_{t})_{t\in[0,1]}$
in space: let $k$ be a convolution kernel on $X$, supported on $B(0,1)$,
and consider $(k_{\delta}*\rho_{t},k_{\delta}*\mathbf{v}_{t})_{t\in[0,1]}$.
Note that this is still a solution to the nonlocal continuity equation,
and that the action is stable with respect to this regularization.
In what follows, we take $\delta<\delta_{0}$. 

Additionally, we \emph{also} mollify $(\rho_{t},\mathbf{v}_{t})_{t\in[0,T]}$
in time, so as to argue as in the proof of \cite[Prop. 3.10]{esposito2019nonlocal}.
Namely, let us extend $(\rho_{t},\mathbf{v}_{t})$ by reflection to
be defined on the time interval $[-T,2T]$, and take $h(t)$ to be
a standard mollifier supported on $[-1,1]$. For $\sigma\in(0,T]$,
we recall that the time-convolution of $(k_{\delta}*\rho_{t},k_{\delta}*\mathbf{v}_{t})_{t\in[0,1]}$
is defined as follows:
\[
\forall A\in\mathcal{B}(\mathbb{R}^{d})\qquad h_{\sigma}*(k_{\delta}*\rho_{t})(A):=\int_{-\sigma}^{\sigma}h_{\sigma}(t-s)(k_{\delta}*\rho_{s})(A)ds;
\]
\[
\forall U\in\mathcal{B}(G)\qquad h_{\sigma}*(k_{\delta}*\mathbf{v}_{t})(U):=\int_{-\sigma}^{\sigma}h_{\sigma}(t-s)(k_{\delta}*\mathbf{v}_{s})(U)ds.
\]
(Here $h_{\sigma}(t):=\frac{1}{\sigma}h(t/\sigma)$.) By Lemma \ref{lem:time-mollified-measures-convergence},
it holds that as $\sigma\rightarrow0$, we have the narrow (resp.
local weak{*}) convergence of $h_{\sigma}*(k_{\delta}*\rho_{t})$
to $k_{\delta}*\rho_{t}$ for all $t\in[0,1]$, and $h_{\sigma}*(k_{\delta}*\mathbf{v}_{t})$
to $k_{\delta}*\mathbf{v}_{t}$ for almost all $t\in[0,1]$.%
\begin{comment}
{[}todo but same as in EPSS --- except that $k_{\delta}*\mathbf{v}_{t}$
is not a tangent flux! Maybe need to interchange order of convolutions.
Hm{]}
\end{comment}
{} %
{}  

On the other hand, $k_{\delta}*\rho_{t}$ converges narrowly to $\rho_{t}$,
and $d(k_{\delta}*\mathbf{v}_{t})dt\rightharpoonup^{*}d\mathbf{v}_{t}dt$
in $\mathcal{M}_{loc}([0,T]\times G)$ by Corollary \ref{cor:action-space-convolution-convergence}.

Let us verify that $\partial_{t}\rho_{t}^{\sigma}$ is well-defined.
Indeed, $k_{\delta}*\rho_{t}$ has density $k_{\delta}*\rho_{t}(x)$,
since $k_{\delta}*\rho_{t}=\int k_{\delta}(x-y)d\rho_{t}(y)dx$. Thus,
we have that 
\[
(h_{\sigma}*k_{\delta}*\rho_{t})(A)=\int_{-\sigma}^{\sigma}k_{\delta}*\rho_{t}(A)h_{\sigma}(|t-s|)ds=\int_{-\sigma}^{\sigma}\left(\int_{A}k_{\delta}*\rho_{t}(x)dx\right)h_{\sigma}(|t-s|)ds
\]
and so by Fubini's theorem 
\[
\int_{-\sigma}^{\sigma}\left(\int_{A}k_{\delta}*\rho_{t}(x)dx\right)h_{\sigma}(|t-s|)ds=\int_{A}\int_{-\sigma}^{\sigma}k_{\delta}*\rho_{t}(x)h_{\sigma}(|t-s|)dsdx.
\]
Therefore the density $(h_{\sigma}*k_{\delta}*\rho_{t})(x)$ exists
as well and is given by $\int_{-\sigma}^{\sigma}k_{\delta}*\rho_{t}(x)h_{\sigma}(|t-s|)ds$.
For fixed $x$, differentiability of $t\mapsto h_{\sigma}*k_{\delta}*\rho_{t}(x)$
is given in Lemma \ref{lem:smoothed-density-regularity}(ii).

Lastly, we introduce one further regularization, for the following
purpose: below we shall need regularizations $\tilde{\rho}_{t}$ and
$\tilde{\pi}$ of $\rho_{t}$ and $\pi$ which satisfy $\Vert\frac{d\tilde{\rho}_{t}}{d\tilde{\pi}}\Vert_{C^{1}}<\infty$,
such that $\tilde{\rho}_{t}$ is still a curve of finite action. In
particular, the preceding calculations show that $\frac{d(h_{\sigma}*k_{\delta}*\rho_{t})}{dx}$
is differentiable in both space and time, and Lemma \ref{lem:smoothed-density-regularity}
establishes that $\Vert\frac{d(h_{\sigma}*k_{\delta}*\rho_{t})}{dx}\Vert_{C^{1}}\leq C(\delta)$
for some uniform constant $C(\delta)$. Since the Radon-Nikodym theorem
tells us that 
\[
\frac{d(h_{\sigma}*k_{\delta}*\rho_{t})}{d(k_{\delta}*\pi)}=\frac{d(h_{\sigma}*k_{\delta}*\rho_{t})}{dx}\frac{dx}{d(k_{\delta}*\pi)}=\frac{d(h_{\sigma}*k_{\delta}*\rho_{t})}{dx}\left(\frac{d(k_{\delta}*\pi)}{dx}\right)^{-1}
\]
we would like for it to be the case that $\left(\frac{d(k_{\delta}*\pi)}{dx}\right)^{-1}$
is $C^{1}$ (in space), however this does not hold when $X=\mathbb{R}^{d}$.
Accordingly, we replace $k_{\delta}*\pi$ with $(1-\epsilon)k_{\delta}*\pi+\epsilon\text{Leb}$.
More precisely, let $\mu_{0}$ be any probability measure with smooth
density satisfying $\mathcal{H}(\mu_{0}\mid\text{Leb})<\infty$ and
$\mathcal{I}(\mu_{0}\mid\text{Leb})<\infty$. Then we have that 
\[
((1-\epsilon)h_{\sigma}*k_{\delta}*\rho_{t}+\epsilon\mu_{0},(1-\epsilon)h_{\sigma}*k_{\delta}*\mathbf{v}_{t}+\epsilon0)_{t\in[0,1]}
\]
 solves the nonlocal continuity equation. From joint convexity of
the action we have 
\begin{multline*}
\mathcal{A}((1-\epsilon)h_{\sigma}*k_{\delta}*\rho_{t}+\epsilon\mu_{0},(1-\epsilon)h_{\sigma}*k_{\delta}*\mathbf{v}_{t});(1-\epsilon)k_{\delta}*\pi+\epsilon\text{Leb})\\
\begin{aligned} & \leq(1-\epsilon)\mathcal{A}(h_{\sigma}*k_{\delta}*\rho_{t},h_{\sigma}*k_{\delta}*\mathbf{v}_{t};k_{\delta}*\pi)+\epsilon\mathcal{A}(\mu_{0},0;\text{Leb})\\
 & =(1-\epsilon)\mathcal{A}(h_{\sigma}*k_{\delta}*\rho_{t},h_{\sigma}*k_{\delta}*\mathbf{v}_{t};k_{\delta}*\pi);
\end{aligned}
\end{multline*}
and so the action is stable with respect to this regularization. 

As shorthand, we typically write $\rho_{t}^{\delta,\epsilon,\sigma}:=(1-\epsilon)h_{\sigma}*k_{\delta}*\rho_{t}+\epsilon\mu_{0}$,
$\mathbf{v}_{t}^{\delta,\epsilon,\sigma}:=(1-\epsilon)h_{\sigma}*k_{\delta}*\mathbf{v}_{t}$,
and $\pi^{\delta,\epsilon}=(1-\epsilon)k_{\delta}*\pi+\epsilon\text{Leb}$
below. We drop the small parameter superscripts when that parameter
is equal to zero, so that for instance $\rho_{t}^{\delta,\epsilon}:=(1-\epsilon)k_{\delta}*\rho_{t}+\epsilon\mu_{0}$.

Observe that
\[
\left(\frac{d((1-\epsilon)k_{\delta}*\pi+\epsilon\text{Leb})}{dx}\right)^{-1}=\left((1-\epsilon)\frac{d(k_{\delta}*\pi)}{dx}+\epsilon\right)^{-1}\leq\epsilon^{-1}
\]
so indeed $\left(\frac{d((1-\epsilon)k_{\delta}*\pi+\epsilon\text{Leb})}{dx}\right)^{-1}$
(and hence $\frac{d(h_{\sigma}*k_{\delta}*\rho_{t})}{dx}\left(\frac{d((1-\epsilon)k_{\delta}*\pi+\epsilon\text{Leb})}{dx}\right)^{-1}$)
is bounded. Likewise we have that the gradient (in $x$) of $\left(\frac{d((1-\epsilon)k_{\delta}*\pi+\epsilon\text{Leb})}{dx}\right)^{-1}$
is equal to 
\[
-\frac{1}{\left(\frac{d((1-\epsilon)k_{\delta}*\pi+\epsilon\text{Leb})}{dx}\right)^{2}}(1-\epsilon)\nabla\frac{d(k_{\delta}*\pi)}{dx}
\]
and since $\frac{1}{\left(\frac{d((1-\epsilon)k_{\delta}*\pi+\epsilon\text{Leb})}{dx}\right)^{2}}\leq\epsilon^{-2}$
we have $\Vert\nabla\left(\frac{d((1-\epsilon)k_{\delta}*\pi+\epsilon\text{Leb})}{dx}\right)^{-1}\Vert\leq\epsilon^{-2}\Vert\nabla\frac{d(k_{\delta}*\pi)}{dx}\Vert.$
Altogether this shows that the $C^{1}$ norm of $\left(\frac{d((1-\epsilon)k_{\delta}*\pi+\epsilon\text{Leb})}{dx}\right)^{-1}$
is bounded by some constant $C(\delta,\epsilon)$ which goes to infinity
as $\delta,\epsilon\rightarrow0$. Consequently, $\Vert\frac{d\rho_{t}^{\delta,\epsilon,\sigma}}{d\pi^{\delta,\epsilon}}\Vert_{C^{1}(X)}<\infty$
also, and likewise $\Vert g_{n}\left(\frac{d\rho_{t}^{\delta,\epsilon,\sigma}}{d\pi^{\delta,\epsilon}}\right)\Vert_{C^{1}(X)}<\infty$. 

Similarly, time-regularity of $f_{n}\left(\frac{d\rho_{t}^{\delta,\epsilon,\sigma}}{d\pi^{\delta,\epsilon}}\right)$
follows from the fact that 
\begin{align*}
\frac{\partial}{\partial t}f_{n}\left(\frac{d\rho_{t}^{\delta,\epsilon,\sigma}}{d\pi^{\delta,\epsilon}}(x)\right) & =g_{n}\left(\frac{d\rho_{t}^{\delta,\epsilon,\sigma}}{d\pi^{\delta,\epsilon}}(x)\right)\partial_{t}\left(\frac{d\rho_{t}^{\delta,\epsilon,\sigma}}{d\pi^{\delta,\epsilon}}\right)(x)\\
 & =g_{n}\left(\frac{d\rho_{t}^{\delta,\epsilon,\sigma}}{d\pi^{\delta,\epsilon}}(x)\right)\left(\frac{d\pi^{\delta,\epsilon}}{dx}\right)^{-1}\partial_{t}\left(\frac{d\rho_{t}^{\delta,\epsilon,\sigma}}{dx}\right)(x)\\
 & \leq\Vert g_{n}\Vert_{\infty}\epsilon^{-1}O(\delta^{-d}\sigma^{-1})
\end{align*}
where in the last line we have used Lemma \ref{lem:smoothed-density-regularity}(ii).

For fixed $n$, the preceding computation indicates that we can apply
the dominated convergence theorem and differentiate under the integral
sign, to compute that 
\[
\frac{d}{dt}\mathcal{H}^{n}(\rho_{t}^{\delta,\epsilon,\sigma}\mid\pi^{\delta,\epsilon})=\int_{X}g_{n}\left(\frac{d\rho_{t}^{\delta,\epsilon,\sigma}}{d\pi^{\delta,\epsilon}}\right)\partial_{t}\left(\frac{d\rho_{t}^{\delta,\epsilon,\sigma}}{d\pi^{\delta,\epsilon}}\right)d\pi^{\delta,\epsilon}.
\]
Now, since $\frac{d\rho_{t}^{\delta,\epsilon,\sigma}}{d\pi^{\delta,\epsilon}}$
is smooth in both space and time, and $g_{n}$ is $C^{1}$, it follows
that 
\[
\int_{X}g_{n}\left(\frac{d\rho_{t}^{\delta,\epsilon,\sigma}}{d\pi^{\delta,\epsilon}}\right)\partial_{t}\left(\frac{d\rho_{t}^{\delta,\epsilon,\sigma}}{d\pi^{\delta,\epsilon}}\right)d\pi^{\delta,\epsilon}=\frac{1}{2}\iint_{G}\bar{\nabla}g_{n}\left(\frac{d\rho_{t}^{\delta,\epsilon,\sigma}}{d\pi^{\delta,\epsilon}}\right)d\mathbf{v}_{t}^{\delta,\epsilon,\sigma}.
\]
Indeed, we simply take $g_{n}\left(\frac{d\rho_{t}^{\delta,\epsilon,\sigma}}{d\pi^{\delta,\epsilon}}\right)$
as the $C^{1}$ test function $\varphi_{t}(x)$ and use the weak
form of the nonlocal continuity equation after integrating by parts:
\begin{align*}
\int_{X}\varphi_{t}(x)\partial_{t}\left(\frac{d\rho_{t}^{\delta,\epsilon,\sigma}}{d\pi^{\delta,\epsilon}}\right)d\pi^{\delta,\epsilon} & =-\int_{X}\partial_{t}\varphi_{t}(x)\frac{d\rho_{t}^{\delta,\epsilon,\sigma}}{d\pi^{\delta,\epsilon}}d\pi^{\delta,\epsilon}\\
 & =\frac{1}{2}\iint_{G}\bar{\nabla}\varphi_{t}(x,y)d\mathbf{v}_{t}^{\delta,\epsilon,\sigma}.
\end{align*}
Hence, 
\begin{align}
\mathcal{H}^{n}(\rho_{0}^{\delta,\epsilon,\sigma} & \mid\pi^{\delta,\epsilon})-\mathcal{H}^{n}(\rho_{\tau}^{\delta,\epsilon,\sigma}\mid\pi^{\delta,\epsilon})=-\frac{1}{2}\int_{0}^{\tau}\iint_{G}\bar{\nabla}g_{n}\left(\frac{d\rho_{t}^{\delta,\epsilon,\sigma}}{d\pi^{\delta,\epsilon}}\right)d\mathbf{v}_{t}^{\delta,\epsilon,\sigma}dt.\label{eq:regularized-chain-rule}
\end{align}
It therefore remains for us to pass to the limit. We will send $\sigma\rightarrow0$,
then $n\rightarrow\infty$, then $\epsilon\rightarrow0$ and $\delta\rightarrow0$,
and seek to show that for all $\tau\in[0,T]$, 
\[
\lim_{\sigma,n,\epsilon,\delta}\int_{0}^{\tau}\iint_{G}\bar{\nabla}g_{n}\left(\frac{d\rho_{t}^{\delta,\epsilon,\sigma}}{d\pi^{\delta,\epsilon}}\right)d\mathbf{v}_{t}^{\delta,\epsilon,\sigma}dt=\int_{0}^{\tau}\iint_{G}\bar{\nabla}\log\rho_{t}d\mathbf{v}_{t}dt
\]
and for almost all $\tau\in[0,T]$, 
\[
\mathcal{H}(\rho_{0}\mid\pi)-\mathcal{H}(\rho_{\tau}\mid\pi)\leq\liminf_{\sigma,n,\epsilon,\delta}\left[\mathcal{H}^{n}(\rho_{0}^{\delta,\epsilon,\sigma}\mid\pi^{\delta,\epsilon})-\mathcal{H}^{n}(\rho_{\tau}^{\delta,\epsilon,\sigma}\mid\pi^{\delta,\epsilon})\right].
\]
 we first consider the right hand side of \ref{eq:regularized-chain-rule}.

\emph{Step 1: convergence of RHS}. 

\emph{Step 1a}. We first send $\sigma\rightarrow0$. Note that as
a function of $x$, $\Vert\frac{d\rho_{t}^{\delta,\epsilon,\sigma}}{d\pi^{\delta,\epsilon}}\Vert_{C^{1}(X)}\leq C(\delta,\epsilon)$
(with $C(\delta,\epsilon)\rightarrow\infty$ as $\delta\rightarrow0$),
uniformly in $t$ (and $\sigma$). Consequently, $\Vert g_{n}\left(\frac{d\rho_{t}^{\delta,\epsilon,\sigma}}{d\pi^{\delta,\epsilon}}\right)\Vert_{C^{1}(X)}\leq\Vert g_{n}\Vert_{C^{1}(X)}C(\delta,\epsilon)$
where $C(\delta,\epsilon)$ is some constant which is uniform in $t$
but may blow up as $\delta,\epsilon\rightarrow0$. Additionally, it
holds in general for $C^{1}$ functions $\varphi$ that $|\bar{\nabla}\varphi(x,y)|\leq\Vert\varphi\Vert_{C^{1}(X)}(1\wedge|x-y|)$.
Therefore, we can estimate as follows: 
\begin{align*}
\iint_{G}\left|\bar{\nabla}g_{n}\left(\frac{d\rho_{t}^{\delta,\epsilon,\sigma}}{d\pi^{\delta,\epsilon}}\right)\right|d|\mathbf{v}_{t}^{\delta,\epsilon,\sigma}|dt & \leq\Vert g_{n}\Vert_{C^{1}(\mathbb{R}^{d})}C(\delta,\epsilon)\iint_{G}(1\wedge|x-y|)d|h_{\sigma}*k_{\delta}*\mathbf{v}_{t}|\\
 & \leq\Vert g_{n}\Vert_{C^{1}(\mathbb{R}^{d})}C(\delta,\epsilon)\cdot C_{\eta}\sqrt{\mathcal{A}(h_{\sigma}*k_{\delta}*\rho_{t},h_{\sigma}*k_{\delta}*\mathbf{v}_{t};k_{\delta}*\pi)}
\end{align*}
where in the second line we have applied Lemma \ref{lem:C-eta=000020upper=000020bound=000020of=000020flux}. 

Now, let $\sigma_{n}$ be a sequence converging to zero.  From the
local weak{*} convergence of $h_{\sigma_{n}}*k_{\delta}*\mathbf{v}_{t}$
to $k_{\delta}*\mathbf{v}_{t}$, and the fact that $\sup_{\sigma_{n}}\mathcal{A}(h_{\sigma}*k_{\delta}*\rho_{t},h_{\sigma}*k_{\delta}*\mathbf{v}_{t};k_{\delta}*\pi)<\infty$,
we can apply Lemma \ref{lem:total-flux-convergence}, and deduce that\footnote{To apply this lemma, we need to argue that $g_{n}\left(\frac{d((1-\epsilon)h_{\sigma}*k_{\delta}*\rho_{t}+\epsilon\mu_{0})}{d((1-\epsilon)k_{\delta}*\pi+\epsilon\text{Leb})}\right)$
and $g_{n}\left(\frac{d((1-\epsilon)k_{\delta}*\rho_{t}+\epsilon\mu_{0})}{d((1-\epsilon)k_{\delta}*\pi+\epsilon\text{Leb})}\right)$
are indeed $C^{1}$ functions and we have uniform convergence on compact
sets as $\sigma\rightarrow0$. We estimate as follows. First note
that  $h_{\sigma_{n}}*k_{\delta}*\rho_{t}(x)$ converges uniformly
on compacts to $k_{\delta}*\rho_{t}(x)$, by Lemma \ref{lem:time-mollified-measures-convergence}.
So let's see that we still have the same uniform convergence on compacts
for $g_{n}\left(\frac{d((1-\epsilon)h_{\sigma}*k_{\delta}*\rho_{t}+\epsilon\mu_{0})}{d((1-\epsilon)k_{\delta}*\pi+\epsilon\text{Leb})}\right)$.
Firstly, $\epsilon\frac{d\mu_{0}}{d((1-\epsilon)k_{\delta}*\pi+\epsilon\text{Leb})}$
is independent of $\sigma$, and
\[
\frac{d(h_{\sigma}*k_{\delta}*\rho_{t})}{d(k_{\delta}*\pi+\epsilon\text{Leb})}=\frac{d(h_{\sigma}*k_{\delta}*\rho_{t})}{dx}\frac{dx}{d(k_{\delta}*\pi+\epsilon\text{Leb})}.
\]
So since $(\frac{d(k_{\delta}*\pi+\epsilon\text{Leb})}{dx})^{-1}$
is $C^{1}$ we are fine. Then, $g_{n}$ is $C^{1}$, hence uniformly
continuous. Therefore the uniform convergence of $\frac{d((1-\epsilon)h_{\sigma}*k_{\delta}*\rho_{t}+\epsilon\mu_{0})}{d((1-\epsilon)k_{\delta}*\pi+\epsilon\text{Leb})}(x)$
on compacts to $\frac{d((1-\epsilon)k_{\delta}*\rho_{t}+\epsilon\mu_{0})}{d((1-\epsilon)k_{\delta}*\pi+\epsilon\text{Leb})}$
means that $g_{n}\left(\frac{d((1-\epsilon)h_{\sigma}*k_{\delta}*\rho_{t}+\epsilon\mu_{0})}{d((1-\epsilon)k_{\delta}*\pi+\epsilon\text{Leb})}\right)$
converges uniformly on compacts as well.} 
\[
\iint_{G}\bar{\nabla}g_{n}\left(\frac{d\rho_{t}^{\delta,\epsilon,\sigma}}{d\pi^{\delta,\epsilon}}\right)d|\mathbf{v}_{t}^{\delta,\epsilon,\sigma}|\stackrel{\sigma\rightarrow0}{\longrightarrow}\iint_{G}\bar{\nabla}g_{n}\left(\frac{d\rho_{t}^{\delta,\epsilon}}{d\pi^{\delta,\epsilon}}\right)d|\mathbf{v}_{t}^{\delta,\epsilon}|\text{\ensuremath{\qquad t\text{-a.s..}}}
\]
\begin{comment}
{[}need to explicitly argue that $g_{n}\left(\frac{d(h_{\sigma}*k_{\delta}*\rho_{t})}{d\pi}\right)\rightarrow g_{n}\left(\frac{d(k_{\delta}*\rho_{t})}{d\pi}\right)$
pointwise with bounded $C^{1}$ norm{]}
\end{comment}
{} The claim now follows from an application of the generalized Lebesgue
dominated convergence theorem.  Indeed,we have that pointwise $t$-a.s.
\begin{align*}
\iint_{G}\left|\bar{\nabla}g_{n}\left(\frac{d\rho_{t}^{\delta,\epsilon,\sigma}}{d\pi^{\delta,\epsilon}}\right)\right|d(|\mathbf{v}_{t}^{\delta,\epsilon,\sigma}|) & \leq C\sqrt{\mathcal{A}(h_{\sigma}*k_{\delta}*\rho_{t},h_{\sigma}*k_{\delta}*\mathbf{v}_{t};k_{\delta}*\pi)}\\
 & \leq C\left(\mathcal{A}(h_{\sigma}*k_{\delta}*\rho_{t},h_{\sigma}*k_{\delta}*\mathbf{v}_{t};k_{\delta}*\pi)+1\right).
\end{align*}
Now, observe that by Proposition \ref{prop:time-convolution-action}
we have that 
\[
\mathcal{A}(h_{\sigma}*k_{\delta}*\rho_{t},h_{\sigma}*k_{\delta}*\mathbf{v}_{t};k_{\delta}*\pi)dt\rightarrow\mathcal{A}(k_{\delta}*\rho_{t},k_{\delta}*\mathbf{v}_{t};k_{\delta}*\pi)\text{ }t-a.s.
\]
and
\[
\int_{0}^{T}\mathcal{A}(h_{\sigma}*k_{\delta}*\rho_{t},h_{\sigma}*k_{\delta}*\mathbf{v}_{t};k_{\delta}*\pi)dt\rightarrow\int_{0}^{T}\mathcal{A}(k_{\delta}*\rho_{t},k_{\delta}*\mathbf{v}_{t};k_{\delta}*\pi)dt
\]
as $\sigma\rightarrow0$, since 
\[
\int_{0}^{T}\mathcal{A}(k_{\delta}*\rho_{t},k_{\delta}*\mathbf{v}_{t};k_{\delta}*\pi)dt\leq S\int_{0}^{T}\mathcal{A}(\rho_{t},\mathbf{v}_{t})dt<\infty
\]
 by Lemma \ref{lem:action-spatial-convolution}. Therefore, from the
generalized Lebesgue dominated convergence theorem, the fact that
\[
1+\int_{0}^{T}\mathcal{A}(h_{\sigma}*k_{\delta}*\rho_{t},h_{\sigma}*k_{\delta}*\mathbf{v}_{t};k_{\delta}*\pi)dt\stackrel{\sigma\rightarrow0}{\longrightarrow}1+\int_{0}^{T}\mathcal{A}(k_{\delta}*\rho_{t},k_{\delta}*\mathbf{v}_{t};k_{\delta}*\pi)dt
\]
then implies that 
\[
\int_{0}^{T}\iint_{G}\left|\bar{\nabla}g_{n}\left(\frac{d\rho_{t}^{\delta,\epsilon,\sigma}}{d\pi^{\delta,\epsilon}}\right)\right|d(|\mathbf{v}_{t}^{\delta,\epsilon,\sigma}|)dt\stackrel{\sigma\rightarrow0}{\longrightarrow}\int_{0}^{T}\iint_{G}\left|\bar{\nabla}g_{n}\left(\frac{d\rho_{t}^{\delta,\epsilon}}{d\pi^{\delta,\epsilon}}\right)\right|d|\mathbf{v}_{t}^{\delta,\epsilon}|dt
\]
as desired.

\begin{comment}
{[}continue{]} {[}should be similar to argument from proof of compactness
theorem in Erbar{]}
\end{comment}
{} %
\begin{comment}
However, (as already observed in the proof of Proposition \ref{prop:compactness-nce})
this convergence follows directly from the fact that as $\sigma\rightarrow0$,
$h_{\sigma_{n}}*k_{\delta}*\mathbf{v}_{t}$ converges locally weak{*}ly
to $k_{\delta}*\mathbf{v}_{t}$ in $\mathcal{M}_{loc}(G)$.
\end{comment}

\emph{Step 1b}. Next, we send $n\rightarrow\infty$. For this estimate,
we argue similarly to the proof of \cite[Claim 5.12]{erbar2014gradient}.
In what follows, for $0\leq\delta<\delta_{0}$, we write $v_{t}^{\delta,\epsilon}$
to denote the density of $\mathbf{v}_{t}^{\delta,\epsilon}$ with
respect to $d\pi^{\delta,\epsilon}(x)d\pi^{\delta,\epsilon}(y)$ restricted
to $G$; such a density is guaranteed to exist $t$-a.s. by \cite[Lem. 2.3]{erbar2014gradient}
(in the case $X=\mathbb{R}^{d}$; if $X=\mathbb{T}^{d}$ the equivalent
lemma is \cite[Lem. 3.5]{ferreira2019minimizing}) since $\mathcal{A}(\rho_{t}^{\delta,\epsilon},\mathbf{v}_{t}^{\delta,\epsilon};\pi^{\delta,\epsilon})<\infty$
for almost all $t$, and $\rho_{t}\ll\pi$ for almost all $t$ since
$\int_{0}^{T}\mathcal{H}(\rho_{t}\mid\pi)<\infty$. Now, since $g_{n}(x)$
converges pointwise to $\log x$, in order to apply the dominated
converge theorem, we want to show that uniformly in $n$, $\left(\bar{\nabla}\log\left(\frac{d\rho_{t}^{\delta,\epsilon}}{d\pi^{\delta,\epsilon}}\right)-\bar{\nabla}g_{n}\left(\frac{d\rho_{t}^{\delta,\epsilon}}{d\pi^{\delta,\epsilon}}\right)\right)v_{t}^{\delta}$
is dominated in $L^{1}$ in spacetime. 

We first estimate 
\begin{multline*}
\int_{0}^{T}\iint_{G}\left|\left(\bar{\nabla}\log\left(\frac{d\rho_{t}^{\delta,\epsilon}}{d\pi^{\delta,\epsilon}}\right)-\bar{\nabla}g_{n}\left(\frac{d\rho_{t}^{\delta,\epsilon}}{d\pi^{\delta,\epsilon}}\right)\right)v_{t}^{\delta,\epsilon}\right|d(\pi^{\delta,\epsilon}\otimes\pi^{\delta,\epsilon})dt\\
\begin{aligned} & \leq\int_{0}^{T}\iint_{G}\left|\left(\bar{\nabla}\log\left(\frac{d\rho_{t}^{\delta,\epsilon}}{d\pi^{\delta,\epsilon}}\right)-\bar{\nabla}g_{n}\left(\frac{d\rho_{t}^{\delta,\epsilon}}{d\pi^{\delta,\epsilon}}\right)\right)\right||v_{t}^{\delta,\epsilon}|d(\pi^{\delta,\epsilon}\otimes\pi^{\delta,\epsilon})dt\end{aligned}
.
\end{multline*}
Note that since $|g_{n}|\leq|\log|$, it holds that pointwise, 
\[
\left|\left(\bar{\nabla}\log\left(\frac{d\rho_{t}^{\delta,\epsilon}}{d\pi^{\delta,\epsilon}}\right)-\bar{\nabla}g_{n}\left(\frac{d\rho_{t}^{\delta,\epsilon}}{d\pi^{\delta,\epsilon}}\right)\right)\right|\leq2\left|\bar{\nabla}\left(\log\right)\left(\frac{d\rho_{t}^{\delta,\epsilon}}{d\pi^{\delta,\epsilon}}\right)\right|.
\]
By Lemma \ref{lem:Young's=000020inequality=000020action=000020fisher=000020integrands},
we see at the level of integrands that 
\begin{multline}
\left|\bar{\nabla}\left(\log\right)\left(\frac{d\rho_{t}^{\delta,\epsilon}}{d\pi^{\delta,\epsilon}}\right)(x,y)\right||v_{t}^{\delta,\epsilon}|(x,y)\leq\frac{\left(\frac{d\mathbf{v}_{t}^{\delta,\epsilon}}{d(\pi^{\delta,\epsilon}\otimes\pi^{\delta,\epsilon})}(x,y)\right)^{2}}{2\theta\left(\frac{d(\rho_{t}^{\delta,\epsilon}\otimes\pi^{\delta,\epsilon})}{d(\pi^{\delta,\epsilon}\otimes\pi^{\delta,\epsilon})}(x,y),\frac{d(\pi^{\delta,\epsilon}\otimes\rho_{t}^{\delta,\epsilon})}{d(\pi^{\delta,\epsilon}\otimes\pi^{\delta,\epsilon})}(x,y)\right)\eta(x,y)}\label{eq:fisher-delta-cauchy-schwarz}\\
+\frac{1}{2}\left(\bar{\nabla}\log\frac{d\rho_{t}^{\delta,\epsilon}}{d\pi^{\delta,\epsilon}}(x,y)\right)^{2}\theta\left(\frac{d(\rho_{t}^{\delta,\epsilon}\otimes\pi^{\delta,\epsilon})}{d(\pi^{\delta,\epsilon}\otimes\pi^{\delta,\epsilon})}(x,y),\frac{d(\pi^{\delta,\epsilon}\otimes\rho_{t}^{\delta,\epsilon})}{d(\pi^{\delta,\epsilon}\otimes\pi^{\delta,\epsilon})}(x,y)\right)\eta(x,y).
\end{multline}
Integrating both sides with respect to $d(\pi^{\delta,\epsilon}\otimes\pi^{\delta,\epsilon})$
(and using the fact that $\theta$ is the logarithmic mean) we get
\[
\int_{0}^{T}\iint_{G}\left|\bar{\nabla}\left(\log\right)\left(\frac{d\rho_{t}^{\delta,\epsilon}}{d\pi^{\delta,\epsilon}}\right)\right||v_{t}^{\delta,\epsilon}|d(\pi^{\delta,\epsilon}\otimes\pi^{\delta,\epsilon})dt\leq\int_{0}^{T}\left[\mathcal{A}(\rho_{t}^{\delta,\epsilon},\mathbf{v}_{t}^{\delta,\epsilon};\pi^{\delta,\epsilon})+\mathcal{I}(\rho_{t}^{\delta,\epsilon}\mid\pi^{\delta,\epsilon})\right]dt
\]
In turn, by Lemma \ref{lem:Cauchy-Schwarz} and the convexity of
$\mathcal{I}$ in its first argument, and the fact that $(1-\epsilon)\mathcal{A}(k_{\delta}*\rho_{t},k_{\delta}*\mathbf{v}_{t};k_{\delta}*\pi)\leq S\mathcal{A}(\rho_{t},\mathbf{v}_{t};\pi)$
and $\mathcal{I}(k_{\delta}*\rho_{t}\mid k_{\delta}*\pi)\leq S\mathcal{I}(\rho_{t}\mid\pi)$,
we see that
\begin{multline*}
\int_{0}^{T}\iint_{G}\left|\bar{\nabla}\left(\log\right)\left(\frac{d\rho_{t}^{\delta,\epsilon}}{d\pi^{\delta,\epsilon}}\right)\right||v_{t}^{\delta,\epsilon}|d(\pi^{\delta,\epsilon}\otimes\pi^{\delta,\epsilon})dt\\
\begin{aligned} & \leq S\int_{0}^{T}\mathcal{A}(\rho_{t},\mathbf{v}_{t};\pi)dt+S\int_{0}^{T}\left(\mathcal{I}(\rho_{t}\mid\pi)+\epsilon\mathcal{I}(\mu_{0}\mid\text{Leb})\right)dt<\infty.\end{aligned}
\end{multline*}
Altogether this shows that $\left(\bar{\nabla}\log\left(\frac{d\rho_{t}^{\delta,\epsilon}}{d\pi^{\delta,\epsilon}}\right)-\bar{\nabla}g_{n}\left(\frac{d\rho_{t}^{\delta,\epsilon}}{d\pi^{\delta,\epsilon}}\right)\right)v_{t}^{\delta,\epsilon}$
is dominated by the $L^{1}(G\times[0,T],(\pi^{\delta,\epsilon}\otimes\pi^{\delta,\epsilon})\times\text{Leb})$
function $2\left|\bar{\nabla}\left(\log\right)\left(\frac{d\rho_{t}^{\delta,\epsilon}}{d\pi^{\delta,\epsilon}}\right)\right||v_{t}^{\delta,\epsilon}|$
as required, and so as $n\rightarrow\infty$, 
\[
\int_{0}^{T}\iint_{G}\left|\bar{\nabla}g_{n}\left(\frac{d\rho_{t}^{\delta,\epsilon}}{d\pi^{\delta,\epsilon}}\right)v_{t}^{\delta,\epsilon}\right|d(\pi^{\delta,\epsilon}\otimes\pi^{\delta,\epsilon})dt\rightarrow\int_{0}^{T}\iint_{G}\left|\bar{\nabla}\log\left(\frac{d\rho_{t}^{\delta,\epsilon}}{d\pi^{\delta,\epsilon}}\right)v_{t}^{\delta,\epsilon}\right|d(\pi^{\delta,\epsilon}\otimes\pi^{\delta,\epsilon})dt.
\]

\emph{Step 1c}. Lastly, we send $\epsilon\rightarrow0$ and $\delta\rightarrow0$.
We want to show that for almost all $t\in[0,T]$, we have the convergence
(denoting $v_{t}:=\frac{d\mathbf{v}_{t}}{d(\pi\otimes\pi)}$)
\[
\int_{0}^{T}\iint_{G}\left|\bar{\nabla}\log\left(\frac{d\rho_{t}^{\delta,\epsilon}}{d\pi^{\delta,\epsilon}}\right)v_{t}^{\delta,\epsilon}\right|d(\pi^{\delta,\epsilon}\otimes\pi^{\delta,\epsilon})dt\stackrel{\epsilon,\delta\rightarrow0}{\longrightarrow}\int_{0}^{T}\iint_{G}\left|\bar{\nabla}\log\left(\frac{d\rho_{t}}{d\pi}\right)v_{t}\right|d(\pi\otimes\pi)dt.
\]
Equation \ref{eq:fisher-delta-cauchy-schwarz} above already shows
that $\left|\bar{\nabla}\log\left(\frac{d\rho_{t}^{\delta,\epsilon}}{d\pi^{\delta,\epsilon}}\right)v_{t}^{\delta,\epsilon}\right|$
is dominated in, $L^{1}(G\times[0,T];(\pi^{\delta,\epsilon}\otimes\pi^{\delta,\epsilon})\times\text{Leb})$.
Multiplying both sides of Equation \ref{eq:fisher-delta-cauchy-schwarz}
by $\frac{d(\pi^{\delta,\epsilon}\otimes\pi^{\delta,\epsilon})}{dxdy}$
and integrating both sides w.r.t. $dxdydt$ instead achieves the same
result. Therefore, to apply the generalized Lebesgue dominated convergence
theorem, it suffices to show that: 
\begin{enumerate}
\item $\left|\bar{\nabla}\log\left(\frac{d\rho_{t}^{\delta,\epsilon}}{d\pi^{\delta,\epsilon}}\right)v_{t}^{\delta,\epsilon}\right|\frac{d(\pi^{\delta,\epsilon}\otimes\pi^{\delta,\epsilon})}{dxdy}$
converges pointwise almost everywhere (w.r.t. $dxdy\llcorner G$ in
space and $dt$ in time) to $\left|\bar{\nabla}\log\left(\frac{d\rho_{t}}{d\pi}\right)v_{t}\right|\frac{d(\pi\otimes\pi)}{dxdy}$
;
\item For the same notion of pointwise convergence, 
\begin{multline*}
\frac{1}{2}\left(\bar{\nabla}\log\frac{d\rho_{t}^{\delta,\epsilon}}{d\pi^{\delta,\epsilon}}(x,y)\right)^{2}\theta\left(\frac{d\rho_{t}^{\delta,\epsilon}}{d\pi^{\delta,\epsilon}}(x),\frac{d\rho_{t}^{\delta,\epsilon}}{d\pi^{\delta,\epsilon}}(y)\right)\eta(x,y)\frac{d(\pi^{\delta,\epsilon}\otimes\pi^{\delta,\epsilon})}{dxdy}\rightarrow\\
\frac{1}{2}\left(\bar{\nabla}\log\frac{d\rho_{t}}{d\pi}(x,y)\right)^{2}\theta\left(\frac{d\rho_{t}}{d\pi}(x),\frac{d\rho_{t}}{d\pi}(y)\right)\eta(x,y)\frac{d(\pi\otimes\pi)}{dxdy};
\end{multline*}
\item And likewise,
\[
\frac{\left(\frac{d\mathbf{v}_{t}^{\delta,\epsilon}}{d(\pi^{\delta,\epsilon}\otimes\pi^{\delta,\epsilon})}(x,y)\right)^{2}}{2\theta\left(\frac{d\rho_{t}^{\delta,\epsilon}}{d\pi^{\delta,\epsilon}}(x),\frac{d\rho_{t}^{\delta,\epsilon}}{d\pi^{\delta,\epsilon}}(y)\right)\eta(x,y)}\frac{d(\pi^{\delta,\epsilon}\otimes\pi^{\delta,\epsilon})}{dxdy}\rightarrow\frac{\left(\frac{d\mathbf{v}_{t}}{d(\pi\otimes\pi)}(x,y)\right)^{2}}{2\theta\left(\frac{d\rho_{t}}{d\pi}(x),\frac{d\rho_{t}}{d\pi}(y)\right)\eta(x,y)}\frac{d(\pi\otimes\pi)}{dxdy};
\]
\item And finally,
\[
\int_{0}^{T}\left[\mathcal{I}(\rho_{t}^{\delta,\epsilon}\mid\pi^{\delta,\epsilon})+\mathcal{A}(\rho_{t}^{\delta,\epsilon},\mathbf{v}_{t}^{\delta,\epsilon};\pi^{\delta,\epsilon})\right]dt\stackrel{\epsilon,\delta\rightarrow0}{\longrightarrow}\int_{0}^{T}\left[\mathcal{I}(\rho_{t}\mid\pi)+\mathcal{A}(\rho_{t},\mathbf{v}_{t};\pi)\right]dt.
\]
\end{enumerate}
To establish the pointwise convergences, for (2), it suffices to
observe that $\frac{d\pi^{\delta,\epsilon}}{dx}=(1-\epsilon)\frac{d(k_{\delta}*\pi)}{dx}+\epsilon$
and therefore pointwise $dx$-a.e. convergence as $\delta,\epsilon\rightarrow0$
follows from \cite[Thm. C.7(ii)]{evans2010partial} and the fact that
$\pi\ll dx$. Similarly, $\rho_{t}\ll\pi$ for almost all $t$ since
we assumed $\int_{0}^{T}\mathcal{H}(\rho_{t}\mid\pi)<\infty$, and
so by the same token, for almost all $t\in[0,T]$ we have that $\frac{d\rho_{t}^{\delta,\epsilon}}{dx}=(1-\epsilon)\frac{d(k_{\delta}*\rho_{t})}{dx}+\epsilon\frac{d\mu_{0}}{dx}$
converges pointwise $dx$-a.e to $\frac{d\rho_{t}}{dx}$. It now follows
from the Radon-Nikodym theorem that $\frac{d\rho_{t}^{\delta,\epsilon}}{d\pi^{\delta,\epsilon}}$
converges to $\frac{d\rho_{t}}{d\pi}$ for almost all $x$ and $t$.
Finally the integrand in (2) is pointwise continuous with respect
to each of its Radon-Nikodym derivative arguments once we know that
$\mathcal{I}(\rho_{t}^{\delta,\epsilon}\mid\pi^{\delta,\epsilon})$
and $\mathcal{I}(\rho_{t}\mid\pi)$ are finite. 

To establish the pointwise almost everywhere convergence for (1) and
(3), we reason identically, except we also need that $v_{t}^{\delta,\epsilon}$
converges pointwise a.e. to $v_{t}$. To see this, observe that 
\[
v_{t}^{\delta,\epsilon}\frac{d(\pi^{\delta,\epsilon}\otimes\pi^{\delta,\epsilon})}{dxdy}=\frac{d\mathbf{v}_{t}^{\delta,\epsilon}}{d(\pi^{\delta,\epsilon}\otimes\pi^{\delta,\epsilon})}\frac{d(\pi^{\delta,\epsilon}\otimes\pi^{\delta,\epsilon})}{dxdy}=\frac{d\mathbf{v}_{t}^{\delta,\epsilon}}{dxdy}=(1-\epsilon)\frac{d(k_{\delta}*\mathbf{v}_{t})}{dxdy}
\]
so in light of the pointwise a.e. convergence of $\frac{d(\pi^{\delta,\epsilon}\otimes\pi^{\delta,\epsilon})}{dxdy}$
to $\frac{d(\pi\otimes\pi)}{dxdy}$, it suffices that $\frac{d(k_{\delta}*\mathbf{v}_{t})}{dxdy}$
converges pointwise a.e. to $\frac{d\mathbf{v}_{t}}{dxdy}$ for almost
all $t\in[0,T]$. However this is established by Lemma \ref{lem:flux-density-pointwise-convergence}
as soon as we know that $\mathbf{v}_{t}\ll dxdy\llcorner G$; and
this in turn holds for almost all $t$ since $\rho_{t}\ll\pi\ll dx$,
and $\int_{0}^{T}\mathcal{A}(\rho_{t},\mathbf{v}_{t};\pi)dt<\infty$,
so \cite[Lem. 2.3]{erbar2014gradient} (resp. \cite[Lem. 3.5]{ferreira2019minimizing})
applies.

Finally, to establish (4), we first observe that $\rho_{t}^{\delta,\epsilon}\rightharpoonup\rho_{t}$,
$\pi^{\delta,\epsilon}\rightharpoonup\pi$, and $d\mathbf{v}_{t}^{\delta,\epsilon}dt\rightharpoonup^{*}d\mathbf{v}_{t}dt$
in $\mathcal{M}_{loc}([0,T]\times G)$, and so by Propositions \ref{prop:action=000020convex=000020lsc}
and \ref{prop:fisher-convex-lsc} and Fatou's lemma we have 
\[
\int_{0}^{T}\left[\mathcal{I}(\rho_{t}\mid\pi)+\mathcal{A}(\rho_{t},\mathbf{v}_{t};\pi)\right]dt\leq\liminf_{\delta,\epsilon\rightarrow0}\int_{0}^{T}\left[\mathcal{I}(\rho_{t}^{\delta,\epsilon}\mid\pi^{\delta,\epsilon})+\mathcal{A}(\rho_{t}^{\delta,\epsilon},\mathbf{v}_{t}^{\delta,\epsilon};\pi^{\delta,\epsilon})\right]dt.
\]
On the other hand, we have 
\[
\int_{0}^{T}\left[\mathcal{I}(\rho_{t}^{\delta,\epsilon}\mid\pi^{\delta,\epsilon})+\mathcal{A}(\rho_{t}^{\delta,\epsilon},\mathbf{v}_{t}^{\delta,\epsilon};\pi^{\delta,\epsilon})\right]dt\leq\int_{0}^{T}\left[S\mathcal{I}(\rho_{t}\mid\pi)+\epsilon\mathcal{I}(\mu_{0}\mid\text{Leb})+S\mathcal{A}(\rho_{t},\mathbf{v}_{t};\pi)\right]dt
\]
and so, since $S\rightarrow1$ as $\delta\rightarrow0$, we have 
\[
\limsup_{\delta,\epsilon\rightarrow0}\int_{0}^{T}\left[\mathcal{I}(\rho_{t}^{\delta,\epsilon}\mid\pi^{\delta,\epsilon})+\mathcal{A}(\rho_{t}^{\delta,\epsilon},\mathbf{v}_{t}^{\delta,\epsilon};\pi^{\delta,\epsilon})\right]dt\leq\int_{0}^{T}\left[\mathcal{I}(\rho_{t}\mid\pi)+\mathcal{A}(\rho_{t},\mathbf{v}_{t};\pi)\right]dt
\]
so (4) holds.

\emph{Step 2: convergence of LHS}. We first send $\sigma\rightarrow0$,
then $n\rightarrow\infty$, then $\epsilon,\delta\rightarrow0$, just
as for the RHS.   Before proceeding, we note that $\mathcal{H}^{n}(\cdot\mid\cdot)$
is jointly convex on $\mathcal{M}_{loc}^{+}$, since $f^{n}$ is convex;
hence 
\[
\mathcal{H}^{n}(\rho_{t}^{\delta,\epsilon,\sigma}\mid\pi^{\delta,\epsilon})\leq(1-\epsilon)\mathcal{H}^{n}(h_{\sigma}*k_{\delta}*\rho_{t}\mid k_{\delta}*\pi)+\epsilon\mathcal{H}^{n}(\mu_{0}\mid\text{Leb}).
\]
At the same time, we have that $f^{n}(x)\leq x\log x-x+1$ , hence
$\mathcal{H}^{n}(\mu_{0}\mid\text{Leb})\leq\mathcal{H}(\mu_{0}\mid\text{Leb})<\infty$
by assumption.

\emph{Step 2a}. We first send $\sigma\rightarrow0$.  By the construction
in step zero, we have that $f_{n}(x)$ converges monotonically upwards
to $x\log x-x+1$. We use this and the convexity of $x\log x-x+1$
to compute that
\begin{align*}
\mathcal{H}^{n}(\rho_{t}^{\delta,\epsilon,\sigma}\mid\pi^{\delta,\epsilon}) & =\int_{X}f_{n}\left(\frac{d\rho_{t}^{\delta,\epsilon,\sigma}}{d\pi^{\delta,\epsilon}}\right)d\pi^{\delta,\epsilon}\\
 & \leq\int_{X}\left(\frac{d\rho_{t}^{\delta,\epsilon,\sigma}}{d\pi^{\delta,\epsilon}}\log\left(\frac{d\rho_{t}^{\delta,\epsilon,\sigma}}{d\pi^{\delta,\epsilon}}\right)-\frac{d\rho_{t}^{\delta,\epsilon,\sigma}}{d\pi^{\delta,\epsilon}}+1\right)d\pi^{\delta,\epsilon}\\
\text{(Jensen)} & \leq\int_{X}\int_{-\sigma}^{\sigma}\left(\frac{d\rho_{t-s}^{\delta,\epsilon}}{d\pi^{\delta,\epsilon}}\log\left(\frac{d\rho_{t-s}^{\delta,\epsilon}}{d\pi^{\delta,\epsilon}}\right)-\frac{d\rho_{t-s}^{\delta,\epsilon}}{d\pi^{\delta,\epsilon}}+1\right)h_{\sigma}(s)dsd\pi^{\delta,\epsilon}\\
 & =h_{\sigma}*\mathcal{H}(\rho_{t}^{\delta,\epsilon}\mid\pi^{\delta,\epsilon}).
\end{align*}
At the same time,  since pointwise in $t$, 
\begin{align*}
\mathcal{H}(\rho_{t}^{\delta,\epsilon}\mid\pi^{\delta,\epsilon}) & \leq(1-\epsilon)\mathcal{H}(k_{\delta}*\rho_{t}\mid k_{\delta}*\pi)+\epsilon\mathcal{H}(\mu_{0}\mid\text{Leb})\\
 & \leq(1-\epsilon)\mathcal{H}(\rho_{t}\mid\pi)+\epsilon\mathcal{H}(\mu_{0}\mid\text{Leb})
\end{align*}
we have that $t\mapsto\mathcal{H}(\rho_{t}^{\delta,\epsilon}\mid\pi^{\delta,\epsilon})$
is in $L^{1}([0,T])$, and therefore $h_{\sigma}*\mathcal{H}(\rho_{t}^{\delta,\epsilon}\mid\pi^{\delta,\epsilon})$
converges pointwise a.s. to $\mathcal{H}(\rho_{t}^{\delta,\epsilon}\mid\pi^{\delta,\epsilon})$
as $\sigma\rightarrow0$. In other words, for almost all $t$, 
\[
\int_{X}\int_{-\sigma}^{\sigma}\frac{d\rho_{t-s}^{\delta,\epsilon}}{d\pi^{\delta,\epsilon}}\log\left(\frac{d\rho_{t-s}^{\delta,\epsilon}}{d\pi^{\delta,\epsilon}}\right)h_{\sigma}(s)dsd\pi^{\delta,\epsilon}\rightarrow\int_{X}\frac{d\rho_{t}^{\delta,\epsilon}}{d\pi^{\delta,\epsilon}}\log\left(\frac{d\rho_{t}^{\delta,\epsilon}}{d\pi^{\delta,\epsilon}}\right)d\pi^{\delta,\epsilon}.
\]
At the same time, by convexity of $x\log x-x+1$ we have the pointwise
bound
\[
f_{n}\left(\frac{d\rho_{t}^{\delta,\epsilon,\sigma}}{d\pi^{\delta,\epsilon}}\right)\leq\int_{-\sigma}^{\sigma}\left(\frac{d\rho_{t-s}^{\delta,\epsilon}}{d\pi^{\delta,\epsilon}}\log\left(\frac{d\rho_{t-s}^{\delta,\epsilon}}{d\pi^{\delta,\epsilon}}\right)-\frac{d\rho_{t-s}^{\delta,\epsilon}}{d\pi^{\delta,\epsilon}}+1\right)h_{\sigma}(s)ds.
\]
Note also that $\frac{d\rho_{t}^{\delta,\epsilon,\sigma}}{d\pi^{\delta,\epsilon}}$
converges pointwise $\text{Leb}^{d}$-almost everywhere to $\frac{d\rho_{t}^{\delta,\epsilon}}{d\pi^{\delta,\epsilon}}$.
So by the generalized dominated convergence theorem we have that
$t$-a.s.,  
\[
\int_{X}f_{n}\left(\frac{d\rho_{t}^{\delta,\epsilon,\sigma}}{d\pi^{\delta,\epsilon}}\right)d\pi^{\delta,\epsilon}\rightarrow\int_{X}f_{n}\left(\frac{d\rho_{t}^{\delta,\epsilon}}{d\pi^{\delta,\epsilon}}\right)d\pi^{\delta,\epsilon}.
\]
Thus, for almost all $\tau_{0}\leq\tau_{1}\in[0,T]$, it holds that
\[
\mathcal{H}^{n}(\rho_{\tau_{0}}^{\delta,\epsilon}\mid\pi^{\delta,\epsilon})-\mathcal{H}^{n}(\rho_{\tau_{1}}^{\delta,\epsilon}\mid\pi^{\delta,\epsilon})=\lim_{\sigma\rightarrow0}\left(\mathcal{H}^{n}(\rho_{\tau_{0}}^{\delta,\epsilon,\sigma}\mid k_{\delta}*\pi)-\mathcal{H}^{n}(\rho_{\tau_{1}}^{\delta,\epsilon,\sigma}\mid\pi^{\delta,\epsilon})\right).
\]

\emph{Step 2b}. Next, observe that by construction, as $n\rightarrow0$,
$\mathcal{H}^{n}(\rho_{t}^{\delta,\epsilon}\mid\pi^{\delta,\epsilon})\nearrow\mathcal{H}(\rho_{t}^{\delta,\epsilon}\mid\pi^{\delta,\epsilon})$
for all $t\in[0,T]$.

\emph{Step 2c}. Finally, we send $\epsilon,\delta\rightarrow0$. On
the one hand,  from Lemma \ref{lem:convolution-regularity-entropy}
we have that as $\delta\rightarrow0$, we have$\mathcal{H}(k_{\delta}*\rho_{t}\mid k_{\delta}*\pi)\rightarrow\mathcal{H}(\rho_{t}\mid\pi).$
 At the same time, from the joint convexity of $\mathcal{H}$ we
have 
\[
\mathcal{H}((1-\epsilon)k_{\delta}*\rho_{t}+\epsilon\mu_{0}\mid(1-\epsilon)k_{\delta}*\pi+\epsilon\text{Leb})\leq(1-\epsilon)\mathcal{H}(k_{\delta}*\rho_{t}\mid k_{\delta}*\pi)+\epsilon\mathcal{H}(\mu_{0}\mid\text{Leb})
\]
so that 
\[
\limsup_{\epsilon,\delta\rightarrow0}\mathcal{H}((1-\epsilon)k_{\delta}*\rho_{t}+\epsilon\mu_{0}\mid(1-\epsilon)k_{\delta}*\pi+\epsilon\text{Leb})\leq\mathcal{H}(\rho_{t}\mid\pi)
\]
and clearly $(1-\epsilon)k_{\delta}*\rho+\epsilon\mu_{0}\rightharpoonup^{*}\rho$
and $(1-\epsilon)k_{\delta}*\pi+\epsilon\text{Leb}\rightharpoonup^{*}\pi$
as $\epsilon,\delta\rightarrow0$, so that 
\[
\liminf_{\epsilon,\delta\rightarrow0}\mathcal{H}((1-\epsilon)k_{\delta}*\rho_{t}+\epsilon\mu_{0}\mid(1-\epsilon)k_{\delta}*\pi+\epsilon\text{Leb})\geq\mathcal{H}(\rho_{t}\mid\pi).
\]
Hence 
\[
\lim_{\epsilon,\delta\rightarrow0}\mathcal{H}((1-\epsilon)k_{\delta}*\rho_{t}+\epsilon\mu_{0}\mid(1-\epsilon)k_{\delta}*\pi+\epsilon\text{Leb})=\mathcal{H}(\rho_{t}\mid\pi)
\]
as desired.

Altogether, this shows that in the limit, for almost all $\tau_{0},\tau_{1}\in[0,T]$,
the LHS (with endpoints $\tau_{0}$ and $\tau_{1}$) converges to
$\mathcal{H}(\rho_{\tau_{0}}\mid\pi)-\mathcal{H}(\rho_{\tau_{1}}\mid\pi)$.
 Lastly, let $(\tau_{n})_{n\in\mathbb{N}}$ be a sequence of points
converging to $0$, each of which belongs to the set (of full measure)
for which we have deduced the convergence of $\mathcal{H}^{n}(\rho_{t}^{\delta,\epsilon,\sigma}\mid\pi^{\delta,\epsilon})$
to $\mathcal{H}(\rho_{\tau}\mid\pi)$. Since $(\rho_{t})_{t\in[0,T]}$
is weak{*} continuous in $t$, it follows that 
\[
\mathcal{H}(\rho_{0}\mid\pi)\leq\liminf_{\tau_{n}\rightarrow0}\mathcal{H}(\rho_{\tau_{n}}\mid\pi).
\]
This implies that for almost all $\tau\in[0,T]$,
\[
\mathcal{H}(\rho_{0}\mid\pi)-\mathcal{H}(\rho_{\tau}\mid\pi)\leq-\int_{0}^{\tau}\iint_{G}\bar{\nabla}\log\rho_{t}d\mathbf{v}_{t}dt.
\]
Indeed, we have that as $\tau_{n}\rightarrow0$, 
\[
\int_{\tau_{n}}^{\tau}\iint_{G}\bar{\nabla}\log\rho_{t}d\mathbf{v}_{t}dt\rightarrow\int_{0}^{\tau}\iint_{G}\bar{\nabla}\log\rho_{t}d\mathbf{v}_{t}dt,
\]
since 
\[
\left|\int_{0}^{\tau_{n}}\iint_{G}\bar{\nabla}\log\rho_{t}d\mathbf{v}_{t}dt\right|\leq\int_{0}^{\tau_{n}}\mathcal{I}(\rho_{t}\mid\pi)|\rho_{t}^{\prime}|dt\rightarrow0.
\]
\end{proof}

\end{document}